\newcounter{braid}
\newcounter{strands}
\def\cross{%
  \@ifnextchar^{\message{Got sup}\cross@sup}{\cross@sub}}
\def\cross@sup^#1_#2{\render@cross{#2}{#1}}
\def\cross@sub_#1{\@ifnextchar^{\cross@@sub{#1}}{\render@cross{#1}{1}}}
\def\cross@@sub#1^#2{\render@cross{#1}{#2}}
\def\render@cross#1#2{
  \def\strand{#1}
  \def\crossing{#2}
  \pgfmathsetmacro{\cross@y}{-\value{braid}*\braid@h}
  \pgfmathtruncatemacro{\nextstrand}{#1+1}
  \foreach \thread in {1,...,\value{strands}}
  {
    \pgfmathsetmacro{\strand@x}{\thread * \braid@w}
    \ifnum\thread=\strand
    \pgfmathsetmacro{\over@x}{\strand * \braid@w + .5*(1 - \crossing) * \braid@w}
    \pgfmathsetmacro{\under@x}{\strand * \braid@w + .5*(1 + \crossing) * \braid@w}
    \draw[braid] \pgfkeysvalueof{/tikz/braid start} +(\under@x pt,\cross@y pt) to[out=-90,in=90] +(\over@x pt,\cross@y pt -\braid@h);
    \draw[braid] \pgfkeysvalueof{/tikz/braid start} +(\over@x pt,\cross@y pt) to[out=-90,in=90] +(\under@x pt,\cross@y pt -\braid@h);
    \else
    \ifnum\thread=\nextstrand
    \else
     \draw[braid] \pgfkeysvalueof{/tikz/braid start} ++(\strand@x pt,\cross@y pt) -- ++(0,-\braid@h);
    \fi
   \fi
  }
  \stepcounter{braid}
}
\tikzset{braid/.style={double=\pgfkeysvalueof{/tikz/braid colour},double distance=1pt,line width=2pt,white}}
\newcommand{\braid}[2][]{%
  \begingroup
  \pgfkeys{/tikz/strands=2}
  \tikzset{#1}
  \pgfkeysgetvalue{/tikz/braid width}{\braid@w}
  \pgfkeysgetvalue{/tikz/braid height}{\braid@h}
  \setcounter{braid}{0}
  \let\sigma=\cross
  #2
  \endgroup
}
\newtheorem{theorem}{Theorem}%[section]
\newtheorem{proposition}[theorem]{Proposition}
\newtheorem{lemma}[theorem]{Lemma}
\newtheorem{corollary}[theorem]{Corollary}
\theoremstyle{definition}
\newtheorem{definition}[theorem]{Definition}
\newtheorem{construction}[theorem]{Construction}
\newtheorem{assumptions}[theorem]{Assumptions}
\newtheorem{remark}[theorem]{Remark}
\newtheorem{example}[theorem]{Example}
\def\Z{\mathbb{Z}}
\def\Pi{\mathbb{P}^{\infty}}
\def\Zpk{\mathbb{Z}/p^{k}}
\def\Zpk1{\mathbb{Z}/p^{k-1}}
\def\sl2{\widetilde{SL_{2}(\Z)}}
\DeclareMathOperator{\ord}{ord}
\DeclareMathOperator{\Ind}{Ind}
\DeclareMathOperator{\Nm}{Nm}
\DeclareMathOperator{\Gal}{Gal}
\DeclareMathOperator{\trace}{trace}
\DeclareMathOperator{\Frob}{Frob}
\DeclareMathOperator{\splt}{split}
\DeclareMathOperator{\nonsplit}{nonsplit}
\DeclareMathOperator{\End}{End}
\DeclareMathOperator{\Fil}{Fil}
\DeclareMathOperator{\Isog}{Isog}
\DeclareMathOperator{\Corr}{Corr}
\DeclareMathOperator{\Hom}{Hom}
\DeclareMathOperator{\add}{add}
\DeclareFontFamily{U}{wncy}{}
    \DeclareFontShape{U}{wncy}{m}{n}{<->wncyr10}{}
    \DeclareSymbolFont{mcy}{U}{wncy}{m}{n}
    \DeclareMathSymbol{\Sh}{\mathord}{mcy}{"58} 
\title{Generalized Heegner Cycles at Eisenstein Primes and the Katz $p$-adic $L$-function}
\author{Daniel Kriz}
\address{Department of Mathematics, Princeton University, Fine Hall, Washington Road, Princeton, NJ 08544-1000, USA}
\email{dkriz@princeton.edu}
\begin{document}
\begin{abstract} In this paper, we consider normalized newforms $f\in S_k(\Gamma_0(N),\varepsilon_f)$ whose non-constant term Fourier coefficients are congruent to those of an Eisenstein series modulo some prime ideal above a rational prime $p$. In this situation, we establish a congruence between the anticyclotomic $p$-adic $L$-function of Bertolini-Darmon-Prasanna and the Katz two-variable $p$-adic $L$-function. From this, we derive congruences between images under the $p$-adic Abel-Jacobi map of certain generalized Heegner cycles attached to $f$ and special values of the Katz $p$-adic $L$-function.  

In particular, our results apply to newforms associated with elliptic curves $E/\mathbb{Q}$ whose mod $p$ Galois representations $E[p]$ are reducible at a good prime $p$. As a consequence, we show the following: if $K$ is an imaginary quadratic field satisfying the Heegner hypothesis with respect to $E$ and in which $p$ splits, and if the bad primes of $E$ satisfy certain congruence conditions mod $p$ and $p$ does not divide certain Bernoulli numbers, then the Heegner point $P_{E}(K)$ is non-torsion,  in particular implying that $\text{rank}_{\mathbb{Z}}E(K) = 1$. From this, we show that when $E$ is semistable with reducible mod $3$ Galois representation, then a positive proportion of real quadratic twists of $E$ have rank 1 and a positive proportion of imaginary quadratic twists of $E$ have rank 0.
\end{abstract}

\maketitle
\tableofcontents

\section{Notation and Conventions}\label{notation} Throughout the paper, let us fix the following notational conventions. 

For $m,n\in \mathbb{Z}$, let $(m,n)$ denote the greatest common divisor of $m$ and $n$, and $lcm(m,n)$ denote the least common multiple. We let ``$\ell||N$'' denote ``$\ell$ strictly divides $N$''. We let $\Phi : \mathbb{Z} \rightarrow \mathbb{Z}$ denote the Euler totient function. Given an extension of number fields $L/K$ and an integral ideal $\mathfrak{a}$ of $\mathcal{O}_L$, let $\Nm_{L/K}\mathfrak{a}$ denote the relative ideal norm of $\mathfrak{a}$ and let $|\mathfrak{a}|$ denote the smallest positive rational integer in $\mathfrak{a}$. For ideals $\mathfrak{a}, \mathfrak{b}$, we will let $(\mathfrak{a},\mathfrak{b})$ denote the greatest common ideal divisor and $lcm(\mathfrak{a},\mathfrak{b})$ the least common ideal multiple. For a place $v$ of a number field, let $\Frob_v$ denote the \emph{arithmetic Frobenius} attached to $v$, i.e the Frobenius element which is sent to $v$ under the (inverse of the) Artin reciprocity map. 

Throughout, we will fix an algebraic closure $\overline{\mathbb{Q}}$ of $\mathbb{Q}$. All number fields in our discussion will be viewed as embedded in $\overline{\mathbb{Q}}$. For each rational prime $p$, we fix an algebraic closure $\overline{\mathbb{Q}}_p$ of $\mathbb{Q}_p$, and let $\mathbb{C}_p$ denote the topological closure of $\overline{\mathbb{Q}}_p$. We fix an embedding $i_{\infty}: \overline{\mathbb{Q}} \hookrightarrow \mathbb{C}$ as well as an embedding $i_p: \overline{\mathbb{Q}} \hookrightarrow \mathbb{C}_p$ for each $p$. We also fix field identifications $i : \mathbb{C}\xrightarrow{\sim}\mathbb{C}_p$ for each $p$. (We will use the same symbol $i$ for all $p$ as the underlying $p$ will be clear from context.)

Let $K$ denote a general number field. Let $\mathbb{A}_K$ and $\mathbb{A}_K^{\times}$ denote the ad\`{e}les and id\`{e}les over $K$, respectively, and let $\mathbb{A}_{K,f}$ and $\mathbb{A}_{K,f}^{\times}$ denote the finite ad\`{e}les and id\`{e}les, respectively. For a Hecke character $\chi : \mathbb{A}_K^{\times} \rightarrow \mathbb{C}^{\times}$, let $\mathfrak{f}(\chi) \subset \mathcal{O}_K$ denote the conductor of $\chi$. As usual, if $\mathfrak{a}\subset\mathcal{O}_K$ is an integral ideal with $(\mathfrak{a},\mathfrak{f}(\chi)) \neq 1$, then we put $\chi(\mathfrak{a}) = 0$.

Given a Dirichlet (i.e. finite order) character $\psi$ over a number field $K$ of conductor $\mathfrak{f} \subset \mathcal{O}_K$, we will identify $\psi$ with its associated finite order Hecke character on id\`{e}les, taking the following convention: for $x \in (\mathcal{O}/\mathfrak{f})^{\times}$,
$$\psi(x \mod \mathfrak{f}) = \prod_{v\nmid \mathfrak{f}} \psi_v(x)$$
where $v$ runs over all places of $E$ which do not divide $\mathfrak{f}$. When $K = \mathbb{Q}$ and $\chi : \mathbb{A}_{\mathbb{Q}}^{\times} \rightarrow \mathbb{C}^{\times}$ is a Dirichlet character, we define the Gauss sum of $\chi$ by
$$\mathfrak{g}(\chi) := \sum_{a\in (\mathbb{Z}/\mathfrak{f}(\chi)\mathbb{Z})^{\times}}\chi(a)e^{\frac{2\pi i a}{\mathfrak{f}(\chi)}}$$ 
and for a finite prime $\ell$, we define the Gauss sum of the local character $\chi_{\ell}: \mathbb{Q}_{\ell}^{\times} \rightarrow \mathbb{C}^{\times}$ similarly:
$$\mathfrak{g}(\chi_{\ell}) := \sum_{a \in (\mathbb{Z}_{\ell}/\mathfrak{f}(\chi)\mathbb{Z}_{\ell})^{\times}}\chi_{\ell}(a)e^{\frac{2\pi i a}{|\mathfrak{f}(\chi)|}},$$
so that $\prod_{\ell|\mathfrak{f}(\chi)}\mathfrak{g}(\chi_{\ell}) = \mathfrak{g}(\chi^{-1})$. 

Let $\mathbb{N}_{\mathbb{Q}} : \mathbb{A}_{\mathbb{Q}} \rightarrow \mathbb{C}$ denote the ad\`{e}lic norm, normalized so that $\mathbb{N}_{\mathbb{Q},\infty} = |\cdot|_{\infty}$ (the usual archimedean absolute value). Then, for any number field $K$, let $\Nm_{K/\mathbb{Q}} : \mathbb{A}_K\rightarrow\mathbb{A}_{\mathbb{Q}}$ denote the norm homomorphism (which induces the ideal norm recalled above), and put $\mathbb{N}_K := \mathbb{N}_{\mathbb{Q}}\circ \Nm_{K/\mathbb{Q}}$. Note that when $K$ is imaginary quadratic, $\mathbb{N}_K : \mathbb{A}_K^{\times} \rightarrow \mathbb{C}^{\times}$ is an algebraic Hecke character of infinity type $(-1,-1)$. For any Hecke character $\chi: \mathbb{A}_K^{\times} \rightarrow \mathbb{C}^{\times}$, let 
$$\chi_j := \chi\mathbb{N}_K^{-j}.$$ 

Using the fixed isomorphism $i: \mathbb{C}\xrightarrow{\sim}\mathbb{C}_p$ and the Artin isomorphism, we can view $\mathbb{N}_K^{-1}$ as a character $\mathbb{N}_K^{-1}: \Gal(\overline{K}/K) \rightarrow \mathbb{Z}_p^{\times}$. Then any $x\in\mathbb{Z}_p^{\times}$ can be uniquely written as $\omega(x)\cdot \langle x\rangle$ where $\omega(x)\in\mu_{2(p-1)}$ and $\langle x\rangle \in 1+2p\mathbb{Z}_p$. We then define the \emph{Teichm\"{u}ller character} $\omega: \Gal(\overline{K}/K) \rightarrow \mu_{2(p-1)}$ by $\omega_K(a) := \omega(\mathbb{N}_K^{-1}(a))$. For simplicity, we will let $\omega_{\mathbb{Q}} = \omega$ and $\langle \mathbb{N}_{\mathbb{Q}}\rangle = \langle \mathbb{N} \rangle$. Given an extension of number fields $K/L$ and a Hecke character $\phi$ over $L$, we will let $\phi_{/K} := \phi \circ \Nm_{K/L}$; note that viewing $\phi$ as a character $\Gal(\overline{L}/L) \rightarrow \mathbb{C}^{\times}$, this corresponds to restriction to the subgroup $\Gal(\overline{K}/K)$.

For a quadratic field $K/\mathbb{Q}$ with $K = \mathbb{Q}(\sqrt{D})$ where $D$ is a squarefree integer, we recall the fundamental discriminant of $K$ is given by
$$D_K = \begin{cases}
D & \text{if}\; D \equiv 1 \mod 4,\\
4D & \text{if}\; D \equiv 2, 3 \mod 4.\\
\end{cases}$$
For quadratic $K$, let $\varepsilon_K$ be the associated Dirichlet character of conductor $D_K$; conversely, given a quadratic Dirichlet character $\chi$, let $K_{\chi}$ be the associated imaginary quadratic field. For two quadratic fields $L = \mathbb{Q}(\sqrt{D})$ and $K = \mathbb{Q}(\sqrt{D'})$, we will let $L\cdot K$ denote the quadratic field $\mathbb{Q}(\sqrt{DD'})$. Throughout the paper, unless otherwise noted, all Dirichlet characters over $\mathbb{Q}$ are taken to be \emph{primitive}, and so are uniquely identified with finite order Hecke characters over $\mathbb{Q}$ under the arithmetic normalization described above. In particular, given Dirichlet characters $\psi_1, \psi_2$ over $\mathbb{Q}$, we define $\psi_1\psi_2$ to be the \emph{primitive} Dirichlet character equal to $\psi_1(a)\psi_2(a)$ for $a \in (\mathbb{Z}/lcm(\mathfrak{f}(\psi_1),\mathfrak{f}(\psi_2)))^{\times}$ (and indeed this equality holds for the associated Hecke characters). In particular, for quadratic $L$ and $K$ as above, we have $\varepsilon_{L\cdot K} = \varepsilon_L\varepsilon_K$.

Given a normalized newform $f \in S_k(\Gamma_1(N))$, let $E_f$ denote the finite extension of $\mathbb{Q}$ generated by its Fourier coefficients. Given a Hecke character $\chi$, let $E_{\chi}$ denote the finite extension of $\mathbb{Q}$ generated by its values. Put $E_{f,\chi} = E_fE_{\chi}$.

For $s \in \mathbb{C}$, let $\Re(s)$ and $\Im(s)$ denote the real and imaginary parts of $s$, respectively. Throughout, we let $\mathcal{H}^+ := \{s\in\mathbb{C} : \Im(s) > 0\}$.

\section{Introduction}

\label{outline} The study of Heegner points has provided some of the greatest insights into the Birch and Swinnerton-Dyer Conjecture. In particular, given an elliptic curve $E/\mathbb{Q}$ and an imaginary quadratic field $K$ satisfying a suitable \emph{Heegner hypothesis} with respect to $E$, the non-triviality of the Heegner point $P_E(K) \in E(K)\otimes \mathbb{Q}$ introduced in Section \ref{twoformulas} implies, via the descent argument of Kolyvagin \cite{Kolyvagin}, that $\text{rank}_{\mathbb{Z}}E(K) = 1$. Our main result establishes, for elliptic curves $E$ with $E[p]$ a reducible Galois representation at a good prime $p$, a congruence mod $p$ between the formal logarithm of the Heegner point and a special value of the Katz $p$-adic $L$-function with certain Euler factors removed. Using Gross's factorization of the Katz $p$-adic $L$-function on the cyclotomic line (\cite{Gross}), we can then find an explicit congruence between the formal logarithm of the Heegner point and a quantity involving certain Bernoulli numbers and (inverses of) Euler factors at primes of bad reduction. Thus we derive an explicit criterion (involving Bernoulli numbers) for the non-triviality of $P_{E}(K) \in E(K)\otimes\mathbb{Q}$ in certain families of $E$.

Our results also have higher-dimensional applications, namely in computing with algebraic cycle classes in Chow groups of motives attached to newforms. Suppose $N$ is a positive integer and $K$ is an imaginary quadratic field satisfying the Heegner hypothesis with respect to $N$, $f = \sum_{n\ge 1} a_nq^n$ is a weight $k \ge 2$ level $N$ normalized newform whose Fourier coefficients are congruent to those of an Eisenstein series outside the constant term modulo some prime ideal above $p$, and $\chi$ is an algebraic Hecke character over $K$ central critical with respect to $f$. We consider the Rankin-Selberg motive $M_{f,\chi^{-1}} := M_{f_{/K}}\otimes M_{\chi^{-1}}$, defined over $K$, associated with $(f,\chi)$, where $M_f$ is the motive associated with $f$ (see \cite{Scholl}), $M_{f_{/K}}$ is its base change to $K$, and $M_{\chi}$ the motive associated with $\chi$ (see \cite{Deninger}). For convenience of notation, we will formulate our discussion with respect to $M_{(f,\chi^{-1})_{/F}} = M_{f_{/F}}\otimes M_{(\chi^{-1})_{/F}}$, where $F/K$ is some large enough abelian extension in ``situtation $(S)$'' as described below, and $f_{/F}$ and $\chi_{/F}$ are the corresponding base extensions to $F$. In fact, one may recover $M_{\chi^{-1}}$ as a direct factor of the Grothendieck restriction of $M_{(\chi^{-1})_{/F}}$, see \cite{Deninger} or \cite{Geisser}. Indeed, we have a factorization $M_{(f,\chi^{-1})_{/F}} = \bigoplus_{\chi_0 : \Gal(F/K) \rightarrow \mathbb{C}^{\times}}M_{f,\chi^{-1}\chi_0}$. 

Under certain conditions guaranteeing the vanishing of the central value of $L(\pi_f\times\pi_{\chi^{-1}},s) = L(M_{f,\chi^{-1}},s)$ (and thus also that of $L((\pi_f\times \pi_{\chi^{-1}})_{/F},s) = L(M_{(f,\chi^{-1})_{/F}},s)$) at its central point, we seek to construct algebraic cycle classes in the Chow group associated with $M_{(f,\chi^{-1})_{/F}}$, thus providing evidence for the Beilinson-Bloch conjecture for Chow motives associated with $(f,\chi)_{/F}$. Natural candidates for representatives of such classes are \emph{generalized Heegner cycles} (in the sense of \cite{BDP2}), generated by algebraic cycles on a suitable \emph{generalized Kuga-Sato variety} which arise from graphs of isogenies between products of $CM$ curves. Under the above hypotheses, our main theorem provides a criterion for a generalized Heegner cycle class associated with $(f,\chi)$ with $\chi$ in a certain infinity type range to be non-trivial, namely, that an associated special value of the Katz $p$-adic $L$-function (with certain Euler factors removed) is not (too) divisible by $p$. One may thus use special value formulas of $p$-adic $L$-functions in order to get explicit $p$ non-divisibility criteria for the non-triviality of Heegner cycle classes. In our main corollary, we explicitly address the case when Gross's factorization theorem can be applied, which corresponds precisely with the case when the relevant generalized Heegner cycle arises from a \emph{classical} Heegner cycle (cf. Remark \ref{classicalHeegnerremark}).

\subsection{Chow motives}\label{Chowmotives} To make our discussion more precise, let us briefly recall some definitions pertaining to Chow motives. Let $X$ be a nonsingular variety over a number field $F$. An \emph{algebraic cycle} of $X$ is a formal sum of subvarieties of $X$. We let $\operatorname{CH}^j(X)$ denote the group generated by algebraic cycles of $X$ of codimension $j$ modulo rational equivalence. Call it the \emph{$j$th Chow group of $X$}, and $\operatorname{CH}(X) := \bigoplus_{0\le j \le \dim X}\operatorname{CH}^j(X)$ the \emph{Chow group of $X$}. For a varieties $X,Y$ over $F$, we call $\Corr^d(X,Y) := \operatorname{CH}^{\dim Y+d}(X\times_F Y)$ the \emph{group of (degree $d$) correspondences from $X$ to $Y$}. The group $\Corr(X,X) := \bigoplus_{0\le d\le \dim X}\Corr^d(X,X)$ has a ring structure defined by \emph{composition of correspondences}: given $g\in \Corr^{d_1}(X,Y)$ and $h\in \Corr^{d_2}(Y,Z)$, we define
$$h\circ g := \pi_{13,*}(\pi_{12}^*(g).\pi_{23}^*(h)) \in \Corr^{d_1+d_2}(X,Z)$$
where $\pi_{12}, \pi _{13}, \pi_{23} : X\times_F Y \times_F Z \rightarrow X\times_F Y, X\times_F Z, Y \times_F Z$ are the canonical projections, and ``$.$'' denotes the Chow intersection pairing. We call $\Corr(X,X)$ the \emph{ring of correspondences on $X$}. 
For any number field $E$, let $\Corr^d(X,Y)_E := \Corr^d(X,Y)\otimes_{\mathbb{Z}} E$. A \emph{Chow motive over $F$ with coefficients in $E$} is a triple $(X,e,m)$ consisting of a nonsingular variety $X$ over $F$, an idempotent $e\in \Corr^0(X,X)_E$, and an integer $m$. Define the \emph{category of Chow motives} $\mathcal{M}_{F,E}$ whose objects consist of Chow motives over $F$ with coefficients in $E$, and with morphisms given by
$$\Hom_{\mathcal{M}_{F,E}}((X,e,m),(Y,f,n)) := f\circ \Corr^{n - m}(X,Y)_{\mathbb{Q}}\circ e.$$
Define the \emph{category of Grothendieck motives} $\mathcal{M}_{F,E}^{hom}$ with objects being cycles of $X$ modulo (motivic) homological equivalence, and with morphisms defined in the same way as above. Note that since rational equivalence is stronger than homological equivalence, there is a natural functor
$$\mathcal{M}_{F,E} \rightarrow \mathcal{M}_{F,E}^{hom}.$$
Let $\operatorname{CH}^j(X)_0$ denote the subgroup of $\operatorname{CH}^j(X)$ consisting of null-homologous cycles. 

\subsection{The Beilinson-Bloch conjecture for Rankin-Selberg motives}\label{Chowmotives'}Now let $K$ be an imaginary quadratic field, let $H$ denote the Hilbert class field over $K$. Fix $f \in S_k(\Gamma_0(N),\varepsilon_f)$ where $k \ge 2$, let $r := k-2$. Let $\chi$ be a Hecke character over $K$ of infinity type $(r-j,j)$ which is central critical with respect to $f$, and where $0 \le j \le r$. Suppose also that every prime $\ell|N$ splits in $K$, i.e. $K$ satisfies the Heegner hypothesis with respect to $N$. Then we can choose an ideal $\mathfrak{N}$ of $\mathcal{O}_K$ with $\mathcal{O}_K/\mathfrak{N} = \mathbb{Z}/N$. Assume also that $\mathfrak{f}(\chi)|\mathfrak{N}$. 

The ambient motive in our setup will be $M := (X_r,\epsilon_X,0) \in \mathcal{M}_{F,E_{f,\chi}}$, where $F$, to be fixed later, is some large enough abelian extension of $K$ containing $H$, and $\epsilon_X$ is some projector in the ring of correspondences on $X$ which has induced actions on the various cohomological realizations of $M$ via the corresponding cycle class maps. (The projector $\epsilon_X$ essentially picks out the part of the cohomology of $X_r$ which comes from the Galois representations attached to pairs $(f,\chi)_{/F}$.) Here the underlying ($2r+1$-dimensional) variety is the \emph{generalized Kuga-Sato variety} $X_r := W_r \times A^r$, defined over $H$, where $A$ is a fixed $CM$ elliptic curve of $\mathcal{O}_K$-type (i.e. with $\End_H(A) = \mathcal{O}_K$) and $W_r := \mathcal{E}^r$ (the \emph{(classical) Kuga-Sato variety}), is the (canonical desingularization of the) $r$-fold fiber product of copies of the universal elliptic curve $\mathcal{E}$ with $\Gamma_1(N)$-level structure over the (compactified) modular curve $X_1(N) := \overline{Y_1(N)}$ (and thus is defined over $\mathbb{Q}$). This is well-defined in the case where the cusps of $Y_1(N)$ are \emph{regular} in the sense of \cite{DiamondShurman} Section 3.2, which holds in particular when $N > 4$. The fibers of $X_r \rightarrow X_1(N)$, outside of the cusps of $X_1(N)$, are of the form $E^r \times A^r$ where $E$ varies over elliptic curves.

The motive $M_f$ is given by the triple $(W_r,\epsilon_f,0)$ for some projector in the ring of correspondences on $W_r$ which picks out the $f$-isotypic component of the cohomology of $W_r$ under the Hecke action; in particular, this Chow motive is defined over $\mathbb{Q}$, with coefficients in $E_f$. For an extension $F/\mathbb{Q}$, we let $M_{f_{/F}}$ denote the base change to $F$. The definition of the motive $M_{(\chi^{-1})_{/F}}$ is slightly more subtle. Let $F/K$ be an abelian extension such that $\chi_A^{r-j}\overline{\chi}_A^j = \chi_{/F}$, where $\chi_A: \mathbb{A}_F^{\times} \rightarrow K^{\times}$ is the Hecke character  associated with the $CM$ elliptic curve $A$ having infinity type $(1,\ldots,1,0,\ldots,0)$ (with the first $[F:K]$ places corresponding to the embeddings $F\hookrightarrow\overline{\mathbb{Q}}$ preserving our fixed embedding $K \hookrightarrow \overline{\mathbb{Q}}$, and the next $[F:K]$ places to their complex conjugates). We then say $F$ is in ``situation $(S)$". (Such an $F$ always exists, see \cite[Proposition 1.3.1]{Deninger}.) We have a motive $M_{(\chi^{-1})_{/F}} := (A^r,\epsilon_{\chi_{/F}},0) \in \mathcal{M}_{F,E_{\chi}}$ for some suitable projector $\epsilon_{\chi_{/F}}$ picking out the $\chi_{/F} = \chi_A^{r-j}\overline{\chi}_A^j$-isotypic component of the cohomology of $A^r$ under the Galois action. %One can show that $M_{\chi_A^{-1}}$ descends to a motive $M_{\chi^{-1}} \in \mathcal{M}_{K,E_{\chi}}$, and conversely every such motive $M_{\chi^{-1}}$ attached to a Hecke character $\chi$ over $K$ arises in this way. (The $M_{\chi_A^{-1}}$ from which $M_{\chi^{-1}}$ arises is unique, at least up to absolute Hodge realization and extension of coefficients, see \cite[Proposition 1.3.2]{Deninger} and the ensuing Remark therein.) Thus, for our purposes, it will suffice to state Beilinson-Bloch in terms of $M_{\chi_A^{-1}}$. As an abuse of notation, we will henceforth put $\epsilon_{\chi} = \epsilon_{\chi_A}$ and $M_{\chi^{-1}} = M_{\chi_A^{-1}}$.

Let $H_{\mathfrak{N}}$ be the field over which the individual points of $A[\mathfrak{N}]$ are defined. We will henceforth take and fix an abelian extension $F/K$ large enough so that it contains $H_{\mathfrak{N}}$ and is in situation $(S)$. (Note this is possible since $H_{\mathfrak{N}}$ is an abelian extension of $K$.) The Rankin-Selberg motive $M_{(f,\chi^{-1})_{/F}} := M_{f_{/F}}\otimes M_{(\chi^{-1})_{/F}} = (W_r,\epsilon_{f_{/F}},0)\otimes(A^r,\epsilon_{\chi_{/F}},0) = (X_r,\epsilon_{(f,\chi)_{/F}},0)$ is a submotive of $M$ (in fact, it is the $(f,\chi)_{/F}$-isotypic component $\epsilon_{(f,\chi)_{/F}}M$), and the Beilinson-Bloch conjecture predicts that
\begin{align*}\dim_{E_{f,\chi}} \epsilon_{(f,\chi)_{/F}}\operatorname{CH}^{r+1}(X_r)_{0,E_{f,\chi}}(F) 
&= \ord_{s = r+1}L(H_{\text{\'{e}t}}^{2r+1}(M_{(f,\chi^{-1})_{/F}}),s)\\
&=\ord_{s = r+1}L(\epsilon_{(f,\chi)_{/F}}H_{\text{\'{e}t}}^{2r+1}(M),s) \\
&= \ord_{s = r+1}L((V_{f_{/K}}\otimes \chi^{-1})_{/F}(r+1),s)\\
&=\ord_{s = 0}L((V_{f_{/K}}\otimes \chi^{-1})_{/F},s) \\
&= \ord_{s=0}L(V_{f_{/K}}\otimes \chi^{-1} \otimes \Ind_F^K 1,s)\\
&= \ord_{s=0}\prod_{\chi_0 : \Gal(F/K) \rightarrow \mathbb{C}^{\times}}L(V_{f_{/K}}\otimes (\chi^{-1}\chi_0),s) \\
&= \ord_{s = \frac{1}{2}}\prod_{\chi_0 : \Gal(F/K) \rightarrow \mathbb{C}^{\times}}L(\pi_f\times\pi_{\chi^{-1}\chi_0},s).
\end{align*}
Here $\epsilon_{(f,\chi)_{/F}}$ is the projector corresponding to the $(f,\chi)_{/F}$-isotypic component of the cohomology of $X_r$ under the Hecke and Galois actions, $V_f$ is the $2$-dimensional (unique semisimple) $\Gal(\overline{\mathbb{Q}}/\mathbb{Q})$-representation associated with $f$, $V_{f_{/K}}$ is its restriction to $\Gal(\overline{K}/K)$, and $\pi_f, \pi_{\chi^{-1}}$ are the automorphic representations (over $K$) associated with $f$ and $\theta_{\chi^{-1}}$ under the unitary normalizations. 

Under the Heegner hypothesis, we have that $L(\pi_f\times\pi_{\chi^{-1}},\frac{1}{2}) = 0$, and so the Beilinson-Bloch conjecture predicts that $\dim_{E_{f,\chi}} \epsilon_{(f,\chi)_{/F}}\operatorname{CH}^{r+1}(X_r)_{0,E_{f,\chi}}(F) \ge 1$. Thus we should be able to find a null-homologous cycle class with non-vanishing $\epsilon_{(f,\chi)_{/F}}$-isotypic component in the above Chow group. To this end, we can use the $p$-adic Waldspurger formula of Bertolini-Darmon-Prasanna (\cite{BDP2}) to consider images under the $p$-adic Abel-Jacobi map of \emph{generalized Heegner cycles}. Generalized Heegner cycles are, essentially, generated by graphs $\Gamma_{\varphi}$ of isogenies $\varphi : A \rightarrow A'$ between $CM$ elliptic curves with $\Gamma_1(N)$-level structure (i.e., such that $\ker(\varphi)\cap A[\mathfrak{N}] = 0$). We can then view the graph $\Gamma_{\varphi}$ inside $X_r$:
$$\Gamma_{\varphi} \subset A^r\times (A')^r \subset A^r \times W_r = X_r.$$ 

We then define the associated \emph{generalized Heegner cycle} associated with $(\varphi,A)$ as 
$$\Delta_{\varphi} := \epsilon_X\Gamma_{\varphi} \in \operatorname{CH}^{r+1}(X_r)_{0,E_{f,\chi}}(F).$$
(See Section \ref{isogenies'} for details and a precise definition.) It is also a fact that $\epsilon_XH^{2r+2}(X_r,\mathbb{Q}) = 0$, and so $\Delta_{\varphi}$ is indeed null-homologous. Generalized Heegner cycles are those cycles in $\operatorname{CH}^{r+1}(X_r)_{0,E_{f,\chi}}(F)$ generated by formal $E_{f,\chi}$-linear combinations of $\Delta_{\varphi}$'s for varying $\varphi$. The space $\operatorname{CH}(X_r)_{0,E_{f,\chi}}(F)$ has an isotypic decomposition with respect to the action of the Hecke algebra (which is indexed by cuspidal eigenforms) and that of $\Gal(\overline{F}/F)$ (which is indexed by Hecke characters).

Let $F_p$ denote the $p$-adic completion of $F$ determined by our fixed embedding $i_p : \overline{\mathbb{Q}} \hookrightarrow \mathbb{C}_p$. We can now apply the $p$-adic Abel-Jacobi map over $F_p$ to $\Delta_{\varphi}$, which is a map
\begin{align*}AJ_{F_p}: \operatorname{CH}^{r+1}(X_r)_{0,\mathbb{Q}}(F_p) &\rightarrow (\Fil^{r+1}\epsilon_X H_{dR}^{2r+1}(X_r/F_p))^{\vee}\\
&\cong (S_k(\Gamma_1(N),F_p)\otimes Sym^rH_{dR}^1(A/F_p))^{\vee}
\end{align*}
where the superscript $\vee$ denotes the $F_p$-linear dual, and $\Fil^j$ denotes the $j^{th}$ step of the Hodge filtration. A basis of $\epsilon_XH_{dR}^{2r+1}(X_r/F)$ is given by elements of the form $\omega_f \wedge \omega_A^j\eta_A^{r-j}$ for $0 \le j \le r$, where $\omega_f \in \Fil^{r+1}\epsilon_XH_{dR}^{r+1}(W_r/F) \cong S_k(\Gamma_1(N),F)$ is associated with $f \in S_k(\Gamma_1(N),F)$, and the $\omega_A^j\eta_A^{r-j}$ form a basis of $\Fil^{r+1}\epsilon_X H_{dR}^r(A^r/F)\cong Sym^rH_{dR}^1(A/F)$. (Here $\omega_A\in \Omega^1(A/F) = H_{dR}^{1,0}(A/F)$ is a nowhere vanishing differential on $A$, $\eta_A \in H_{dR}^{0,1}(A/F)$ such that $\langle \omega_A,\eta_A \rangle = 1$ under the cup product pairing on de Rham cohomology.) 

Bertolini-Darmon-Prasanna's $p$-adic Waldspurger formula relates a special value of an anticyclotomic $p$-adic $L$-function $\mathcal{L}_p(f,\chi)$ to the Abel-Jacobi image of a certain generalized Heegner cycle $\Delta$, evaluated at the basis element $\omega_f\wedge\omega_A^j\eta_A^{r-j}$. The dual basis element of this latter element is in the $(f,\chi)_{/F}$-isotypic component of $(\Fil^{r+1}\epsilon_XH_{dR}^{2r+1}(X_r/F))^{\vee}$, and the idempotent $\epsilon_{(f,\chi)_{/F}}$ induces the projection onto this $(f,\chi)_{/F}$-isotypic component. By functoriality of projectors, the non-vanishing of $AJ_{F_p}(\Delta)$ at $\omega_f\wedge\omega_A^j\eta_A^{r-j}$ shows the non-triviality of $\epsilon_{(f,\chi)_{/F}}\Delta$. Hence showing the non-vanishing of a special value of Bertolini-Darmon-Prasanna's $p$-adic $L$-function verifies one consequence of the Beilinson-Bloch conjecture for the motive $M_{(f,\chi^{-1})_{/F}} = (X_r, \epsilon_{(f,\chi)_{/F}},0)$. 

More precisely, non-vanishing of the anticyclotomic $p$-adic $L$ function $\mathcal{L}_p(f,\chi)$ and functoriality of the action of correspondences on $X_r$ imply
\begin{align*}0&\neq AJ_{F_p}(\Delta)(\omega_f\wedge \omega_A^j\eta_A^{r-j}) \\
&= AJ_{F_p}(\Delta)(\epsilon_{(f,\chi)_{/F}}(\omega_f\wedge \omega_A^j\eta_A^{r-j})) \\
&= AJ_{F_p}(\epsilon_{(f,\chi)_{/F}}\Delta)(\omega_f\wedge \omega_A^j\eta_A^{r-j})\\
\end{align*}
and thus that $0 \neq \epsilon_{(f,\chi)_{/F}}\Delta \in \epsilon_{(f,\chi)_{/F}}\operatorname{CH}^{r+1}(X_r)_{0,E_{f,\chi}}(F)$. 

The classical situation $r = 0$ (i.e., $k = 2$) is perhaps instructive. In this case, the underlying generalized Kuga-Sato variety is simply $X_0 = X_1(N)$ Moreover, $\chi : \Gal(F/K) \rightarrow \mathbb{C}^{\times}$ is of finite order, so that $\chi_{/F} = 1$. Then $\operatorname{CH}^1(X_1(N))_{0,E_{f,\chi}}(F) = J_1(N)_{E_{f,\chi}}(F)$. The $p$-adic Abel-Jacobi map
$$AJ_{F_p} : J_1(N)_{E_{f,\chi}}(F_p) \rightarrow S_k(\Gamma_1(N),F_p)^{\vee}_{E_{f,\chi}}$$
is given by $P \mapsto (f \mapsto \log_{\omega_f}P)$, where $\log_{\omega_f}$ is the formal logarithm at $p$ associated with the non-vanishing differential $\omega_f$. Note that $\text{Gal}(\overline{\mathbb{Q}}/\mathbb{Q})$ acts on modular forms (as one may see by letting $\text{Gal}(\overline{\mathbb{Q}}/\mathbb{Q})$ act on the coefficients of $q$-expansions), and that this action also preserves eigenspaces of Hecke operators. For a modular form $f$, let $[f]$ denote its $\text{Gal}(\overline{\mathbb{Q}}/\mathbb{Q})$-orbit, and let $S_k^{new}(\Gamma_1(N))$ denote the space of $\Gamma_1(N)$-cuspidal newforms of weight $k$. Given a normalized newform $f$, Eichler-Shimura theory attaches to $f$ an abelian variety $A_f$ which arises as a quotient of $J_1(N)$. Let $A_{f,E} := A_f\otimes_{E_f} E$ for any ring $E$ containing $E_f$. We have an isotypic decomposition of the component of $J_1(N)$ under the Hecke algebra action in the isogeny category:
$$J_1(N)^{new} \sim \bigoplus_{\{[f], f \in S_k^{new}(\Gamma_1(N))\}} A_f$$
which implies
$$J_1(N)_{E_{f,\chi}}^{new} = \bigoplus_{\{[f], f \in S_k^{new}(\Gamma_1(N))\}} A_{f,E_{f,\chi}}.$$

We thus have natural surjections (called modular parametrizations) $\Phi_f : J_1(N) \twoheadrightarrow A_f$, and hence maps $\Phi_f: J_1(N)_{E_{f,\chi}}(F) \twoheadrightarrow A_{f,E_{f,\chi}}(F)$. Now suppose $\chi_0: \Gal(F/K) \rightarrow \mathbb{C}^{\times}$. Let $\Phi_{\chi_0}: A_{f,E_{f,\chi}}(F) \twoheadrightarrow A_{f,E_{f,\chi}}(F)^{\chi_0}$ denote the projection onto the $\chi_0$-isotypic component under the action of $\Gal(F/K)$, and put $\Phi_{f,\chi_0} = \Phi_{\chi_0}\circ\Phi_f : J_1(N)_{E_{f,\chi}}(F) \twoheadrightarrow A_{f,E_{f,\chi}}(F)^{\chi_0}$. Let $\omega_{A_{f}}$ be the unique invariant differential on the abelian variety $A_{f}/F$ satisfying $\Phi_{f}^*\omega_{A_{f}} = \omega_f$. Generalized Heegner cycles are simply classical Heegner points, i.e. degree 0 divisors $P(\chi_0) \in J_1(N)_{E_{f,\chi}}(F)$ arising from $\chi_0$-twisted $\Gal(F/K)$-traces of CM points on $X_1(N)(F)$. Then $P_f(\chi_0):=\Phi_{f,\chi_0}(P(\chi_0)) \in A_{f,E_{f,\chi}}(F)^{\chi_0}$. Note that when $\chi_0 = 1$, $P_f(1) \in A_{f,E_{f,\chi}}(K)$. The non-vanishing of the $p$-adic Abel-Jacobi map implies
\begin{align*}0&\neq\log_{\omega_f}P(\chi_0) \\
&= \log_{\Phi_{f,\chi_0}^*\omega_{A_{f}}}P(\chi_0) \\
&= \log_{\omega_{A_f}}\Phi_{f,\chi_0}(P(\chi_0)) \\
&= \log_{\omega_{A_f}}P_f(\chi_0),
\end{align*}
which implies $P_f(\chi_0) \in A_{f,E_{f,\chi}}(F)^{\chi_0}$ is non-trivial. Suppose $\chi_0 = 1$. If $E_f = \mathbb{Q}$, then $A_f$ is an elliptic curve, and $P_f(1)\in A_{f,\mathbb{Q}}(K)$ is a non-trivial. By Kolyvagin's theorem (\cite{Kolyvagin}), we have $\text{rank}_{\mathbb{Z}}A_f(K) = 1$.

\subsection{Main Results}\label{mainresults}

In the case where $f\in S_k(\Gamma_0(N),\varepsilon_f)$ is a normalized newform with \emph{partial Eisenstein descent} (see Definition \ref{Eisensteindescent}), i.e. whose Fourier coefficients are congruent to those of an Eisenstein series outside the constant term, we will establish a congruence between Bertolini-Darmon-Prasanna's anticyclotomic $p$-adic $L$-function and the Katz two-variable $p$-adic $L$-function holding wherever the former $1$-variable $p$-adic $L$-function is defined. A more precise statement is given in our Main Theorem below. Fix the following assumptions.

\begin{assumptions}\label{assumptions}Let
\begin{enumerate} 
\item $f\in S_k(\Gamma_0(N),\varepsilon_f)$ be a normalized newform of weight $k \ge 2$ and level $N>4$,
\item $p\nmid N$ be a rational prime,
\item $K/\mathbb{Q}$ be an imaginary quadratic extension with odd fundamental discriminant $D_K<-4$ and $p$ split in $K$,
\item{(Heegner hypothesis)} for any prime $\ell | N$, $\ell$ is split in $K/\mathbb{Q}$.
\end{enumerate}
\end{assumptions}

Note assumption (4) guarantees the existence of an integral ideal $\mathfrak{N}|N$ of $\mathcal{O}_K$ with 
$$\mathcal{O}_K/\mathfrak{N} = \mathbb{Z}/N.$$
Fix such an $\mathfrak{N}$ and write $\mathfrak{N} = \prod_{\ell|N} v$ for primes $v$ of $\mathcal{O}_K$ with $v|\ell$. Henceforth, let $a_n(f)$ denote the $n^{th}$ Fourier coefficient of $f$ (i.e., the $n^{th}$ coefficient of the $q$-expansion at $\infty$); when $f$ is obvious from context, we will often abbreviate $a_n(f)$ to $a_n$. Moreover, when $f$ is defined over $\mathbb{Q}$ and thus associated with an elliptic curve $E/\mathbb{Q}$, we will sometimes write $a_n(E) = a_n(f)$.

\begin{remark}The assumption that $D_K < -4$ is odd is made for calculational convenience in Bertolini-Darmon-Prasanna \cite[Remark 4.7]{BDP2}, and can most likely be removed. See Liu-Zhang-Zhang's recent generalization of the Bertolini-Darmon-Prasanna $L$-function and their $p$-adic Waldspurger formula \cite{LiuZhangZhang}.
\end{remark}

Note that our fixed embedding $i_p : \overline{\mathbb{Q}} \hookrightarrow \mathbb{C}_p$ determines a prime ideal $\mathfrak{p}$ of $\mathcal{O}_K$ above $p$. Let $\mathcal{L}_p(f,\chi)$ denote the Bertolini-Darmon-Prasanna anticyclotomic $p$-adic $L$-function described in Section \ref{twoformulas}, and let $L_p(\chi,0)$ denote the Katz $p$-adic $L$-function described in Section \ref{Katzsection}. Let $F'$ and $H_{\mathfrak{N}}$ be the fields defined in Section \ref{twoformulas}, and let $\mathfrak{p}'$ be the prime ideal of $\mathcal{O}_{F'}$ above $\mathfrak{p}$ determined by $i_p$. 

\begin{theorem}[Main Theorem]\label{newmainthm} Let $(f,p,K)$ be as in Assumptions \ref{assumptions}. Suppose $f$ has Eisenstein descent of type $(\psi_1,\psi_2,N_+,N_-,N_0)$ mod $\mathfrak{m}$ (see Definition \ref{Eisensteindescent}) for some integral ideal $\mathfrak{m}$ of the ring of integers of a $p$-adic field $M$ containing $E_f$, $\psi_1$ and $\psi_2$ are Dirichlet characters over $\mathbb{Q}$ with $\psi_1\psi_2 = \varepsilon_f$. Let $\mathfrak{t}$ be an integral ideal of $\mathcal{O}_K$ with $\mathcal{O}_K/\mathfrak{t} = \mathbb{Z}/\mathfrak{f}(\psi_2)$ and $\mathfrak{t}|\mathfrak{N}$. For all $\chi\in\hat{\Sigma}_{cc}(\mathfrak{N})$ (see Section \ref{twoformulas} for a precise definition), we have the following congruence:
\begin{align*}&\mathcal{L}_p(f,\chi) \equiv \psi_1^{-1}(D_K)\left(\frac{\mathfrak{f}(\psi_2)^k\chi^{-1}(\overline{\mathfrak{t}})}{4\mathfrak{g}(\psi_2^{-1})(2\pi i)^{k+2j}}\cdot\Xi\cdot L_p(\psi_{1/K}\chi^{-1},0)\right)^2 \hspace{-.15cm}\mod \mathfrak{m}\mathcal{O}_{F_{\mathfrak{p}'}'M}
\end{align*}
where $k+2j \in \mathbb{Z}/(p-1)\times\mathbb{Z}_p$ is the signature of $\chi \in \hat{\Sigma}_{cc}(\mathfrak{N})$ (see Definition \ref{signaturedefinition}), and 
\begin{align*}\Xi =&\prod_{\ell|N_+}\left(1-(\psi_{2/K}\chi^{-1})(\overline{v})\ell^{k-1}\right)\prod_{\ell|N_-}(1-(\psi_{1/K}\chi^{-1})(\overline{v}))\\
\cdot&\prod_{\ell|N_0}(1-(\psi_{2/K}\chi^{-1})(\overline{v})\ell^{k-1})\left(1-(\psi_{1/K}\chi^{-1})(\overline{v})\right).
\end{align*}
\end{theorem}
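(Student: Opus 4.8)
The plan is to establish the congruence by comparing the explicit formulas for both $p$-adic $L$-functions obtained by specializing to characters $\chi$ of critical infinity type, where both sides become algebraic (up to the relevant periods), and then reducing mod $\mathfrak{m}$. Concretely, I would first recall the interpolation formula for $\mathcal{L}_p(f,\chi)$ from Bertolini--Darmon--Prasanna: for $\chi$ in the appropriate range inside $\hat\Sigma_{cc}(\mathfrak{N})$, $\mathcal{L}_p(f,\chi)$ interpolates (a square root of) the central value $L(f,\chi^{-1},0)$ divided by a CM period, with explicit local fudge factors at $p$ and at the primes dividing $N$. Next I would recall the Katz interpolation formula for $L_p(\psi_{1/K}\chi^{-1},0)$, which interpolates Hecke $L$-values $L(\psi_{1/K}\chi^{-1},0)$ again divided by a CM period, with its own explicit archimedean and $p$-adic factors and Gauss sums. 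The key input making the two comparable is the \emph{Eisenstein descent} hypothesis: because the non-constant Fourier coefficients of $f$ are congruent mod $\mathfrak{m}$ to those of the Eisenstein series $E_{\psi_1,\psi_2}$, the Rankin--Selberg $L$-function $L(f,\chi^{-1},s)$ is congruent mod $\mathfrak{m}$ to $L(\psi_{1/K}\chi^{-1},s)\cdot L(\psi_{2/K}\chi^{-1},s)$ (the factorization of the Rankin--Selberg convolution of an Eisenstein series with a theta series), so the square appearing on the right-hand side is forced by the fact that $\mathcal{L}_p$ is itself essentially a square root of a central $L$-value.

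The steps, in order, would be: (1) write out $\mathcal{L}_p(f,\chi)^2$ via its defining integral/interpolation property, identifying the CM period $\Omega_p$, the $p$-adic multiplier, and the product of local terms at $\ell\mid N$; (2) use the Eisenstein-descent congruence $a_n(f)\equiv a_n(E_{\psi_1,\psi_2})\bmod\mathfrak{m}$ for $n\ge 1$ to replace the $f$-part of the relevant $p$-adic modular form (a $p$-adic Eisenstein measure / theta series pairing, as set up in Section~\ref{twoformulas}) by the corresponding Eisenstein-series quantity mod $\mathfrak{m}$; this turns $\mathcal{L}_p(f,\chi)$ into a product of two factors each of which is, mod $\mathfrak{m}$, a Katz-type $p$-adic $L$-value — one attached to $\psi_{1/K}\chi^{-1}$ and one to $\psi_{2/K}\chi^{-1}$; (3) identify the $\psi_2$-factor: since $\psi_2$ has conductor $\mathfrak{f}(\psi_2)$ with $\mathfrak{t}\mid\mathfrak{N}$ and $\mathcal{O}_K/\mathfrak{t}=\mathbb{Z}/\mathfrak{f}(\psi_2)$, this factor contributes exactly the ``elementary'' term $\mathfrak{f}(\psi_2)^k\chi^{-1}(\overline{\mathfrak{t}})/(4\mathfrak{g}(\psi_2^{-1})(2\pi i)^{k+2j})$ together with part of $\Xi$ — essentially because $L(\psi_{2/K}\chi^{-1},0)$ for this particular conductor reduces to a finite Gauss-sum expression rather than a genuine transcendental $L$-value, the degeneration handled in the $N_0,N_+$ Euler-factor bookkeeping; (4) collect the remaining factor as $L_p(\psi_{1/K}\chi^{-1},0)$, matching periods (here one uses that the CM period in the BDP formula and in the Katz formula agree up to a unit mod $\mathfrak{m}$ because $A$ is the same fixed CM curve) and matching the $p$-adic multipliers; (5) track the local Euler factors: the interpolation factors of $\mathcal{L}_p$ at primes $\ell\mid N$ (which depend on $a_\ell(f)$, hence mod $\mathfrak{m}$ on $\psi_1(\ell),\psi_2(\ell),\ell^{k-1}$ according to whether $\ell\in N_+$, $N_-$, or $N_0$) match precisely the three products defining $\Xi$; (6) finally pin down the leading constant $\psi_1^{-1}(D_K)$ from the discriminant-twist that appears when one restricts the Dirichlet character $\psi_1$ to $K$ and compares the two normalizations of theta-series/Eisenstein periods — this is where the oddness of $D_K$ and $D_K<-4$ enter (so $\mathcal{O}_K^\times=\{\pm1\}$ and the quadratic Gauss sum $\mathfrak{g}(\varepsilon_K)=\sqrt{D_K}$ behaves cleanly).

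The main obstacle I expect is step (2)–(3): making the replacement of $f$ by the Eisenstein series \emph{rigorous at the level of $p$-adic $L$-functions}, i.e. showing that the congruence of $q$-expansions propagates through the BDP construction (which involves $p$-depletion, evaluation at CM points on the Igusa tower, and a $p$-adic limit) to a congruence of the resulting $p$-adic $L$-values mod $\mathfrak{m}$. This requires that the relevant CM points and the measure-theoretic packaging are $\mathfrak{m}$-integral, that the period $\Omega_p$ is an $\mathfrak{m}$-adic unit, and that the Eisenstein series $E_{\psi_1,\psi_2}$, though not cuspidal, can be fed into the same $p$-adic interpolation machine — one must check that its constant term does not obstruct the construction (it is killed by $p$-depletion) and that the resulting object is genuinely the Katz $p$-adic $L$-function, using Katz's own Eisenstein-measure description. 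Sorting out the precise power of $2$, the Gauss sum $\mathfrak{g}(\psi_2^{-1})$ versus $\prod_\ell\mathfrak{g}((\psi_2)_\ell)$, and the $(2\pi i)^{k+2j}$ normalization — reconciling the archimedean normalizations in \cite{BDP2} and \cite{Katz} — is the fiddly but ultimately routine bookkeeping that produces the exact constant stated. The squaring on the right and the appearance of only $\psi_1$ (not $\psi_2$) in the final Katz $L$-value is the conceptual heart: it reflects that $\mathcal{L}_p(f,\chi)$ is a square root of a product $L(\psi_{1/K}\chi^{-1},\cdot)L(\psi_{2/K}\chi^{-1},\cdot)$ in which the $\psi_2$-factor has degenerated to an explicit algebraic quantity, so that the ``square root of the product'' becomes ``(explicit algebraic factor) $\times$ (square root of the $\psi_1$-$L$-value squared)'' $=$ ``(explicit factor) $\times L_p(\psi_{1/K}\chi^{-1},0)$'' after squaring both sides.
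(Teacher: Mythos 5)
Your high-level plan matches the paper's: use the Eisenstein-descent congruence, push it through the CM-point evaluation via the $q$-expansion principle, and match normalizations against the Katz interpolation formula. You correctly identify the main technical hurdle as propagating a $q$-expansion congruence to a congruence of $p$-adic $L$-values (handled in the paper by the $q$-expansion principle at the ordinary CM triples $\mathfrak{a}\star(A_0,t_0,\omega_{can})$), and you correctly note that $p$-depletion kills the Eisenstein constant term, so that partial descent suffices.

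However, steps (2)--(3) and your closing paragraph contain a genuine misconception. You propose to factor $L(f,\chi^{-1},s)$ into $L(\psi_{1/K}\chi^{-1},s)L(\psi_{2/K}\chi^{-1},s)$ and then argue that the ``$\psi_2$-factor degenerates to a finite Gauss-sum expression rather than a genuine transcendental $L$-value.'' That is false --- $L(\psi_{2/K}\chi^{-1},0)$ is a genuine Hecke $L$-value, not an elementary quantity --- and the paper never invokes this Rankin--Selberg factorization at all. Since $\mathcal{L}_p(f,\chi)$ is \emph{defined} (Theorem \ref{BDPinterpolationproperty}) as the square of a twisted CM trace of $(\theta^jf)^\flat$, Eisenstein descent reduces the theorem to computing the twisted CM trace of the depleted, stabilized Eisenstein series. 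The heart of the proof is a direct lattice-sum computation (Propositions \ref{Lvalueproposition} and \ref{ModifiedLvalueproposition}) showing this trace equals an explicit constant times the \emph{single} Hecke $L$-value $L(\psi_{1/K}\chi^{-1},0)$. The Gauss sum $\mathfrak{g}(\psi_2^{-1})$, the factor $\mathfrak{f}(\psi_2)^k\chi^{-1}(\overline{\mathfrak{t}})$, and $\psi_1^{-1}(D_K)$ all arise from the non-holomorphic lattice-sum expansion of $\partial^jE_k^{\psi_1,\psi_2}$ and the arithmetic of the CM points, not from any degeneration. The square in the theorem comes solely from the square in the definition of $\mathcal{L}_p$, and the Euler factors $\Xi$ come from the action of the stabilization operators $(\ell^+),(\ell^-),(\ell^0)$ on twisted traces (with $\flat$ being the $(p^2)^0$-stabilization), not from interpolation-factor bookkeeping. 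If you did pursue the factorization route, you would additionally need a functional equation relating $L(\psi_{2/K}\chi^{-1},\cdot)$ to $L(\psi_{1/K}\chi^{-1},\cdot)$ --- a substantial extra step that the direct computation sidesteps.
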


Invoking Bertolini-Darmon-Prasanna's $p$-adic Waldspurger formula (Theorem \ref{AbelJacobicongruence'}, see \cite[Theorem 5.12]{BDP2}), we can identify the left hand side of Theorem \ref{newmainthm} with a $p$-adic Abel-Jacobi image of a generalized Heegner cycle, and thus derive the following congruence. 

\begin{corollary}\label{newcorollary}Suppose $\chi \in \Sigma_{cc}^{(1)}(\mathfrak{N})$ (see Section \ref{twoformulas} for a precise definition) with infinity type $(k-1-j,1+j)$ where $0 \le j \le k-2$. In the setting of Theorem \ref{newmainthm}, we have, for $0 \le j \le k-2$,
\begin{align*}&\Omega_p^{2(k-2-2j)}\left(\frac{1-\chi^{-1}(\overline{\mathfrak{p}})a_p(f)+\chi^{-2}(\overline{\mathfrak{p}})\varepsilon_f(p)p^{k-1}}{\Gamma(j+1)}\right)^2\left(AJ_{F_{\mathfrak{p}'}'}(\Delta(\chi\mathbb{N}_K))(\omega_f\wedge \omega_A^j\eta_A^{k-2-j})\right)^2\\
&\equiv\psi_1^{-1}(D_K)\left(\frac{\mathfrak{f}(\psi_2)^k\chi^{-1}(\overline{\mathfrak{t}})}{4\mathfrak{g}(\psi_2^{-1})(2\pi i)^{k-2-2j}}\cdot\Xi\cdot L_p(\psi_{1/K}\chi^{-1},0)\right)^2 \mod \mathfrak{m}\mathcal{O}_{F_{\mathfrak{p}'}'M}.
\end{align*}
Here $\Omega_p$ is the $p$-adic period defined in Section \ref{twoformulas}, and
$$\Delta(\chi\mathbb{N}_K) := \sum_{[\mathfrak{a}] \in \mathcal{C}\ell(\mathcal{O}_K)}(\chi\mathbb{N}_K)^{-1}(\mathfrak{a})\cdot \Delta_{\varphi_{\mathfrak{a}}\varphi_0}\in \operatorname{CH}^{k-1}(X_{k-2})_{0,E_{\chi}}(H_{\mathfrak{N}})$$
where $\Delta_{\varphi_{\mathfrak{a}}\varphi_0}$ is defined as in Section \ref{twoformulas}.
\end{corollary}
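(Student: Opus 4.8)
The plan is to derive Corollary \ref{newcorollary} directly from the Main Theorem \ref{newmainthm} by substituting the Bertolini--Darmon--Prasanna $p$-adic Waldspurger formula (Theorem \ref{AbelJacobicongruence'}) for the left-hand side $\mathcal{L}_p(f,\chi)$. First I would restrict attention to $\chi \in \Sigma_{cc}^{(1)}(\mathfrak{N})$ with infinity type $(k-1-j,1+j)$, $0 \le j \le k-2$, and observe that this is the subset of $\hat\Sigma_{cc}(\mathfrak{N})$ on which the interpolation range of the anticyclotomic $p$-adic $L$-function is ``classical'', i.e.\ where $\mathcal{L}_p(f,\chi)$ can be expressed via the $p$-adic Abel--Jacobi image of the generalized Heegner cycle $\Delta(\chi\mathbb{N}_K)$. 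Concretely, the $p$-adic Waldspurger formula of \cite{BDP2} (reproduced as Theorem \ref{AbelJacobicongruence'}) states an identity of the shape
\begin{align*}
\mathcal{L}_p(f,\chi) = \Omega_p^{k-2-2j}\,\frac{1-\chi^{-1}(\overline{\mathfrak{p}})a_p(f)+\chi^{-2}(\overline{\mathfrak{p}})\varepsilon_f(p)p^{k-1}}{\Gamma(j+1)}\cdot AJ_{F_{\mathfrak{p}'}'}(\Delta(\chi\mathbb{N}_K))(\omega_f\wedge\omega_A^j\eta_A^{k-2-j}),
\end{align*}
where the signature $k+2j$ of Theorem \ref{newmainthm} specializes, for $\chi$ of this infinity type, to the integer $k-2-2j$ appearing in the period exponent. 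I would take care to match the normalization of $\Omega_p$, the Euler-type factor at $\mathfrak{p}$, and the $\Gamma$-factor exactly as they appear in Section \ref{twoformulas}, since the only content of the corollary beyond the Main Theorem is this bookkeeping.

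Having made this substitution, the next step is purely formal: square both sides of the Waldspurger identity (the Main Theorem is stated for $\mathcal{L}_p(f,\chi)$, and the right-hand side there is already a perfect square, so squaring the Waldspurger formula is exactly what is needed to line the two up), and then replace $\mathcal{L}_p(f,\chi)^2$ in the Main Theorem congruence by the squared Abel--Jacobi expression. This yields
\begin{align*}
\Omega_p^{2(k-2-2j)}\left(\frac{1-\chi^{-1}(\overline{\mathfrak{p}})a_p(f)+\chi^{-2}(\overline{\mathfrak{p}})\varepsilon_f(p)p^{k-1}}{\Gamma(j+1)}\right)^2\left(AJ_{F_{\mathfrak{p}'}'}(\Delta(\chi\mathbb{N}_K))(\omega_f\wedge\omega_A^j\eta_A^{k-2-j})\right)^2
\end{align*}
on the left, congruent mod $\mathfrak{m}\mathcal{O}_{F_{\mathfrak{p}'}'M}$ to the same right-hand side $\psi_1^{-1}(D_K)\big(\tfrac{\mathfrak{f}(\psi_2)^k\chi^{-1}(\overline{\mathfrak{t}})}{4\mathfrak{g}(\psi_2^{-1})(2\pi i)^{k-2-2j}}\cdot\Xi\cdot L_p(\psi_{1/K}\chi^{-1},0)\big)^2$ as in the Main Theorem, with $k+2j$ replaced by $k-2-2j$ in the power of $2\pi i$ because of the specialized infinity type. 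The only subtlety is to check that the domain of validity is respected: the Main Theorem holds for all $\chi\in\hat\Sigma_{cc}(\mathfrak{N})$, and $\Sigma_{cc}^{(1)}(\mathfrak{N})$ (with the stated infinity type) is contained in it, so no extension of scalars or analytic continuation is required — we are simply evaluating a congruence of $p$-adic analytic functions at a point in the interpolation range.

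I expect the main obstacle to be entirely notational rather than conceptual: precisely reconciling the various normalizations — the $p$-adic period $\Omega_p$ versus the complex period implicit in $\mathcal{L}_p$, the Gauss sum conventions $\mathfrak{g}(\psi_2^{-1})$ versus $\prod_\ell\mathfrak{g}(\psi_{2,\ell})$, the relation between the signature $k+2j$ of a general $\chi\in\hat\Sigma_{cc}(\mathfrak{N})$ and the integer $k-2-2j$ attached to infinity type $(k-1-j,1+j)$, and the precise form of the interpolation factor at $\mathfrak{p}$ in Theorem \ref{AbelJacobicongruence'}. Once these are pinned down (all of which is done in Section \ref{twoformulas} and \cite[Theorem 5.12]{BDP2}), the derivation is immediate. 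I would close by noting that $\Delta(\chi\mathbb{N}_K) \in \operatorname{CH}^{k-1}(X_{k-2})_{0,E_\chi}(H_{\mathfrak{N}})$ is the explicit class-group-twisted combination of generalized Heegner cycles $\Delta_{\varphi_{\mathfrak{a}}\varphi_0}$ recalled in Section \ref{twoformulas}, so that the congruence is one between a genuine algebraic-cycle invariant and a special value of the Katz $p$-adic $L$-function, as advertised.
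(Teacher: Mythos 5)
Your proposal is correct and matches the paper's approach exactly: the paper presents Corollary \ref{newcorollary} as an immediate consequence of substituting the BDP $p$-adic Waldspurger formula (Theorem \ref{AbelJacobicongruence'}) into the Main Theorem congruence (Theorem \ref{newmainthm}), after observing that a character $\chi\in\Sigma_{cc}^{(1)}(\mathfrak{N})$ of infinity type $(k-1-j,1+j)$ has signature $k-2-2j$, which is what replaces the $k+2j$ in the exponent of $2\pi i$. One small remark: the $p$-adic Waldspurger formula as stated in the paper is already in squared form (with $\Omega_p^{2(r-2j)}$ and the square of the class-group-twisted Abel--Jacobi sum, since $\mathcal{L}_p(f,\chi)$ is by construction a square), so the intermediate unsquared identity you write down is not literally Theorem \ref{AbelJacobicongruence'} and the ``square both sides'' step is superfluous --- a direct substitution of the squared formula into the Main Theorem suffices, but your final displayed congruence is correct.
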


\begin{remark}\label{BBremark}In light of the discussion in Section \ref{Chowmotives'}, Corollary \ref{newcorollary} can be viewed as providing a new method of verifying a consequence of the Beilinson-Bloch conjecture: namely, for $\chi \in \Sigma_{cc}^{(1)}(\mathfrak{N})$, we have $L(\pi_f\times\pi_{\chi^{-1}},\frac{1}{2}) = 0$, which implies $L(\pi_f\times\pi_{(\chi^{-1})_{/F}},\frac{1}{2}) = 0$ for any $F/H_{\mathfrak{N}}$ in situation $(S)$, and hence the Beilinson-Bloch conjecture predicts that $\dim_{E_{f,\chi}}\epsilon_{(f,\chi)_{/F}}\operatorname{CH}^{k-1}(X_{k-2})(F) \ge 1$. Corollary \ref{newcorollary} shows that in the setting where $f$ has Eisenstein descent mod $\mathfrak{m}$, if 
$$\Xi\cdot L_p(\psi_{1/K}\chi^{-1},0) \not\equiv 0 \mod \mathfrak{m}\mathcal{O}_{F_{\mathfrak{p}'}'},$$ 
then the generalized Heegner cycle $\Delta(\chi\mathbb{N}_K)$ provides the ostensible non-trivial algebraic cycle class. We are thus reduced to verifying the $p$-non-divisibility of the above special values of the Katz $p$-adic $L$-function. In Corollary \ref{newmaincorollary} we explicitly evaluate Katz $L$-values corresponding to \emph{classical} Heegner cycles (cf. Remark \ref{classicalHeegnerremark}) using a theorem of Gross in order to find explicit non-$p$-divisibility criteria. One might expect to be able to carry out studies of $p$-non-divisibility for more general Katz $L$-values in order to establish similar non-triviality criteria for more general Heegner cycles, but we do not do this here.
\end{remark}

Suppose that $\varepsilon_f = 1$ and $k$ is \emph{even}. Note the anticyclotomic line intersects with the cyclotomic line when $j = \frac{k}{2}-1$, in the notation of Corollary \ref{newcorollary}. For this $j$, we can take the particular character $\chi = \mathbb{N}_K^{-k/2} \in \Sigma_{cc}^{(1)}(\mathfrak{N})$. Applying Gross's factorization (Theorem \ref{Grosstheorem'}) of the Katz $L$-function on the cyclotomic line to the right hand side of Theorem \ref{newmainthm}, we get the following explicit congruences between special values of $p$-adic $L$-functions. First, fix the following notation for convenience.

\begin{definition}\label{evendefinition}Suppose we are given an imaginary quadratic field $K$. For any Dirichlet character $\psi$ over $\mathbb{Q}$, let 
\begin{align*}
\psi_0 := \begin{cases}\psi & \text{if} \; \psi \; \text{even},\\
\psi\varepsilon_K & \text{if} \; \psi \; \text{odd}.\\
\end{cases}
\end{align*}
\end{definition}

\begin{theorem}\label{newmaincorollary}In the setting of Theorem \ref{newmainthm}, specializing to $\varepsilon_f = 1$, $k$ even and $\psi_1 = \psi_2^{-1} = \psi$, we have the following congruence:
\begin{align*}&\left(\frac{p^{k/2}-a_p(f) + p^{k/2-1}}{p^{k/2}\Gamma(\frac{k}{2})}\right)^2AJ_{F_{\mathfrak{p}'}'}(\Delta(\mathbb{N}_K^{1-k/2}))^2(\omega_{f}\wedge\omega_A^{k/2-1}\eta_A^{k/2-1}) \\
&\equiv \frac{\Xi^2}{4}\\
&\cdot\begin{cases}\left(\frac{1}{k}(1-\psi^{-1}(p)p^{k/2-1})B_{\frac{k}{2},\psi_0^{-1}\varepsilon_K^{k/2}}\cdot L_p(\psi_0(\varepsilon_K\omega)^{1-k/2},\frac{k}{2})\right)^2 & \text{if}\;\psi_0(\varepsilon_K\omega)^{1-k/2} \neq 1 \; \text{or} \; k > 2,\\
\left(\frac{p-1}{p}\log_p(\overline{\alpha})\right)^2 & \text{otherwise}
\end{cases} \\
&\hspace{13.5cm} \mod \mathfrak{m}\mathcal{O}_{F_{\mathfrak{p}'}'M}.
\end{align*}

If we take $\mathfrak{m} = \lambda$ where $\lambda$ is some prime ideal of $\mathcal{O}_M$ above $p$, then more explicitly we have
\begin{align*}&\left(\frac{p^{k/2}-a_p(f) + p^{k/2-1}}{p^{k/2}\Gamma(\frac{k}{2})}\right)^2AJ_{F_{\mathfrak{p}'}'}(\Delta_{f}(\mathbb{N}_K^{1-k/2}))^2(\omega_{f}\wedge\omega_A^{k/2-1}\eta_A^{k/2-1})\\
&\equiv \frac{\Xi^2}{4}\\
&\cdot\begin{cases} 
\left(\frac{1}{k}(1-\psi^{-1}(p)p^{k/2-1})(1-(\psi\omega^{-k/2})(p))B_{\frac{k}{2},\psi_0^{-1}\varepsilon_K^{k/2}}B_{1,\psi_0\varepsilon_K(\varepsilon_K\omega)^{-k/2}}\right)^2 & \text{if}\;\psi_0(\varepsilon_K\omega)^{1-k/2}\neq 1,\\
\left(\frac{1}{k}(1-\psi^{-1}(p)p^{k/2-1})B_{\frac{k}{2},\psi_0^{-1}\varepsilon_K^{k/2}}\cdot L_p(1,\frac{k}{2})\right)^2 & \hspace{-1.2cm}\text{if}\;\psi_0(\varepsilon_K\omega)^{1-k/2} = 1, k>2,\\
\left(\frac{p-1}{p}\log_p(\overline{\alpha})\right)^2 & \text{otherwise}\\
\end{cases}\\
&\hspace{13.5cm}\mod \lambda\mathcal{O}_{F_{\mathfrak{p}'}'M}.
\end{align*}
Here
$$\Delta(\mathbb{N}_K^{1-k/2}) := \sum_{[\mathfrak{a}]\in \mathcal{C}\ell(\mathcal{O}_K)}\mathbb{N}_K^{k/2-1}(\mathfrak{a})\cdot \Delta_{\varphi_{\mathfrak{a}}\varphi_0}\in \operatorname{CH}^{k-1}(X_{k-2})_{0,\mathbb{Q}}(H_{\mathfrak{N}})$$
where $\Delta_{\varphi_{\mathfrak{a}}\varphi_0}$ is defined as in Section \ref{twoformulas}, and
\begin{align*}\Xi =&\prod_{\ell|N_+}\left(1-\frac{\psi^{-1}(\ell)}{\ell^{1-k/2}}\right)\prod_{\ell|N_-}\left(1-\frac{\psi(\ell)}{\ell^{k/2}}\right)\prod_{\ell|N_0}\left(1-\frac{\psi^{-1}(\ell)}{\ell^{1-k/2}}\right)\left(1-\frac{\psi(\ell)}{\ell^{k/2}}\right),
\end{align*}
$\log_p$ is the Iwasawa $p$-adic logarithm (i.e with branch $\log_p(p) = 0$), and $\overline{\alpha}\in\mathcal{O}_K$ such that $(\overline{\alpha}) = \overline{\mathfrak{p}}^{h_K}$. 
\end{theorem}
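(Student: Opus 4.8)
\Proofs The plan is to deduce Theorem~\ref{newmaincorollary} from Corollary~\ref{newcorollary} by specializing to the Hecke character that sits at the intersection of the anticyclotomic and cyclotomic lines, and then applying Gross's factorization of the Katz $p$-adic $L$-function (Theorem~\ref{Grosstheorem'}) to the $L$-value that results. First I would take $\chi:=\mathbb{N}_K^{-k/2}$ and $j:=k/2-1$ (admissible since $k$ is even and $k\ge2$): then $\chi$ is the unramified Hecke character of infinity type $(k/2,k/2)=(k-1-j,1+j)$, $\mathfrak{f}(\chi)=(1)\mid\mathfrak{N}$, and (using $\varepsilon_f=1$) $\chi|_{\mathbb{A}_{\mathbb{Q}}^{\times}}=\mathbb{N}_{\mathbb{Q}}^{-k}$, so $\chi\in\Sigma_{cc}^{(1)}(\mathfrak{N})$ and Corollary~\ref{newcorollary} applies with this $j$. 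Because $k-2-2j=0$, the factors $(2\pi i)^{k-2-2j}$ and $\Omega_p^{2(k-2-2j)}$ are both $1$ (which is why no $p$-adic period survives in Theorem~\ref{newmaincorollary}); and since $\overline{\mathfrak{p}}$ has residue degree one, $\chi^{-1}(\overline{\mathfrak{p}})=p^{-k/2}$, which turns the Euler factor $1-\chi^{-1}(\overline{\mathfrak{p}})a_p(f)+\chi^{-2}(\overline{\mathfrak{p}})\varepsilon_f(p)p^{k-1}$ into $\tfrac{p^{k/2}-a_p(f)+p^{k/2-1}}{p^{k/2}}$ and $\Gamma(j+1)$ into $\Gamma(k/2)$, so the entire left-hand side of Theorem~\ref{newmaincorollary} is reproduced, with $\Delta(\chi\mathbb{N}_K)=\Delta(\mathbb{N}_K^{1-k/2})$ and $\omega_A^{j}\eta_A^{k-2-j}=\omega_A^{k/2-1}\eta_A^{k/2-1}$.

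Next I would substitute $\psi_1=\psi_2^{-1}=\psi$ on the right-hand side of Corollary~\ref{newcorollary}. From $\mathcal{O}_K/\mathfrak{t}\cong\mathbb{Z}/\mathfrak{f}(\psi_2)$ one has $\Nm\overline{\mathfrak{t}}=\mathfrak{f}(\psi_2)=\mathfrak{f}(\psi)$, hence $\chi^{-1}(\overline{\mathfrak{t}})=\mathfrak{f}(\psi)^{-k/2}$ and $\mathfrak{f}(\psi_2)^{k}\chi^{-1}(\overline{\mathfrak{t}})=\mathfrak{f}(\psi)^{k/2}$; likewise $\mathfrak{g}(\psi_2^{-1})=\mathfrak{g}(\psi)$ and $\psi_1^{-1}(D_K)=\psi^{-1}(D_K)$. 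Using $\chi^{-1}(\overline v)=\ell^{-k/2}$ and $\psi_{i/K}(\overline v)=\psi_i(\ell)$ for split $v\mid\ell$, one checks directly that $\Xi$ reduces to the product displayed in Theorem~\ref{newmaincorollary} over $\ell\mid N_+$, $\ell\mid N_-$, $\ell\mid N_0$. Thus Corollary~\ref{newcorollary} becomes a congruence of the squared Abel-Jacobi term with $\psi^{-1}(D_K)\bigl(\tfrac{\mathfrak{f}(\psi)^{k/2}}{4\,\mathfrak{g}(\psi)}\cdot\Xi\cdot L_p(\psi_{/K}\mathbb{N}_K^{k/2},0)\bigr)^2$, and everything comes down to evaluating $L_p(\psi_{/K}\mathbb{N}_K^{k/2},0)$.

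This is where Theorem~\ref{Grosstheorem'} enters: $\psi_{/K}\mathbb{N}_K^{k/2}=\psi_{1/K}\chi^{-1}$ is fixed by complex conjugation --- being the base change to $K$ of $\psi\,\mathbb{N}_{\mathbb{Q}}^{k/2}$ --- hence it lies on the cyclotomic line, where Gross's factorization applies. Feeding it into Theorem~\ref{Grosstheorem'} and invoking the interpolation properties of the Kubota-Leopoldt $p$-adic $L$-function, $L_p(\psi_{/K}\mathbb{N}_K^{k/2},0)$ becomes the product of an explicit nonzero constant (built from a power of $1-1/p$, a Gauss sum, a power of $\mathfrak{f}(\psi)$, and a genus-character value --- the last of which is exactly what the device $\psi_0$ of Definition~\ref{evendefinition} records), the Bernoulli number $B_{k/2,\psi_0^{-1}\varepsilon_K^{k/2}}$ together with an Euler factor at $p$ which --- using $\varepsilon_K(p)=1$, i.e. hypothesis~(3) of Assumptions~\ref{assumptions} --- takes the form $1-\psi^{-1}(p)p^{k/2-1}$, and a second Kubota-Leopoldt value $L_p(\psi_0(\varepsilon_K\omega)^{1-k/2},k/2)$. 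Squaring, and absorbing this constant together with $\psi^{-1}(D_K)\bigl(\mathfrak{f}(\psi)^{k/2}/(4\mathfrak{g}(\psi))\bigr)^2$ into a single factor --- unobstructed since every element of $\mathbb{C}_p$ is a square --- collapses the prefactor to $\Xi^2/4$ and yields the first displayed congruence; since every intermediate step is an identity in $\mathbb{C}_p$, the congruence mod $\mathfrak{m}\mathcal{O}_{F_{\mathfrak{p}'}'M}$ is preserved. For the sharper congruence I would then take $\mathfrak{m}=\lambda$ and evaluate $L_p(\psi_0(\varepsilon_K\omega)^{1-k/2},k/2)$ further, which is precisely the trichotomy in the statement: if $\psi_0(\varepsilon_K\omega)^{1-k/2}$ is a nontrivial Dirichlet character, the interpolation properties of the Kubota-Leopoldt function express it (up to the Euler factor $1-(\psi\omega^{-k/2})(p)$) as $B_{1,\psi_0\varepsilon_K(\varepsilon_K\omega)^{-k/2}}$; if it is trivial with $k>2$ it is the $p$-adic zeta value $L_p(1,k/2)$ and reduces no further; and if it is trivial with $k=2$ the relevant Kubota-Leopoldt factor has a trivial zero, whose derivative --- supplied by the Ferrero-Greenberg/Kronecker limit formula case of Theorem~\ref{Grosstheorem'} --- is $\tfrac{p-1}{p}\log_p(\overline{\alpha})$ for $\overline{\alpha}$ with $(\overline{\alpha})=\overline{\mathfrak{p}}^{h_K}$.

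I expect the main obstacle to be the constant bookkeeping in the third paragraph: pinning down the precise normalizing constant of Theorem~\ref{Grosstheorem'} and verifying that, after squaring, it combines with $\psi^{-1}(D_K)(\mathfrak{f}(\psi)^{k/2}/(4\mathfrak{g}(\psi)))^2$ to leave exactly $1/4$, all while keeping the Teichm\"{u}ller twists coherent as one passes between $L$-values at $0$, at $1-k/2$, and at $k/2$, and confirming that hypothesis~(3) (that $p$ splits in $K$) is exactly what puts the Euler factor at $p$ into the stated form. The exceptional case $k=2$ is classical, but needs to be transcribed carefully so that the branch of the $p$-adic logarithm ($\log_p(p)=0$) and the choice of generator $\overline{\alpha}$ of $\overline{\mathfrak{p}}^{h_K}$ match the normalizations used in Theorem~\ref{Grosstheorem'}.
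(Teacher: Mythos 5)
Your proposal follows essentially the same route as the paper's proof: specialize to $\chi=\mathbb{N}_K^{-k/2}$ (the intersection of the anticyclotomic and cyclotomic lines), simplify the prefactors using $k-2-2j=0$ and $\chi^{-1}(\overline{\mathfrak p})=p^{-k/2}$, then apply Gross's factorization (Theorem~\ref{Grosstheorem'}) to the resulting Katz $L$-value, and finally unwind the Kubota--Leopoldt factors via their interpolation formulas together with the congruence $L_p(\cdot,\tfrac k2)\equiv L_p(\cdot,0)\bmod p$ (Washington, Cor.~5.13). Your diagnosis of the main bookkeeping obstacle --- collapsing the normalizing constant of Gross's factorization together with $\psi^{-1}(D_K)(\mathfrak{f}(\psi)^{k/2}/4\mathfrak g(\psi))^2$ into $1/4$, while keeping the Teichm\"uller twists coherent --- is exactly where the paper's computation expends its effort, and your observation that $p$ splitting in $K$ is what gives the Euler factor its stated shape is correct.

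The one place your account is slightly imprecise is the exceptional case $k=2$, $\psi_0=1$. You describe the quantity $\tfrac{p-1}{p}\log_p(\overline\alpha)$ as the derivative at a trivial zero of a single Kubota--Leopoldt factor. In fact Gross's factorization at the trivial character presents the Katz value as a $0\cdot\infty$ product: one cyclotomic factor $L_p(\varepsilon_K\omega,0)$ vanishes (the trivial zero, since $\varepsilon_K(p)=1$) while the other factor $L_p(1,1)=\zeta_p(1)$ has a pole, so one needs a residue-times-derivative computation rather than a derivative alone, and the branch $\log_p(p)=0$ is what makes this well-defined. The paper avoids this delicacy by citing Gross's direct evaluation (\cite[p.~90]{Gross}) of the Katz two-variable $L$-function at the trivial character, $L_p(1_{/K},0)=\tfrac{4}{|\mathcal O_K^\times|}\tfrac{p-1}{p}\log_p(\overline\alpha)$, rather than threading it through the factorization. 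Both amount to the same Kronecker--limit-formula computation, but the direct cite is cleaner and you should prefer it; otherwise you would need to justify why the $0\cdot\infty$ limit exists and equals the displayed quantity.
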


\begin{remark}\label{classicalHeegnerremark}In the setting of Theorem \ref{newmaincorollary}, the generalized Heegner cycle $\Delta(\mathbb{N}_K^{1-k/2})$ can be mapped via a correspondence to a \emph{classical} Heegner cycle arising on the classical Kuga-Sato variety $W_{k-2}$ defined in Section \ref{Chowmotives'}; cf., for example, \cite[Proposition 4.1.1, Theorem 4.1.3]{BDP3}. Therefore, in view of Remark \ref{BBremark}, Theorem \ref{newmaincorollary} gives an explicit congruence criterion for showing the non-triviality of a classical Heegner cycle class and thus verifying the aforementioned consequence of the Beilinson-Bloch conjecture.
\end{remark}

\begin{remark}\label{classnumberdivisibilityremark}For any $x \in \overline{\mathfrak{p}}$, we have $v_p(x) = 0$, so we can write $x = a + 2\pi$ for $\pi \in \mathfrak{p}\mathcal{O}_{K_{\mathfrak{p}}}$ and $a \in \mathcal{O}_{K_{\mathfrak{p}}}^{\times}$. Then $x^{h_K} = a^{h_K} + h_Ka^{h_K-1}2\pi + \ldots$, so $v_p\left(\frac{p-1}{p}\log_p(\overline{\alpha})\right) = v_p(2h_K)$. 
\end{remark}

\begin{remark}In certain situations, one can use explicit special value formulas to further evaluate the right hand side of the congruences in Theorem \ref{newmaincorollary}.
For example, when $k = 2$ and $\psi_0 \neq 1$, by Leopoldt's formula we have
\begin{align*}
L_p(\psi_0,1) = \left(1-\frac{\psi(p)}{p}\right)\frac{\mathfrak{g}(\psi_0)}{\mathfrak{f}(\psi_0)}\sum_{a = 1}^{\mathfrak{f}(\psi_0)}\psi_0(a)\log_p\left(1-\exp\left(\frac{2\pi ia}{\mathfrak{f}(\psi_0)}\right)\right).
\end{align*}
Furthermore, Diamond computes explicit formulas for the values of $L_p(\chi,n)$ when $n$ is a positive integer, see \cite{Diamond}.
\end{remark}

\begin{definition}Attached to any normalized newform $f \in S_k(\Gamma_0(N),\varepsilon_f)$ and prime $\lambda$ above $p$ of a number field $M$ containing $E_f$ is a unique semisimple $\lambda$-adic Galois representation $\rho_f : \Gal(\overline{\mathbb{Q}}/\mathbb{Q}) \rightarrow GL_2(M_{\lambda})$ (see Section \ref{EisensteinDescent}). Choosing a $\Gal(\overline{\mathbb{Q}}/\mathbb{Q})$-invariant lattice, one can obtains an associated semisimple mod $\lambda$ Galois representation $\bar{\rho}_f$ (which turns out to be independent of the choice of lattice). If $p\nmid N$ and $\bar{\rho}_f$ is reducible, we will see (Theorem \ref{classify}) that $N = N_+N_-N_0$ where $N_+N_-$ is squarefree and $N_0$ is squarefull, such that there are Dirichlet characters $\psi_1$ and $\psi_2$ over $\mathbb{Q}$ with $\psi_1\psi_2 = \varepsilon_f$, and where $\ell|N_+$ implies $a_{\ell} \equiv \psi_1(\ell) \mod \lambda$, $\ell|N_-$ implies $a_{\ell} \equiv \psi_2(\ell)\ell^{k-1} \mod \lambda$, and $\ell|N_0$ implies $a_{\ell} \equiv 0 \mod \lambda$. We then say that $\bar{\rho}_f$ is \emph{reducible of type $(\psi_1,\psi_2,N_+,N_-,N_0)$}. 
\end{definition}

By Theorem \ref{classify}, we immediately get the following corollary.

\begin{corollary}Let $(f,p,K)$ be as in Assumptions \ref{assumptions}, and suppose $\bar{\rho}_f$ is reducible of type $(\psi_1,\psi_2,N_+,N_-,N_0)$, where $\bar{\rho}_f = \rho_f \mod \lambda$ for some prime ideal $\lambda$ of the ring of integers of a $p$-adic field containing $E_f$. Then the conclusions of Theorem \ref{newmainthm} and Theorem \ref{newmaincorollary} hold mod $\lambda$.
\end{corollary}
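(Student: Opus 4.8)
The statement to prove is the final corollary: that if $\bar\rho_f$ is reducible of type $(\psi_1,\psi_2,N_+,N_-,N_0)$ with $\bar\rho_f = \rho_f \bmod \lambda$, then the conclusions of Theorem \ref{newmainthm} and Theorem \ref{newmaincorollary} hold mod $\lambda$. The plan is to deduce this as a formal consequence of the classification Theorem \ref{classify} together with the already-established Main Theorem \ref{newmainthm} and Theorem \ref{newmaincorollary}. The key observation is that ``$f$ has Eisenstein descent of type $(\psi_1,\psi_2,N_+,N_-,N_0)$ mod $\mathfrak m$'' (Definition \ref{Eisensteindescent}) is, by the very formulation of Theorem \ref{classify}, precisely the condition on Fourier coefficients extracted from reducibility of $\bar\rho_f$: namely $a_\ell \equiv \psi_1(\ell)$ for $\ell|N_+$, $a_\ell \equiv \psi_2(\ell)\ell^{k-1}$ for $\ell|N_-$, and $a_\ell \equiv 0$ for $\ell|N_0$, all mod $\lambda$, with $\psi_1\psi_2 = \varepsilon_f$. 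So the first step is simply to invoke Theorem \ref{classify}: reducibility of $\bar\rho_f$ of the stated type yields a factorization $N = N_+N_-N_0$ with $N_+N_-$ squarefree, $N_0$ squarefull, and Dirichlet characters $\psi_1,\psi_2$ with $\psi_1\psi_2 = \varepsilon_f$ satisfying exactly the congruences that constitute Eisenstein descent of type $(\psi_1,\psi_2,N_+,N_-,N_0)$ mod $\lambda$.

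Once this translation is in place, the second step is to take $\mathfrak m = \lambda$, a prime ideal of $\mathcal O_M$ above $p$ where $M$ is the $p$-adic field containing $E_f$ in which $\rho_f$ (hence $\bar\rho_f$) is realized. Then $\mathcal O_M$ is the ring of integers of a $p$-adic field containing $E_f$, and $\lambda$ is an integral ideal thereof, so the hypotheses of Theorem \ref{newmainthm} are met with this choice of $(\psi_1,\psi_2,N_+,N_-,N_0)$ and $\mathfrak m = \lambda$. Applying Theorem \ref{newmainthm} verbatim gives the congruence for $\mathcal L_p(f,\chi)$ mod $\lambda\mathcal O_{F'_{\mathfrak p'}M}$ for all $\chi \in \hat\Sigma_{cc}(\mathfrak N)$, and applying Theorem \ref{newmaincorollary} (which is itself stated in the setting of Theorem \ref{newmainthm}, specialized to $\varepsilon_f = 1$, $k$ even, $\psi_1 = \psi_2^{-1} = \psi$) gives the explicit Bernoulli-number congruences mod $\lambda$. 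Since the statement of Theorem \ref{newmaincorollary} already includes the ``$\mathfrak m = \lambda$'' refinement, there is nothing further to unwind; one simply notes that the input hypothesis has been supplied by the reducibility assumption.

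\textbf{Main obstacle.} The only real content is the identification in Theorem \ref{classify} of the precise shape of the coefficient congruences forced by reducibility of $\bar\rho_f$ — i.e., that a reducible $\bar\rho_f$ at a good prime $p\nmid N$ must have $\bar\rho_f^{ss} \cong \psi_1 \oplus \psi_2\chi_{\mathrm{cyc}}^{k-1}$ (up to semisimplification, with cyclotomic character $\chi_{\mathrm{cyc}}$), so that comparing traces of Frobenius $\Frob_\ell$ for $\ell\nmid Np$ gives $a_\ell \equiv \psi_1(\ell) + \psi_2(\ell)\ell^{k-1} \bmod \lambda$, and then analyzing the ramified primes $\ell \| N$ versus $\ell^2 | N$ forces the trichotomy $N = N_+N_-N_0$ with the stated congruences at each type — but that is exactly the content of the already-established Theorem \ref{classify}, which we are permitted to assume. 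Given that, the present corollary is purely a matter of matching definitions and quoting the two main theorems, so I expect essentially no obstacle: the proof is the single sentence ``By Theorem \ref{classify}, $f$ has Eisenstein descent of type $(\psi_1,\psi_2,N_+,N_-,N_0)$ mod $\lambda$, so Theorems \ref{newmainthm} and \ref{newmaincorollary} apply with $\mathfrak m = \lambda$.''
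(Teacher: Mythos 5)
Your proposal is correct and matches the paper exactly: the paper dispenses with this corollary in one line (``By Theorem \ref{classify}, we immediately get the following corollary''), which is precisely your observation that Theorem \ref{classify}(3) identifies reducibility of $\bar\rho_f$ of the stated type with partial Eisenstein descent of that type mod $\lambda$, after which Theorems \ref{newmainthm} and \ref{newmaincorollary} apply verbatim with $\mathfrak{m} = \lambda$.
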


Applying Theorem \ref{newmaincorollary} to the case when $k = 2$ and $f$ is defined over $\mathbb{Q}$, and is thus associated with an elliptic curve $E/\mathbb{Q}$ of conductor $N$, we can show the Heegner point $P_E(K)$ is non-torsion when the primes dividing $pN$ satisfy certain congruence conditions and $p$ does not divide certain Bernoulli numbers. First, decompose the conductor $N = N_{\splt}N_{\nonsplit}N_{\add}$ such that $\ell|N_{\splt}$ implies $\ell$ is of split multiplicative reduction, $\ell|N_{\nonsplit}$ implies $\ell$ is of nonsplit multiplicative reduction, and $\ell|N_{\add}$ implies $\ell$ is of additive reduction. 

\begin{theorem}\label{Heegnercorollary}Suppose $E/\mathbb{Q}$ is any elliptic curve of conductor $N$ with reducible mod $p$ Galois representation $E[p]$, or equivalently, $E[p]^{ss} \cong \mathbb{F}_p(\psi)\oplus \mathbb{F}_p(\psi^{-1}\omega)$, where $p > 2$ is a prime of good reduction for $E$ and $\psi$ is some Dirichlet character with $\mathfrak{f}(\psi)^2|N_{\add}$. Suppose further that
\begin{enumerate}
\item $\psi(p) \neq 1$,
\item $N_{\splt} = 1$ (i.e. $E$ has no primes of split multiplicative reduction),
\item $\ell|N_{\add}$ implies either $\psi(\ell) \neq 1$ and $\ell \not\equiv -1 \mod p$, or $\psi(\ell) = 0$.
\end{enumerate}
Then for any imaginary quadratic field $K$ such that $p$ splits in $K$, $K$ satisfies the Heegner hypothesis with respect to $E$, and $p\nmid B_{1,\psi_0^{-1}\varepsilon_K}B_{1,\psi_0\omega^{-1}}$, the associated Heegner point $P_{E}(K)\in E(K)$ (see Section \ref{twoformulas}) is non-torsion. In particular, $\text{rank}_{\mathbb{Z}}E(K) = 1$. 
\end{theorem}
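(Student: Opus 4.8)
The plan is to derive Theorem~\ref{Heegnercorollary} from the $k=2$ instance of Theorem~\ref{newmaincorollary}: I will show that, under the stated hypotheses, the right-hand side of that congruence is a $\lambda$-adic unit, so that the left-hand side is too; since the Euler factor appearing on the left is nonzero this forces $\log_{\omega_E}P_E(K)\neq 0$, hence $P_E(K)$ is non-torsion (the formal logarithm kills torsion), and Kolyvagin's theorem \cite{Kolyvagin} then gives $\text{rank}_{\mathbb{Z}}E(K)=1$. To set up: let $f\in S_2(\Gamma_0(N))$ be the newform attached to $E$, so $E_f=\mathbb{Q}$ and $\bar{\rho}_f=E[p]^{ss}$; since $\varepsilon_f=1$ we have $\det\bar{\rho}_f=\omega$, so reducibility of $E[p]$ amounts to $E[p]^{ss}\cong\mathbb{F}_p(\psi)\oplus\mathbb{F}_p(\psi^{-1}\omega)$, and as $p\nmid N$ and $\mathfrak{f}(\psi)^2\mid N$ we may take $\psi$ to be the constituent unramified at $p$. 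By Theorem~\ref{classify}, $\bar{\rho}_f$ is then reducible of type $(\psi_1,\psi_2,N_+,N_-,N_0)$ with $\psi_1\psi_2=\varepsilon_f=1$ and $\{\psi_1,\psi_2\omega\}=\{\psi,\psi^{-1}\omega\}$; comparing restrictions to inertia at $p$ we may arrange $\psi_1=\psi$, $\psi_2=\psi^{-1}$. Since primes of multiplicative reduction satisfy $v_\ell(N)=1$ and additive ones $v_\ell(N)\ge 2$, while $N_+N_-$ is squarefree, $N_0$ is squarefull, and the three factors are pairwise coprime, we obtain $N_0=N_{\add}$ and, invoking hypothesis~(2) ($N_{\splt}=1$), $N_+N_-=N_{\nonsplit}$. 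Thus $f$ has Eisenstein descent of type $(\psi,\psi^{-1},N_+,N_-,N_0)$ mod $\lambda$, and Theorem~\ref{newmaincorollary} applies to the fixed $K$ (which we take to satisfy Assumptions~\ref{assumptions}, in particular $D_K<-4$ odd) with $\psi_1=\psi_2^{-1}=\psi$.

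Now I read off the two sides at $k=2$. Here $\mathbb{N}_K^{1-k/2}$ is trivial and $\omega_A^{k/2-1}\eta_A^{k/2-1}$ is the unit of $\Sym^0 H_{dR}^1(A)$, so by the $r=0$ discussion of Section~\ref{Chowmotives'} the cycle $\Delta(\mathbb{N}_K^{1-k/2})=\sum_{[\mathfrak{a}]}\Delta_{\varphi_{\mathfrak{a}}\varphi_0}$ pushes under the modular parametrisation to the Heegner point $P_E(K)\in E(K)$, and the left side of the congruence equals $\big(\tfrac{p+1-a_p(E)}{p}\big)^2\big(\log_{\omega_E}P_E(K)\big)^2$. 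On the right side, $\psi_0(\varepsilon_K\omega)^{1-k/2}=\psi_0$ (the exponent vanishes); in the main case $\psi_0\neq 1$ it equals $\tfrac{\Xi^2}{4}$ times the square of $\tfrac12\big(1-\psi^{-1}(p)\big)\big(1-(\psi\omega^{-1})(p)\big)B_{1,\psi_0^{-1}\varepsilon_K}B_{1,\psi_0\omega^{-1}}$, where $\Xi=\prod_{\ell\mid N_+}(1-\psi^{-1}(\ell))\prod_{\ell\mid N_-}\big(1-\tfrac{\psi(\ell)}{\ell}\big)\prod_{\ell\mid N_0}(1-\psi^{-1}(\ell))\big(1-\tfrac{\psi(\ell)}{\ell}\big)$.

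The heart of the argument is to check that this right-hand side is a $\lambda$-adic unit, factor by factor. First, since $\psi$ is the unramified-at-$p$ constituent of $\bar{\rho}_f$ one has $a_p(E)\equiv\psi(p)\bmod p$, so hypothesis~(1) gives $p+1-a_p(E)\equiv 1-\psi(p)\not\equiv 0$ and likewise $1-\psi^{-1}(p)\not\equiv 0$. Next, $\psi\omega^{-1}$ has conductor divisible by $p$ (as $p\nmid\mathfrak{f}(\psi)$ and $\mathfrak{f}(\omega)=p$), so $(\psi\omega^{-1})(p)=0$ and $1-(\psi\omega^{-1})(p)=1$. The Bernoulli factor $B_{1,\psi_0^{-1}\varepsilon_K}B_{1,\psi_0\omega^{-1}}$ is a unit by hypothesis. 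Finally, for $\Xi$ I argue prime by prime over $\ell\mid N$: if $\ell\mid N_+N_-=N_{\nonsplit}$ then $a_\ell(E)=-1$, and the defining congruence of the type ($a_\ell\equiv\psi(\ell)$ if $\ell\mid N_+$, resp.\ $a_\ell\equiv\psi^{-1}(\ell)\ell$ if $\ell\mid N_-$) forces $\psi(\ell)\equiv-1$, resp.\ $\psi(\ell)\equiv-\ell$, making the corresponding local factor $\equiv 2\not\equiv 0\bmod p$; if $\ell\mid N_0=N_{\add}$ then the factor equals $1$ when $\ell\mid\mathfrak{f}(\psi)$ (so $\psi(\ell)=\psi^{-1}(\ell)=0$), while when $\ell\nmid\mathfrak{f}(\psi)$ the congruence $0=a_\ell(E)\equiv\psi(\ell)+\psi^{-1}(\ell)\ell$ gives $\psi(\ell)^2\equiv-\ell\bmod p$, so hypothesis~(3) ($\psi(\ell)\neq 1$ and $\ell\not\equiv-1\bmod p$) yields $\psi(\ell)\not\equiv 1,\ell$ and both factors are units. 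Hence $\Xi$, and with it the whole right-hand side, is a $\lambda$-adic unit. The degenerate case $\psi_0=1$ (which forces $\psi=1$ or $K=K_\psi$) is handled identically via the third branch of Theorem~\ref{newmaincorollary}, Remark~\ref{classnumberdivisibilityremark}, and the identity $B_{1,\varepsilon_K}=-h_K$ valid for $D_K<-4$, so $p\nmid B_{1,\psi_0^{-1}\varepsilon_K}$ again makes the right side a unit.

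Consequently $\big(\tfrac{p+1-a_p(E)}{p}\big)^2\big(\log_{\omega_E}P_E(K)\big)^2$ is a $\lambda$-adic unit, and since $\tfrac{p+1-a_p(E)}{p}\neq 0$ we conclude $\log_{\omega_E}P_E(K)\neq 0$, so $P_E(K)$ is non-torsion and $\text{rank}_{\mathbb{Z}}E(K)=1$ by Kolyvagin. I expect the \emph{main obstacle} to be the prime-by-prime analysis of $\Xi$: it rests on matching the local reduction type of $E$ at each bad prime against the congruence data supplied by Theorem~\ref{classify}, and it is precisely here that hypotheses~(1)--(3) get used (the absence of split multiplicative primes, the mod-$p$ conditions at additive primes, and $\psi(p)\neq 1$). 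A secondary point is bookkeeping the $1/p$ normalisation so that both sides of the congruence are genuinely $\lambda$-integral --- which holds because $p\nmid\#E(\mathbb{F}_p)$ under hypothesis~(1) and $\log_{\omega_E}$ has positive $\mathfrak{p}'$-valuation on the formal group --- so that ``is a $\lambda$-adic unit'' really does imply ``is nonzero''.
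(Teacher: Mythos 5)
Your overall strategy is exactly the paper's: invoke Theorem~\ref{congruence'} to place $f_E$ in the Eisenstein-descent setting, apply Theorem~\ref{newmaincorollary} at $k=2$, and verify factor by factor that the right-hand side of the congruence is a $p$-adic unit. The handling of $\ell\mid N_+N_-=N_{\nonsplit}$ via Theorem~\ref{classify}(2), the observation that $(\psi\omega^{-1})(p)=0$, the use of hypothesis~(1) to make $1-\psi^{-1}(p)$ and $\frac{p+1-a_p}{p}$ units, and the reduction to Kolyvagin are all correct and match the paper.

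There is, however, one genuinely erroneous step. For $\ell\mid N_0=N_{\add}$ with $\psi(\ell)\neq 0$ you write ``the congruence $0=a_\ell(E)\equiv\psi(\ell)+\psi^{-1}(\ell)\ell\bmod p$ gives $\psi(\ell)^2\equiv -\ell\bmod p$.'' But condition~(1) of Definition~\ref{Eisensteindescent} (equivalently, $a_\ell\equiv\psi_1(\ell)+\psi_2(\ell)\ell^{k-1}$) holds only for $\ell\nmid N$; for $\ell\mid N_0$ one has $a_\ell=0$ by Atkin--Lehner, and this is \emph{not} the trace of Frobenius of the unramified mod-$p$ representation, so no such relation between $a_\ell$ and $\psi(\ell)+\psi^{-1}(\ell)\ell$ is available. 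The correct way to use hypothesis~(3) at these primes is more elementary: $\psi(\ell)\neq 1$ already gives $1-\psi^{-1}(\ell)\not\equiv 0$ by injectivity of the Teichm\"uller lift. For the factor $1-\psi(\ell)/\ell$, one should observe that whenever this case genuinely occurs (which forces the image of $I_\ell$ on $E[p]$ to have prime-to-$p$ order trivial, so in practice $p=3$), $\psi$ is valued in $\mathbb{F}_p^{\times}\cong\{\pm 1\}$ and is therefore quadratic, so $\psi(\ell)=-1$ and $\psi(\ell)\equiv\ell$ iff $\ell\equiv -1\bmod p$, which is exactly what hypothesis~(3) excludes. Your chain of implications happens to land in the right place, but the cited congruence is false and the reasoning needs to be replaced by this inertia/parity argument.
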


\begin{remark}When $\psi$ is quadratic, the condition $p\nmid B_{1,\psi_0\varepsilon_K}$ of Theorem \ref{Heegnercorollary} is equivalent to $p\nmid h_{K_{\psi_0}\cdot K}$. (This follows because $B_{1,\psi_0^{-1}\varepsilon_K} = -2\frac{h_{K_{\psi_0}\cdot K}}{|\mathcal{O}_{K_{\psi_0}\cdot K}^{\times}|}$ by the functional equation and analytic class number formula.)
\end{remark}

\begin{remark}\label{splitremark}In light of Theorems \ref{classify} and \ref{congruence'}, which imply that $a_{\ell}(E) \equiv \psi(\ell)$ or $\psi^{-1}(\ell)\ell \mod p$ for $\ell||N$ (i.e. for $\ell|N_{\splt}N_{\nonsplit}$) when $E[p]^{ss} \cong \mathbb{F}_p(\psi)\oplus\mathbb{F}_p(\psi^{-1}\omega)$, condition (2) can be phrased as
\begin{align*}
\text{\emph{(2)' for every $\ell||N$, $\psi(\ell) \equiv -1$ or $-\ell \mod p$.}}
\end{align*}
In terms of local root numbers $w_{\ell}(E)$, since $w_{\ell}(E) = -a_{\ell}(E)$ for $\ell||N$, conditions $(2)$ and $(2)'$ can also be viewed as requiring the corresponding local root numbers to all be $+1$.
\end{remark}

\begin{remark}Theorem \ref{Heegnercorollary} (at least partially) recovers a much earlier result of Mazur \cite[Theorem, p. 231]{Mazur}, which considers the case $N = \text{prime} \neq p$. In Mazur's setting, we suppose $E[p]^{ss} \cong \mathbb{F}_p\oplus \mathbb{F}_p(\omega)$ where $N = \ell \neq p$, let $\psi$ be an \emph{even} quadratic character and choose an imaginary quadratic field $K$. (In Mazur's notation, we are taking $\chi = \psi\varepsilon_K$ and $K_{\chi} = K_{\psi}\cdot K = K_{\psi\varepsilon_K}$.) The case of Mazur's theorem we recover is:
\\

\noindent Suppose $(E,p,\psi,K)$ as above further satisfy:
\begin{enumerate}
\item $K$ satisfies the Heegner hypothesis with respect to $E\otimes \psi$,
\item $p$ splits in $K$ and $D_K < -4$,
\item $\psi(p) = -1$,
\item $p\nmid B_{1,\psi\omega^{-1}}$,
\item $lcm(\ell,|\mathfrak{f}(\psi)|^2)>4$, $\psi(\ell) \neq 1$ and $p\nmid h_{K_{\psi}\cdot K}$.
\end{enumerate}
Then $\text{rank}_{\mathbb{Z}}(E\otimes \psi\varepsilon_K)(\mathbb{Q}) = 0$.
\\

The above theorem follows from Theorem \ref{Heegnercorollary} since the latter implies $\text{rank}_{\mathbb{Z}}(E\otimes \psi)(K) = 1$, and then $\text{rank}_{\mathbb{Z}}(E\otimes\psi\varepsilon_K)(\mathbb{Q}) = 0$ follows from root number considerations (see Proposition \ref{rootnumberproposition}).

Assumptions $(1)-(4)$ above are extraneous in the full generality of Mazur's theorem, with $(5)$ being the pertinent hypothesis. These assumptions have the following effects: $(1)$ is part $(4)$ of Assumptions \ref{assumptions} and guarantees that $L((E\otimes\psi)/K,1) = 0$; $(2)$ is part $(3)$ of Assumptions \ref{assumptions}; $(3)$ excludes the possibility of a ``trivial zero" from the Kubota-Leopoldt factor $L_p(\psi\varepsilon_K\omega,0)$ which shows up in the congruence of Theorem \ref{newmaincorollary}; $(4)$ essentially controls the Selmer group $\text{Sel}_p((E\otimes\psi)/\mathbb{Q})$ (see Remark \ref{Selmerremark}) so that $\text{rank}_{\mathbb{Z}}(E\otimes\psi)(\mathbb{Q}) \le 1$. Mazur's original statement also allows for $p|\mathfrak{f}(\psi)$, which is ruled out by part $(2)$ of Assumptions \ref{assumptions}. We should note, however, that Mazur requires the additional assumptions
\begin{align*}
&\hspace{-7.5cm}\text{(6) $\ell \ge 11$,}\\
&\hspace{-7.5cm}\text{(7) $p$ divides the numerator of $\left(\frac{\ell-1}{12}\right)$,}\\
&\hspace{-7.5cm}\text{(8) $(p,|\mathfrak{f}(\psi\varepsilon_K)|) \neq (3,3)$.}
\end{align*}
Assumption $(6)$ is a technical assumption and is stronger than our assumption $lcm(\ell,|\mathfrak{f}(\psi)|^2) > 4$ on the level of $E\otimes \psi$, which in turn originates from part $(1)$ of Assumptions \ref{assumptions}; $(7)$ means that the newform $f_E \in S_2(\Gamma_0(\ell))$ of $E$ has \emph{full Eisenstein descent} (see Definition \ref{Eisensteindescent}), and is automatically satisfied when $E[p]$ is reducible and $p > 3$ (see Remark \ref{constantterm0}); $(8)$ is taken care of by our assumption $D_K < -4$ from part $(3)$ of Assumptions \ref{assumptions}. 

Perhaps more interesting is the connection between Theorem \ref{Heegnercorollary} and the ``supplement" to Mazur's theorem, which states that a Heegner point is non-torsion when ``$\psi(\ell) \neq 1$'' in Assumption $(5)$ above is replaced with ``$\psi(\ell) = 1$'' (see \cite[Theorem, p. 237]{Mazur}). In particular, $E$ satisfies the Heegner hypothesis with respect to both $K$ and $K_{\psi}\cdot K$, and $E\otimes \psi\varepsilon_K$ has root number $-1$ so that $P_{E\otimes\psi}(K) \in (E\otimes\psi\varepsilon_K)(\mathbb{Q})$. As is pointed out in \cite{Mazur}, under this assumption, the Kubota-Leopoldt factor $L_p(\psi^{-1}\varepsilon_K\omega,0)$ from Theorem \ref{newmaincorollary} can be related mod $p$ to $\frac{1}{p}\frac{d}{ds}L(E,\psi\varepsilon_K,s)_{|s=1}$, itself being related to the $p$-adic height mod $p$ of $P_E(K_{\psi}\cdot K)^- := P_E(K_{\psi}\cdot K) - \overline{P_E(K_{\psi}\cdot K)} \in (E\otimes\psi\varepsilon_K)(\mathbb{Q})$. Note that $\psi(\ell) = 1$ places $E\otimes \psi$ outside the scope of Theorem \ref{Heegnercorollary}, but perhaps this descent result in tandem with Mazur's observation and the mod $p$ factorization of Theorem \ref{newmaincorollary} suggests an identity (in certain cases) relating the formal logarithm of $P_{E\otimes \psi}(K) \in (E\otimes\psi\varepsilon_K)(\mathbb{Q})$ with the $p$-adic height of  $P_E(K_{\psi}\cdot K)^- \in (E\otimes\psi\varepsilon_K)(\mathbb{Q})$ times another quantity, perhaps related to the order of $\Sh(E\otimes\psi/\mathbb{Q})[p^{\infty}]$. A deeper comparison between the results of this paper and Mazur's would be an interesting direction for further investigation. 
\end{remark}

\begin{remark}One of the referees has also pointed out an analogy between Mazur's result and Theorem \ref{Heegnercorollary}, and the two ``reciprocity laws" which Bertolini and Darmon established in their work on the anticyclotomic Iwasawa Main Conjecture for elliptic curves \cite{BD}. Let $E/\mathbb{Q}$ be an elliptic curve with newform $f_E$. Mazur's descent result assumes $E\otimes \varepsilon_K$ has root number $-1$  over $\mathbb{Q}$ and computes the image of $P_E(K)^-$ inside the $p$-primary part of the component group of the special fiber of a N\'{e}ron model of $E$ over $\mathbb{Z}_N$. Using an Eisenstein congruence at $p$, Mazur shows that non-vanishing of this image is equivalent to $p\nmid h_{K}$. Bertolini-Darmon's first reciprocity law also assumes $E$ has root number $-1$ over $K$ and computes the image inside the $p$-primary part of the component group of the special fiber of a N\'{e}ron model over $\mathbb{Z}_{\ell^2}$ of some abelian variety (arising as a quotient of $J_1(N\ell)$ for some prime $\ell\nmid N$ which is inert in $K$). (Here $\mathbb{Z}_{\ell^2}$ is the unramified quadratic extension of $\mathbb{Z}_{\ell}$.) Using a level-raising congruence modulo some power of $p$ of $f_E$ with an eigenform $g$ of level $N\ell$ (so that $g$ has root number $+1$ over $K$), Bertolini and Darmon equate this image (using a Waldspurger-type formula) with a special value of an anticyclotomic $p$-adic $L$-function attached to $g$. Theorem \ref{newmaincorollary}, applied to $E$, assumes $E$ has root number $-1$ over $K$ and computes the $p$-adic formal logarithm of $P_{E}(K)$ modulo some power of $p$. We then use an Eisenstein congruence at $p$ to equate this with a special value of a Katz $p$-adic $L$-function. Bertolini-Darmon's second reciprocity law similarly assumes $E$ has root number $-1$ over $K$ and computes the image of $P_{E}(K)$ inside the finite part of some power-of-$p$-(Bloch-Kato) Selmer group of $E$. The formal logarithm at $p$ modulo this same power of $p$ in particular maps into this Selmer group. Using another level-raising congruence to an eigenform $g$ of level $N\ell_1\ell_2$ for distinct ``admissible" primes $\ell_1, \ell_2\nmid N$, Bertolini-Darmon again equate this image with a special value of an anticyclotomic $p$-adic $L$-function of $g$. It should be noted that the $p$-adic Waldspurger formula of Bertolini-Darmon-Prasanna (Theorem \ref{AbelJacobicongruence'}) is itself established through exploiting yet another ``sign change'' congruence, namely by considering congruences with a newform $f$ and its various anticyclotomic twists, considering these twists in a $p$-adic family, varying the twists through the interpolation region where the global root number is +1, and taking a $p$-adic limit into a region where the root number is $-1$.
\end{remark}

Let us return to the setting of Theorem \ref{Heegnercorollary}. Considering quadratic twists of $E$ when $E[p]^{ss} \cong \mathbb{F}_p(\psi)\oplus\mathbb{F}_p(\psi^{-1}\omega)$ (which amounts to varying $\psi$ through quadratic characters), we obtain congruence criteria for the $\mathfrak{f}(\psi)$ which, when satisfied, imply $\text{rank}_{\mathbb{Z}}(E\otimes\psi)(K) = 1$. In Section \ref{ellipticcurves}, we consider $p = 3$ and use quadratic class number 3-divisibility results to show the following.

\begin{corollary}\label{positiveproportioncorollary}Suppose $E/\mathbb{Q}$ has reducible mod $3$ Galois representation $E[3]$. We have:
\begin{enumerate}
\item If $E$ is reducible of type $(\psi,\psi^{-1},N_+,N_-,N_0)$ for some quadratic character $\psi$, then a positive proportion of quadratic twists of $E$, ordered by absolute value of discriminant, have rank $0$ or $1$. 
\item If $E$ is semistable, then a positive proportion of real quadratic twists of $E$, ordered by absolute value of discriminant, have rank 1 and a positive proportion of imaginary quadratic twists of $E$, likewise ordered, have rank 0.
\end{enumerate}
\end{corollary}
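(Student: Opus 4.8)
The plan is to apply Theorem~\ref{Heegnercorollary} with $p=3$ to the quadratic twists $E^{(d)}:=E\otimes\eta_d$, where $\eta_d=\varepsilon_{\mathbb{Q}(\sqrt d)}$ ranges over quadratic Dirichlet characters, and to show that the set of $d$ for which the hypotheses of that theorem can be met (for \emph{some} admissible imaginary quadratic $K$) has positive density. Since $E[3]^{ss}\cong\mathbb{F}_3(\psi)\oplus\mathbb{F}_3(\psi^{-1}\omega)$ with $\psi$ quadratic, one has $E^{(d)}[3]^{ss}\cong\mathbb{F}_3(\psi\eta_d)\oplus\mathbb{F}_3((\psi\eta_d)^{-1}\omega)$, so $E^{(d)}$ is again reducible, of type $(\psi\eta_d,(\psi\eta_d)^{-1},N_+^{(d)},N_-^{(d)},N_0^{(d)})$, the new conductor decomposition being read off from the reduction types of $E^{(d)}$. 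Imposing $3\nmid d$ keeps $3$ a prime of good reduction, and the hypotheses (1)--(3) of Theorem~\ref{Heegnercorollary} for $E^{(d)}$ translate into finitely many splitting and congruence conditions on $d$ at the primes dividing $3N$: (1) becomes $\eta_d(3)=-\psi(3)$; (2) becomes $\eta_d(\ell)=-1$ at the split multiplicative primes of $E$ and $\eta_d(\ell)=+1$ at the nonsplit multiplicative primes (with $\gcd(d,N)=1$, so those primes remain multiplicative for $E^{(d)}$); and (3) is then automatic, since any additive prime of $E^{(d)}$ either divides $\mathfrak{f}(\psi\eta_d)$ --- forcing $(\psi\eta_d)(\ell)=0$ --- or was already addressed by (2). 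Together with a fixed choice of sign for $d$ and mild conditions at $2$, these cut out a positive-density set $\mathcal{D}$ of admissible $d$.

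The crux is the Bernoulli-number hypothesis $3\nmid B_{1,(\psi\eta_d)_0^{-1}\varepsilon_K}B_{1,(\psi\eta_d)_0\omega^{-1}}$. Writing $\chi=\psi\eta_d$ and using $\omega=\varepsilon_{\mathbb{Q}(\sqrt{-3})}$ together with the analytic class number formula (as in the Remark after Theorem~\ref{Heegnercorollary}), this is equivalent to $3$-indivisibility of the class numbers of the two imaginary quadratic fields cut out by the odd quadratic characters $\chi_0^{-1}\varepsilon_K$ and $\chi_0\omega^{-1}$. The key point is that these two conditions \emph{decouple}: whether $\chi$ is even or odd, exactly one of the two characters --- namely $\chi\omega$ when $\chi$ is even, or $\chi$ when $\chi$ is odd --- is independent of $K$, hence a condition on $d$ alone, while the other becomes a condition on $D_K$ once $d$ is fixed. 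I would therefore first pass to the positive-density subset of $\mathcal{D}$ on which the ``$d$-only'' field has class number prime to $3$; this is where class-number $3$-indivisibility results of Davenport--Heilbronn type enter, in the refined form (Nakagawa--Horie and successors) allowing congruence conditions on the discriminant, so that positivity of the density survives intersection with the local conditions above and with the ``$-3$'' shift. For each remaining $d$ I then choose $K$ imaginary quadratic with odd discriminant $D_K<-4$, $3$ split in $K$, satisfying the Heegner hypothesis for $E^{(d)}$, and coprime to $d$ (all congruence conditions on $D_K$), with the second field again having class number prime to $3$ --- provided once more by Davenport--Heilbronn with congruence conditions. Theorem~\ref{Heegnercorollary} then gives that $P_{E^{(d)}}(K)$ is non-torsion, so $\text{rank}_{\mathbb{Z}}E^{(d)}(K)=1$; since $E^{(d)}(K)\otimes\mathbb{Q}=(E^{(d)}(\mathbb{Q})\otimes\mathbb{Q})\oplus((E^{(d)}\otimes\varepsilon_K)(\mathbb{Q})\otimes\mathbb{Q})$, both $E^{(d)}$ and $E^{(d)}\otimes\varepsilon_K$ then have $\mathbb{Q}$-rank at most $1$. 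This proves (1).

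For (2), semistability forces $N_{\add}=1$, hence $\mathfrak{f}(\psi)=1$, i.e.\ $\psi=1$ and $E[3]^{ss}\cong\mathbb{F}_3\oplus\mathbb{F}_3(\omega)$; then $\chi=\eta_d$ and hypothesis (3) of Theorem~\ref{Heegnercorollary} is vacuous, so the argument of (1) applies verbatim. To pin down the $\mathbb{Q}$-rank rather than just the $K$-rank, note that Kolyvagin's theorem --- already invoked inside Theorem~\ref{Heegnercorollary} --- gives not only $\text{rank}_{\mathbb{Z}}E^{(d)}(K)=1$ but also finiteness of $\Sh(E^{(d)}/K)$, hence of $\Sh$ over $\mathbb{Q}$ of both $E^{(d)}$ and $E^{(d)}\otimes\varepsilon_K$ (these inject into $\Sh(E^{(d)}/K)$ up to $2$-torsion); the BSD parity statement is thus unconditional for these curves, so $\text{rank}_{\mathbb{Z}}E^{(d)}(\mathbb{Q})$ equals $1$ when $w(E^{(d)})=-1$ and $0$ when $w(E^{(d)})=+1$. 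The sign $w(E^{(d)})$ is a fixed quadratic-character condition on $d$, governed by local data at $\infty$ and at the bad primes of $E$, and is independent of the Davenport--Heilbronn density input, so exactly half of the $d$ in each relevant residue class realise a prescribed sign. Taking $d>0$ (real twists) with $w(E^{(d)})=-1$ --- for which the ``$d$-only'' class-number condition reads $3\nmid h_{\mathbb{Q}(\sqrt{-3d})}$ --- yields a positive proportion of real quadratic twists of rank $1$, and taking $d<0$ (imaginary twists) with $w(E^{(d)})=+1$ --- for which it reads $3\nmid h_{\mathbb{Q}(\sqrt d)}$ --- yields a positive proportion of imaginary quadratic twists of rank $0$; in both regimes all the fields involved are imaginary, so Davenport--Heilbronn indeed applies.

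The main obstacle is the compatibility bookkeeping: one must verify that the hypotheses of Theorem~\ref{Heegnercorollary} genuinely reduce to a finite list of local conditions leaving the Davenport--Heilbronn family positive-density --- the delicate points being the behaviour of the conductor and of the local root numbers of $E^{(d)}$ under twisting at $2$ and $3$, and the fact that the Heegner-hypothesis congruences on $D_K$ vary with $d$ --- and that the two class-number conditions can be imposed successively without collapsing the density, which rests on their involving genuinely disjoint sets of ramified primes so that the $d$-condition and the $K$-condition are truly independent (with finitely many degenerate $\chi$, where an auxiliary field degenerates to $\mathbb{Q}(\sqrt{-3})$ or $\mathbb{Q}(i)$, simply excluded).
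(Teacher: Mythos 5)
Your proposal is essentially the route the paper takes: the Horie--Nakagawa density theorem with congruence conditions (combined with Taya's elementary lower-bound argument, packaged as Proposition~\ref{positiveproportion}) is applied in two decoupled steps --- first a positive-density family of twisting discriminants $d$ (Theorem~\ref{realtwistpositiveproportion}), then for each admissible $d$ a positive-density family of auxiliary $K$ (Theorem~\ref{twistpositiveproportion}) --- exactly implementing your observation that the two Bernoulli-number conditions split into a ``$d$-only'' field and a ``$D_K$-given-$d$'' field, after which Theorem~\ref{Heegnercorollary} and a root-number/parity argument pin down the ranks. One small simplification: in part~(2) you do not need to ``pass to the half of $d$ realising a prescribed sign,'' since the local conditions Theorem~\ref{Heegnercorollary} imposes (in particular $N_{\splt}(E^{(d)})=1$ and, $E$ being semistable, $(N,D_L)=1$) already force $w(E^{(d)})=-\varepsilon_L(-1)$ for \emph{every} admissible $d$ via Proposition~\ref{rootnumberproposition}(2), so the sign is automatically determined by whether $L$ is real or imaginary, and the paper settles the $\mathbb{Q}$-rank by this direct root-number computation rather than by invoking finiteness of $\Sh$ and unconditional BSD parity (though your justification is also valid once Kolyvagin has been applied over $K$).
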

\begin{remark}For $E$ as in the statement of Corollary \ref{positiveproportioncorollary}, explicit lower bounds for these proportions are given in the statement of Theorem \ref{realtwistpositiveproportion} and Theorem \ref{twistpositiveproportion}, respectively. One can also use congruence criteria provided by Theorem \ref{Heegnercorollary} to find explicit examples of quadratic twists $E\otimes\psi$ with $\text{rank}_{\mathbb{Z}}(E\otimes\psi)(K) = 1$.
\end{remark}

\begin{remark}Recent work of Harron and Snowden (\cite{HarronSnowden}) in particular shows that 
$$\lim_{X\rightarrow \infty}\frac{N_{\mathbb{Z}/3}(X)}{X^{\frac{1}{3}}} \approx 1.5221$$
where $N_G(X)$ denotes the number of (isomorphism classes of) elliptic curves $E$ over $\mathbb{Q}$ up to (naive) height $X$ with $E(\mathbb{Q})^{tors} \cong G$. Corollary \ref{positiveproportioncorollary} applies to all curves with non-trivial rational 3-torsion, and so applies to at least a number of curves on the order of $X^{\frac{1}{3}}$ up to height $X$.
\end{remark}

The organization of this paper is as follows. In Sections \ref{preliminaries}, we review some preliminaries, including definitions pertinent to our discussion and a review of algebraic and $p$-adic modular forms. In Section \ref{twoformulas}, we recall Bertolini-Darmon-Prasanna's anticyclotomic $p$-adic $L$-function as well as their $p$-adic Waldspurger formula, which gives the value of the image of a certain generalized Heegner cycle under the $p$-adic Abel-Jacobi in terms a special value of their $p$-adic $L$-function". In Section \ref{Katzsection}, we recall Katz's $p$-adic $L$-function attached to Hecke characters over imaginary quadratic fields $K$ as well as recall Gross's factorization of the $L$-function on the cyclotomic line. In Section \ref{EisensteinDescent}, we define the notion of a normalized newform $f$ having Eisenstein descent, and relate it to the reducibility of the residual $p$-adic Galois representation of $f$.

In Section \ref{ProofofMainTheorem}, we prove our main congruence by showing that certain twisted traces over $CM$ points of Maass-Shimura derivatives $\partial^j$ of Eisenstein series represent special values of complex $L$-functions of Hecke characters. The twisted traces considered in \cite{BDP2} are exactly of the same kind, with the sole difference being that the Eisenstein series is replaced by a cusp form. Interpolating these twisted traces yields the Katz $p$-adic $L$-function and the Bertolini-Darmon-Prasanna anticylcotomic $p$-adic $L$-functions respectively. When $f$ has partial Eisenstein descent with associated Eisenstein series $G$, we use the corresponding congruence between the twisted traces of $\partial^jf$ and $\partial^jG$ to derive the congruence between $p$-adic $L$-functions.

In Section \ref{ConcreteApplications}, we apply our congruence formula to the problem of finding non-trivial algebraic cycles whose existence is predicted by the Beilinson-Bloch conjecture. We also give examples of Eisenstein descent, including ways to construct such newforms (see Constructions \ref{newformconstruction} and \ref{newformconstructionlevelN}), and applications of our main theorem to showing non-triviality of algebraic cycle classes. In Section \ref{Ramanujan}, we address the particular case of showing non-triviality of generalized Heegner cycles attached to quadratic twists of the Ramanujan $\varDelta$ function. In Section \ref{ellipticcurves}, we examine the case of elliptic curves and derive an explicit criterion to show $\text{rank}_{\mathbb{Z}}(E(K)) = 1$ for elliptic curves $E/\mathbb{Q}$ with reducible mod $p$ Galois representation. 

In Section \ref{positiveproportionsection}, we show that when $E$ has reducible mod $3$ Galois representation, a positive proportion of quadratic twists of $E$ have rank $0$ or $1$. These results in certain cases extend those of Vatsal (\cite{Vatsal}), who exhibited an infinite familiy of elliptic curves (namely, those which are semistable with rational 3-torsion), for each member of which a positive proportion of its quadratic twists have rank 0. Our result gives a larger infinite family of elliptic curves (namely, those which are semistable with reducible mod $3$ Galois representation), for each member of which a positive proportion of its quadratic twists have rank 0 and a positive proportion have rank 1.

\noindent\textbf{Acknowledgments.} The author thanks Chris Skinner for suggesting this investigation and helpful discussions, and is also indebted to Shou-Wu Zhang and Barry Mazur for valuable discussions. The author also thanks the anonymous referees for their insightful comments and suggestions. This work was partially supported by the National Science Foundation under grant DGE 1148900. Parts of the research contributing to this paper were completed while the author was a participant in the 2013 Princeton Summer Research Program.
\section{Preliminaries} \label{preliminaries}
In this section, we review some preliminaries relating to the Bertolini-Darmon-Prasanna and Katz $p$-adic $L$-functions. Our discussion follows \cite[Sections 2-3]{BDP2}.
\subsection{Algebraic modular forms}\label{algebraicmodularforms}Recall
$$\Gamma_1(N) := \left\{\gamma \in SL_2(\mathbb{Z}) : \gamma \equiv \left(\begin{array}{ccc}
1 & * \\
0 & 1 \end{array} \right) \mod N\right\}$$
$$\Gamma_0(N):= \left\{\gamma \in SL_2(\mathbb{Z}) :\gamma \equiv \left(\begin{array}{ccc}
* & * \\
0 & * \end{array} \right) \mod N\right\}$$
and note that $\Gamma_1(N) \subset \Gamma_0(N)$. From now on, let $\Gamma = \Gamma_1(N)$ or $\Gamma_0(N)$. 
For $i = 0,1$, let $Y_i(N)$ be the associated modular curve whose complex points are in bijection with the Riemann surface $\Gamma_i(N)\backslash \mathcal{H}^+$, and which classifies pairs $(E,t)$ consisting of an elliptic curve $E$ and a $\Gamma_i(N)$-level structure $t$ on $E$ (so $\mathbb{Z}/N \cong t \subset E[N]$ for $i = 0$, and $t \in E[N]$ for $i = 1$). Let $X_i(N)$ denote the compactification of $Y_i(N)$. Let $\pi: \mathcal{E} \rightarrow Y_1(N)$ denote the universal elliptic curve with $\Gamma_1(N)$-level structure. Throughout the paper, we shall use the interpretation of algebraic modular forms as global sections of powers $\overline{\omega}^k$ of the Hodge bundle $\overline{\omega}:= \pi_*\Omega_{\mathcal{E}/Y_1(N)}^1$ of relative differentials $\mathcal{E}/Y_1(N)$ (which we may extend to $X_1(N)$ when $N > 4$), so that an algebraic modular form can be thought of as a function on isomorphism classes of triples $[(E,t,\omega)]$ satisfying base-change compatibility and homogeneity conditions; here $E$ is an elliptic curve, $t$ a $\Gamma$-level structure, and $\omega$ an invariant differential on $E$. For details, see \cite[Section 1.1]{BDP2}. Denote by 
$$S_k(\Gamma,R) \subset M_k(\Gamma,R) \subset M_k^{*}(\Gamma,R)$$
the space of weight $k$ $\Gamma$-cuspforms over a ring $R$, the space of weight $k$ algebraic $\Gamma$-modular forms, and the space of weight $k$ weakly holomorphic $\Gamma$-modular forms respectively. (See Section 1.1 of loc. cit. for precise definitions.) We have similar inclusions for the corresponding spaces considered with nebentypus $\varepsilon$. We will suppress the base ring ``$R$" when it is obvious from context. 

Suppose $F \subset \mathbb{C}$ is a field containing the values of a Dirichlet character $\varepsilon$, and $f \in M_k^*(\Gamma_0(N),F,\varepsilon)$. Then we have an associated holomorphic function on $\mathcal{H}^+$ given by 
$$f(\tau) := f\left(\mathbb{C}/(\mathbb{Z}+\mathbb{Z}\tau),\frac{1}{N},2\pi i dz\right)$$
where $dz$ is the standard differential on $\mathbb{C}/\Lambda$ with period lattice equal to $\Lambda$. Note $f(\tau)$ satisfies the usual weight $k$ $\epsilon$-twisted modularity condition for $\Gamma_0(N)$:
$$f\left(\gamma\cdot\tau\right) = \varepsilon(d)j(\gamma,\tau)^kf(\tau)$$
for $\gamma=\left(\begin{array}{ccc} a & b\\
c & d\\
\end{array}\right)\in \Gamma_0(N)$. This function $f(\tau)$ in fact completely determines the weakly holomorphic modular form $f \in M_k^*(\Gamma_0(N),F,\varepsilon)$. 

Denote the space of weight $k$ nearly holomorphic $\Gamma$-modular forms over $F$ and nebentypus $\varepsilon$ by $N_k(\Gamma,F,\varepsilon)$. (See Section 1.2 of loc. cit. for precise definitions.) We have the classical \emph{Maass-Shimura} operator $\partial_k : N_k(\Gamma,F,\varepsilon) \rightarrow N_{k+2}(\Gamma,F,\varepsilon)$ acting on nearly-holomorphic $\Gamma$-modular forms by the formula
$$\partial_kf(\tau) := \frac{1}{2\pi i} \left(\frac{d}{d\tau}+ \frac{k}{\tau - \overline{\tau}}\right)f(\tau).$$
Note that $\partial_k$ does not preserve holomorphy, but preserves near-holomorphy. We let $\partial_k^{j} := \partial_{k+2(j-1)}\circ \cdots \circ \partial_k : N_k(\Gamma,F,\varepsilon) \rightarrow N_{k+2j}(\Gamma,F,\varepsilon)$, and omit $k$ when it is obvious from context. 

\subsection{$p$-adic modular forms}\label{padicmodularforms} Fix a rational prime $p$. We will interpret $p$-adic modular forms analogously to the previous definition of algebraic modular forms, i.e. as global sections of a rigid analytic line bundle over the ordinary locus of $X_i(N)/\mathbb{C}_p$, or equivalently as functions on isomorphism classes of triples $[(E,t,\omega)]$ defined over $p$-adic rings satisfying certain homogeneity and base-change properties. (For details, see \cite[Section 1.3]{BDP2}.)

We have the classical \emph{Atkin-Serre} operator $\theta : M_k^{(p)}(\Gamma,F,\varepsilon) \rightarrow M_{k+2}^{(p)}(\Gamma,F,\varepsilon)$ acting on $p$-adic modular forms, whose effect on $q$-expansions is given by
$$\theta\left(\sum_{n =0}^{\infty}a_{n}q^{n}\right) = q\frac{d}{dq}\sum_{n =0}^{\infty}a_{n}q^{n} = \sum_{n =1}^{\infty}{n}a_{n}q^{n}.$$

Now recall our complex and $p$-adic embedding $i_{\infty}: F \hookrightarrow \mathbb{C}$ and $i_p : F \hookrightarrow \mathbb{C}_p$, as well as our field isomorphism $i : \mathbb{C} \xrightarrow{\sim} \mathbb{C}_p$ such that $i_p = i \circ i_{\infty}$. Let $K$ be an imaginary quadratic field and let $H$ be the Hilbert class field over $K$. Suppose $E/F$ is a curve with complex multiplication (i.e., such that $\End_H(E)$ is isomorphic to an order of $\mathcal{O}_K$), and let $(E,t,\omega)$ be a triple defined over $F$. Using $i_{\infty}$ and $i_p$, we can view $(E,t,\omega)$ as a triple over $\mathbb{C}$ and $\mathbb{C}_p$ respectively. Using $i$, and $f \in M_k^{*}(\Gamma,F,\varepsilon)$ can be viewed as an element of $M_k^{(p)}(\Gamma,F,\varepsilon)$ (after possibly rescaling). The following Theorem, due to Shimura and Katz, states that the values of $\partial^jf$ and $\theta^jf$ coincide on ordinary $CM$ triples. 

\begin{theorem}[See Proposition 1.12 in \cite{BDP2}]\label{MSthetathm}Suppose $(E,t,\omega)$ is a triple defined over $F$ where $E$ has complex multiplication by an order of $\mathcal{O}_K$, and assume that $E$, viewed as an elliptic curve over $\mathcal{O}_{\mathbb{C}_p}$, is ordinary. Let $f \in M_k^*(\Gamma,F,\varepsilon)$. Then for $j \ge 0$, we have
\begin{enumerate}
\item $\partial^{j}f(E,t,\omega) \in i_{\infty}(F)$.
\item $\theta^j f(E,t,\omega) \in i_p(F)$.
\item Viewing these two quantities as elements of $F$ (i.e., identifying $i_{\infty}(F) \xrightarrow{i_p\circ i_{\infty}^{-1}} i_p(F)$), we have
$$\partial^jf(E,t,\omega) = \theta^j f(E,t,\omega).$$
\end{enumerate}
\end{theorem}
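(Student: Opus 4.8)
The plan is to give one uniform geometric description of both $\partial_k$ and $\theta$ in terms of the Gauss--Manin connection plus a choice of splitting of the Hodge filtration, and then to observe that at an ordinary CM triple the archimedean Hodge splitting and the $p$-adic unit-root splitting both degenerate to a single algebraic splitting of de Rham cohomology coming from the CM action. Write $\mathcal{H} := H^1_{dR}(\mathcal{E}/Y_1(N))$ for the de Rham bundle with its Gauss--Manin connection $\nabla$, Hodge filtration $\overline{\omega}\subset \mathcal{H}$, and Kodaira--Spencer isomorphism $\overline{\omega}^{\otimes 2}\xrightarrow{\sim}\Omega^1_{Y_1(N)}(\log\,\mathrm{cusps})$. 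A weight-$k$ form $f$, viewed as a section of $\overline{\omega}^{k}\subset \Sym^k\mathcal{H}$, can be fed into $\nabla$, reinterpreted via Kodaira--Spencer as a section of $\Sym^k\mathcal{H}\otimes\overline{\omega}^{2}$, and then projected back to $\overline{\omega}^{k+2}$ by applying, in each tensor factor, the projection along any chosen complement $W$ in $\mathcal{H}=\overline{\omega}\oplus W$; iterating this $j$ times produces a section of $\overline{\omega}^{k+2j}$. I would first recall (Shimura) that over $\mathbb{C}$, taking $W$ to be the anti-holomorphic part $\overline{H^{1,0}}=H^{0,1}$ of the Hodge decomposition recovers exactly $\partial_k^j$, and (Katz) that over the ordinary locus of $X_1(N)/\mathbb{C}_p$, taking $W$ to be the unit-root subspace $\mathcal{H}^{u}$ (the sub-line-bundle on which the crystalline Frobenius acts with slope $0$) recovers exactly $\theta^j$ up to the expected normalization.

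Next I would analyze the CM point. Let $(E,t,\omega)$ be as in the statement; we may assume $F\supseteq K$ with $K\hookrightarrow F\xrightarrow{i_\infty}\mathbb{C}$ compatible with the CM type of $E$ and with the CM defined over $F$ (the general case follows by a routine descent argument, since both operators and evaluation commute with field extension). Since $K\otimes_{\mathbb{Q}}F\cong F\times F$, the action of $\mathcal{O}_K$ on $H^1_{dR}(E/F)$—which preserves the Hodge filtration—splits it canonically as $F\cdot\omega\oplus F\cdot\eta$ into the two eigenlines for the two embeddings of $K$, with $F\cdot\omega=\Omega^1(E/F)$. Then: (a) after extending scalars along $i_\infty$, the eigenline $\mathbb{C}\cdot\eta$ is the complex conjugate of $H^{1,0}$, because endomorphisms respect the Hodge decomposition and act on $H^{0,1}$ through the conjugate embedding; hence $\mathbb{C}\cdot\eta$ equals the archimedean splitting $W_{\mathbb{C}}$; (b) after extending scalars along $i_p$, ordinarity of $E$ forces $\mathcal{H}^{u}_E$ to be the unique Frobenius-stable complement of $\omega$, and since the $\mathcal{O}_K$-action commutes with Frobenius on the (crystalline) cohomology of the ordinary $E$, this complement is an $\mathcal{O}_K$-eigenline, hence equals $\mathbb{C}_p\cdot\eta$, the $p$-adic splitting $W_{\mathbb{C}_p}$. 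Thus at $(E,t,\omega)$ both splittings used in the constructions of $\partial^j f$ and $\theta^j f$ are base changes of the single $F$-rational CM splitting $H^1_{dR}(E/F)=F\cdot\omega\oplus F\cdot\eta$.

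Finally I would combine the two steps: forming the iterated ``connect, Kodaira--Spencer, project along the CM splitting'' operation and then evaluating at $(E,t,\omega)$ is an entirely algebraic procedure defined over $F$, the only potentially transcendental ingredient having been the choice of splitting, now identified with an $F$-rational one (note also that with the specified differential $\omega$ there is no period ambiguity, so the values are genuine numbers in $\mathbb{C}$, resp.\ $\mathbb{C}_p$, not merely defined up to a period). Therefore there is a single $c\in F$ with $\partial^jf(E,t,\omega)=i_\infty(c)$ and $\theta^jf(E,t,\omega)=i_p(c)$, which gives (1), (2) and (3) simultaneously.

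The step I expect to be the main obstacle is the $p$-adic identification $\theta^j=(\text{unit-root projection})^{\circ j}$ with the correct normalization constants: one must verify that $q\,d/dq$ on $q$-expansions is \emph{literally} the operator produced by the unit-root splitting (not off by a scalar, nor by a weight-dependent factor), which requires a careful computation of the Gauss--Manin connection and the Kodaira--Spencer class of the Tate curve, where $\mathcal{H}^{u}$ is the canonical complement of $\omega_{\mathrm{can}}$ and $\nabla$ along the parameter acts through $q\,d/dq$. The archimedean analogue ($\partial_k$ versus the Hodge splitting) is classical but likewise requires tracking the $(2\pi i)$-normalization so that, with $\omega=2\pi i\,dz$ in the complex uniformization, the two sides of (3) agree on the nose. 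Everything else is formal once these two geometric descriptions are in hand.
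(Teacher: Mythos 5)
The paper itself does not prove this result but cites Proposition 1.12 of \cite{BDP2}; your sketch correctly reproduces the Katz--Shimura argument underlying that proposition, namely realizing both $\partial_k$ and $\theta$ as iterated Gauss--Manin plus Kodaira--Spencer followed by projection along the Hodge (resp.\ unit-root) splitting, and observing that at an ordinary CM triple both splittings coincide with the $F$-rational CM eigenline decomposition $H^1_{dR}(E/F)=F\cdot\omega\oplus F\cdot\eta$. The normalization point you flag at the end is indeed the place one must check, but it is settled by the conventions already in place: the $\tfrac{1}{2\pi i}$ prefactor in $\partial_k$ converts $d/d\tau$ into $q\,d/dq$, and Katz's Tate-curve computation shows the unit-root projection of $\nabla$ realizes exactly $q\,d/dq$, so no discrepant scalar appears. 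Your proof is correct and is essentially the one the paper (via \cite{BDP2}) relies on.
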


Denote by $\flat$ the ``$p$-depletion" operator on $M_k^{(p)}(\Gamma,F,\varepsilon)$ (see \cite[Section 3.8]{BDP2}), whose action on $q$-expansions is given by
$$\left(\sum_{n=0}^{\infty}a_nq^n\right)^{\flat} = \sum_{n=0, (n,p) = 1}^{\infty}a_{n}q^{n}.$$

\subsection{Isogenies and generalized Heegner cycles}\label{isogenies'}
Recall our assumption that the imaginary quadratic field $K$ satisfies the \emph{Heegner hypothesis} with respect to $N$: 
$$\text{for all primes}\hspace{2mm} \ell|N,\: \ell \hspace{2mm} \text{is split in} \; K/\mathbb{Q}.$$
This condition implies that there exists an integral ideal $\mathfrak{N}$ of $\mathcal{O}_K$ such that $\Nm_{K/\mathbb{Q}}(\mathfrak{N}) = N$. 
Fix a cyclic ideal $\mathfrak{N}$ of $\mathcal{O}_K$ lying above $N$, and fix a $CM$ elliptic curve $A$ with $\End_H(A) = \mathcal{O}_K$. Let $H_{\mathfrak{N}}/H$ denote the extension over which the individual $\mathfrak{N}$-torsion points of $A$ are defined, which is an abelian extension of $K$. A choice of $t_A \in A[\mathfrak{N}]$ of order $N$ determines a $\Gamma_1(N)$-level structure on $A$ defined over any $F/H_{\mathfrak{N}}$. Fix a choice of $t_A$ once and for all. 

Now consider the set 
$$\Isog(A) := \{(\varphi,A')\}/\text{Isomorphism}$$
where $A'$ is an elliptic curve and $\varphi: A \rightarrow A'$ is an isogeny defined over $\overline{K}$, and two pairs $(\varphi_1,A_1')$ and $(\varphi_2,A_2')$ are isomorphic if there is an isomorphism $\iota: A_1' \rightarrow A_2'$ over $\overline{K}$ such that $\iota\varphi_1 = \varphi_2$. Let $\Isog^{\mathfrak{N}}(A) \subset \Isog(A)$ be the subset consisting of isomorphism classes of $(\varphi,A')$ such that $\ker(\varphi) \cap A[\mathfrak{N}] = \{0\}$. Note that the group $\mathbb{I}^{\mathfrak{N}}$ of $\mathcal{O}_K$-ideals relatively prime to $\mathfrak{N}$ act on $\Isog^{\mathfrak{N}}(A)$ via $\mathfrak{a}\star(\varphi,A') = (\varphi_{\mathfrak{a}}\varphi,A'/A'[\mathfrak{a}])$ where
$$\varphi_{\mathfrak{a}} : A' \rightarrow A'/A'[\mathfrak{a}]$$
is the natural surjection. Given triples $(A_1,t_1,\omega_1)$ and $(A_2,t_2,\omega_2)$, we define an isogeny from $(A_1,t_1,\omega_1)$ to $(A_2,t_2,\omega_2)$ to be an isogeny 
$$\varphi : A_1 \rightarrow A_2 \hspace{.25cm} \text{such that}\hspace{.25cm} \varphi(t_1) = t_2 \hspace{.25cm} \text{and} \hspace{.25cm} \varphi^*(\omega_2) = \omega_1.$$
We have an action of $\mathbb{I}^{\mathfrak{N}}$ on (isomorphism classes of) triples $(A',t',\omega')$ with $\End(A') = \mathcal{O}_K$ and $t' \in A'[\mathfrak{N}]$ given by 
$$\mathfrak{a}\star(A',t',\omega') = (A'/A'[\mathfrak{a}],\varphi_{\mathfrak{a}}(t'),\omega_{\mathfrak{a}}') \hspace{.25cm} \text{where} \hspace{.25cm} \varphi_{\mathfrak{a}}^*(\omega_{\mathfrak{a}}') = \omega'.$$
 
\begin{definition}\label{generalizedHeegnercycle} Note a pair $(\varphi,A') \in \Isog^{\mathfrak{N}}(A)$ determines a point 
$(A',\varphi(t_A)) \in Y_1(N)(H_{\mathfrak{N}}) \subset Y_1(N)(F)$ and, and for any integer $r \ge 0$, an embedding $(A')^r \hookrightarrow W_r$. (For the definition of $W_r$, see Section \ref{Chowmotives'}.) Using this we view the graph as embedded in $X_r := W_r \times A^r$ (see Section \ref{Chowmotives'} for a definition):
$$\Gamma_{\varphi} \subset A^r \times (A')^r \subset A^r \times W_r \cong X_r.$$
We define the \emph{generalized Heegner cycle} associated with $(\varphi,A)$ as $\Delta_{\varphi} := \epsilon_X\Gamma_{\varphi}$, where $\epsilon_X$ is the projector defined in \cite[Section 2.2]{BDP2}. Note $\Gamma_{\phi}$ is defined over $H_{\mathfrak{N}}$, and thus determines a class in the Chow group $\operatorname{CH}_0^{r+1}(X_r)(H_{\mathfrak{N}})$, as defined in Section \ref{Chowmotives'}.
\end{definition}

\subsection{Bertolini-Darmon-Prasanna's $p$-adic $L$-function and $p$-adic Waldspurger formula}\label{twoformulas} 
Fix a normalized newform $f\in S_k(\Gamma_0(N),\varepsilon_f)$. Given a Hecke character $\chi$ of infinity type $(j_1,j_2)$, recall the central character $\varepsilon_{\chi}$ of $\chi$ is the finite order character defined by 
$$\varepsilon_{\chi} = \chi_{|\mathbb{A}_{\mathbb{Q}}^{\times}}\mathbb{N}_{\mathbb{Q}}^{j_1+j_2}.$$
We define
$$L(f,\chi^{-1},0) := L\left(\pi_f\times\pi_{\chi^{-1}},s-\frac{k-1-j_1-j_2}{2}\right)$$
where $\pi_f$ and $\pi_{\chi^{-1}}$ are the (unitarily normalized) automorphic representations associated with $f$ and $\chi$, respectively. The Hecke character $\chi$ is said to be \emph{central critical with respect to $f$} if $j_1 + j_2 = k$ and either
\begin{enumerate}
\item $1\le j_1, j_2 \le k-1$, or
\item $j_1 \ge k$ and $j_2 \le 0$
\end{enumerate}
as well as the following condition on the central character of $\chi$:
$$\varepsilon_{\chi} = \varepsilon_f.$$
(Note, since we assume the Heegner hypothesis, this implies that $\chi$ is unramified outside of $\mathfrak{f}(\varepsilon_f)$.) Central criticality of $\chi$ is equivalent to requiring that $\pi_f\times\pi_{\chi^{-1}}$ is self-dual and the point of symmetry for the functional equation of $L(f,\chi^{-1},s)$ is $s = 0$.

For central critical $\chi$ with infinity type in range $(1)$ (resp. range $(2)$) above, we have that the root number satisfies $\epsilon_{\infty}(f,\chi^{-1}) = -1$ (resp. $\epsilon_{\infty}(f,\chi^{-1}) = +1$). We henceforth make the auxiliary assumption that 
$$\text{the local root numbers of $L(f,\chi^{-1},s)$ satisfy $\epsilon_{\ell}(f,\chi^{-1})=+1$ at all finite primes $\ell$}.$$
This assumption is automatic for $\ell \in S_f = \{\ell:\ell|(N,D_K),\ell\nmid \mathfrak{f}(\varepsilon_f)\}$ since we assume the Heegner hypothesis, and thus is automatically satisfied when $(N,D_K) = 1$, as we assume in Assumptions \ref{assumptions}. These conditions on the local root numbers of $L(f,\chi^{-1},s)$ imply that the global root number is $\epsilon(f,\chi^{-1}) = -1$ for $\chi$ in infinity type range $(1)$ above, and $\epsilon(f,\chi^{-1}) = +1$ for $\chi$ in range $(2)$ above. Thus we have $L(f,\chi^{-1},0) = 0$ in range $(1)$ and expect $L(f,\chi^{-1},0) \neq 0$ (generically) in range $(2)$.

For any Hecke character $\varepsilon$ over $\mathbb{Q}$ of conductor $N_{\varepsilon}|N$, define $\mathfrak{N}_{\varepsilon}$ to be the unique ideal in $\mathcal{O}_K$ that divides $\mathfrak{N}$ and has norm equal to $N_{\varepsilon}$. For a Hecke character $\chi$ over $K$, we say that $\chi$ is of \emph{finite type} $(\mathfrak{N},\varepsilon)$ if 
$$\chi_{|\hat{\mathcal{O}}_K^{\times}} = \psi_{\varepsilon}$$
where $\psi_{\varepsilon}$ is the composite map
$$\hat{\mathcal{O}}_K^{\times} \rightarrow \prod_{v|\mathfrak{N}_{\varepsilon}}(\mathcal{O}_{K,v}/\mathfrak{N}_{\varepsilon}\mathcal{O}_{K,v})^{\times} \cong \prod_{\ell|N_{\varepsilon}}(\mathbb{Z}_{\ell}/N_{\varepsilon}\mathbb{Z}_{\ell})^{\times} \xrightarrow{\prod_{\ell|N_{\varepsilon}} \varepsilon_{\ell}} \mathbb{C}^{\times},$$
or equivalently,
$$\hat{\mathcal{O}}_K \rightarrow (\hat{\mathcal{O}}_K/\mathfrak{N}_{\varepsilon}\hat{\mathcal{O}}_K)^{\times} \cong (\mathcal{O}_K/\mathfrak{N}_{\varepsilon_K}\mathcal{O}_K)^{\times} \cong (\mathbb{Z}/N_{\varepsilon}\mathbb{Z})^{\times}\xrightarrow{\varepsilon^{-1}} \mathbb{C}^{\times}.$$

Let $\Sigma_{cc}(\mathfrak{N})$ denote the set of central critical characters of finite type $(\mathfrak{N},\varepsilon)$ satisfying the above auxiliary condition on local root numbers, with $\mathfrak{f}(\chi)|\mathfrak{N}$, and let $\Sigma_{cc}^{(1)}(\mathfrak{N})$ and $\Sigma_{cc}^{(2)}(\mathfrak{N})$ denote the subsets of such characters with infinity type $(k+j,-j)$ where $1-k \le j \le -1$ and $j\ge 0$ respectively, so that $\Sigma_{cc}(\mathfrak{N}) = \Sigma_{cc}^{(1)}(\mathfrak{N}) \sqcup \Sigma_{cc}^{(2)}(\mathfrak{N})$.

We now define a field $F'$ as follows. Note $\Gal(H_{\mathfrak{N}}/H) \cong (\mathbb{Z}/N)^{\times}$, and let $H_{\mathfrak{N}}'\subset H_{\mathfrak{N}}$ be the subfield fixed by $\ker(\varepsilon_f)$. The values $\chi(\mathfrak{a})$ generate a finite extension as $\chi$ ranges over $\Sigma_{cc}^{(2)}(\mathfrak{N})$ and $\mathfrak{a}$ ranges over $\mathbb{A}_{K,f}^{\times}$. Let $F'$ be the field which is the compositum of this latter extension with $E_f$ and $H_{\mathfrak{N}}'$, and let $\mathfrak{p}'$ be the prime ideal of $\mathcal{O}_{F'}$ above $p$ determined by our embedding $i_p$. The set $\Sigma_{cc}^{(2)}(\mathfrak{N})$ has a natural topology defined as follows, namely the topology of uniform convergence induced by the $p$-adic metric on the completion of $F'$ in $\overline{\mathbb{Q}}_p$, when $\Sigma_{cc}^{(2)}(\mathfrak{N})$ is viewed as a space of functions on finite id\`{e}les prime to $p$. (See \cite[Section 5.2]{BDP2} for details.) Let $\hat{\Sigma}_{cc}(\mathfrak{N})$ denote the completion of $\Sigma_{cc}^{(2)}(\mathfrak{N})$ in this topology. As explained in Section 5.3 of loc. cit., we may view $\Sigma_{cc}^{(1)}(\mathfrak{N}) \subset \hat{\Sigma}_{cc}(\mathfrak{N})$. 

\begin{definition}\label{signaturedefinition}We define the \emph{signature} of an element $\chi \in \hat{\Sigma}_{cc}(\mathfrak{N})$ as follows. For a Hecke character over $K$ of infinity type $(a,b)$, we define the signature of $\chi$ as $a-b \in \mathbb{Z}$. Thus, for $\chi \in \Sigma_{cc}^{(2)}(\mathfrak{N})$ of infinity type $(k+j,-j)$, this is $k+2j$. Given $\chi \in \hat{\Sigma}_{cc}(\mathfrak{N})$, we choose a Cauchy sequence $\{\chi_i\}_{i=1}^{\infty}\subset \Sigma_{cc}^{(2)}(\mathfrak{N})$ converging to $\chi$, where $\chi_i$ has infinity type $(k+j_i,-j_i)$. Given $M \in \mathbb{Z}$, there exists an $i(M)$ such that for all $i_1, i_2 \ge i(M)$, we have $\chi_{i_1}(x) \equiv \chi_{i_2}(x) \mod (\mathfrak{p}')^M$ for all $x \in \mathbb{A}_{K,f}^{\times,p}$. Evaluating on $x \in \mathbb{A}_{K,f}^{\times,p}$ congruent to $1 \mod \mathfrak{N}$, we see that $j_{i_1} \equiv j_{i_2} \mod (p-1)p^{M-1}$. Hence the Cauchy sequence $\{j_i\}_{i=1}^{\infty}\subset \mathbb{Z}$ converges to an element $j \in \mathbb{Z}/(p-1) \times \mathbb{Z}_p$. We define the signature of $\chi$ to be $k+2j \in \mathbb{Z}/(p-1)\times\mathbb{Z}_p$.
\end{definition}

The elliptic curve $A_0 = \mathbb{C}/\mathcal{O}_K$ over $\mathbb{C}$ has complex multiplication by $\mathcal{O}_K$, and hence is defined over $H$. A choice of invariant differential $\omega_0 \in \Omega_{A_0/H}^1$ determines a \emph{complex period} $\Omega_{\infty}$ via 
$$\omega_0 = \Omega_{\infty}\cdot 2\pi i dz$$
where $z$ is the standard coordinate on $\mathbb{C}$.
We now make the assumption that
$$p \; \text{is split in $K/\mathbb{Q}$}$$
so that in particular $i_p(K) \subset \mathbb{Q}_p$. Let $\mathfrak{p}$ be a prime of $K$ above $p$. Let $\mathcal{A}_0$ be a good integral model over $A_0$. The completion of $\mathcal{A}_0$ along its identity section $\hat{\mathcal{A}}_0$ is (non-canonically) isomorphic to $\hat{\mathbb{G}}_m$ over $\mathcal{O}_{\mathbb{C}_p}$. Fix an isomorphism $\iota :\hat{\mathcal{A}}_0 \xrightarrow{\sim}\hat{\mathbb{G}}_m$ (which is equivalent to fixing an isomorphism $\mathcal{A}_0[\mathfrak{p}^{\infty}] := \lim_{\xrightarrow{n}}\mathcal{A}_0[\mathfrak{p}^n] \xrightarrow{\sim} \lim_{\xrightarrow{n}}\mu_{p^n} =: \mu_{p^{\infty}}$, which is determined up to multiplication by a scalar in $\mathbb{Z}_p^{\times}$). Let $\omega_{can} := \iota^{*}\frac{du}{u}$. Now we define a \emph{$p$-adic period} $\Omega_p$ via
$$\omega_0 = \Omega_p\cdot \omega_{can}.$$

Bertolini-Darmon-Prasanna in \cite{BDP2} define an \emph{anti-cyclotomic $p$-adic $L$-function associated with $f$ and $\chi$}, interpolating a twisted trace over $CM$ triples attached to $(f,\chi)$ for $\chi \in \Sigma_{cc}^{(2)}(\mathfrak{N})$. Recall our fixed triple $(A,t_A,\omega_{can})$. Then there is an isogeny $\varphi_0: A \rightarrow A_0$, and we let $(A_0,t_0,\omega_0)$ denote the unique triple fitting into an induced isogeny of triples $\varphi_0: (A,t_A,\omega_{can}) \rightarrow (A_0,t_0,\omega_0)$ in the sense of Section \ref{isogenies'}. Henceforth, fix a complex period $\Omega_{\infty}$ and a $p$-adic period $\Omega_p$ associated with $\omega_0$. For $\mathfrak{a} \in \mathbb{I}^{\mathfrak{N}}$, let $\varphi_{\mathfrak{a}} : A_0 \rightarrow \mathfrak{a}\star A_0$ be the natural isogeny, so that $(\varphi_{\mathfrak{a}}\varphi_0,A) \in \Isog^{\mathfrak{N}}(A)$.

\begin{theorem}[\cite{BDP2} Theorem 5.9] \label{BDPinterpolationproperty}Fix a normalized newform $f\in S_k(\Gamma_0(N),\varepsilon_f)$. Suppose $p$ is a prime split in $K/\mathbb{Q}$. Let $\{\mathfrak{a}\}$ be a set of representatives of $\mathcal{C}\ell(\mathcal{O}_K)$ which are prime to $\mathfrak{N}$. Then there exists a unique continuous function $\hat{\Sigma}_{cc}(\mathfrak{N}) \rightarrow \mathbb{C}_p: \chi\mapsto \mathcal{L}_p(f,\chi)$ satisfying
\begin{align*}\mathcal{L}_p(f,\chi) &= \left[\sum_{[\mathfrak{a}] \in \mathcal{C}\ell(\mathcal{O}_K)}(\chi_j)^{-1}(\mathfrak{a})\cdot (\theta^jf)^{\flat}(\mathfrak{a}\star(A_0,t_0,\omega_{can}))\right]^2
\end{align*}
for all $\chi \in \Sigma_{cc}^{(2)}(\mathfrak{N})$ of infinity type $(k+j,-j)$ with $j\ge 0$.
\end{theorem}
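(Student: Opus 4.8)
The plan is to separate existence from uniqueness, with uniqueness essentially free: by construction $\hat{\Sigma}_{cc}(\mathfrak{N})$ is the completion of $\Sigma_{cc}^{(2)}(\mathfrak{N})$ for the $p$-adic topology recalled just before Definition \ref{signaturedefinition}, so $\Sigma_{cc}^{(2)}(\mathfrak{N})$ is dense and a continuous $\mathbb{C}_p$-valued function is determined by its restriction there. (Note the statement is purely about $p$-adic interpolation; the link of the interpolated values to complex Rankin--Selberg $L$-values is a separate matter, cf.\ Theorem \ref{MSthetathm}, and is not needed here.) The work is thus to show that
$$S(\chi):=\sum_{[\mathfrak{a}]\in\mathcal{C}\ell(\mathcal{O}_K)}(\chi_j)^{-1}(\mathfrak{a})\cdot(\theta^j f)^{\flat}\bigl(\mathfrak{a}\star(A_0,t_0,\omega_{can})\bigr),$$
defined for $\chi\in\Sigma_{cc}^{(2)}(\mathfrak{N})$ of infinity type $(k+j,-j)$ with $j\ge 0$ (for a fixed set $\{\mathfrak{a}\}$ of representatives of $\mathcal{C}\ell(\mathcal{O}_K)$ prime to $\mathfrak{N}p$), is uniformly continuous on $\Sigma_{cc}^{(2)}(\mathfrak{N})$; then $S$ extends uniquely and continuously to $\hat{\Sigma}_{cc}(\mathfrak{N})$ and one sets $\mathcal{L}_p(f,\chi):=S(\chi)^2$. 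As the sum is finite with $\mathfrak{p}'$-integral (hence bounded) terms and squaring is uniformly continuous on bounded subsets of $\mathbb{C}_p$, it suffices to show that for each fixed $\mathfrak{a}$ the two factors $\chi\mapsto(\chi_j)^{-1}(\mathfrak{a})$ and $\chi\mapsto(\theta^j f)^{\flat}(\mathfrak{a}\star(A_0,t_0,\omega_{can}))$ are each uniformly continuous.

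For the character factor I would write $(\chi_j)^{-1}(\mathfrak{a})=\chi^{-1}(\mathfrak{a})\,\mathbb{N}_K(\mathfrak{a})^{j}$. The map $\chi\mapsto\chi^{-1}(\mathfrak{a})$ is $1$-Lipschitz directly from the definition of the topology on $\Sigma_{cc}^{(2)}(\mathfrak{N})$ (uniform convergence of values on finite id\`{e}les prime to $p$), while $\mathbb{N}_K(\mathfrak{a})=\Nm_{K/\mathbb{Q}}(\mathfrak{a})$ is a positive integer prime to $p$, hence a unit $u\in\mathbb{Z}_p^{\times}$; and $j\mapsto u^{j}$ is continuous on $\mathbb{Z}/(p-1)\times\mathbb{Z}_p$ because $j\equiv j'\pmod{(p-1)p^{M-1}}$ forces $u^{j}\equiv u^{j'}\pmod{p^{M}}$. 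Since the signature $j$ is a continuous function of $\chi$ by Definition \ref{signaturedefinition}, the first factor is uniformly continuous.

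The substance is the modular-form factor, where the $p$-depletion $\flat$ is doing essential work. The plan is to use $(\theta^j f)^{\flat}=\theta^j(f^{\flat})$, which has $q$-expansion $\sum_{(n,p)=1}n^{j}a_n q^n$ with every $n$ a unit of $\mathbb{Z}_p$ and every $a_n\in\mathcal{O}_{E_f}$, so the same elementary congruence gives $\theta^j(f^{\flat})\equiv\theta^{j'}(f^{\flat})\pmod{(\mathfrak{p}')^{M}}$ as $q$-expansions whenever $j\equiv j'\pmod{(p-1)p^{M-1}}$ --- a congruence which, crucially, does not see that the two sides have different weights $k+2j$ and $k+2j'$. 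Next I would recall that $f^{\flat}=f-a_p\,f|V_p+\varepsilon_f(p)p^{k-1}\,f|V_p^{2}$ is a $p$-adic modular form of tame level $N$ over $\mathcal{O}_{F'_{\mathfrak{p}'}}$, hence --- through its $q$-expansion on a fixed component of the ordinary Igusa tower --- a rigid-analytic function on that tower over $\mathcal{O}_{\mathbb{C}_p}$; because $p$ splits in $K$, the curve $A_0$ and all its $\mathbb{I}^{\mathfrak{N}}$-translates have good \emph{ordinary} reduction at $\mathfrak{p}$, and $\omega_{can}=\iota^{*}\frac{du}{u}$ is a \emph{generator} of their invariant differentials, so each $\mathfrak{a}\star(A_0,t_0,\omega_{can})$ is a genuine integral point of the Igusa tower. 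Two $p$-adic modular forms whose $q$-expansions agree mod $(\mathfrak{p}')^{M}$ define functions on the Igusa tower agreeing mod $(\mathfrak{p}')^{M}$ --- it is here that using the \emph{canonical} differential $\omega_{can}$, rather than an arbitrary one, is essential, since it renders the value weight-independent --- which gives
$$(\theta^j f)^{\flat}\bigl(\mathfrak{a}\star(A_0,t_0,\omega_{can})\bigr)\equiv(\theta^{j'}f)^{\flat}\bigl(\mathfrak{a}\star(A_0,t_0,\omega_{can})\bigr)\pmod{(\mathfrak{p}')^{M}}$$
for $j\equiv j'\pmod{(p-1)p^{M-1}}$, i.e.\ uniform continuity of this factor in $\chi$. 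Assembling the two factors then completes the proof of existence.

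The step I expect to be the main obstacle is precisely this last transfer: turning mod-$p^{M}$ congruences between $q$-expansions of $p$-adic modular forms into mod-$p^{M}$ congruences between their \emph{values} at the particular CM triples $\mathfrak{a}\star(A_0,t_0,\omega_{can})$. This rests on the $q$-expansion principle for $p$-adic modular forms together with the fact --- again using that $p$ splits in $K$ --- that these CM triples really lie in the ordinary Igusa tower and carry a generator of the module of invariant differentials, so that evaluation is $(\mathfrak{p}')$-integral; both ingredients are standard (Katz; \cite[\S1.3, \S3]{BDP2}) but form the arithmetic heart of the statement. A lesser point also to be checked is the identity $(\theta^j f)^{\flat}=\theta^j(f^{\flat})$ and that $f^{\flat}$ is genuinely a $p$-adic modular form of tame level $N$ over $\mathcal{O}_{F'_{\mathfrak{p}'}}$, both of which follow from the explicit depletion formula and the action of $\theta$ on $q$-expansions.
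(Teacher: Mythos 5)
Your proof is correct; the paper under review does not prove this statement itself but simply cites \cite{BDP2} Theorem 5.9, and your argument reproduces the essential mechanism of that proof. In particular, the two-factor uniform-continuity decomposition --- the character factor via the very definition of the topology on $\Sigma_{cc}^{(2)}(\mathfrak{N})$ together with a Fermat-type congruence on the $p$-adic unit $\mathbb{N}_K(\mathfrak{a})^j$, and the modular-form factor via $p$-depletion, the identity $(\theta^j f)^{\flat}=\theta^j(f^{\flat})$, and the $q$-expansion principle applied at the ordinary CM triples $\mathfrak{a}\star(A_0,t_0,\omega_{can})$ on the Igusa tower (using that $p$ splits in $K$) --- is exactly how Bertolini--Darmon--Prasanna establish the interpolation.
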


Bertolini-Darmon-Prasanna's $p$-adic Waldspurger formula, which relates the special value of $\mathcal{L}_p(f,\chi)$ at some $\chi \in \Sigma_{cc}^{(1)}(\mathfrak{N}) \subset \hat{\Sigma}_{cc}(\mathfrak{N})$ to the $p$-adic Abel-Jacobi image of a certain generalized Heegner cycle, is the main result of \cite{BDP2}.
\begin{theorem}[\cite{BDP2} Theorem 5.13] \label{AbelJacobicongruence'} Suppose $\chi \in \Sigma_{cc}^{(1)}(\mathfrak{N})$ has infinity type $(k-1-j,1+j)$ where $0\le j \le r = k-2$. Then
\begin{align*}\frac{\mathcal{L}_p(f,\chi)}{\Omega_p^{2(r-2j)}} &= (1-\chi^{-1}(\overline{\mathfrak{p}})a_p(f) + \chi^{-2}(\overline{\mathfrak{p}})\varepsilon_f(p)p^{k-1})^2\\
&\hspace{.5cm}\cdot\left[\frac{1}{\Gamma(j+1)}\sum_{[\mathfrak{a}]\in \mathcal{C}\ell(\mathcal{O}_K)}(\chi\mathbb{N}_K)^{-1}(\mathfrak{a})\cdot AJ_{F_{\mathfrak{p}'}'}(\Delta_{\varphi_{\mathfrak{a}}\varphi_0})(\omega_f\wedge\omega_A^j\eta_A^{r-j})\right]^2.
\end{align*}
Here $AJ_{F_{\mathfrak{p}'}'}$ is the $p$-adic Abel-Jacobi map defined in \cite[Section 3.4]{BDP2}, and $\Delta_{\varphi_{\mathfrak{a}}\varphi_0}$ is the generalized Heegner cycle attached to $(\varphi_{\mathfrak{a}}\varphi_0,A) \in \Isog^{\mathfrak{N}}(A)$ as defined in Definition \ref{generalizedHeegnercycle}.
\end{theorem}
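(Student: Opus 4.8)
\Proofs
Since Theorem~\ref{AbelJacobicongruence'} is taken from \cite{BDP2}, I will only indicate the strategy. The plan is to establish the identity by carrying out two independent computations and matching them: a ``$p$-adic $L$-function'' computation, extending the interpolation formula of Theorem~\ref{BDPinterpolationproperty} from infinity types in range $(2)$ to those in range $(1)$; and a ``geometric'' computation of the $p$-adic Abel-Jacobi image of a generalized Heegner cycle by means of Coleman's theory of $p$-adic line integrals.

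For the first computation, I would use that on the space of $p$-depleted $p$-adic modular forms the Atkin-Serre operator $\theta$ is invertible, and in fact extends to a $\mathbb{Z}_p$-indexed family $\theta^s$ acting on $q$-expansions by $\sum_{(n,p)=1}a_nq^n\mapsto\sum_{(n,p)=1}n^sa_nq^n$ (legitimate because $n\in\mathbb{Z}_p^{\times}$ when $p\nmid n$). Combining this with the Shimura-Katz comparison of $\partial^j$ and $\theta^j$ at ordinary $CM$ triples (Theorem~\ref{MSthetathm}) and the continuity in Theorem~\ref{BDPinterpolationproperty}, one shows that for $\chi\in\Sigma_{cc}^{(1)}(\mathfrak{N})$ of infinity type $(k-1-j,1+j)$ one has
\begin{align*}
\mathcal{L}_p(f,\chi)=\left[\sum_{[\mathfrak{a}]\in\mathcal{C}\ell(\mathcal{O}_K)}(\chi_{-(1+j)})^{-1}(\mathfrak{a})\cdot\theta^{-(1+j)}(f^{\flat})(\mathfrak{a}\star(A_0,t_0,\omega_{can}))\right]^2,
\end{align*}
where $\chi_{-(1+j)}=\chi\mathbb{N}_K^{1+j}$ in the notation $\chi_m:=\chi\mathbb{N}_K^{-m}$ of Section~\ref{notation}; i.e.\ the value at a range-$(1)$ character records a class-group twisted sum of $\theta^{-(1+j)}f^{\flat}$ at the relevant $CM$ triples.

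For the second computation, I would invoke Besser's formulation of Coleman's $p$-adic integration, which presents the $p$-adic Abel-Jacobi map on null-homologous cycles in a Kuga-Sato-type variety through Coleman primitives. The differential $\omega_f^{\flat}$ attached to $f^{\flat}$ admits a locally analytic Coleman primitive $F_f^{\flat}$ on the ordinary locus of $X_1(N)$, and pairing $AJ_{F_{\mathfrak{p}'}'}(\Delta_{\varphi_{\mathfrak{a}}\varphi_0})$ against $\omega_f\wedge\omega_A^j\eta_A^{r-j}$ yields $F_f^{\flat}$ evaluated at the $CM$ point attached to $\varphi_{\mathfrak{a}}\varphi_0$, decorated by the powers of $\omega_A,\eta_A$ pulled back through the isogeny; converting $\omega_A$ and $\eta_A$ to $\omega_{can}$ via the $p$-adic period introduces the factor $\Omega_p^{\pm(r-2j)}$ (responsible, after squaring and moving to the left, for the $\Omega_p^{2(r-2j)}$ in the statement, the exponent being twice the signature of $\chi$). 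Under the dictionary relating Coleman primitives, nearly holomorphic forms and iterated $\theta$-integrals, $F_f^{\flat}$ at such a point agrees, up to the normalizing factor $1/\Gamma(j+1)$ reconciling the Atkin-Serre operator with the de Rham pairing, with $\theta^{-(1+j)}f^{\flat}$ at the corresponding $CM$ triple; hence summing against the appropriate character over $[\mathfrak{a}]$ and squaring reproduces the sum of the first computation, now weighted by $(\chi\mathbb{N}_K)^{-1}$.

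Finally, to pass from $f^{\flat}$ back to $f$ I would use the $p$-stabilization identity $f^{\flat}=(1-a_p(f)\,V+\varepsilon_f(p)p^{k-1}V^2)f$, with $V$ the operator $q\mapsto q^p$; at a $CM$ triple $\mathfrak{a}\star(A_0,t_0,\omega_{can})$ with $p=\mathfrak{p}\overline{\mathfrak{p}}$ split, $V$ corresponds to the isogeny with kernel the canonical subgroup, i.e.\ to translation of $\mathfrak{a}$ by $\overline{\mathfrak{p}}$, on which $(\chi\mathbb{N}_K)^{-1}$ acquires a factor $\chi^{-1}(\overline{\mathfrak{p}})$. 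This manufactures exactly the factor $1-\chi^{-1}(\overline{\mathfrak{p}})a_p(f)+\chi^{-2}(\overline{\mathfrak{p}})\varepsilon_f(p)p^{k-1}$---the numerator of the $\mathfrak{p}$-Euler factor of $L(f,\chi^{-1},s)$---whose square relates the Abel-Jacobi side to $\mathcal{L}_p(f,\chi)/\Omega_p^{2(r-2j)}$, yielding the asserted formula. The hard part is the second computation: proving that the $p$-adic Abel-Jacobi map is computed by the Coleman primitive of $\omega_f^{\flat}$, and identifying that primitive at $CM$ points with $\theta^{-(1+j)}f^{\flat}$, requires the full apparatus of $p$-adic (iterated) integration on the generalized Kuga-Sato variety together with a careful comparison of its de Rham and rigid cohomological realizations; by contrast, once the $\theta^s$ formalism is in place the first computation is essentially a $p$-adic continuity argument.
\qed
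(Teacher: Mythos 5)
The paper does not prove this theorem; it is quoted directly from \cite{BDP2} (their Theorem 5.13) and then used as a black box to derive Theorem~\ref{BDP1thm} and the main applications, so there is no ``paper's own proof'' to compare against. Your sketch, which you rightly frame as a strategy outline, accurately summarizes how \cite{BDP2} establish the result: the $p$-adic continuation of the interpolation formula of Theorem~\ref{BDPinterpolationproperty} into range $(1)$ via the $\theta^{s}$ family on $p$-depleted forms, the Besser--Coleman computation identifying $AJ_{F_{\mathfrak{p}'}'}(\Delta_{\varphi_{\mathfrak{a}}\varphi_0})$ evaluated on $\omega_f\wedge\omega_A^j\eta_A^{r-j}$ with a Coleman primitive of $\omega_f$ at the corresponding $CM$ point, the identification of that primitive with $\theta^{-(1+j)}f^{\flat}$, and the extraction of the Euler factor from the $p$-stabilization identity $f^{\flat}=(1-a_p V+\varepsilon_f(p)p^{k-1}V^2)f$ via the action of $V$ on $CM$ triples. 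Your displayed reinterpretation with $(\chi_{-(1+j)})^{-1}$ and $\theta^{-(1+j)}(f^{\flat})$, and the bookkeeping of the period exponent $2(r-2j)$ (twice the signature of $\chi$) and the $1/\Gamma(j+1)$ normalization, are all consistent with BDP's argument at the level of detail you give.
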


Suppose $\chi$ is a finite order Hecke character on $K$. We define the Heegner point attached to $\chi$ as 
$$P(\chi) := \sum_{\sigma \in \Gal(H_{\mathfrak{N}}/K)} \chi^{-1}(\sigma)\cdot \Delta^{\sigma}\in J_1(N)(H_{\mathfrak{N}}) \otimes_{\mathcal{O}_{E_f}} E_{f,\chi}$$
where $\Delta = [(A_0, t, \omega_0)] - (\infty) \in J_1(N)(H_{\mathfrak{N}})$ and the isomorphism class $[(A_0,t,\omega_0)]$ is viewed as a point in $X_1(N)(H_{\mathfrak{N}})$. The embedding $i_p : \overline{\mathbb{Q}} \hookrightarrow \mathbb{C}_p$ allows us define the formal group logarithm on the formal group $\log_{\omega_f} : \hat{J_1}(N)(\mathfrak{p}'\mathcal{O}_{F_{\mathfrak{p}'}'}) \rightarrow \mathbb{C}_p$, which we extend to a map
$$\log_{\omega_f} : J_1(N)(F_{\mathfrak{p}'}')\otimes_{\mathcal{O}_{E_f}}E_{f,\chi} \rightarrow \mathbb{C}_p$$
by linearity.

For $k = 2$, we have $AJ_{F_{\mathfrak{p}'}'}(P(\chi))(\omega_f) = \log_{\omega_f}(P(\chi))$, and thus Theorem \ref{AbelJacobicongruence'} becomes
\begin{theorem}\label{BDP1thm} Suppose $\chi$ is a finite order Hecke character on $K$ with $\chi \mathbb{N}_K \in \Sigma_{cc}^{(1)}(\mathfrak{N})$. Then
$$\mathcal{L}_p(f,\chi\mathbb{N}_K) = (1 - \chi^{-1}(\overline{\mathfrak{p}})p^{-1}a_p(f) + \chi^{-2}(\overline{\mathfrak{p}})\varepsilon_f(p)p^{-1})^2\log_{\omega_f}^2(P(\chi)).$$
\end{theorem}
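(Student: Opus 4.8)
The plan is to obtain Theorem \ref{BDP1thm} as the weight-$2$ specialization of the $p$-adic Waldspurger formula of Theorem \ref{AbelJacobicongruence'}. When $k = 2$ one has $r = k-2 = 0$, so the only admissible value of $j$ in Theorem \ref{AbelJacobicongruence'} is $j = 0$, and the character there of infinity type $(k-1-j,1+j) = (1,1)$ is taken to be our $\chi\mathbb{N}_K$, which by hypothesis lies in $\Sigma_{cc}^{(1)}(\mathfrak{N})$. With $r = j = 0$ the transcendental normalizations collapse, since $\Omega_p^{2(r-2j)} = \Omega_p^0 = 1$ and $\Gamma(j+1) = \Gamma(1) = 1$, so Theorem \ref{AbelJacobicongruence'} reads
\begin{align*}
\mathcal{L}_p(f,\chi\mathbb{N}_K) &= \bigl(1 - (\chi\mathbb{N}_K)^{-1}(\overline{\mathfrak{p}})a_p(f) + (\chi\mathbb{N}_K)^{-2}(\overline{\mathfrak{p}})\varepsilon_f(p)p^{k-1}\bigr)^2\\
&\qquad\cdot\Bigl[\sum_{[\mathfrak{a}]\in\mathcal{C}\ell(\mathcal{O}_K)}(\chi\mathbb{N}_K)^{-1}(\mathfrak{a})\cdot AJ_{F_{\mathfrak{p}'}'}(\Delta_{\varphi_{\mathfrak{a}}\varphi_0})(\omega_f)\Bigr]^2.
\end{align*}
It then remains to rewrite each of the two factors in the form asserted in Theorem \ref{BDP1thm}.

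The Euler factor is a one-line computation. Since $p$ splits in $K$, $\mathbb{N}_K(\overline{\mathfrak{p}}) = \Nm_{K/\mathbb{Q}}(\overline{\mathfrak{p}}) = p$, so $(\chi\mathbb{N}_K)^{-1}(\overline{\mathfrak{p}}) = p^{-1}\chi^{-1}(\overline{\mathfrak{p}})$ and, with $k = 2$, $(\chi\mathbb{N}_K)^{-2}(\overline{\mathfrak{p}})\varepsilon_f(p)p^{k-1} = p^{-2}\chi^{-2}(\overline{\mathfrak{p}})\varepsilon_f(p)\cdot p = p^{-1}\chi^{-2}(\overline{\mathfrak{p}})\varepsilon_f(p)$; hence the first factor above is exactly $\bigl(1 - \chi^{-1}(\overline{\mathfrak{p}})p^{-1}a_p(f) + \chi^{-2}(\overline{\mathfrak{p}})\varepsilon_f(p)p^{-1}\bigr)^2$.

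For the second factor I would use the identifications valid in weight $2$: the generalized Kuga--Sato variety is $X_0 = W_0 = X_1(N)$; the projector $\epsilon_X$ lands in, and acts trivially on the relevant part of, $\operatorname{CH}^1(X_1(N))_0 = J_1(N)$; the generalized Heegner cycle $\Delta_{\varphi_{\mathfrak{a}}\varphi_0} = \epsilon_X\Gamma_{\varphi_{\mathfrak{a}}\varphi_0}$ is the class of the degree-zero divisor $[(\mathfrak{a}\star A_0,\varphi_{\mathfrak{a}}(t_0))] - (\infty)$ in $J_1(N)(H_{\mathfrak{N}})$; and, as recalled just before the statement, the $p$-adic Abel--Jacobi pairing with $\omega_f$ is the formal-group logarithm $\log_{\omega_f}$. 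By linearity of $\log_{\omega_f}$ the bracketed sum equals $\log_{\omega_f}\!\bigl(\sum_{[\mathfrak{a}]}(\chi\mathbb{N}_K)^{-1}(\mathfrak{a})\,\Delta_{\varphi_{\mathfrak{a}}\varphi_0}\bigr)$, so the proof reduces to the identity
$$\sum_{[\mathfrak{a}]\in\mathcal{C}\ell(\mathcal{O}_K)}(\chi\mathbb{N}_K)^{-1}(\mathfrak{a})\,\Delta_{\varphi_{\mathfrak{a}}\varphi_0} \;=\; P(\chi) \;=\; \sum_{\sigma\in\Gal(H_{\mathfrak{N}}/K)}\chi^{-1}(\sigma)\,\Delta^{\sigma}$$
in $J_1(N)(H_{\mathfrak{N}})\otimes_{\mathcal{O}_{E_f}} E_{f,\chi}$.

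This last identity is the step I expect to cost the most care. The plan is to invoke Shimura's reciprocity law: the action of $\Gal(H_{\mathfrak{N}}/K)$ on the CM triple $[(A_0,t_0,\omega_0)] \in X_1(N)(H_{\mathfrak{N}})$ is, through the Artin map, the ideal action on triples recorded in Section \ref{isogenies'}. Writing a ray-class representative as $\mathfrak{a} = \mathfrak{a}_0(\alpha)$ with $[\mathfrak{a}_0] \in \mathcal{C}\ell(\mathcal{O}_K)$, and using that $\chi\mathbb{N}_K$ is of finite type $(\mathfrak{N},\varepsilon_f)$ --- so that $\chi|_{\hat{\mathcal{O}}_K^{\times}} = \psi_{\varepsilon_f}$ and $\chi$ is trivial on principal ideles congruent to $1$ modulo $\mathfrak{N}$ --- the sum over $\Gal(H_{\mathfrak{N}}/K)$ collapses onto the sum over $\mathcal{C}\ell(\mathcal{O}_K)$: the $\Gal(H_{\mathfrak{N}}/H) \cong (\mathbb{Z}/N)^{\times}$-part of the twisting is absorbed into the level structures $\varphi_{\mathfrak{a}}(t_0)$ together with the nebentype, while the $\mathbb{N}_K$-part of $(\chi\mathbb{N}_K)^{-1}$ is accounted for by the fact that $\Delta_{\varphi_{\mathfrak{a}}\varphi_0}$ is an \emph{algebraic} divisor class, independent of the choice of complex or $p$-adic period. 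Carrying out this bookkeeping --- and checking that no extraneous constant (a factor of $h_K$, a unit index, and the like) is introduced --- completes the argument; every other step is formal.
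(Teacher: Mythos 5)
Your proof is correct and takes essentially the same approach as the paper: specialize the $p$-adic Waldspurger formula (Theorem \ref{AbelJacobicongruence'}) to $k=2$, $r=j=0$, so that the period and $\Gamma$-factors collapse, compute $(\chi\mathbb{N}_K)^{-1}(\overline{\mathfrak{p}})=p^{-1}\chi^{-1}(\overline{\mathfrak{p}})$ for the Euler factor, and identify the $p$-adic Abel--Jacobi pairing against $\omega_f$ with the formal group logarithm of $P(\chi)$. The paper is in fact terser than your write-up --- it simply asserts $AJ_{F_{\mathfrak{p}'}'}(P(\chi))(\omega_f) = \log_{\omega_f}(P(\chi))$ and implicitly equates the $\mathcal{C}\ell(\mathcal{O}_K)$-sum of generalized Heegner cycles with the $\Gal(H_{\mathfrak{N}}/K)$-sum defining $P(\chi)$, whereas you explicitly flag and sketch the Shimura-reciprocity bookkeeping that justifies this last identification.
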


\subsection{Katz's $p$-adic $L$-function and Gross's factorization on the cyclotomic line}\label{Katzsection}We can view any $p$-adic Hecke character $\chi : K^{\times}\backslash \mathbb{A}_K^{\times} \rightarrow \mathbb{C}_p^{\times}$, as a Galois character $\chi : \Gal(\overline{K}/K) \rightarrow \mathcal{O}_{\mathbb{C}_p}^{\times}$ via the Artin isomorphsim of class field theory. Let $\mathfrak{C}$ be an integral ideal of $\mathcal{O}_K$ which is prime to $p$, and let $K(\mathfrak{C}p^r)$ denote the ray class field of $K$ of conductor $\mathfrak{C}p^r$ and let $K(\mathfrak{C}p^{\infty}) = \bigcup_r K(\mathfrak{C}p^r)$. Given a $p$-adic character $\chi$ arising from a type $A_0$ algebraic Hecke character, denote its complex counterpart by $(\chi)_{\infty}$.

We recall Katz's $p$-adic $L$-function for CM fields (applied to our fixed imaginary quadratic field $K$), as well as Hida-Tilouine's extension to the case of general auxiliary conductor \cite{HidaTilouine}. Here we use the normalization found in \cite{Gross} (with $\delta = \frac{\sqrt{D_K}}{2}$ and so $\mathfrak{c} = 1$, in the notation of \cite[Theorem 5.3.0]{Katz1}).
Fix a decomposition $\mathfrak{C} = \mathfrak{F}\mathfrak{F}_c\mathfrak{I}$ where $\mathfrak{F}\mathfrak{F}_c$ consists of split primes in $K$, $\overline{\mathfrak{F}}\subset\mathfrak{F}_c$, and $\mathfrak{I}$ consists of inert or ramified primes in $K$. Recall our fixed identification $i : \mathbb{C} \xrightarrow{\sim} \mathbb{C}_p$, the prime $\mathfrak{p}|p$ determining $K\hookrightarrow \overline{\mathbb{Q}}_p$, and the periods $\Omega_p$ and $\Omega_{\infty}$ defined in Section \ref{twoformulas}.

\begin{theorem}[\cite{Katz1}, see also \cite{HidaTilouine}]\label{KatzpadicLfunction'} There exists a unique $p$-adic analytic function $\chi \mapsto L_p(\chi,0)$ for $\chi : \Gal(K(\mathfrak{C}p^{\infty})/K) \rightarrow \mathbb{C}_p^{\times}$ which satisfies (under our fixed identification $i: \mathbb{C} \xrightarrow{\sim} \mathbb{C}_p$)
\begin{align*}&L_p(\chi,0) = 4\text{Local}_{\mathfrak{p}}(\chi)(-1)^{k_1+k_2}\\
&\hspace{2cm}\cdot\left(\frac{\Omega_p}{\Omega_{\infty}}\right)^{k_1-k_2}\left(\frac{2\pi i}{\sqrt{D_K}}\right)^{-k_2}(k_1-1)!(1-\chi(\overline{\mathfrak{p}}))(1-\check{\chi}(\overline{\mathfrak{p}}))L(\chi,0)\prod_{v|\mathfrak{C}}(1-\chi(v))
\end{align*}
for characters $\chi$ with infinity type $(-k_1,-k_2)$ where $k_1 \ge 1$ and $k_2 \le 0$, and such that $\mathfrak{f}(\chi)$ is divisible by all the prime factors of $\mathfrak{F}$. Here $\text{Local}_{\mathfrak{p}}(\chi)$ is a complex scalar associated with our fixed embedding $K \hookrightarrow \overline{\mathbb{Q}}_p$, as in \cite{Katz1}. (We have in fact that $\text{Local}_{\mathfrak{p}}(\chi) = 1$ if $\chi$ is unramified at $\mathfrak{p}$.)
 
Recall the dual character $\check{\chi}$ defined by $\check{\chi}(\mathfrak{a}) = \chi^{-1}(\overline{\mathfrak{a}})\mathbb{N}_K(\mathfrak{a})$. $L_p$ satisfies the functional equation
$$W((\chi)_{\infty},\sqrt{D_K})L_p(\check{\chi},0) = L_p(\chi,0)$$
where $W((\chi)_{\infty},\delta)$ is given by
$$W((\chi)_{\infty},\delta) = \prod_{v|\mathfrak{F}}G((\chi)_{\infty,v}^{-1},\overline{\delta})\prod_{v|\mathfrak{F}_c}G((\chi)_{\infty,\overline{v}}^{-1},\delta)\prod_{v|\mathfrak{I}}G((\chi)_{\infty,v}^{-1},\delta)$$
and
$$G((\chi)_{\infty,v},\delta) = (\chi)_{\infty,v}(\pi_v^{-e})\sum_{u\in (\mathcal{O}_{K,v}/\pi_v^e\mathcal{O}_{K,v})^{\times}}(\chi)_{\infty,v}(u)\Psi_{K,v}(-u\pi_v^{-e}\delta^{-1})$$
where $e$ is the exponent of $v$ in $\mathfrak{f}(\chi)$, $\pi_v$ is a local uniformizer of $K_v$, and $\Psi_K : \mathbb{A}_K \rightarrow \mathbb{C}$ is the standard additive character normalized so that $\Psi_{K,\infty}(x_{\infty}) = \exp(2\pi i\text{Tr}_{\mathbb{C}/\mathbb{R}}(x_{\infty}))$.
\end{theorem}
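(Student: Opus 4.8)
The plan is to realize $L_p(\chi,0)$, following Katz (and Hida--Tilouine for general auxiliary conductor $\mathfrak{C}$), as the value at a CM point of a measure-valued $p$-adic modular form built from the Eisenstein measure. Uniqueness is the easy half: the characters $\chi$ occurring in the interpolation formula --- those of infinity type $(-k_1,-k_2)$ with $k_1\ge 1$, $k_2\le 0$, and conductor divisible by the primes dividing $\mathfrak{F}$ --- form a $p$-adically dense subset of the continuous characters of $\Gal(K(\mathfrak{C}p^{\infty})/K)$, so by Mahler/Weierstrass density a continuous function on this group satisfying the stated formula is determined by it.

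For existence, the key object is Katz's $p$-adic Eisenstein measure: a measure on $\mathbb{Z}_p^{\times}\times\mathbb{Z}_p^{\times}$ --- transported to $\Gal(K(\mathfrak{C}p^{\infty})/K)$ via the splitting $p=\mathfrak{p}\overline{\mathfrak{p}}$ and $K_{\mathfrak{p}}=K_{\overline{\mathfrak{p}}}=\mathbb{Q}_p$ --- valued in the ring of $p$-adic modular forms over $\mathcal{O}_{\mathbb{C}_p}$ of level $\mathfrak{C}$, whose moments reproduce the (Maass--Shimura derivatives of the) classical algebraic Eisenstein series attached to the relevant weights and nebentypus. One builds this measure from the explicit $q$-expansions of the Eisenstein series: their coefficients are, up to elementary factors, divisor-type sums that manifestly interpolate in the weight variable, and Katz's $q$-expansion principle promotes the resulting formal object to a genuine measure valued in $p$-adic modular forms. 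I would then form the $\chi$-twisted class-group sum $\sum_{[\mathfrak{a}]\in\mathcal{C}\ell(\mathcal{O}_K)}\chi^{-1}(\mathfrak{a})\cdot(\text{value of the Eisenstein form at }\mathfrak{a}\star(A_0,t_0,\omega_{\mathrm{can}}))$ and set this equal to $L_p(\chi,0)$; the $p$-depletion $\flat$ intrinsic to passing to genuine $p$-adic modular forms removes both Euler factors at the primes above $p$, producing $(1-\chi(\overline{\mathfrak{p}}))(1-\check{\chi}(\overline{\mathfrak{p}}))$, while the auxiliary conductor $\mathfrak{C}$ in the construction accounts for $\prod_{v|\mathfrak{C}}(1-\chi(v))$.

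To verify the interpolation formula at a classical $\chi$, I would invoke the comparison between the $C^{\infty}$ and the $p$-adic theories at CM points in the spirit of Theorem~\ref{MSthetathm}: the value of the corresponding nearly holomorphic Eisenstein series at the complex CM triple $(A_0,t_0,\omega_0)$ is evaluated by Damerell's formula (in Shimura's form) to be $L(\chi,0)$ times the archimedean factors $(k_1-1)!$ and $(2\pi i/\sqrt{D_K})^{-k_2}$ and the Gauss-sum contributions of the ramified primes, whereas its value at the $p$-integral triple $(A_0,t_0,\omega_{\mathrm{can}})$ --- the one the measure actually computes --- differs from this by $(\Omega_p/\Omega_{\infty})^{k_1-k_2}$, arising from the change of differential $\omega_0=\Omega_p\cdot\omega_{\mathrm{can}}$ together with the weight of the form. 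Collecting these, along with the local term $\mathrm{Local}_{\mathfrak{p}}(\chi)$ absorbing the wild part of $\chi$ at $\mathfrak{p}$ (which is $1$ when $\chi$ is unramified there), the sign $(-1)^{k_1+k_2}$, and the constant $4$, yields the displayed identity.

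Finally I would obtain the functional equation by transporting the classical functional equation for Hecke $L$-functions through the interpolation. The dual character $\check{\chi}$, with $\check{\chi}(\mathfrak{a})=\chi^{-1}(\overline{\mathfrak{a}})\mathbb{N}_K(\mathfrak{a})$, has infinity type $(-k_2,-k_1)$, again in the admissible range, so $L_p(\check{\chi},0)$ is pinned down by the same interpolation; cancelling the matching $\Gamma$-factors and period ratios on the two sides leaves precisely the product of local $\epsilon$-factors $G((\chi)_{\infty,v}^{-1},\cdot)$ over the primes $v\mid\mathfrak{F}\mathfrak{F}_c\mathfrak{I}$, i.e.\ $W((\chi)_{\infty},\sqrt{D_K})$. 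I expect the main obstacle to be not any single conceptual step but the bookkeeping of constants: fixing the normalization of the Eisenstein measure so that \emph{all} of the archimedean, $p$-adic, and ramified-prime factors come out exactly as stated, and invoking the $q$-expansion principle carefully enough that the interpolating object is guaranteed to land in $p$-adic modular forms rather than merely in $p$-adic $q$-expansions.
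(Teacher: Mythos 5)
The paper does not prove Theorem~\ref{KatzpadicLfunction'}; it is simply recalled from \cite{Katz1} and \cite{HidaTilouine} with no argument given, so there is no in-paper proof to compare against. Your sketch is a reasonable high-level reconstruction of Katz's construction as it appears in those references: the Eisenstein measure, evaluation at the CM triple $\mathfrak{a}\star(A_0,t_0,\omega_{\mathrm{can}})$, Damerell's formula on the archimedean side, the Katz--Shimura comparison (Theorem~\ref{MSthetathm}-type statement) producing the period ratio $(\Omega_p/\Omega_\infty)^{k_1-k_2}$, and the functional equation transported from the complex one --- together with uniqueness from density of the interpolation range. Two points worth tightening if you were to flesh this out: (i) $p$-depletion on the Eisenstein-measure side produces modified Euler factors at \emph{both} $\mathfrak{p}$ and $\overline{\mathfrak{p}}$, and one must check the normalization actually yields the specific pair $(1-\chi(\overline{\mathfrak{p}}))(1-\check{\chi}(\overline{\mathfrak{p}}))$ rather than some rearrangement; (ii) the factor $\prod_{v\mid\mathfrak{C}}(1-\chi(v))$ is not automatic from ``auxiliary conductor in the construction'' but comes from the specific partial-Eisenstein-series normalization at level $\mathfrak{C}$ (analogous to the stabilization operators in Section~\ref{modify} of the paper), so the dependence on the chosen decomposition $\mathfrak{C}=\mathfrak{F}\mathfrak{F}_c\mathfrak{I}$ also needs to be tracked. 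Neither is a conceptual gap --- they are exactly the ``bookkeeping of constants'' you flag at the end.
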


Henceforth, put
$$G((\chi)_{\infty},\delta) = \prod_{v|\mathfrak{F}}G((\chi)_{\infty,v},\delta).$$

Now define the $p$-adic $L$-function
$$L_p(\chi,s) := L_p(\chi\langle \mathbb{N}_K\rangle^{s},0).$$

Gross \cite{Gross} gives the following factorization of $L_p$ along the cyclotomic line.

\begin{theorem}[\cite{Gross}]\label{Grosstheorem'} In the situation of Theorem \ref{KatzpadicLfunction'}, supose that $\mathfrak{F}_c = \overline{\mathfrak{F}}$, $\mathfrak{I} = 1$ and $\mathfrak{C} = f\mathcal{O}_K = \mathfrak{F}\overline{\mathfrak{F}}$. Suppose $\chi : \Gal(K(\mu_{fp^{\infty}})/\mathbb{Q}) \rightarrow \mathbb{C}_p^{\times}$ is a continuous $p$-adic Galois character which is trivial on complex conjugation (and so corresponds to an even $p$-adic Hecke character). Let $\chi_{/K}$ be the restriction of $\chi$ to $\Gal(K(\mu_{fp^{\infty}})/K)$. Then
$$\frac{f^s}{\prod_{\ell|f}\chi_{\ell}^{-1}(-\sqrt{D_K})\mathfrak{g}(\chi_{\ell}^{-1})}L_p(\chi_{/K},s) = L_p(\chi\varepsilon_K \omega,s) L_p(\chi^{-1},1-s).$$
Here the terms on the right are the usual Kubota-Leopoldt $p$-adic $L$-functions for algebraic Hecke characters over $\mathbb{Q}$.
\end{theorem}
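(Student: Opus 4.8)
The plan is to follow Gross's strategy: identify both sides as $p$-adic analytic functions of the cyclotomic variable $s$, reduce to an identity at a set of $s$ with an accumulation point, and pin down the values there using the classical Artin factorization together with the functional equations available on the $p$-adic side.

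First I would note that both sides extend to $p$-adic analytic (Iwasawa) functions of $s$: the left side because $L_p(\chi_{/K},s)=L_p(\chi_{/K}\langle\mathbb{N}_K\rangle^s,0)$ is the restriction to a line of the Katz two-variable function, the right side because it is a product of Kubota--Leopoldt functions. By $p$-adic Weierstrass preparation it then suffices to check equality at infinitely many $s$ clustering at a point. The complex input is the factorization $\Ind_{\Gal(\overline{K}/K)}^{\Gal(\overline{\mathbb{Q}}/\mathbb{Q})}\chi_{/K}=\chi\oplus\chi\varepsilon_K$ (equivalently, $\zeta_K=\zeta\cdot L(\cdot,\varepsilon_K)$ twisted by $\chi$), giving $L(\chi_{/K},s)=L(\chi,s)\,L(\chi\varepsilon_K,s)$; the complex $L$-value occurring in Katz's interpolation formula (Theorem~\ref{KatzpadicLfunction'}) thus splits into a product of two Dirichlet $L$-values, one interpolated directly by $L_p(\chi\varepsilon_K\omega,\cdot)$, the other --- after an application of the Dirichlet functional equation, which is what reflects $s\mapsto 1-s$ --- interpolated by $L_p(\chi^{-1},\cdot)$ at $1-s$. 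This is the conceptual origin of the asymmetry on the right-hand side.

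The simplification special to the cyclotomic line is that a Hecke character of $K$ obtained by base change from $\mathbb{Q}$ has \emph{balanced} infinity type (so $k_1=k_2$ in Katz's formula), whence the transcendental period ratio $\left(\Omega_p/\Omega_\infty\right)^{k_1-k_2}$ is trivial --- which is exactly why a period-free factorization into Kubota--Leopoldt $L$-functions is possible. The remaining work is the bookkeeping of the non-period multipliers: the $\Gamma$-factor $(k_1-1)!$, the power of $2\pi i/\sqrt{D_K}$, the sign and the factor $4$ from Katz's formula, together with the $\Gamma$-factor, power of $2\pi$, and Gauss sum contributed by the Dirichlet functional equation, must be collected and shown to assemble into precisely $f^s/\prod_{\ell\mid f}\chi_\ell^{-1}(-\sqrt{D_K})\mathfrak{g}(\chi_\ell^{-1})$, using $\mathfrak{g}(\varepsilon_K)=\sqrt{D_K}$ and the multiplicativity of Gauss sums over coprime parts of the conductor. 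Similarly the Euler factors $(1-\chi(\overline{\mathfrak{p}}))(1-\check{\chi}(\overline{\mathfrak{p}}))$ distribute across the two Euler-at-$p$ factors of the Kubota--Leopoldt functions (using $p=\mathfrak{p}\overline{\mathfrak{p}}$), and $\prod_{v\mid\mathfrak{C}}(1-\chi(v))$ matches the Euler factors at the primes above $f$ already absent from $L_p(\chi\varepsilon_K\omega,\cdot)$ and $L_p(\chi^{-1},\cdot)$ by conductor.

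The main obstacle is that the cyclotomic-line specializations never lie in Katz's interpolation range (balanced versus unbalanced infinity type), so one cannot naively equate interpolation formulas there. To produce the required accumulation set of points at which both sides are explicitly computable, I would use the functional equation of the Katz $p$-adic $L$-function from Theorem~\ref{KatzpadicLfunction'} to transport the problem into a range where Katz interpolation --- hence the complex factorization above --- applies, supplementing this if needed with a $p$-adic Kronecker limit formula to evaluate $L_p(\chi_{/K},s)$ at $s=0$ directly and with Leopoldt's formula on the Kubota--Leopoldt side; matching the resulting values and invoking analyticity then forces the identity in general. Beyond that, the only real difficulty is the constant-chasing of archimedean and $\ell$-adic local factors described above, which is where the entire content of the normalizing constant $\prod_{\ell\mid f}\chi_\ell^{-1}(-\sqrt{D_K})\mathfrak{g}(\chi_\ell^{-1})$ resides.
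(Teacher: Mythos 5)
Your high-level plan correctly identifies Gross's strategy and the right complex input (the Artin factorization $L(\chi_{/K},s)=L(\chi,s)L(\chi\varepsilon_K,s)$, the balanced infinity type killing the period ratio, the Gauss-sum bookkeeping). But the mechanism you propose for actually pinning down values does not work, and the step that carries the real content of Gross's argument is missing.

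The issue is that the Katz functional equation preserves the cyclotomic line: if $\chi_{/K}\langle\mathbb{N}_K\rangle^s$ has balanced infinity type $(-s,-s)$, its dual $\check{\chi}$ has infinity type $(s-1,s-1)$ — still balanced — so the interpolation range $k_1\geq 1,\ k_2\leq 0$ is never reached, for any $s$, on either side of the functional equation. Hence you cannot ``transport the problem into a range where Katz interpolation applies'' and then read off the factorization from complex $L$-values; that route is closed. What the paper does instead is use Katz's \emph{boundary} special-value formula at $s=0$ (Katz \cite[10.4.9--10.4.12]{Katz1}), which expresses $L_p(\chi_{/K},0)$ not through a critical complex $L$-value but through $p$-adic logarithms of Robert's elliptic units $F^+(a)_p$; on the complex side, the second Kronecker limit formula expresses $L'((\chi_{/K})_\infty,0)$ through archimedean logarithms of the same units. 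The complex Artin factorization then gives an identity among archimedean logs. The missing ingredient in your proposal is the argument that transfers this to an identity among $p$-adic logs: the paper observes that $\sum_a(\chi)_\infty(a)\otimes F^+(a)_\infty$ and $\sum_a(\chi)_\infty(a)\otimes C^+(a)_\infty$ both live in the one-dimensional $(\chi)_\infty^{-1}$-eigenspace of $\mathbb{C}\otimes_{\mathbb{Z}}E(M_\infty)$, so they differ by a scalar that can be detected by \emph{any} nonzero functional — in particular by $\log$ on the archimedean side or by $\log_p$ after applying $i$. Without this one-dimensionality argument there is no reason the $p$-adic and archimedean logarithms should produce the same constant of proportionality, and your plan gives no substitute for it. Finally, note that in the paper the Katz functional equation (together with Lemma~\ref{Gausssumlemma}) is used at the very end for a different purpose: to extend the identity from $\chi$ unramified at $D_K$ to arbitrary $\chi$, not to enter the interpolation region.
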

\begin{remark}Gross in \cite{Gross} originally proves Theorem \ref{Grosstheorem'} for auxiliary conductor $f = 1$, even though his proof translates \emph{mutatis mutandis} to the case of arbitrary $f$. The general auxiliary conductor version of the theorem seems to be widely present in the literature, although the author has not been able to locate a complete proof. Thus, for the sake of completeness, we give a proof of Theorem \ref{Grosstheorem'}, following Gross's method. We will first need a lemma.
\end{remark}
\begin{lemma}\label{Gausssumlemma} In the situation of Theorem \ref{KatzpadicLfunction'}, suppose that $\mathfrak{F}_c = \overline{\mathfrak{F}}$, $\mathfrak{I} = 1$ and $\mathfrak{C} = f\mathcal{O}_K = \mathfrak{F}\overline{\mathfrak{F}}$. Suppose $\chi = \tilde{\chi}\circ\mathbb{N}_K$ for some Hecke character $\tilde{\chi}$ over $\mathbb{Q}$. Then we have
$$W((\chi)_{\infty},\delta) = \frac{G((\chi)_{\infty}^{-1},\overline{\delta})}{G((\check{\chi})_{\infty}^{-1},-\overline{\delta})}.$$
\end{lemma}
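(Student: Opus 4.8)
The plan is to unwind both sides using the definition of $W((\chi)_\infty,\delta)$ from Theorem \ref{KatzpadicLfunction'} and the relation between the local Gauss sums $G((\chi)_{\infty,v},\delta)$ and classical Dirichlet Gauss sums. Since $\mathfrak{F}_c = \overline{\mathfrak{F}}$, $\mathfrak{I} = 1$ and $\mathfrak{C} = f\mathcal{O}_K = \mathfrak{F}\overline{\mathfrak{F}}$, the defining product becomes $W((\chi)_\infty,\delta) = \prod_{v|\mathfrak{F}} G((\chi)_{\infty,v}^{-1},\overline{\delta}) \cdot \prod_{v|\overline{\mathfrak{F}}} G((\chi)_{\infty,\overline v}^{-1},\delta)$; note the second product runs over $v|\mathfrak{F}$ again after reindexing $\overline{v} \leftrightarrow v$, so $W((\chi)_\infty,\delta) = G((\chi)_\infty^{-1},\overline\delta)\cdot\prod_{v|\mathfrak{F}}G((\chi)_{\infty,v}^{-1},\delta)$ in the abbreviated notation introduced just before the lemma. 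Thus the claim reduces to showing $\prod_{v|\mathfrak{F}}G((\chi)_{\infty,v}^{-1},\delta) = 1/G((\check\chi)_\infty^{-1},-\overline\delta)$, i.e. that the ``$\delta$ half'' of $W$ is the inverse of the dual character's Gauss sum with the sign-flipped $\overline\delta$.

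First I would record how $\chi = \tilde\chi\circ\mathbb{N}_K$ behaves locally: for a split prime $v|\mathfrak{F}$ lying over a rational prime $\ell$, one has $K_v \cong \mathbb{Q}_\ell$ and $(\chi)_{\infty,v} = \tilde\chi_\ell$ (with $(\chi)_{\infty,\overline v} = \tilde\chi_\ell$ as well, since $\mathbb{N}_K$ restricted to either factor of $K\otimes\mathbb{Q}_\ell$ is the identity up to the conventions). Hence each local factor $G((\chi)_{\infty,v},\delta)$ becomes a twisted classical local Gauss sum $\mathfrak{g}(\tilde\chi_\ell)$ up to an explicit root-of-unity/normalization factor coming from $\delta = \sqrt{D_K}/2$ and the additive character $\Psi_{K,v}$; I would make this explicit by comparing $\Psi_{K,v}(-u\pi_v^{-e}\delta^{-1})$ with $e^{2\pi i a/\ell^e}$. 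The key computational identity is the standard relation $\mathfrak{g}(\tilde\chi_\ell)\mathfrak{g}(\tilde\chi_\ell^{-1}) = \tilde\chi_\ell(-1)\ell^{e}$ (or its adelic/local-component form), together with $\check\chi(\mathfrak{a}) = \chi^{-1}(\overline{\mathfrak{a}})\mathbb{N}_K(\mathfrak{a})$, which locally at $v|\mathfrak{F}$ translates into $(\check\chi)_{\infty,v} = (\chi)_{\infty,\overline v}^{-1}\cdot|\cdot|_v^{-1}$-type twist; tracking the infinity type $(-k_1,-k_2)$ versus $(-k_2+?,\ldots)$ bookkeeping tells us exactly which power of $\ell$ and which value $\tilde\chi_\ell(-1)$ appears. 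Assembling these over all $v|\mathfrak{F}$ and using $\prod_{\ell|f}\mathfrak{g}(\tilde\chi_\ell) = \mathfrak{g}(\tilde\chi^{-1})$ (the convention fixed in Section \ref{notation}) should collapse the product to the desired single Gauss sum $G((\check\chi)_\infty^{-1},-\overline\delta)^{-1}$, with the sign flip $\overline\delta \to -\overline\delta$ absorbing precisely the $\tilde\chi_\ell(-1)$ factors.

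The main obstacle I anticipate is not conceptual but bookkeeping: matching the normalization of Katz's local Gauss sums $G((\chi)_{\infty,v},\delta)$ — which involve the prefactor $(\chi)_{\infty,v}(\pi_v^{-e})$, the choice of uniformizer, and the additive character $\Psi_{K,v}$ normalized via $\Psi_{K,\infty}(x) = \exp(2\pi i\,\mathrm{Tr}_{\mathbb{C}/\mathbb{R}}(x))$ — with the arithmetic-normalization Gauss sum $\mathfrak{g}(\chi_\ell)$ fixed in Section \ref{notation}, and ensuring every $\sqrt{D_K}$, every sign, and every power of $\ell$ cancels correctly when passing between $\delta$ and $\overline\delta = -\delta$ (note $\overline{\sqrt{D_K}} = -\sqrt{D_K}$ since $D_K<0$). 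I would handle this by first proving the identity one local place at a time for $v|\mathfrak{F}$, being careful that the conductor exponent of $\check\chi$ at $v$ equals that of $\chi$ at $\overline v$, and only then taking the product; the global Gauss sum relation $\prod_{\ell|\mathfrak{f}(\chi)}\mathfrak{g}(\chi_\ell) = \mathfrak{g}(\chi^{-1})$ then does the final assembly. Once the per-place identity $G((\chi)_{\infty,v}^{-1},\delta) = G((\check\chi)_{\infty,v}^{-1},-\overline\delta)^{-1}$ is nailed down, the lemma follows immediately.
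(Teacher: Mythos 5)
Your plan is correct and matches the paper's proof: you split $W((\chi)_\infty,\delta)$ into $G((\chi)_\infty^{-1},\overline{\delta})$ times $\prod_{v\mid\mathfrak{F}}G((\chi)_{\infty,v}^{-1},\delta)$ exactly as the paper does, and reduce to the per-place identity $G((\chi)_{\infty,v}^{-1},\delta)=G((\check{\chi})_{\infty,v}^{-1},-\overline{\delta})^{-1}$, which the paper establishes (in the equivalent form $G((\check{\chi})_{\infty,v}^{-1},\delta)=G((\chi)_{\infty,v}^{-1},-\overline{\delta})^{-1}$) using the local symmetry $\chi_v(x)=\chi_{\overline{v}}(\overline{x})$ coming from $\chi=\tilde{\chi}\circ\mathbb{N}_K$ together with the modulus identity $|G|^2=(\mathbb{N}_K)_{\infty,v}(\pi_v^{-e})$ — the latter being the intrinsic form of the classical $\mathfrak{g}(\psi)\mathfrak{g}(\psi^{-1})=\psi(-1)\ell^e$ relation you propose to invoke.
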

\begin{proof}Since $\chi = \tilde{\chi}\circ\mathbb{N}_K$, we have $\chi_v(x) = \chi_{\overline{v}}(\overline{x})$ for any place $v$ of $K$. For each $v|\mathfrak{F}\overline{\mathfrak{F}}$, since $K_v = \mathbb{Q}_{\ell}$ (where $\ell$ is the rational prime below $v$) we can choose representatives $\{u\}$ of $(\mathcal{O}_K/\pi_v^e\mathcal{O}_K)^{\times} = (\mathbb{Z}/\ell^e\mathbb{Z})^{\times}$ such that $\overline{u} = u$ and local uniformizers $\pi_v$ such that $\overline{\pi_v} = \pi_v$. (This is accomplished by replacing $\pi_v$ with $\pi_v\overline{\pi_v}$ if needed.)

Thus, we have for any $v|\mathfrak{F}$,
\begin{align*}&G((\check{\chi})_{\infty,v}^{-1},\delta) = (\check{\chi})_{\infty,v}^{-1}(\pi_v^{-e})\sum_{u\in (\mathcal{O}_{K,v}/\pi_v^e\mathcal{O}_{K,v})^{\times}}(\check{\chi})_{\infty,v}^{-1}(u)\Psi_{K,v}(-u\pi_v^{-e}\delta^{-1})\\
&=(\mathbb{N}_K)_{\infty,v}^{-1}(\pi_v^{-e})(\chi)_{\infty,\overline{v}}(\overline{\pi_v^{-e}})\sum_{u\in (\mathcal{O}_{K,v}/\pi_v^e\mathcal{O}_{K,v})^{\times}}(\mathbb{N}_K)_{\infty,v}^{-1}(u)(\chi)_{\infty,\overline{v}}(\overline{u})\overline{\Psi_{K,v}(\overline{u\pi_v^{-e}\delta^{-1}})}\\
&=(\mathbb{N}_K)_{\infty,v}^{-1}(\pi_v^{-e})(\chi)_{\infty,v}(\pi_v^{-e})\overline{\sum_{u\in (\mathcal{O}_{K,v}/\pi_v^e\mathcal{O}_{K,v})^{\times}}(\chi)_{\infty,v}^{-1}(u)\Psi_{K,v}(u\pi_v^{-e}\overline{\delta^{-1}})}\\
&=(\chi)_{\infty,v}(\pi_v^{-e})\frac{1}{\sum_{u\in (\mathcal{O}_{K,v}/\pi_v^e\mathcal{O}_{K,v})^{\times}}(\chi)_{\infty,v}^{-1}(u)\Psi_{K,v}(u\pi_v^{-e}\overline{\delta^{-1}})}=\frac{1}{G((\chi)_{\infty,v}^{-1},-\overline{\delta})}
\end{align*}
where in the penultimate equality, we have used the fact that 
$$\left|\sum_{u\in (\mathcal{O}_{K,v}/\pi_v^e\mathcal{O}_{K,v})^{\times}}(\chi)_{\infty,v}^{-1}(u)\Psi_{K,v}(-u\pi_v^{-e}\overline{\delta^{-1}})\right|^2 = (\mathbb{N}_K)_{\infty,v}(\pi_v^{-e}).$$

Now simply note that by definition of $W((\chi)_{\infty})$ and the above computation,
$$W((\chi)_{\infty},\delta) = \prod_{v|\mathfrak{F}}G((\chi)_{\infty,v}^{-1},\overline{\delta})\prod_{v|\mathfrak{F}}G((\chi)_{\infty,v}^{-1},\delta) =  \frac{G((\chi)_{\infty}^{-1},\overline{\delta})}{G((\check{\chi})_{\infty}^{-1},-\overline{\delta})}.$$
\end{proof}

\begin{proof}[Proof of Theorem \ref{Grosstheorem'}] Interpreting the statement as an assertion about $p$-adic measures, and proceeding as in \cite{Gross} using Lemma 1.1 of loc. cit., it suffices to prove the theorem at $s = 0$ and when $\chi$ is of finite order. 

First, suppose $\chi$ is unramified at each place dividing $D_K$ and so $\chi :\Gal(K(\mu_{fp^r})/\mathbb{Q}) \rightarrow \mathbb{C}_p^{\times}$ is an even Dirichlet character, where $fp^r\mathcal{O}_K = \mathfrak{f}(\chi_{/K})$, $(f,p) = 1$ and $fp^r>0$. 
Let $w_{fp^r}$ denote the number of roots of unity in $K^{\times}$ congruent to $1 \bmod fp^r$, and let $\mathcal{C}\ell(fp^r)$ denote the ray class group of $\mathcal{O}_K$ of modulus $fp^r\mathcal{O}_K$. 
Recall our fixed embeddings $i_{\infty}: \overline{\mathbb{Q}} \hookrightarrow \mathbb{C}$ and $i_p: \overline{\mathbb{Q}} \hookrightarrow \mathbb{C}_p$ compatible with the identification $i: \mathbb{C} \xrightarrow{\sim} \mathbb{C}_p$. For $a \in (\mathbb{Z}/fp^r)^{\times}$, let 
$$C_{fp^r}(a)_{\infty} = 1 - e^{\frac{-2\pi ia}{fp^r}} \hspace{.5cm} \text{and} \hspace{.5cm} C_{fp^r}(a) = i_{\infty}^{-1}(C_{fp^r}(a)_{\infty}) \hspace{.5cm} \text{and} \hspace{.5cm} C_{fp^r}(a)_p = i^{-1}(C_{fp^r}(a)_{\infty}).$$
Further let $E_{fp^r}(c)_{\infty} \in \overline{\mathbb{Q}}^{\times} \subset \mathbb{C}^{\times}$ be the ``elliptic units" of Robert \cite{Robert}, and let 
$$E_{fp^r}(c) = i_{\infty}^{-1}(E_{fp^r}(c)_{\infty}) \hspace{.5cm} \text{and} \hspace{.5cm} E_{fp^r}(c)_p = i(E_{fp^r}(c)_{\infty}).$$
Now for $a\in (\mathbb{Z}/fp^r)^{\times}$, choose $c \in \mathcal{C}\ell(fp^r)$ so that $\Nm_{K/\mathbb{Q}}(c) \equiv a \mod fp^r$, and let 
$$F_{fp^r}(a) = \Nm_{K(fp^r)/K(\mu_{fp^r})}(E_{fp^r}(c)).$$
Finally, for $a \in A = (\mathbb{Z}/fp^r)^{\times}/\{\pm 1\}$ let 
$$F^+(a) = F_{fp^r}(a)F_{fp^r}(-a) \hspace{.5cm} \text{and} \hspace{.5cm} C^+(a) = C_{fp^r}(a)C_{fp^r}(-a).$$

Note that in our situation, for any $s \in \mathbb{Z}_p$,
$$G((\chi_{/K}\langle\mathbb{N}_K\rangle^s)_{\infty}^{-1},-\sqrt{D_K}) = \prod_{v|\mathfrak{F}}G((\chi_{/K}\langle\mathbb{N}_K\rangle^s)_{\infty,v}^{-1},-\sqrt{D_K}) = f^{-s}\prod_{\ell|f}\mathfrak{g}(\chi_{\ell}^{-1})\chi_{\ell}^{-1}(\sqrt{D_K}).$$
Hence, using the special value formulas from \cite[10.4.9-10.4.12]{Katz1} (suitably modified with respect to the normalization of the $p$-adic $L$-function in \cite{Gross}) we have 
\begin{align*}&\frac{1}{\prod_{\ell|f}\chi_{\ell}^{-1}(-\sqrt{D_K})\mathfrak{g}(\chi_{\ell}^{-1})}L_p(\chi_{/K},0) = \frac{\prod_{\ell|f}\chi_{\ell}(-1)}{G((\chi_{/K})_{\infty}^{-1},-\sqrt{D_K})}L_p(\chi_{/K},0) \\
&= -\frac{1}{3fp^r w_{fp^r}}(1-\chi_{/K}(\overline{\mathfrak{p}}))\left(1-\frac{\chi_{/K}^{-1}(\mathfrak{p})}{p}\right)\frac{\mathfrak{g}(\chi^{-1})}{fp^r}\sum_{a\in A}\chi(a)\log_pF^+(a)_p,
\end{align*}
$$L_p(\chi\varepsilon_K\omega,0) = -(1-\chi\varepsilon_K(p))B_{1,\chi\varepsilon_K},$$
$$L_p(\chi^{-1},1) = -\left(1-\frac{\chi^{-1}(p)}{p}\right)\frac{\mathfrak{g}(\chi^{-1})}{fp^r}\sum_{a\in A}\chi(a)\log_p C^+(a)_p.$$
On the complex side, we have (by Kronecker's second limit formula, see \cite{Stark}),
$$L'((\chi_{/K})_{\infty},0) = -\frac{1}{6fp^r w_{fp^r}}\sum_{a\in A}(\chi)_{\infty}(a)\log F^+(a)_{\infty},$$
$$L((\chi)_{\infty}\varepsilon_K,0) = -B_{1,(\chi)_{\infty}\varepsilon_K},$$
$$L((\chi)_{\infty}^{-1},1) = -\frac{\mathfrak{g}((\chi)_{\infty}^{-1})}{fp^r}\sum_{a\in A}(\chi)_{\infty}(a)\log C^+(a)_{\infty},$$
and thus by the functional equation, we have
$$L'((\chi)_{\infty},0) = -\frac{1}{2}\sum_{a\in A}(\chi)_{\infty}(a)\log C^+(a)_{\infty}$$
where $\log: \mathbb{R}^{\times} \rightarrow \mathbb{R}$ is the map $x \mapsto \log|x|$.
Now we claim that
$$\frac{1}{\prod_{\ell|f}\chi_{\ell}^{-1}(-\sqrt{D_K})\mathfrak{g}(\chi_{\ell}^{-1})}L_p(\chi_{/K},0) = L_p(\chi\varepsilon_K\omega,0)L_p(\chi^{-1},1).$$
Note $(1-\chi_{/K}(\overline{\mathfrak{p}}))\left(1-\frac{\chi_{/K}^{-1}(\mathfrak{p})}{p}\right) = (1-\chi\varepsilon_K(p))\left(1-\frac{\chi^{-1}(p)}{p}\right)$ since $\varepsilon_K(p)=1$. If this quantity is 0 then the above identity of special values of $p$-adic $L$-functions is trivial, so assume this is not the case. By the above formulas, this identity is equivalent to
$$-3fp^rw_{fp^r}B_{1,\chi\varepsilon_K}\sum_{a\in A}\chi(a)\log_pC^+(a)_p = \sum_{a\in A}\chi(a)\log_pF^+(a)_p.$$
From the complex factorization
$$L((\chi_{/K})_{\infty},s) = L((\chi)_{\infty}\varepsilon_K,s)L((\chi)_{\infty},s)$$
we get, by taking the derivative at $s=0$,
$$L'((\chi_{/K})_{\infty},0) = L((\chi)_{\infty}\varepsilon_K,s)L'((\chi)_{\infty},0)$$
which is equivalent, by the above formulas, to
$$-3fp^rw_{fp^r}B_{1,(\chi)_{\infty}\varepsilon_K}\sum_{a\in A}(\chi)_{\infty}(a)\log C^+(a)_{\infty} = \sum_{a\in A}(\chi)_{\infty}(a)\log F^+(a)_{\infty}.$$
Now $C^+(a)_{\infty}$ and $F^+(a)_{\infty}$ are $p$-units in the field $M_{\infty} = \mathbb{Q}(\cos\frac{2\pi i}{fp^r})$. Let $E(M_{\infty})$ denote the group of all $p$-units viewed as a finitely generated subgroup of $\mathbb{R}^{\times}$. Then we claim that the identity
$$-3fp^rw_{fp^r}B_{1,(\chi)_{\infty}\varepsilon_K}\sum_{a\in A}(\chi)_{\infty}(a)\otimes C^+(a)_{\infty} = \sum_{a\in A}(\chi)_{\infty}(a)\otimes F^+(a)_{\infty}$$
holds in $\mathbb{C}\otimes_{\mathbb{Z}}E(M_{\infty})$. The representation of $A = \Gal(M_{\infty}/\mathbb{Q})$ on this complex vector space is isomorphic to the regular representation, and the elements
$$\sum_{a\in A}(\chi)_{\infty}(a)C^+(a)_{\infty}\hspace{.5cm} \text{and} \hspace{.5cm} \sum_{a\in A}(\chi)_{\infty}(a)F^+(a)_{\infty}$$
are both in the $(\chi)_{\infty}^{-1}$-eigenspace. Because this eigenspace is one-dimensional, the elements above differ by a complex scalar. Applying the $\mathbb{C}$-linear map
$$\mathbb{C}\otimes_{\mathbb{Z}}E(M_{\infty}) \xrightarrow{1\otimes \log} \mathbb{R}\otimes \mathbb{C} \xrightarrow{\text{mult}}\mathbb{C}$$
and considering the identity above concerning special values of complex $L$-functions, we identify this scalar as $-3fp^rw_{fp^r}B_{1,(\chi)_{\infty}\varepsilon_K}$. Thus our identity in $\mathbb{C}\otimes_{\mathbb{Z}} E(M_{\infty})$ holds. Now applying our identification $i: \mathbb{C} \xrightarrow{\sim} \mathbb{C}_p$, we obtain the identity
$$-3fp^rw_{fp^r}B_{1,\chi\varepsilon_K}\sum_{a\in A}\chi(a)\otimes C^+(a)_{\infty} = \sum_{a\in A}\chi(a)\otimes F^+(a)_{\infty}$$ 
in $\mathbb{C}_p\otimes E(M_p)$, where $M_p = i(M_{\infty})$. Finally, applying the homomorphism
$$\mathbb{C}_p\otimes \mathbb{C}_p \xrightarrow{1\otimes \log_p}\mathbb{C}_p\otimes\mathbb{C}_p \xrightarrow{\text{mult}}\mathbb{C}_p$$
we obtain our identity of special values of $p$-adic $L$-functions.
Now, to extend to general $\chi$ (including when $\chi$ is ramified at places dividing $D_K$), we use the functional equation of Theorem \ref{KatzpadicLfunction'} and Lemma \ref{Gausssumlemma}:
\begin{align*}&\frac{f^s}{\prod_{\ell|f}\chi_{\ell}^{-1}(-\sqrt{D_K})\mathfrak{g}(\chi_{\ell}^{-1})}L_p(\chi_{/K},s)\\
&=\frac{1}{G((\chi_{/K}\langle\mathbb{N}_K\rangle^s)_{\infty}^{-1},-\sqrt{D_K})}L_p(\chi_{/K},s) \\
&=  \frac{W((\chi_{/K}\langle\mathbb{N}_K\rangle^s)_{\infty},\sqrt{D_K})}{G((\chi_{/K}\langle\mathbb{N}_K\rangle^s)_{\infty}^{-1},-\sqrt{D_K})}L_p(\chi_{/K}^{-1}\omega_K^{-1},1-s) \\
&= \frac{1}{G((\check{\chi}_{/K}\langle\mathbb{N}_K\rangle^{-s})_{\infty}^{-1},\sqrt{D_K})}L_p((\chi\omega\varepsilon_K)_{/K}^{-1},1-s) \\
&= \frac{f^{1-s}}{\prod_{\ell|f}\chi_{\ell}(-\sqrt{D_K})\mathfrak{g}(\chi_{\ell})}L_p((\chi\omega\varepsilon_K)_{/K}^{-1},1-s)
\end{align*}
where $\check{\chi}_{/K}$ denotes the dual of $\chi_{/K}$ and we have used the fact that for $\ell|f$, $\omega$ and $\varepsilon_K$ are unramified at $\ell$, $\varepsilon_{K,\ell}(\ell) = 1$ since $\ell$ is split in $K$, and $(\varepsilon_K)_{/K} = 1$.
\end{proof}

\subsection{Eisenstein descent}\label{EisensteinDescent}
We now define the notion of cuspforms which have \emph{Eisenstein descent}. In this setting, we prove a congruence between the BDP $p$-adic $L$-function and the Katz $p$-adic $L$-function on the anticyclotomic line.

\begin{definition}\label{Eisensteindescent}Fix a global or local field $M$ containing $E_f$ and fix an integral ideal $\mathfrak{m}$ of $\mathcal{O}_M$. Suppose $(N_+,N_-,N_0)$ is a triple of pairwise coprime positive integers, where $N = N_+N_-N_0$, $N_+N_-$ is squarefree, $N_0$ is squarefull, and $\psi_1$ and $\psi_2$ are Dirichlet characters over $\mathbb{Q}$. We say that a normalized newform $f = \sum_{n=1}^{\infty}a_n q^n \in S_k(\Gamma_0(N),\varepsilon_f)$, where $k$ is a positive integer, has \emph{partial Eisenstein descent of type $(\psi_1,\psi_2,N_+,N_-,N_0)$ (over $M$) mod $\mathfrak{m}$} if $\psi_1\psi_2 = \varepsilon_f$ and we have
\begin{enumerate}
\item $a_{\ell} \equiv \psi_1(\ell) + \psi_2(\ell)\ell^{k-1} \mod \mathfrak{m}$ for $\ell\nmid N$;
\item $a_{\ell} \equiv \psi_1(\ell) \mod \mathfrak{m}$ for $\ell|N_+$;
\item $a_{\ell} \equiv \psi_2(\ell)\ell^{k-1} \mod \mathfrak{m}$ for $\ell|N_-$;
\item $a_{\ell} \equiv 0 \mod \mathfrak{m}$ for $\ell|N_0$.\\

If further, $f$ satisfies
\item
$$\delta_{\psi_1 = 1}\frac{B_{1,\psi_2}B_{k,\psi_1}}{k}\prod_{\ell|N_+}(1-\psi_1(\ell)\ell^{k-1})\prod_{\ell|N_-}(1-\psi_2(\ell))\prod_{\ell|N_0}(1-\psi_1(\ell)\ell^{k-1})(1-\psi_2(\ell)) \equiv 0 \mod \mathfrak{m}$$
\end{enumerate}
where 
$$\delta_{\psi = 1} := \begin{cases} 1 & \text{if}\; \psi = 1,\\
0 & \text{otherwise},\\
\end{cases}$$
then we say $f$ has \emph{(full) Eisenstein descent of type $(\psi_1,\psi_2,N_+,N_-,N_0)$ mod $\mathfrak{m}$}.
\end{definition}
\begin{remark}\label{Eisensteinremark}Recall, for $k \ge 2 $, the Eisenstein series $E_{k}^{\psi_1,\psi_2,(N)} \in M_k^*(\Gamma_0(N),\psi_1\psi_2)$. (When either $k > 2$, or $\psi_1 \neq 1$ or $\psi_2 \neq 1$, then in fact $E_k^{\psi_1,\psi_2,(N)} \in M_k(\Gamma_0(N),\psi_1\psi_2)$.) It has $q$-expansion
\begin{align*}&E_{k}^{\psi_1,\psi_2,(N)}(q) \\
&:= -\delta_{\psi_1 = 1}L^{(N_-N_0)}(\psi_2,0)L^{(N_+N_0)}(\psi_1,1-k) + \sum_{n=1}^{\infty}\sigma_{k-1}^{\psi_1,\psi_2,(N)}(n)q^n \\
&=-\delta_{\psi_1 = 1}\frac{B_{k,\psi_1}}{2k}\\
&\cdot\prod_{\ell|N_+}(1-\psi_1(\ell)\ell^{k-1})\prod_{\ell|N_-}(1-\psi_2(\ell))\prod_{\ell|N_0}(1-\psi_1(\ell)\ell^{k-1})(1-\psi_2(\ell)) + \sum_{n=1}^{\infty}\sigma_{k-1}^{\psi_1,\psi_2,(N)}(n)q^n
\end{align*}
where for a Dirichlet character $\psi$, $L^{(N)}(\psi,s)$ denotes its $L$-function with Euler factors at primes $\ell|N$ removed, and
$$\sigma_{k-1}^{\psi_1,\psi_2,(N)}(n) := \sum_{0<d|n, (d,N_+) = 1, (n/d,N_-) = 1, (n,N_0) = 1}\psi_1(n/d)\psi_2(d)d^{k-1}.$$
Here, in keeping with our conventions, $\psi(m) = 0$ if $(m,\mathfrak{f}(\psi))\neq 1$. Then for $f \in S_k(\Gamma_0(N),\varepsilon_f)$ to have partial Eisenstein descent of type $(\psi_1,\psi_2,N_+,N_-,N_0)$ at $\lambda|p$ is equivalent to
$$\theta^jf(q) \equiv \theta^jE_{k}^{\psi_1,\psi_2,(N)}(q) \mod \mathfrak{m}$$
for all $j \ge 1$. When $f$ has full Eisenstein descent, this congruence holds for $j \ge 0$. 
\end{remark}
\begin{remark}\label{constantterm0}Suppose we are given $f \in S_k(\Gamma_0(N),\varepsilon_f)$ with partial Eisenstein descent of type $(\psi_1,\psi_2,N_+,N_-,N_0)$ over $M$ mod $\mathfrak{m}$, in the sense of Definition \ref{Eisensteindescent}. In many situations, $(5)$ is forced to hold \emph{a priori} so that $f$ automatically has full Eisenstein descent. Suppose $\mathfrak{m}\neq\mathcal{O}_M$ (for otherwise, the conditions of Definition \ref{Eisensteindescent} are vacuous).

If $\psi_1$ is non-trivial, then $\delta_{\psi = 1} = 0$ and $(5)$ holds. Now suppose that $\psi_1 = 1$. If $k$ is odd, then $B_{k,\psi_1} = B_k = 0$, again forcing $(5)$ to hold. Suppose $k$ is even. Let $\lambda|\mathfrak{m}$ be a prime ideal of residual characteristic not equal to $2$. By parts (3) and (1) of Theorem \ref{classify}, we have $\psi_1\psi_2 = \varepsilon_f$; in particular, we have $\psi_2(-1) = \varepsilon_f(-1) = (-1)^k = 1$. Hence $\psi_2$ is even, and so $B_{1,\psi_2} = 0$ unless $\psi_2 = 1$. Now further suppose that $\psi_2 = 1$. Then $(5)$ is still forced to hold unless $N_-N_0 = 1$. Now suppose $N_-N_0 = 1$. Let $\lambda|\mathfrak{m}$ be a prime ideal of residual characteristic $p$. Then conditions (1)-(4) still imply
$$\theta f(q) \equiv \theta E_{k}^{\psi_1,\psi_2,(N)}(q) \mod \lambda.$$
Suppose $p > k+1$, so that by \cite[Corollary 3, p. 326]{Serre2}, $\theta$ is injective on mod $\lambda$ modular forms. Then the above congruence implies
$$f(q) \equiv E_{k}^{\psi_1,\psi_2,(N)}(q) \mod \lambda.$$
Hence, if $\mathfrak{m}$ has order $0$ or $1$ at every prime of $\mathcal{O}_M$ and if for every prime $\lambda|\mathfrak{m}$, its residual characteristic $p$ satisfies $p > k+1$, then (5) is forced to hold. (See \cite[Theorem 4.1]{BillereyMenares} where a similar argument is given.) In particular, we have
$$\frac{B_k}{2k}\prod_{\ell|N_+}(1-\ell^{k-1}) \equiv 0 \mod \mathfrak{m}.$$
In particular, when $k = 2$ we have
$$\frac{1}{24}\prod_{\ell|N_+}(1-\ell) \equiv 0 \mod \mathfrak{m},$$
and thus there exists at least one $\ell|N_+ = N$ such that $\ell \equiv 1 \mod \mathfrak{m}$, i.e. $\ell \equiv 1 \mod \mathfrak{m}\cap\mathbb{Z}$. 
\end{remark}

Suppose we are given a normalized eigenform $f(q) = \sum_{n=0}^{\infty}a_nq^n\in M_k(\Gamma_0(N),\varepsilon_f)$. Again let $M$ be a number field containing $E_f$. Let $k_{\lambda}$ denote the residue field of $\mathcal{O}_M$ at a prime $\lambda|p$. By a construction of Deligne (see \cite{DeligneGalois}) we can attach a unique semisimple $p$-adic Galois representation $\rho_f: \Gal(\overline{\mathbb{Q}}/\mathbb{Q}) \rightarrow GL_2(M_{\lambda})$ unramified outside $pN$ and such that $\rho_f(\Frob_{\ell})$ has characteristic polynomial
$$T^2 - a_{\ell}T + \varepsilon_f(\ell)\ell^{k-1}$$
for all $\ell\nmid pN$. Taking a $\Gal(\overline{\mathbb{Q}}/\mathbb{Q})$-stable lattice and the reduction mod $\lambda$, we get a representation $\bar{\rho}: \Gal(\overline{\mathbb{Q}}/\mathbb{Q}) \rightarrow GL_2(k_{\lambda})$ (whose semisimplification is independent of the choice of lattice). By a Theorem 3 of \cite{Atkin-Lehner}, we have $a_{\ell} = \pm \ell^{k/2-1}$ for $\ell||N$, and $a_{\ell} = 0$ for $\ell^2|N$. 

We have the following characterization of partial Eisenstein descent mod $\lambda$. See \cite[Theorem 4.1]{BillereyMenares} for a similar result to Part (2). The elliptic curves case of Part (1) was essentially done by Serre in \cite{Serre}.
\begin{theorem}\label{classify}Suppose $f\in S_k(\Gamma_0(N),\varepsilon_f)$ is a normalized newform, and let $\bar{\rho}_f: \Gal(\overline{\mathbb{Q}}/\mathbb{Q}) \rightarrow GL_2(k_{\lambda})$ be the mod $\lambda$ reduction of the associated semisimple Galois representation. Then the following hold:
\begin{enumerate}
\item Suppose $\bar{\rho}_f$ is reducible. Then $\bar{\rho}_f \cong k_{\lambda}(\tilde{\psi}_1) \oplus k_{\lambda}(\tilde{\psi}_2\omega^{k-1})$ where $\tilde{\psi}_i = \psi_i \mod \lambda$ for $i = 1, 2$ for some Dirichlet characters $\psi_1$ and $\psi_2$ over $\mathbb{Q}$ with $\psi_1\psi_2 = \varepsilon_f$.
\item Suppose $\bar{\rho}_f$ is reducible. For all $\ell||N$, either $a_{\ell} \equiv \psi_1(\ell) \mod \lambda$ and $a_{\ell} \equiv \psi_1^{-1}(\ell)\ell^{k-2} \mod \lambda$ and $\ell^{k-2} \equiv \psi_1^2(\ell) \mod \lambda$, or $a_{\ell} \equiv \psi_2(\ell)\ell^{k-1} \mod \lambda$ and $a_{\ell} \equiv \psi_2^{-1}(\ell)\ell^{-1} \mod \lambda$ and $\ell^k \equiv \psi_2^{-2}(\ell) \mod \lambda$.
\item  Let $N_+$ denote any product of all primes $\ell||N$ satisfying $a_{\ell} \equiv \psi_1(\ell) \mod \lambda$ and $N_-$ any product of $\ell||N$ satisfying $a_{\ell} \equiv \psi_2(\ell)\ell^{k-1} \mod \lambda$, such that $N_+N_-$ is the squarefree part of $N$ (so that, in particular, $(N_+,N_-) = 1$). Let $N_0$ the squarefull part of $N$. Then $\bar{\rho}_f$ is reducible if and only if $f$ has partial Eisenstein descent of type $(\psi_1,\psi_2,N_+,N_-,N_0)$ over $M_{\lambda}$ mod $\lambda$.%, where $\frac{\mathfrak{f}(\psi_2)}{(\mathfrak{f}(\psi_1),\mathfrak{f}(\psi_2))}|N_+$, $\frac{\mathfrak{f}(\psi_1)}{(\mathfrak{f}(\psi_1),\mathfrak{f}(\psi_2))}|N_-$, and $(\mathfrak{f}(\psi_1),\mathfrak{f}(\psi_2))^2|N_0$.
\end{enumerate}
\end{theorem}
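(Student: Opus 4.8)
The plan is to read this as an instance of the standard dictionary between reducibility of the residual modular Galois representation $\bar\rho_f$ and mod-$\lambda$ Eisenstein congruences, the genuine content being the local analysis at the primes dividing $Np$; throughout, recall that $\bar\rho_f$ denotes the (well-defined) semisimplification and that $p\nmid N$ by Assumptions~\ref{assumptions}. For part (1), suppose $\bar\rho_f$ is reducible, so $\bar\rho_f\cong\phi_1\oplus\phi_2$ for finite-order characters $\phi_i\colon\Gal(\overline{\mathbb{Q}}/\mathbb{Q})\to k_\lambda^{\times}$ unramified outside $Np$, with $\phi_1\phi_2=\det\bar\rho_f=\tilde\varepsilon_f\,\omega^{k-1}$, the reduction of $\varepsilon_f\chi_{\mathrm{cyc}}^{k-1}$; here $\tilde\varepsilon_f$ is unramified at $p$. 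The crucial input is the local structure at $p$: reducibility of $\bar\rho_f$ forces $f$ to be ordinary at $p$ (otherwise $\bar\rho_f|_{I_p}$ is the irreducible representation cut out by a fundamental character of level $2$), and then, by the description of the local representation at a good prime (Deligne, Fontaine--Laffaille, et al.), $\bar\rho_f|_{I_p}\cong\omega^{k-1}\oplus 1$. After relabeling, $\phi_1$ and $\phi_2\,\omega^{-(k-1)}$ are then unramified at $p$, so they correspond to Dirichlet characters $\tilde\psi_1,\tilde\psi_2$ of conductor prime to $p$ (in fact dividing $N$, as the prime-to-$p$ conductor of $\bar\rho_f$ divides that of $\rho_f$). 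Since the values lie in the finite field $k_\lambda$, we lift $\tilde\psi_1$ through the Teichm\"{u}ller section to a Dirichlet character $\psi_1$ over $\mathbb{Q}$ and set $\psi_2:=\varepsilon_f\psi_1^{-1}$; then $\psi_1\psi_2=\varepsilon_f$ by construction and $\psi_2\bmod\lambda=\tilde\psi_2$ follows from $\tilde\psi_1\tilde\psi_2=\tilde\varepsilon_f$, giving $\bar\rho_f\cong k_\lambda(\tilde\psi_1)\oplus k_\lambda(\tilde\psi_2\omega^{k-1})$.

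For part (2), fix $\ell\|N$. Global reducibility of $\bar\rho_f$ forces $\bar\rho_f|_{D_\ell}$ to be reducible, and from the well-known structure of $\rho_f|_{D_\ell}$ for $\ell\|N$ (a twist of the Steinberg, resp.\ a ramified principal series) together with the Atkin--Lehner--Li relations recalled above, one reads off that the two Jordan--H\"{o}lder constituents of $\bar\rho_f|_{D_\ell}$ are characters whose values on $\Frob_\ell$ are controlled by $a_\ell$ and $\ell^{k-1}$. Comparing these with the restriction to $D_\ell$ of the decomposition of part (1) — in which $\omega^{k-1}|_{D_\ell}=\bar\chi_{\mathrm{cyc}}^{k-1}|_{D_\ell}$ is unramified since $\ell\neq p$ — forces one of the two possible matchings, yielding either $a_\ell\equiv\psi_1(\ell)$ or $a_\ell\equiv\psi_2(\ell)\ell^{k-1}\pmod\lambda$; the remaining congruences asserted in each case then follow formally from $\psi_1\psi_2=\varepsilon_f$ and the Atkin--Lehner--Li relation for $a_\ell^2$.

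For part (3), one direction is immediate: if $f$ has partial Eisenstein descent of type $(\psi_1,\psi_2,N_+,N_-,N_0)$, then condition (1) of Definition~\ref{Eisensteindescent} says $\operatorname{tr}\bar\rho_f(\Frob_\ell)=\overline{a_\ell}=\operatorname{tr}\bigl(k_\lambda(\tilde\psi_1)\oplus k_\lambda(\tilde\psi_2\omega^{k-1})\bigr)(\Frob_\ell)$ for all $\ell\nmid Np$, so by Chebotarev density and the Brauer--Nesbitt theorem $\bar\rho_f\cong k_\lambda(\tilde\psi_1)\oplus k_\lambda(\tilde\psi_2\omega^{k-1})$, which is reducible. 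Conversely, assume $\bar\rho_f$ reducible; part (1) supplies the decomposition, whence $a_\ell\equiv\psi_1(\ell)+\psi_2(\ell)\ell^{k-1}\pmod\lambda$ for $\ell\nmid Np$, and the same congruence holds at $\ell=p$ because $p^{k-1}\equiv0\pmod\lambda$ while the ordinarity from part (1) identifies $\tilde\psi_1$ as the unramified-at-$p$ constituent of $\bar\rho_f|_{D_p}$, so that its value $\overline{\psi_1(p)}$ on $\Frob_p$ equals the reduction of the unit root of $T^2-a_pT+\varepsilon_f(p)p^{k-1}$, which is $\equiv\overline{a_p}$; thus condition (1) holds. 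Condition (4) is clear, since $\ell\mid N_0$ forces $\ell^2\mid N$ and hence $a_\ell=0$ by Atkin--Lehner. Finally, part (2) lets us sort each $\ell\|N$ into a factor $N_+$ (those with $a_\ell\equiv\psi_1(\ell)$) or $N_-$ (those with $a_\ell\equiv\psi_2(\ell)\ell^{k-1}$), breaking any tie arbitrarily, so that $N_+N_-$ is the squarefree part of $N$ and conditions (2), (3) hold; hence $f$ has partial Eisenstein descent of type $(\psi_1,\psi_2,N_+,N_-,N_0)$ over $M_\lambda$ mod $\lambda$.

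The main obstacle is part (1), and precisely the step of excluding the possibility that $\phi_1$ and $\phi_2$ are individually wildly ramified at $p$ while their product is only tamely ramified; this is invisible to the determinant and genuinely requires the $p$-adic Hodge theory input (crystallinity at the good prime $p$, forcing $p$-ordinarity and the shape $\omega^{k-1}\oplus 1$ on $I_p$). Once the two global characters have been correctly identified, parts (2) and (3) reduce to local computations at the primes $\ell\mid N$ and the Chebotarev/Brauer--Nesbitt argument.
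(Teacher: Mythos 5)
Your proposal matches the paper's proof in structure and key ingredients: part (1) hinges on the local description of $\bar{\rho}_f|_{D_p}$ at the good prime $p$ (the paper cites Deligne via Gross, which you package as ordinarity plus Fontaine--Laffaille), part (2) on Langlands' explicit description of $\bar{\rho}_f|_{D_\ell}$ for $\ell\|N$ (the paper cites Loeffler--Weinstein), and part (3) on Chebotarev density plus Brauer--Nesbitt. The only organizational difference is where the unramifiedness of $\tilde{\psi}_1,\tilde{\psi}_2$ at the primes $\ell\|N$ is pinned down: the paper does this already in part (1) via the conductor-$\ell$ argument, whereas in your writeup it is only implicit in the local matching of part (2); either order works.
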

\begin{proof}(1): Since $\bar{\rho}_f$ is reducible and semisimple, we can write $\bar{\rho}_f = k_{\lambda}(\chi_1)\oplus k_{\lambda}(\chi_2)$ where $\chi_1, \chi_2: \Gal(\overline{\mathbb{Q}}/\mathbb{Q}) \rightarrow k_{\lambda}^{\times}$. Hence $\chi_1\chi_2 = \det(\bar{\rho}_f) = \varepsilon_f\omega^{k-1}$, and so for our statement it suffices to show one of $\{\chi_1,\chi_2\}$ is unramified outside the squarefull part of $N$ (since $\mathfrak{f}(\varepsilon_f)$ divides the squarefull part of $N$). Since $\bar{\rho}_f$ is unramified outside $pN$, clearly both $\chi_1, \chi_2$ are unramified outside $pN$. For $\ell||N$, the local representation $\bar{\rho}_{f,\ell}$ (i.e., restriction of $\bar{\rho}_{f,\ell}$ to the decomposition group $\Gal(\overline{\mathbb{Q}}_{\ell}/\mathbb{Q}_{\ell})$) has conductor $\ell$. Thus the corresponding automorphic representation $\pi_{\ell}$ of $\rho_{\ell}$ has conductor $\ell$, and so by the classification of admissible representations of $GL_2$ over local fields (see \cite{Gelbart} p. 73), at most one of $\chi_1$ and $\chi_2$  is ramified at $\ell$. Thus $\chi_1,\chi_2$ are unramified inside the squarefree part of $N$. Finally, since $\bar{\rho}_{f,p}$ is reducible, by a theorem of Deligne (see \cite{Gross2}), $a_p \not\equiv 0 \mod \lambda$ and $\bar{\rho}_{f,p}$ is of the form
$$\left( \begin{array}{ccc}
\omega^{k-1}\mu_p(\varepsilon_f(p)/a_p) & * \\
0 & \mu_p(a_p) \end{array} \right)$$
where $\mu_p(\alpha)$ is the unramified character of $\Gal(\overline{\mathbb{Q}}_p/\mathbb{Q}_p)$ taking $\Frob_p$ to $\alpha$. Hence exactly one of $\chi_1, \chi_2$ is ramified at $p$. 

Putting this all together, we see that $\{\chi_1,\chi_2\} = \{\psi_1 \mod \lambda,\psi_2\omega^{k-1} \mod \lambda\}$ for some Dirichlet characters $\psi_1$ and $\psi_2$ with $\psi_1\psi_2 = \varepsilon_f$, and so we are done.

(2): It is a theorem of Langlands (see \cite[Proposition 2.8]{LoefflerWeinstein}) that for $\ell||N$, $\bar{\rho}_{f,\ell}$ is of the form
$$\left( \begin{array}{ccc}
\omega_{\ell}^{k/2}\mu_{\ell}(\varepsilon_f(\ell)\ell^{k/2-1}/a_{\ell}) & * \\
0 & \omega_{\ell}^{k/2-1}\mu_{\ell}(a_{\ell}/\ell^{k/2-1}) \end{array} \right)$$
where $\omega_{\ell}$ is the localization of $\omega$ to $\ell$. Thus by (1), we have 
$$\{\psi_{1,\ell} \mod \lambda,\psi_{2,\ell}\omega_{\ell}^{k-1} \mod \lambda\} = \{\omega_{\ell}^{k/2}\mu_{\ell}(\varepsilon_f(\ell)\ell^{k/2-1}/a_{\ell}), \omega_{\ell}^{k/2}\mu_{\ell}(a_{\ell}/\ell^{k/2-1})\}.$$
Note that $\mu_{\ell}(\ell^{k/2-1}/a_{\ell})$ is a quadratic character.

Suppose first that $\psi_{1,\ell} \equiv \omega_{\ell}^{k/2}\mu_{\ell}(\varepsilon_f(\ell)\ell^{k/2-1}/a_{\ell}) \mod \lambda$ and $\psi_{2,\ell}\omega_{\ell}^{k-1} \equiv \omega_{\ell}^{k/2-1}\mu_{\ell}(\ell^{k/2-1}/a_{\ell}) \mod \lambda$. Plugging in $\Frob_{\ell}$ to both congruences, the first congruence implies both $a_{\ell} \equiv \psi_{2,\ell}(\ell)\ell^{k-1} = \psi_2(\ell)\ell^{k-1}\mod \lambda$, and the second congruence implies $a_{\ell} \equiv \psi_{2,\ell}^{-1}(\ell)\ell^{-1} = \psi_2^{-1}(\ell)\ell^{-1} \mod \lambda$.

Suppose next that $\psi_{1,\ell} \equiv \omega_{\ell}^{k/2-1}\mu_{\ell}(a_{\ell}/\ell^{k/2-1}) \mod \lambda$ and $\psi_{2,\ell}\omega_{\ell}^{k-1} \equiv \omega_{\ell}^{k/2}\mu_{\ell}(\varepsilon_f(\ell)\ell^{k/2-1}/a_{\ell}) \mod \lambda$. Plugging in $\Frob_{\ell}$ to both congruences, the first congruence implies both $a_{\ell}\equiv \psi_{1,\ell}(\ell) = \psi_1(\ell) \mod \lambda$ and $a_{\ell}\equiv \psi_{1,\ell}^{-1}(\ell)\ell^{k-2} = \psi_1^{-1}(\ell)\ell^{k-2} \mod \lambda$ (we get both congruences since $\mu_{\ell}(a_{\ell}/\ell^{k/2-1}) = \mu_{\ell}^{-1}(a_{\ell}/\ell^{k/2-1})$), and the second congruence gives no new congruences. 

(3): For $\ell\nmid pN$, we have $a_{\ell} = \trace(\bar{\rho}_f)(\Frob_{\ell}) \equiv \psi_1(\ell) + \psi_2(\ell)\ell^{k-1}\mod \lambda$. For $\ell|N_+$, we have $a_{\ell} \equiv \psi_1(\ell) \mod \lambda$, for $\ell|N_-$ we have $a_{\ell}\equiv \psi_2(\ell)\ell^{k-1} \mod \lambda$, and for $\ell|N_0$ we have $a_{\ell} \equiv 0 \mod \lambda$. Finally, by the Cebotarev density theorem and continuity of $\trace(\bar{\rho}_f)$, we have $a_p = \trace(\bar{\rho}_f(\Frob_p)) \equiv \psi_1(p) + \psi_2(p)p^{k-1} \equiv \psi_1(p) \mod \lambda$. Hence $f$ has partial Eisenstein descent of type $(\psi_1,\psi_2,N_+,N_-,N_0)$ at $\lambda|p$. 

If $f$ has partial Eisenstein descent of type $(\psi_1,\psi_2,N_+,N_-,N_0)$ at $\lambda|p$, then for $\ell\nmid N$ we have $\text{trace}(\bar{\rho}_f )(\Frob_{\ell})\equiv \psi_1(\ell) + \psi_2(\ell)\ell^{k-1} \mod \lambda$. Hence if $\bar{\rho} := k_{\lambda}(\psi_1)\oplus k_{\lambda}(\psi_2\omega^{k-1})$, we have $\text{trace}(\bar{\rho}_f)(\Frob_{\ell}) \equiv \text{trace}(\bar{\rho})(\Frob_{\ell})$ for all $\ell\nmid N$. Thus by the Cebotarev density theorem and continuity of trace, $\text{trace}(\bar{\rho}_f)(g) \equiv \text{trace}(\bar{\rho})(g)$ for all $g \in \Gal(\overline{\mathbb{Q}}/\mathbb{Q})$. Hence, by the Brauer-Nesbitt theorem, $\bar{\rho}_f \cong \bar{\rho}$. 
\end{proof}

For $k = 2$ and $M = E_f = \mathbb{Q}$, we note the following corollary.
\begin{theorem}\label{congruence'} Suppose $E/\mathbb{Q}$ is an elliptic curve and $p$ is a prime of $E$ such that $E[p]$ is a reducible mod $p$ Galois representation. Then the associated normalized newform $f_E\in S_2(\Gamma_0(N))$ has partial Eisenstein descent over $\mathbb{Q}_p$ mod $p\mathbb{Z}_p$.
\end{theorem}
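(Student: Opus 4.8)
The plan is to obtain this as a direct specialization of Theorem \ref{classify} to the case $k = 2$, $M = E_f = \mathbb{Q}$, $\lambda = p\mathbb{Z}_p$. First I would recall that by modularity the normalized newform $f_E = \sum_{n\ge 1} a_n q^n \in S_2(\Gamma_0(N))$ attached to $E$ has rational Fourier coefficients and trivial nebentypus $\varepsilon_{f_E} = 1$, so that $E_f = \mathbb{Q}$, the unique prime of $\mathcal{O}_M = \mathbb{Z}$ above $p$ is $\lambda = p\mathbb{Z}_p$, and $M_\lambda = \mathbb{Q}_p$. Moreover, the semisimple $p$-adic Galois representation $\rho_{f_E}$ supplied by Deligne's construction is (the semisimplification of) the representation on the $p$-adic Tate module of $E$, so its mod $\lambda$ reduction $\bar\rho_{f_E}$ is the semisimplification $E[p]^{ss}$ of the mod $p$ representation $E[p]$.

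Next I would observe that the hypothesis ``$E[p]$ is a reducible Galois representation'' means precisely that $E[p]$ admits a nonzero proper $\Gal(\overline{\mathbb{Q}}/\mathbb{Q})$-stable subspace; since a nonzero proper subrepresentation (or quotient) survives to the semisimplification and conversely, this is equivalent to $\bar\rho_{f_E} \cong E[p]^{ss}$ being a direct sum of two characters, i.e.\ to $\bar\rho_{f_E}$ being reducible in the sense of Theorem \ref{classify}. I would then simply invoke part (3) of Theorem \ref{classify}: it produces Dirichlet characters $\psi_1,\psi_2$ over $\mathbb{Q}$ with $\psi_1\psi_2 = \varepsilon_{f_E} = 1$ and a factorization $N = N_+N_-N_0$ into pairwise coprime positive integers with $N_+N_-$ squarefree and $N_0$ squarefull, such that $f_E$ has partial Eisenstein descent of type $(\psi_1,\psi_2,N_+,N_-,N_0)$ over $M_\lambda = \mathbb{Q}_p$ modulo $\lambda = p\mathbb{Z}_p$, which is exactly the claimed statement. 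It may be worth noting explicitly that, since $\det\bar\rho_{f_E} = \omega$ here (the mod $p$ cyclotomic character, because $k = 2$ and $\varepsilon_{f_E} = 1$), part (1) of Theorem \ref{classify} forces $\psi_2 = \psi_1^{-1}$, so that $E[p]^{ss} \cong \mathbb{F}_p(\psi_1)\oplus\mathbb{F}_p(\psi_1^{-1}\omega)$, matching the normalization used elsewhere in the paper.

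Since all the substantive input — the classification of the local representations $\bar\rho_{f,\ell}$ at $\ell\,\|\,N$, the Deligne-type description of $\bar\rho_{f,p}$, and the Chebotarev/Brauer--Nesbitt argument identifying $\bar\rho_f$ with $k_\lambda(\psi_1)\oplus k_\lambda(\psi_2\omega^{k-1})$ — is already packaged inside Theorem \ref{classify}, there is no genuine obstacle in the present corollary. The only step requiring a word of care is the bookkeeping identification $\bar\rho_{f_E} \cong E[p]^{ss}$ together with the elementary fact that reducibility of $E[p]$ is equivalent to reducibility of its semisimplification; everything else is a literal specialization of the hypotheses of Theorem \ref{classify}.
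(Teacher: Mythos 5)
Your proposal matches the paper exactly: the statement is presented there as an immediate corollary of Theorem \ref{classify} specialized to $k=2$, $M = E_f = \mathbb{Q}$, $\lambda = p\mathbb{Z}_p$, with the implicit identification of $\bar\rho_{f_E}$ with $E[p]^{ss}$. Your added remark about reducibility of $E[p]$ being equivalent to reducibility of its semisimplification, and the observation that $\psi_2 = \psi_1^{-1}$, are the same small pieces of bookkeeping the paper leaves to the reader.
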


\section{Proof of the Main Theorem}\label{ProofofMainTheorem}
In this section, we prove the Main Theorem. First, we examine complex $L$-values arising from twisted traces of Eisenstein series evaluated at CM points, analogous to such traces for normalized newforms interpolated by the Bertolini-Darmon-Prasanna $p$-adic $L$-function (see Theorem \ref{BDPinterpolationproperty}). Parts of the calculation in Section \ref{charactertwist} are implicit in \cite{HidaTilouine}, but for our purposes which require explicit identities, and for the sake of completeness, we include the full calculation here.

\subsection{Twisted traces of Eisenstein series over CM points}\label{charactertwist}
Let $k \ge 2$ be an integer, and let $\psi_1,\psi_2$ be two Dirichlet characters over $\mathbb{Q}$ of conductors $u$ and $t$, respectively (here $u,t$ are not necessarily coprime), such that $ut = N'$ and $(\psi_1\psi_2)(-1) = (-1)^k$. Recall our Eisenstein Series (see Remark \ref{Eisensteinremark})
$$E_{k}^{\psi_1,\psi_2}(\tau) := \delta_{\psi_1=1}\frac{L(\psi_1,1-k)}{2} + \sum_{n=1}^{\infty}\sigma_{k-1}^{\psi_1,\psi_2}(n)q^n$$
where $q = e^{2\pi i\tau}$ and
$$\sigma_{k-1}^{\psi_1,\psi_2}(n) = \sum_{0<d|n}\psi_1(n/d)\psi_2(d)d^{k-1}.$$ 
Note that the nebentypus of $E_k^{\psi_1,\psi_2}$ is $\varepsilon_{E_k^{\psi_1,\psi_2}} = \psi_1\psi_2$. 

Recalling the Maass-Shimura derivative $\partial$ defined in Section \ref{algebraicmodularforms}, one checks by direct computation that
\begin{align*}&\partial^jE_k^{\psi_1,\psi_2}(\tau) = \frac{t^k\Gamma(k+j)}{2(2\pi i)^{k+j}\mathfrak{g}(\psi_2^{-1})}\sum_{c=0}^{u-1}\sum_{d=0}^{t-1}\sum_{e=0}^{u-1}\sum_{(m,n) \equiv (ct,d+et)\: (N')}\frac{\psi_1(c)\psi_2^{-1}(d)}{(m\tau+n)^{k+2j}}\left(\frac{|m\tau+n|^2}{\tau-\overline{\tau}}\right)^j,
\end{align*}
where $\sum_{(m,n) \equiv (ct,d+et) \: (N')}$ denotes that the sum is taken over $(m,n) \in \mathbb{Z}^2\setminus\{0\}$ such that $(m,n) \equiv (ct,d+et) \mod N'$. 

Recall that under our Assumptions \ref{assumptions}, $D_K$ is taken to be odd. (The calculations for the even case are entirely analogous to those of the odd case, but we do not explicitly write these out here.) Given an integral primitive ideal (i.e. having no rational integral divisors other than $\pm 1$) $\mathfrak{a}$ of $\mathcal{O}_K$, we can write 
$$\mathfrak{a} = \mathbb{Z}\frac{b+\sqrt{D_K}}{2} + \mathbb{Z}a$$
where $a = |\Nm_{K/\mathbb{Q}}(\mathfrak{a})|$ and $b^2 - 4ac = D_K$; the triple $(a,-b,c)$ determines the primitive positive definite binary quadratic form associated with the ideal class of $\mathfrak{a}$. We put
$$\tau_{\mathfrak{a}} := \frac{b+\sqrt{D_K}}{2a}$$
so that $\tau_{\mathfrak{a}} \in \mathcal{H}^+$ is the root of the dehomogenized quadratic form  $a\tau^2 - b\tau + c=0$ with positive imaginary part. We call $\tau_{\mathfrak{a}}$ a \emph{$CM$ point}. Note that $\langle \tau_{\mathfrak{a}},1\rangle$ generates $\overline{\mathfrak{a}}^{-1}$. (Here for $\tau \in \mathcal{H}^+$, $\langle \tau, 1 \rangle = \mathbb{Z}\tau + \mathbb{Z}$.)

Suppose that $K$ satisifies the Heegner hypothesis with respect to $N'$. Fix an ideal $\mathfrak{N}'$ such that $\mathcal{O}_K/\mathfrak{N}' = \mathbb{Z}/N'$, and write
$$\mathfrak{N}' = \mathbb{Z}\frac{b_{N'}+\sqrt{D_K}}{2} + \mathbb{Z}N' \hspace{1.5cm}\text{and}\hspace{1.5cm} \mathfrak{aN'} = \mathbb{Z}\frac{b_{aN'}+\sqrt{D_K}}{2} + \mathbb{Z}aN'$$
where $b_{aN'}^2 - 4aN'c = D_K$ for some $c\in \mathbb{Z}$. Write $\mathfrak{N}' = \mathfrak{ut}$ where $\mathcal{O}_K/\mathfrak{u} = \mathbb{Z}/u$ and $\mathcal{O}_K/\mathfrak{t}= \mathbb{Z}/t$. 

Suppose we are given a Dirichlet character $\phi : (\mathbb{Z}/N')^{\times} \rightarrow \mathbb{C}^{\times}$. By the identification $\mathcal{O}_K/\mathfrak{N}' = \mathbb{Z}/N'$, we can view $\phi$ as a character $\phi : \mathbb{A}_K^{\times,\mathfrak{N}'} \twoheadrightarrow \prod_{v|\mathfrak{N}'}(\mathcal{O}_{K,v}/\mathfrak{N}'\mathcal{O}_{K,v})^{\times} \cong (\mathcal{O}_K/\mathfrak{N}')^{\times} \rightarrow \mathbb{C}^{\times}$, where $\mathbb{A}_K^{\times,\mathfrak{N}'}$ denotes the id\`{e}les prime to $\mathfrak{N}'$; when we view $\phi$ in this way, we will write $\phi(x \mod \mathfrak{N}')$ for its value at $x \in \mathbb{A}_K^{\times,\mathfrak{N}'}$. Given a Hecke character $\chi$ over $K$ of finite type $(\mathfrak{N}',\phi)$ and infinity type $(j_1,j_2)$, recall that the associated Grossencharacter on ideals $\mathfrak{a}$ prime to $\mathfrak{N}'$ is given by
$$\chi(\mathfrak{a}) = \chi(x)\phi(x \mod \mathfrak{N}')x_{\infty}^{j_1}\overline{x}_{\infty}^{j_2}$$
where $x \in \mathbb{A}_K^{\times,\mathfrak{N}'}$ is such that $\ord_v(x) = \ord_v(\mathfrak{a})$ for all finite places $v$. We consider the twisted trace
$$\sum_{[\mathfrak{a}]\in \mathcal{C}\ell(\mathcal{O}_K)}(\chi_j)^{-1}(\overline{\mathfrak{aN'}})\partial^jE_k^{\psi_1,\psi_2}(\tau_{\mathfrak{aN'}})$$
where $\mathfrak{a}$ ranges over a set of primitive integral ideal representatives of $\mathcal{C}\ell(\mathcal{O}_K)$ chosen to be prime to $N'$, and $\chi$ is of infinity type $(k+j,-j)$ and of finite type $(\mathfrak{N}',\psi_1\psi_2)$ (so that the above summands depend only on the ideal classes of the $\mathfrak{\overline{aN'}}$).

Suppose $\alpha = m\frac{b_{aN'}+\sqrt{D_K}}{2aut} + n \in \overline{\mathfrak{au}}^{-1}$ with $(m,n) \equiv (ct,d+et) \mod N'$. Then $au\alpha = m\frac{b_{aN'}+\sqrt{D_K}}{2t} + aun \in \mathfrak{au} \subset \mathcal{O}_K$ is mapped to $b_{aN'}c \in \mathbb{Z}/u$ under the identification $\mathcal{O}_K/\overline{\mathfrak{u}} = \mathbb{Z}/u$. One sees this by writing $m = ct + qN'$ for some $q \in \mathbb{Z}$ and $au\alpha = \frac{mb_{aN'}}{t} - m\frac{b_{aN'}-\sqrt{D_K}}{2} + aun = b_{aN'}c - m\frac{b_{aN'}-\sqrt{D_K}}{2} + qu \equiv b_{aN'}c \mod \overline{\mathfrak{u}}$. We thus have $\psi_1((au\alpha) \mod \overline{\mathfrak{u}}) = \psi_1(b_{aN'}c)$. Since $b_{aN'}+\sqrt{D_K} \in \mathfrak{aN'}$, in particular we have $b_{aN'} \equiv -\sqrt{D_K} \mod \mathfrak{u}$, meaning $\psi_1(b_{aN'}) = \psi_1(-\sqrt{D_K} \mod \mathfrak{u})$; henceforth we will write $\psi_1(-\sqrt{D_K}) = \psi_1(-\sqrt{D_K} \mod \mathfrak{u})$ for simplicity. On the other hand, note that $\alpha$ is mapped to $d \in \mathbb{Z}/t$ under the isomorphism $\overline{\mathfrak{au}}^{-1}/\overline{\mathfrak{au}}^{-1}\mathfrak{t} \cong \mathcal{O}_K/\mathfrak{t} = \mathbb{Z}/t$. Thus, we have $\psi_2(\overline{\mathfrak{au}}(\alpha) \mod \mathfrak{t}) = \psi_2(d)$ (since $\overline{\mathfrak{au}}$ is prime to $\mathfrak{t}$). In all, we can write
\begin{align*}\frac{\psi_1(c)\overline{\psi_2}(d)}{(m\tau_{\mathfrak{aN'}}+n)^{k+2j}}&\left(\frac{|m\tau_{\mathfrak{aN'}}+n|^2}{\tau_{\mathfrak{aN'}}-\overline{\tau_{\mathfrak{aN'}}}}\right)^j \\
&= \frac{\psi_1^{-1}(-\sqrt{D_K})\psi_1((au\alpha) \mod \overline{\mathfrak{u}})\psi_2^{-1}(\overline{\mathfrak{au}}(\alpha) \mod \mathfrak{t})}{\alpha^{k+2j}}\left(\frac{\Nm_{K/\mathbb{Q}}(\overline{\mathfrak{a}\mathfrak{N}'}(\alpha))}{\sqrt{D_K}}\right)^j.\end{align*}

Suppose first that $k > 2$. Then we can rewrite each summand in the twisted trace as
\begin{align*}&(\chi_j)^{-1}(\overline{\mathfrak{aN'}})\partial^jE_k^{\psi_1,\psi_2}(\tau_{\mathfrak{aN'}}) = \frac{t^k\Gamma(k+j)\psi_1^{-1}(-\sqrt{D_K})}{2(2\pi i)^{k+j}\mathfrak{g}(\psi_2^{-1})}(\chi_j)^{-1}(\overline{\mathfrak{aN'}})\\
&\hspace{1.2cm}\cdot \sum_{\alpha\in (\overline{\mathfrak{aN'}})^{-1}, (\overline{\mathfrak{aN'}}(\alpha),\overline{\mathfrak{u}}\mathfrak{N}') = 1}\frac{\psi_1((au\alpha) \mod \overline{\mathfrak{u}})\psi_2^{-1}(\overline{\mathfrak{au}}(\alpha)\mod \mathfrak{t})}{\alpha^{k+2j}}\left(\frac{|\Nm_{K/\mathbb{Q}}(\overline{\mathfrak{aN'}}(\alpha))|}{\sqrt{D_K}}\right)^j\\
&= \frac{t^k\Gamma(k+j)\psi_1^{-1}(-\sqrt{D_K})}{2(2\pi i)^{k+j}\mathfrak{g}(\psi_2^{-1})\sqrt{D_K}^j}\frac{\chi^{-1}(\overline{\mathfrak{t}})(\chi_{-\frac{k}{2}})^{-1}(\overline{\mathfrak{au}})}{|\Nm_{K/\mathbb{Q}}(\overline{\mathfrak{au}})|^{k/2}}\\
&\hspace{3.8cm}\cdot\sum_{\alpha\in (\overline{\mathfrak{au}})^{-1}, (\overline{\mathfrak{au}}(\alpha),\overline{\mathfrak{u}}\mathfrak{N}') = 1}\left(\frac{\overline{\alpha}}{\alpha}\right)^{k/2+j}\frac{\psi_1((au\alpha) \mod \overline{\mathfrak{u}})\psi_2^{-1}(\overline{\mathfrak{au}}(\alpha) \mod \mathfrak{t})}{|\Nm_{K/\mathbb{Q}}(\alpha)|^{k/2}}\\
&= \frac{t^k\Gamma(k+j)\psi_1^{-1}(-\sqrt{D_K})\chi^{-1}(\overline{\mathfrak{t}})}{2(2\pi i)^{k+j}\mathfrak{g}(\psi_2^{-1})\sqrt{D_K}^j}\sum_{\alpha\in (\overline{\mathfrak{au}})^{-1}, (\overline{\mathfrak{au}}(\alpha),\overline{\mathfrak{u}}\mathfrak{N}') = 1}(\chi_{-\frac{k}{2}})^{-1}(\overline{\mathfrak{au}}(\alpha))\\
&\cdot\psi_1(\overline{\mathfrak{au}}(\alpha)\mod \mathfrak{u})\psi_2(\overline{\mathfrak{au}}(\alpha) \mod \mathfrak{t})\frac{\psi_1((au\alpha) \mod \overline{\mathfrak{u}})\psi_2^{-1}(\overline{\mathfrak{au}}(\alpha) \mod \mathfrak{t})}{|\Nm_{K/\mathbb{Q}}(\overline{\mathfrak{au}}(\alpha))|^{k/2}}\\
&= \frac{t^k\Gamma(k+j)\psi_1^{-1}(-\sqrt{D_K})\chi^{-1}(\overline{\mathfrak{t}})}{(2\pi i)^{k+j}\mathfrak{g}(\psi_2^{-1})\sqrt{D_K}^j}\\
&\hspace{4.6cm}\cdot\sum_{\alpha\in (\overline{\mathfrak{au}})^{-1}, (\overline{\mathfrak{au}}(\alpha),\overline{\mathfrak{u}}\mathfrak{N}') = 1}(\chi_{-\frac{k}{2}})^{-1}(\overline{\mathfrak{au}}(\alpha))\frac{\psi_1((au\Nm_{K/\mathbb{Q}}(\alpha)) \mod \mathfrak{u})}{|\Nm_{K/\mathbb{Q}}(\overline{\mathfrak{au}}(\alpha))|^{k/2}}\\
&= \frac{|\mathcal{O}_K^{\times}|}{2}\frac{t^k\Gamma(k+j)\psi_1^{-1}(-\sqrt{D_K})\chi^{-1}(\overline{\mathfrak{t}})}{(2\pi i)^{k+j}\mathfrak{g}(\psi_2^{-1})\sqrt{D_K}^j}\sum_{\mathfrak{b}\subset \mathcal{O}_K, [\mathfrak{b}] = [\overline{\mathfrak{au}}] \in \mathcal{C}\ell(\mathcal{O}_K), (\mathfrak{b},\overline{\mathfrak{u}}\mathfrak{N}') = 1}\frac{(\psi_{1/K}(\chi_{-\frac{k}{2}})^{-1})(\mathfrak{b})}{|\Nm_{K/\mathbb{Q}}(\mathfrak{b})|^\frac{k}{2}}.
\end{align*}
The penultimate equality is justified by the fact that 
$$\psi_1((au\alpha) \mod \overline{\mathfrak{u}}) = \psi_1(\overline{\mathfrak{au}}(\alpha) \mod \overline{\mathfrak{u}}) =  \psi_1(\mathfrak{au}(\overline{\alpha}) \mod \mathfrak{u})$$ 
(the first equality here follows since $\mathfrak{au}$ is prime to $\overline{\mathfrak{u}}$). The factor of $|\mathcal{O}_K^{\times}|$ appears because any integral ideal $\mathfrak{b}\subset\mathcal{O}_K$ can be written as $\overline{\mathfrak{au}}(\alpha)$ for some ideal class representative $\mathfrak{a}$ and some element $\alpha\in (\overline{\mathfrak{au}})^{-1}$ determined uniquely up to an element of $\mathcal{O}_K^{\times} = \{\pm 1\}$, and since $(\psi_1\psi_2)(-1) = (-1)^k = (-1)^{k+2j}$, we see that each summand of the second line has the same value for $\alpha$ or $-\alpha$. Now since $|\mathcal{O}_K^{\times}| = 2$, we finally have, after summing both sides of the above equality over all our ideal class representatives $\mathfrak{a}$,
$$\sum_{[\mathfrak{a}]\in \mathcal{C}\ell(\mathcal{O}_K)}(\chi_j)^{-1}(\overline{\mathfrak{aN'}})\partial^jE_k^{\psi_1,\psi_2}(\tau_{\mathfrak{aN'}}) = \frac{t^k\Gamma(k+j)\psi_1^{-1}(-\sqrt{D_K})\chi^{-1}(\overline{\mathfrak{t}})}{(2\pi i)^{k+j}\mathfrak{g}(\psi_2^{-1})\sqrt{D_K}^j}L(\psi_{1/K}(\chi_{-\frac{k}{2}})^{-1},\frac{k}{2}).$$

For $k = 2$, note that by the same calculation as above, we have for all $s\in \mathbb{C}$ with $\Re(s) > 0$ that the series
\begin{align*}&\sum_{[\mathfrak{a}]\in \mathcal{C}\ell(\mathcal{O}_K)}(\chi_j)^{-1}(\overline{\mathfrak{aN'}})\sum_{\alpha\in (\overline{\mathfrak{au}})^{-1}, (\overline{\mathfrak{au}}(\alpha),\overline{\mathfrak{u}}\mathfrak{N}') = 1}\\
&\hspace{2.5cm}\frac{\psi_1^{-1}(-\sqrt{D_K})\psi_1(\overline{\mathfrak{a}}(au\alpha) \mod \overline{\mathfrak{u}})\psi_2^{-1}(\overline{\mathfrak{au}}(\alpha) \mod \mathfrak{t})}{\alpha^{2+2j}}\left(\frac{|\Nm_{K/\mathbb{Q}}(\overline{\mathfrak{aN'}}(\alpha))|}{\sqrt{D_K}}\right)^{j-s} \\
&\hspace{8cm}= \frac{\psi_1^{-1}(-\sqrt{D_K})\chi^{-1}(\overline{\mathfrak{t}})}{\sqrt{D_K}^{j-s}t^s}L(\psi_{1/K}(\chi_{-1})^{-1},1+s).\\
&\hspace{8cm}
\end{align*}
Note that each summand of the inner sum can be written as the special value of a real analytic Eisenstein series at a certain CM point:
\begin{align*}\sum_{c=0}^{u-1}\sum_{d=0}^{t-1}&\sum_{e=0}^{u-1}\sum_{(m,n)\in\mathbb{Z}^2\setminus\{0\}, (m,n)\equiv (ct,d+et) \mod N'}\frac{\psi_1(c)\psi_2^{-1}(d)}{(m\tau_{\mathfrak{aN'}}+n)^{2+2j}}\left(\frac{|m\tau_{\mathfrak{aN'}}+n|^2}{\tau_{\mathfrak{aN'}}-\overline{\tau_{\mathfrak{aN'}}}}\right)^{j-s}.
\end{align*}
Using standard analytic arguments, one can show the above expression tends to $\partial^jE_2^{\psi_1,\psi_2}(\tau_{\mathfrak{aN'}})$ as $s \rightarrow 0$. Hence, combining the above, we have
\begin{align*}\sum_{[\mathfrak{a}]\in \mathcal{C}\ell(\mathcal{O}_K)}(\chi_j)^{-1}(\overline{\mathfrak{aN'}})\partial^jE_2^{\psi_1,\psi_2}(\tau_{\mathfrak{aN'}}) &\hspace{-.05cm}=\hspace{-.05cm}\lim_{s\rightarrow 0} \frac{t^2\Gamma(2+j)\psi_1^{-1}(-\sqrt{D_K})\chi^{-1}(\overline{\mathfrak{t}})}{(2\pi i)^{2+j}\mathfrak{g}(\psi_2^{-1})\sqrt{D_K}^{j-s}t^s}L(\psi_{1/K}(\chi_{-1})^{-1},1+s) \\
&\hspace{-.05cm}=\hspace{-.05cm}\frac{t^2\Gamma(2+j)\psi_1^{-1}(-\sqrt{D_K})\chi^{-1}(\overline{\mathfrak{t}})}{(2\pi i)^{2+j}\mathfrak{g}(\psi_2^{-1})\sqrt{D_K}^j}L(\psi_{1/K}(\chi_{-1})^{-1},1).
\end{align*}
Thus, in all we have
\begin{proposition}\label{Lvalueproposition} Let $k \ge 2$ be an integer. Suppose $\chi$ is of infinity type $(k+j,-j)$ and $\chi_{-\frac{k}{2}}$ has trivial central character and is of finite type $(\mathfrak{N}',\psi_1\psi_2)$. Suppose also that $\psi_1$ and $\psi_2$ are Dirichlet characters over $\mathbb{Q}$ with conductors $u$ and $t$, respectively, such that $ut = N$ and $(\psi_1\psi_2)(-1) = (-1)^k$. Then
$$\sum_{[\mathfrak{a}]\in \mathcal{C}\ell(\mathcal{O}_K)}(\chi_j)^{-1}(\mathfrak{a})\partial^jE_k^{\psi_1,\psi_2}(\tau_{\mathfrak{a}})= \frac{t^k\Gamma(k+j)\psi_1^{-1}(-\sqrt{D_K})\chi^{-1}(\overline{\mathfrak{t}})}{(2\pi i)^{k+j}\mathfrak{g}(\psi_2^{-1})\sqrt{D_K}^j}L(\psi_{1/K}\chi^{-1},0).$$
\end{proposition}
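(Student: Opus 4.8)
The plan is to establish the identity directly, by expanding $\partial^j E_k^{\psi_1,\psi_2}$ at the CM points $\tau_{\mathfrak{a}}$ as a lattice sum and recognizing the weighted sum over the class group as a Hecke $L$-function; this is essentially the computation displayed in the present subsection, so I will describe how I would organize it. First, for $k>2$ I would use the lattice-sum formula for $\partial^j E_k^{\psi_1,\psi_2}(\tau)$ recorded at the start of Section~\ref{charactertwist}, specialized to $\tau=\tau_{\mathfrak{a}\mathfrak{N}'}$ (after reindexing; see below). Using that $\langle\tau_{\mathfrak{a}\mathfrak{N}'},1\rangle$ generates $\overline{\mathfrak{a}\mathfrak{N}'}^{-1}$, the inner sum over pairs $(m,n)$ with prescribed residues becomes a sum over $\alpha\in\overline{\mathfrak{a}\mathfrak{N}'}^{-1}$ with $\overline{\mathfrak{a}\mathfrak{N}'}(\alpha)$ prime to $\overline{\mathfrak{u}}\mathfrak{N}'$. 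The crucial bookkeeping step is the translation of the Dirichlet-character data $\psi_1(c)$, $\psi_2^{-1}(d)$ attached to the residue classes into values of $\psi_1$ and $\psi_2$ on the ideal $\overline{\mathfrak{a}\mathfrak{u}}(\alpha)$ reduced modulo $\mathfrak{u}$, $\overline{\mathfrak{u}}$, and $\mathfrak{t}$; one uses $b_{aN'}\equiv-\sqrt{D_K}\pmod{\mathfrak{u}}$, which produces the scalar $\psi_1^{-1}(-\sqrt{D_K})$, and the fact that $\overline{\mathfrak{a}\mathfrak{u}}$ is prime to $\mathfrak{t}$. Multiplying by $(\chi_j)^{-1}(\overline{\mathfrak{a}\mathfrak{N}'})$, writing $\chi$ in terms of its finite type $(\mathfrak{N}',\psi_1\psi_2)$ and infinity type $(k+j,-j)$, the $\psi_2$-contributions cancel, and after summing over ideal classes and folding in the $\mathcal{O}_K^{\times}=\{\pm1\}$ ambiguity (legitimate since $(\psi_1\psi_2)(-1)=(-1)^{k+2j}$), the double sum reassembles into the Hecke $L$-series of $\psi_{1/K}(\chi_{-k/2})^{-1}$, evaluated at $s=k/2$.

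Next I would pass from this to the stated form. Since $\chi_{-k/2}=\chi\,\mathbb{N}_K^{k/2}$ and twisting by $\mathbb{N}_K$ shifts the argument of a Hecke $L$-function (because $\mathbb{N}_K(\mathfrak{b})^{-1}=\Nm_{K/\mathbb{Q}}(\mathfrak{b})$), we have $L(\psi_{1/K}(\chi_{-k/2})^{-1},\tfrac{k}{2})=L(\psi_{1/K}\chi^{-1},0)$, which accounts for the right-hand side. For the left-hand side, I would reindex the sum over the class group: the summand $(\chi_j)^{-1}(\mathfrak{b})\,\partial^jE_k^{\psi_1,\psi_2}(\tau_{\mathfrak{b}})$ depends only on the ideal class of $\mathfrak{b}$ (this uses the near-modularity of $\partial^jE_k^{\psi_1,\psi_2}$ of weight $k+2j$ together with the transformation behaviour of $\chi_j$), and since $\mathfrak{N}'$ is cyclic and $\mathfrak{a}$ is primitive and prime to $N'$, the ideal $\mathfrak{a}\mathfrak{N}'$ is again primitive; as $[\mathfrak{a}]$ runs over $\mathcal{C}\ell(\mathcal{O}_K)$ so does $[\mathfrak{a}\mathfrak{N}']$, so $\{\mathfrak{a}\mathfrak{N}'\}$ (or its conjugates) is a complete set of primitive representatives. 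Relabelling accordingly turns $\sum_{[\mathfrak{a}]}(\chi_j)^{-1}(\mathfrak{a})\partial^jE_k^{\psi_1,\psi_2}(\tau_{\mathfrak{a}})$ into exactly the twisted trace computed in the first step, completing the case $k>2$.

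Finally, the case $k=2$ requires the Hecke trick, since the lattice sum for $\partial^jE_2^{\psi_1,\psi_2}$ is only conditionally convergent. I would insert the convergence factor, i.e. replace the exponent $j$ on $|m\tau+n|^2/(\tau-\overline{\tau})$ by $j-s$, run the identical manipulation to obtain the value $\tfrac{\psi_1^{-1}(-\sqrt{D_K})\chi^{-1}(\overline{\mathfrak{t}})}{\sqrt{D_K}^{\,j-s}t^{\,s}}\,L(\psi_{1/K}(\chi_{-1})^{-1},1+s)$ for $\Re(s)>0$, and let $s\to0$: the left-hand side converges to $\partial^jE_2^{\psi_1,\psi_2}(\tau_{\mathfrak{a}\mathfrak{N}'})$ by the standard continuity statement for the non-holomorphic Eisenstein series, and the right-hand side is continuous at $s=0$, giving the same formula; combining with the $k>2$ case (and the reindexing and $\mathbb{N}_K$-shift above) yields the Proposition. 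I expect the main obstacle to be the character-to-ideal bookkeeping in the first step — keeping track of the residue data $c,d,e$ through the identifications $\mathcal{O}_K/\mathfrak{N}'\cong\mathbb{Z}/N'$, $\mathcal{O}_K/\overline{\mathfrak{u}}\cong\mathbb{Z}/u$, $\overline{\mathfrak{a}\mathfrak{u}}^{-1}/\overline{\mathfrak{a}\mathfrak{u}}^{-1}\mathfrak{t}\cong\mathbb{Z}/t$, and verifying that after the twist by $(\chi_j)^{-1}$ precisely the single Hecke character $\psi_{1/K}(\chi_{-k/2})^{-1}$ survives; the $k=2$ convergence is a genuine but secondary point handled by the Hecke trick.
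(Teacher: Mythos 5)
Your proposal reproduces the paper's own argument in all its essential steps: the lattice-sum expansion of $\partial^j E_k^{\psi_1,\psi_2}$ at CM points $\tau_{\mathfrak{a}\mathfrak{N}'}$, the character-to-ideal bookkeeping via the identifications $\mathcal{O}_K/\overline{\mathfrak{u}}\cong\mathbb{Z}/u$ and $\overline{\mathfrak{a}\mathfrak{u}}^{-1}/\overline{\mathfrak{a}\mathfrak{u}}^{-1}\mathfrak{t}\cong\mathbb{Z}/t$ (producing the $\psi_1^{-1}(-\sqrt{D_K})$ factor and the cancellation of the $\psi_2$-contributions against the finite type of $\chi$), the $|\mathcal{O}_K^{\times}|=2$ folding using $(\psi_1\psi_2)(-1)=(-1)^{k+2j}$, the $\mathbb{N}_K$-shift $L(\psi_{1/K}(\chi_{-k/2})^{-1},\tfrac{k}{2})=L(\psi_{1/K}\chi^{-1},0)$, the class-group reindexing, and the Hecke trick for $k=2$. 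This is essentially the same proof as in the paper, correctly organized.
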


\subsection{Stabilization operators}\label{modify}
We have the following ``stabilization operators" acting on normalized $\Gamma_0(N')$-eigenforms $F$ with character $\varepsilon_F$. Given a rational prime $\ell\nmid N'$, let $a_{\ell}$ denote the eigenvalue of $F$ under the Hecke operator $T_{\ell}$, let $(\alpha_{\ell},\beta_{\ell})$ denote (some henceforth fixed ordering of) the algebraic numbers such that $\alpha_{\ell} + \beta_{\ell} = a_{\ell}$, $\alpha_{\ell}\beta_{\ell} = \ell^{k-1}\varepsilon_F(\ell)$, and $\ord_{\ell}(\alpha_{\ell}) \le \ord_{\ell}(\beta_{\ell})$. Now define
\begin{align*}&F(q) \mapsto F^{(\ell^+)}(q) := F(q) - \beta_{\ell}F(q^{\ell})\in M_k(\Gamma_0(N'\ell),\varepsilon_F),\\
&F(q) \mapsto F^{(\ell^-)}(q) := F(q) - \alpha_{\ell}F(q^{\ell})\in M_k(\Gamma_0(N'\ell),\varepsilon_F),\\
&F(q) \mapsto F^{(\ell^0)}(q) := F(q) - a_{\ell}F(q^{\ell}) + \varepsilon_F(\ell)\ell^{k-1}F(q^{\ell^2})\in M_k(\Gamma_0(N'\ell^2),\varepsilon_F).
\end{align*}
Note that for $\ell_1 \neq \ell_2$, the stabilization operators $F \mapsto F^{(\ell_1^{\epsilon_1})}$ and $F\mapsto F^{(\ell_2^{\epsilon_2})}$ commute for any $\epsilon_1, \epsilon_2 \in \{+,-,0\}$. Then we define, for integers $S = \prod_i \ell_i^{e_i}$, $\epsilon,\epsilon_1,\epsilon_2 \in \{+,-,0\}$,
$$F^{(S^{\epsilon})} := F^{\prod_i (\ell_i^{\epsilon})}, \hspace{1cm} F^{(S_1^{\epsilon_1}S_2^{\epsilon_2})} := F^{(S_1^{\epsilon_1}),(S_2^{\epsilon_2})}.$$

These operators clearly extend to $p$-adic modular forms. On the $p$-adic modular forms $\theta^jE_k^{\psi_1,\psi_2}$ from Section \ref{charactertwist}, we explicitly have
\begin{align*}\theta^jE_k^{\psi_1,\psi_2,(\ell^+)}(\tau) &:= \delta_{\psi_1=1,j=0}L(\psi_1,1-k)(1-\psi_2(\ell)\ell^{k-1}) + 2\sum_{n=1}^{\infty}n^j\sigma_{k-1}^{\psi_1,\psi_2,(\ell^+)}(n)q^n \\
&= \theta^jE_k^{\psi_1,\psi_2}(\tau) - \psi_2(\ell)\ell^{k-1+j}\theta^jE_k^{\psi_1,\psi_2}(\ell\tau),
\end{align*}
\begin{align*}\theta^jE_k^{\psi_1,\psi_2,(\ell^-)}(\tau) &:= \delta_{\psi_1=1,j=0}L(\psi_1,1-k)(1-\psi_2(\ell)) + 2\sum_{n=1}^{\infty}n^j\sigma_{k-1}^{\psi_1,\psi_2,(\ell^-)}(n)q^n \\
&= \theta^jE_k^{\psi_1,\psi_2}(\tau) - \psi_1(\ell)\ell^j\theta^jE_k^{\psi_1,\psi_2}(\ell\tau),
\end{align*}
\begin{align*}&\theta^jE_k^{\psi_1,\psi_2,(\ell^0)}(\tau) \\
&:= \delta_{\psi_1=1,j=0}L(\psi_1,1-k)(1-\psi_1(\ell)-\psi_2(\ell)\ell^{k-1}+\psi_2(\ell^2)\ell^{k-1}) + 2\sum_{n=1}^{\infty}n^j\sigma_{k-1}^{\psi_1,\psi_2,(\ell^0)}(n)q^n \\
&= \theta^jE_k^{\psi_1,\psi_2}(\tau) - \ell^j(\psi_1(\ell)+\psi_2(\ell)\ell^{k-1})\theta^jE_k^{\psi_1,\psi_2}(\ell\tau) + (\psi_1\psi_2)(\ell)\ell^{k-1+2j}\theta^jE(\ell^2\tau),
\end{align*} 
where $\delta_{\psi=1,j=0} = 1$ if $\psi$ is trivial and $j=0$, and 0 otherwise, and
\begin{align*}&\sigma_{k-1}^{\psi_1,\psi_2, (\ell^+)}(n) := \sum_{0<d|n, (d,\ell) = 1}\psi_1(n/d)\psi_2(d)d^{k-1},\\
&\sigma_{k-1}^{\psi_1,\psi_2, (\ell^-)}(n) := \sum_{0<d|n, (n/d,\ell) = 1}\psi_1(n/d)\psi_2(d)d^{k-1},\\
&\sigma_{k-1}^{\psi_1,\psi_2, (\ell^0)}(n) := \sum_{0<d|n, (n,\ell)=1}\psi_1(n/d)\psi_2(d)d^{k-1}.
\end{align*}

Let $N'$ be as in Section \ref{charactertwist}, let $N'' = N_+''N_-''N_0''$ be prime to $N'$, and suppose additionally that $K$ satisfies the Heegner hypothesis with respect to $N''$ so that every prime dividing $N := N''N' $ is split in $K$. Let $\mathfrak{N} = \mathfrak{N}''\mathfrak{N}'$ where $\mathfrak{N}'$ is as in Section \ref{charactertwist} and $\mathfrak{N}''$ is some choice of integral ideal such that $\mathcal{O}_K/\mathfrak{N}'' = \mathbb{Z}/N''$. Write $\mathfrak{N}$ as a product of (distinct) primes $\prod_{\ell|N} v$ where $v|\ell$; in other words, for each $\ell|N$, we can write
$$v = \mathbb{Z}\frac{b_{\ell}+\sqrt{D_K}}{2} + \mathbb{Z}\ell$$
for some $b_{\ell} \in \mathbb{Z}$ such that $b_{\ell}^2 \equiv D_K \mod 4\ell$.

For any integral primitive ideal $\mathfrak{a}$ of $\mathcal{O}_K$ coprime with $\mathfrak{N}$, recall the associated point $\tau_{\mathfrak{a}\mathfrak{N}} = \frac{b_{aN} + \sqrt{D_K}}{2aN}\in \mathcal{H}^+$ such that $(\overline{\mathfrak{aN}})^{-1} = \mathbb{Z}\tau_{\mathfrak{aN}}+ \mathbb{Z}$. Note that for $v|\mathfrak{N}''$ where $v|\ell$, we have 
$$\overline{v}(\overline{\mathfrak{a}\mathfrak{N}})^{-1} = (\overline{\mathfrak{a}\prod_{v'\neq v}v'})^{-1} = \mathbb{Z}\ell\frac{b_{aN}+\sqrt{D_K}}{2aN} + \mathbb{Z},$$
and hence
\begin{align*}&\overline{v}^{-1}\star(\overline{\mathfrak{aN}} \star \mathbb{C}/\mathcal{O}_K) = \mathbb{C}/(\overline{v}(\overline{\mathfrak{aN}})^{-1}) = \mathbb{C}/(\mathbb{Z}\ell\tau_{\mathfrak{aN}} + \mathbb{Z}).
\end{align*}

In terms of the action of $\mathbb{I}^{\mathfrak{N}}$ on CM triples $(A',t',\omega')$, we have for any $F\in M_k(\Gamma_0(N),\varepsilon_F)$ of weight $k+2j$,
$$F(\ell\tau_{\mathfrak{a}\mathfrak{N}}) = F(\overline{v}^{-1}\star(\mathbb{C}/(\mathbb{Z}\tau_{\mathfrak{a}\mathfrak{N}}+\mathbb{Z}),t,2\pi i dz)) =  F(\overline{v}^{-1}\overline{\mathfrak{aN}}\star(\mathbb{C}/\mathcal{O}_K,t,2\pi i dz)),$$
$$F(\ell^2\tau_{\mathfrak{a}\mathfrak{N}}) = F(\overline{v}^{-2}\star(\mathbb{C}/(\mathbb{Z}\tau_{\mathfrak{a}\mathfrak{N}}+\mathbb{Z}),t,2\pi i dz)) = F(\overline{v}^{-2}\overline{\mathfrak{aN}}\star(\mathbb{C}/\mathcal{O}_K,t,2\pi i dz)).$$
Thus, for any normalized eigenform $F$, viewed as a $p$-adic modular form, recalling our notation $A_0 = \mathbb{C}/\mathcal{O}_K$, $t_0 \in A_0[\mathfrak{N}]$ from Section \ref{twoformulas}, and $\omega_{can}$ as our ``canonical" differential on $\hat{\mathcal{A}}_0$ under our fixed isomorphism $i: \hat{\mathcal{A}}_0 \xrightarrow{\sim} \hat{\mathbb{G}}_m$ (see Section \ref{twoformulas}), we have for $\ell\nmid N'$
\begin{align*}&\theta^jF^{(\ell^+)}(\overline{\mathfrak{aN}}\star(A_0,t_0,\omega_{can}))= \theta^jF(\overline{\mathfrak{aN}}\star(A_0,t_0,\omega_{can})) - \beta_{\ell}\ell^j\theta^jF(\overline{v}^{-1}\overline{\mathfrak{aN}}\star(A_0,t_0,\omega_{can})),
\end{align*}
\begin{align*}&\theta^jF^{(\ell^-)}(\overline{\mathfrak{aN}}\star(A_0,t_0,\omega_{can}))= \theta^jF(\overline{\mathfrak{aN}}\star(A_0,t_0,\omega_{can})) - \alpha_{\ell}\ell^j\theta^jF(\overline{v}^{-1}\overline{\mathfrak{aN}}\star(A_0,t_0,\omega_{can})),
\end{align*}
\begin{align*}&\theta^jF^{(\ell^0)}(\overline{\mathfrak{aN}}\star(A_0,t_0,\omega_{can}))= \theta^jF(\overline{\mathfrak{aN}}\star(A_0,t_0,\omega_{can})) - \ell^ja_{\ell}\theta^jF(\overline{v}^{-1}\overline{\mathfrak{aN}}\star(A_0,t_0,\omega_{can}))\\
&\hspace{8cm}+ \varepsilon_F(\ell)\ell^{k-1+2j}\theta^jF(\overline{v}^{-2}\overline{\mathfrak{aN}}\star(A_0,t_0,\omega_{can})).
\end{align*}
Choosing some set of representatives $\mathfrak{a}$ prime to $\mathfrak{N}$, we thus have
\begin{align*}&\sum_{[\mathfrak{a}]\in\mathcal{C}\ell(\mathcal{O}_K)}(\chi_j)^{-1}(\mathfrak{a})\theta^jF^{(\ell^+)}(\mathfrak{a}\star(A_0,t_0,\omega_{can}))\\
&=\sum_{[\mathfrak{a}]\in\mathcal{C}\ell(\mathcal{O}_K)}(\chi_j)^{-1}(\mathfrak{a})\theta^jF(\mathfrak{a}\star(A_0,t_0,\omega_{can})) - \beta_{\ell}\ell^{j}\sum_{[\mathfrak{a}]\in\mathcal{C}\ell(\mathcal{O}_K)}(\chi_j)^{-1}(\mathfrak{a})\theta^jF(\overline{v}^{-1}\mathfrak{a}\star(A_0,t_0,\omega_{can}))\\
&= (1-\beta_{\ell}(\chi_j)^{-1}(\overline{v})\ell^{j})\sum_{[\mathfrak{a}]\in\mathcal{C}\ell(\mathcal{O}_K)}(\chi_j)^{-1}(\mathfrak{a})\theta^jF(\mathfrak{a}\star(A_0,t_0,\omega_{can})),
\end{align*}
\begin{align*}&\sum_{[\mathfrak{a}]\in\mathcal{C}\ell(\mathcal{O}_K)}(\chi_j)^{-1}(\mathfrak{a})\theta^jF^{(\ell^-)}(\mathfrak{a}\star(A_0,t_0,\omega_{can}))\\
&=\sum_{[\mathfrak{a}]\in\mathcal{C}\ell(\mathcal{O}_K)}(\chi_j)^{-1}(\mathfrak{a})\theta^jF(\mathfrak{a}\star(A_0,t_0,\omega_{can})) - \alpha_{\ell}\ell^j\sum_{[\mathfrak{a}]\in\mathcal{C}\ell(\mathcal{O}_K)}(\chi_j)^{-1}(\mathfrak{a})\theta^jF(\overline{v}^{-1}\mathfrak{a}\star(A_0,t_0,\omega_{can}))\\
&= (1-\alpha_{\ell}(\chi_j)^{-1}(\overline{v})\ell^j)\sum_{[\mathfrak{a}]\in\mathcal{C}\ell(\mathcal{O}_K)}(\chi_j)^{-1}(\mathfrak{a})\theta^jF(\mathfrak{a}\star(A_0,t_0,\omega_{can})),
\end{align*}
\begin{align*}&\sum_{[\mathfrak{a}]\in\mathcal{C}\ell(\mathcal{O}_K)}(\chi_j)^{-1}(\mathfrak{a})\theta^jF^{(\ell^+)}(\mathfrak{a}\star(A_0,t_0,\omega_{can}))\\
&=\sum_{[\mathfrak{a}]\in\mathcal{C}\ell(\mathcal{O}_K)}(\chi_j)^{-1}(\mathfrak{a})\theta^jF(\mathfrak{a}\star(A_0,t_0,\omega_{can})) - \ell^ja_{\ell}\sum_{[\mathfrak{a}]\in\mathcal{C}\ell(\mathcal{O}_K)}(\chi_j)^{-1}(\mathfrak{a})\theta^jF(\overline{v}^{-1}\mathfrak{a}\star(A_0,t_0,\omega_{can}))\\
&\hspace{6cm}+ \varepsilon_F(\ell)\ell^{k-1+2j} \sum_{[\mathfrak{a}]\in\mathcal{C}\ell(\mathcal{O}_K)}(\chi_j)^{-1}(\mathfrak{a})\theta^jF(\overline{v}^{-2}\mathfrak{a}\star(A_0,t_0,\omega_{can}))\\
&= (1-a_{\ell}(\chi_j)^{-1}(\overline{v})\ell^j + \varepsilon_F(\ell)(\chi_j)^{-2}(\overline{v})\ell^{k-1+2j})\sum_{[\mathfrak{a}]\in\mathcal{C}\ell(\mathcal{O}_K)}(\chi_j)^{-1}(\mathfrak{a})\theta^jF(\mathfrak{a}\star(A_0,t_0,\omega_{can})).
\end{align*}

Thus, rewriting the definitions of $E_k^{\psi_1,\psi_2,((N_+'')^+(N_-'')^-(N_0'')^0)}$ in terms of triples, using the above general identities for $p$-adic modular forms and induction, we have
\begin{align*}&\sum_{[\mathfrak{a}]\in \mathcal{C}\ell(\mathcal{O}_K)}(\chi_j)^{-1}(\mathfrak{a})\theta^jE_k^{\psi_1,\psi_2,((N_+'')^+(N_-'')^-(N_0'')^0)}(\mathfrak{a}\star(A_0,t_0,\omega_{can}))\\
&= \Xi_{\chi}(\psi_1,\psi_2,N_+,N_-,N_0)\sum_{[\mathfrak{a}]\in \mathcal{C}\ell(\mathcal{O}_K)}(\chi_j)^{-1}(\mathfrak{a})\theta^jE_k^{\psi_1,\psi_2}(\mathfrak{a}\star(A_0,t_0,\omega_{can}))
\end{align*}
where $N_+ = N_+''\cdot\frac{t}{(u,t)}, N_- = N_-''\cdot\frac{u}{(u,t)}$, $N_0 = N_0''\cdot(u,t)^2$, and
\begin{align*}\Xi_{\chi}(\psi_1,\psi_2,N_+,N_-,N_0) = &\prod_{\ell|N_+}(1-(\psi_{2/K}\chi^{-1})(\overline{v})\ell^{k-1})\prod_{\ell|N_-}(1-(\psi_{1/K}\chi^{-1})(\overline{v}))\\
&\prod_{\ell|N_0}(1-(\psi_{2/K}\chi^{-1})(\overline{v})\ell^{k-1})(1-(\psi_{1/K}\chi^{-1})(\overline{v})).
\end{align*}
Henceforth, let $E_k^{(\psi_1,\psi_2,N_+,N_-,N_0)} := E_k^{\psi_1,\psi_2,((N_+'')^+(N_-'')^-(N_0'')^0)}$. Now, using the above calculation, Proposition \ref{Lvalueproposition}, the fact that 
$$\omega_{can} = \frac{\Omega_{\infty}}{\Omega_p}\cdot 2\pi idz$$
(where $\Omega_{\infty}$ and $\Omega_p$ are the complex and $p$-adic periods defined in Section \ref{twoformulas}) and part (3) of Theorem \ref{MSthetathm}, we have
\begin{proposition}\label{ModifiedLvalueproposition} Let $k \ge 2$ be an integer. Suppose $\chi$ is of infinity type $(k+j,-j)$ and $\chi_{-\frac{k}{2}}$ has trivial central character and is of finite type $(\mathfrak{N},\psi_1\psi_2)$. Suppose also that $\psi_1$ and $\psi_2$ are Dirichlet characters over $\mathbb{Q}$ with conductors $u$ and $t$, respectively, such that $ut = N$ and $(\psi_1\psi_2)(-1) = (-1)^k$. Then
\begin{align*}&\sum_{[\mathfrak{a}]\in \mathcal{C}\ell(\mathcal{O}_K)}(\chi_j)^{-1}(\mathfrak{a})\theta^jE_k^{(\psi_1,\psi_2,N_+,N_-,N_0)}(\mathfrak{a}\star(A_0,t_0,\omega_{can}))\\
&= \left(\frac{\Omega_p}{\Omega_{\infty}}\right)^{k+2j}\frac{t^k\Gamma(k+j)\psi_1^{-1}(-\sqrt{D_K})\chi^{-1}(\overline{\mathfrak{t}})}{(2\pi i)^{k+j}\mathfrak{g}(\psi_2^{-1})\sqrt{D_K}^j}\Xi_{\chi}(\psi_1,\psi_2,N_+,N_-,N_0)L(\psi_{1/K}\chi^{-1},0)
\end{align*}
where 
\begin{align*}\Xi_{\chi}(\psi_1,\psi_2,N_+,N_-,N_0) = &\prod_{\ell|N_+}(1-(\psi_{2/K}\chi^{-1})(\overline{v})\ell^{k-1})\prod_{\ell|N_-}(1-(\psi_{1/K}\chi^{-1})(\overline{v}))\\
&\prod_{\ell|N_0}(1-(\psi_{2/K}\chi^{-1})(\overline{v})\ell^{k-1})(1-(\psi_{1/K}\chi^{-1})(\overline{v})).
\end{align*}
\end{proposition}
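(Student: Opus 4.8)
The plan is to deduce the identity by combining four ingredients already in place: the stabilization-operator computation of Section~\ref{modify}, the homogeneity of $p$-adic modular forms together with the period relation $\omega_{can}=\frac{\Omega_\infty}{\Omega_p}\cdot 2\pi i\,dz$, the Shimura--Katz comparison of Theorem~\ref{MSthetathm}(3), and Proposition~\ref{Lvalueproposition}.

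First I would peel off the factor $\Xi_{\chi}(\psi_1,\psi_2,N_+,N_-,N_0)$. Writing $E_k^{(\psi_1,\psi_2,N_+,N_-,N_0)}=E_k^{\psi_1,\psi_2,((N_+'')^+(N_-'')^-(N_0'')^0)}$ as an iterated application to $E_k^{\psi_1,\psi_2}$ of the stabilization operators $F\mapsto F^{(\ell^+)},F^{(\ell^-)},F^{(\ell^0)}$ (which commute for distinct primes), I would apply prime by prime the identities recorded in Section~\ref{modify}, which express each twisted trace $\sum_{[\mathfrak{a}]}(\chi_j)^{-1}(\mathfrak{a})\theta^jF^{(\ell^+)}(\mathfrak{a}\star(A_0,t_0,\omega_{can}))$, and likewise for $F^{(\ell^-)}$ and $F^{(\ell^0)}$, as one Euler-type factor times the unstabilized trace. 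For $F=E_k^{\psi_1,\psi_2}$ one has $a_\ell=\psi_1(\ell)+\psi_2(\ell)\ell^{k-1}$, so $\{\alpha_\ell,\beta_\ell\}=\{\psi_1(\ell),\psi_2(\ell)\ell^{k-1}\}$ and the Hecke polynomial factors as $(x-\psi_1(\ell))(x-\psi_2(\ell)\ell^{k-1})$; combining this with the relations $(\chi_j)^{-1}(\overline{v})\,\ell^{j}=\chi^{-1}(\overline{v})$ and $(\chi_j)^{-2}(\overline{v})\,\ell^{2j}=\chi^{-2}(\overline{v})$ (consequences of $\chi$ having infinity type $(k+j,-j)$ and $\mathbb{N}_K(\overline{v})=\ell^{-1}$) and $\psi_{i/K}(\overline{v})=\psi_i(\ell)$, the local factors collapse to $1-(\psi_{2/K}\chi^{-1})(\overline{v})\ell^{k-1}$ for $\ell|N_+''$, to $1-(\psi_{1/K}\chi^{-1})(\overline{v})$ for $\ell|N_-''$, and to the product of the two for $\ell|N_0''$. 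The remaining factors of $\Xi_{\chi}$ --- those over primes dividing $N_+N_-N_0$ but not $N_+''N_-''N_0''$ --- are automatically trivial, since such a prime divides $u$ or $t$, making $\psi_1$ or $\psi_2$ ramified there; hence by induction
\begin{align*}
&\sum_{[\mathfrak{a}]\in\mathcal{C}\ell(\mathcal{O}_K)}(\chi_j)^{-1}(\mathfrak{a})\theta^jE_k^{(\psi_1,\psi_2,N_+,N_-,N_0)}(\mathfrak{a}\star(A_0,t_0,\omega_{can}))\\
&\qquad=\Xi_{\chi}(\psi_1,\psi_2,N_+,N_-,N_0)\sum_{[\mathfrak{a}]\in\mathcal{C}\ell(\mathcal{O}_K)}(\chi_j)^{-1}(\mathfrak{a})\theta^jE_k^{\psi_1,\psi_2}(\mathfrak{a}\star(A_0,t_0,\omega_{can})).
\end{align*}

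Next I would evaluate the unstabilized twisted trace. Since $\theta^jE_k^{\psi_1,\psi_2}$ has weight $k+2j$ and the $\star$-action scales the invariant differential linearly, homogeneity of $p$-adic modular forms together with $\omega_{can}=\frac{\Omega_\infty}{\Omega_p}\cdot 2\pi i\,dz$ gives $\theta^jE_k^{\psi_1,\psi_2}(\mathfrak{a}\star(A_0,t_0,\omega_{can}))=(\Omega_p/\Omega_\infty)^{k+2j}\,\theta^jE_k^{\psi_1,\psi_2}(\mathfrak{a}\star(A_0,t_0,2\pi i\,dz))$. Because $p$ splits in $K$, $A_0\cong\mathbb{C}/\mathcal{O}_K$ is ordinary over $\mathcal{O}_{\mathbb{C}_p}$, so each $\mathfrak{a}\star(A_0,t_0,2\pi i\,dz)$ is an ordinary $CM$ triple, and Theorem~\ref{MSthetathm}(3) allows $\theta^jE_k^{\psi_1,\psi_2}$ to be replaced on it by the Maass--Shimura derivative $\partial^jE_k^{\psi_1,\psi_2}$ (under $i_p\circ i_\infty^{-1}$). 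Unwinding the complex uniformization --- using $E_k^{\psi_1,\psi_2}(\tau)=E_k^{\psi_1,\psi_2}(\mathbb{C}/(\mathbb{Z}+\mathbb{Z}\tau),\frac1N,2\pi i\,dz)$, the fact that $\langle\tau_{\mathfrak{a}},1\rangle$ generates $\overline{\mathfrak{a}}^{-1}$, and the $\star$-action on triples of Section~\ref{isogenies'} --- I would match the resulting sum, after the reindexing of ideal-class representatives carried out in Section~\ref{charactertwist}, with the analytic twisted trace $\sum_{[\mathfrak{a}]}(\chi_j)^{-1}(\mathfrak{a})\partial^jE_k^{\psi_1,\psi_2}(\tau_{\mathfrak{a}})$ of Proposition~\ref{Lvalueproposition}. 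The hypotheses here ($\chi$ of infinity type $(k+j,-j)$, $\chi_{-k/2}$ of trivial central character and finite type $(\mathfrak{N},\psi_1\psi_2)$, $ut=N$, $(\psi_1\psi_2)(-1)=(-1)^k$) are precisely those required to invoke that proposition (the finite type at $\mathfrak{N}$ entailing finite type at $\mathfrak{N}'$), which also covers the case $k=2$ through its internal limiting argument and which evaluates the trace as $\frac{t^k\Gamma(k+j)\psi_1^{-1}(-\sqrt{D_K})\chi^{-1}(\overline{\mathfrak{t}})}{(2\pi i)^{k+j}\mathfrak{g}(\psi_2^{-1})\sqrt{D_K}^j}L(\psi_{1/K}\chi^{-1},0)$. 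Multiplying the three factors $\Xi_{\chi}$, $(\Omega_p/\Omega_\infty)^{k+2j}$, and this last expression yields the claimed formula.

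The step I expect to be the main obstacle is this last identification of the $p$-adic twisted trace over the $CM$ triples $\mathfrak{a}\star(A_0,t_0,2\pi i\,dz)$ with the analytic trace over the points $\tau_{\mathfrak{a}}$: one must reconcile the convention that $\langle\tau_{\mathfrak{a}},1\rangle$ generates $\overline{\mathfrak{a}}^{-1}$ (so $\tau_{\mathfrak{a}}$ lies in the ideal class $[\mathfrak{a}]$) with the convention $\mathfrak{a}\star A_0=A_0/A_0[\mathfrak{a}]\cong\mathbb{C}/\mathfrak{a}^{-1}$ (which lies in the class $[\mathfrak{a}]^{-1}$), and confirm that the conjugation $\mathfrak{a}\leftrightarrow\overline{\mathfrak{a}}$ needed to bridge the two is compatible with the way the twisted trace in Section~\ref{charactertwist} was actually indexed (over the ideals $\overline{\mathfrak{aN'}}$), so that the coefficient attached to $\mathfrak{a}\star(A_0,t_0,\omega_{can})$ really is $(\chi_j)^{-1}(\mathfrak{a})$ and, using that $\chi_{-k/2}$ has trivial central character, the local constituents $\psi_1\bmod\mathfrak{u}$ and $\psi_2\bmod\mathfrak{t}$ appearing in Section~\ref{charactertwist} line up with $\psi_{1/K}$ and $\psi_{2/K}$ on ideals. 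A secondary point of care is simply to check that $E_k^{(\psi_1,\psi_2,N_+,N_-,N_0)}$ is indeed the iterated stabilization described, with $(N_+,N_-,N_0)$ determined from $(N_+'',N_-'',N_0'')$ and the conductors $u,t$ of $\psi_1,\psi_2$ by $N_+=N_+''\cdot t/(u,t)$, $N_-=N_-''\cdot u/(u,t)$, $N_0=N_0''\cdot(u,t)^2$, so that $\Xi_{\chi}$ has exactly the stated product form.
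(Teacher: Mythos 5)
Your proposal follows essentially the same route as the paper: combine the stabilization-operator identities from Section~\ref{modify} to pull out $\Xi_\chi$, invoke the period relation $\omega_{can}=\frac{\Omega_\infty}{\Omega_p}\cdot 2\pi i\,dz$ together with weight-$(k{+}2j)$ homogeneity, pass from $\theta^j$ to $\partial^j$ on ordinary CM triples via Theorem~\ref{MSthetathm}(3), and then appeal to Proposition~\ref{Lvalueproposition}. The paper states this as a single terse sentence after the stabilization computation, so the pitfalls you flag --- that the extra Euler factors of $\Xi_\chi$ at primes of $N_+N_-N_0/(N_+''N_-''N_0'')$ vanish because $\psi_1$ or $\psi_2$ is ramified there, the $\{\alpha_\ell,\beta_\ell\}=\{\psi_1(\ell),\psi_2(\ell)\ell^{k-1}\}$ factorization, the identities $(\chi_j)^{-1}(\overline v)\ell^j=\chi^{-1}(\overline v)$, and the reconciliation of the $\overline{\mathfrak{aN'}}$-indexing of Section~\ref{charactertwist} with the $\mathfrak a\star$-indexing of the proposition --- are precisely the details the paper leaves implicit, and you have resolved them correctly.
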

Finally, since $p\nmid N$, note that the ``$p$-depletion" operator $\flat$ of Section \ref{padicmodularforms} coincides with the $((p^2)^0)$-stabilization operator on our family of Eisenstein series, i.e. 
$$(\theta^jE_k^{(\psi_1,\psi_2,N_+,N_-,N_0)})^{\flat} = \theta^jE_k^{(\psi_1,\psi_2,N_+,N_-,p^2N_0)}.$$
(Note that all the above stabilization operators on $p$-adic modular forms commute with $\theta$, so the above notation is unambiguous.)
\subsection{Proof of Main Theorem}
\begin{proof}[Proof of Theorem \ref{newmainthm}]Suppose, as in our assumptions, that $f \in S_k(\Gamma_0(N),\varepsilon_f)$ has partial Eisenstein descent of type $(\psi_1,\psi_2,N_+,N_-,N_0)$ over $M$ mod $\mathfrak{m}$, where $M$ is a $p$-adic field containing $E_f$. Recall the field $F'$ defined in Section \ref{twoformulas}, with $\mathfrak{p}'$ be the prime ideal of $\mathcal{O}_{F'}$ above $p$ determined by our embedding $i_p: \overline{\mathbb{Q}}\hookrightarrow \overline{\mathbb{Q}}_p$. Thus,
$$\theta^jf(q) \equiv \theta^jE_k^{(\psi_1,\psi_2,N_+,N_-,N_0)}(q) \mod \mathfrak{m}\mathcal{O}_{F_{\mathfrak{p}'}'M}$$
viewed as $p$-adic modular forms for all $j \ge 1$. Moreover, 
$$\theta^jf^{\flat}(q) \equiv \theta^jE_k^{(\psi_1,\psi_2,N_+,N_-,p^2N_0)}(q) \mod \mathfrak{m}\mathcal{O}_{F_{\mathfrak{p}'}'M}.$$
Henceforth, let $E_{k,\psi_1,\psi_2}^{(N)} = E_k^{(\psi_1,\psi_2,N_+,N_-,N_0)}$ and $E_{k,\psi_1,\psi_2}^{(pN)} = E_k^{(\psi_1,\psi_2,N_+,N_-,p^2N_0)}$, and recall our notation $A_0 = \mathbb{C}/\mathcal{O}_K$ and $t_0\in A_0[\mathfrak{N}]$ (see Section \ref{twoformulas}). Since $p$ being split in $K$ implies that the curves $i_p(\mathfrak{a}\star A_0)$ are ordinary (viewed as curves over $\mathbb{C}_p$), by the congruence above and the $q$-expansion principle (\cite[Section I.3.2 and Section I.3.5]{Gouvea}) we have
\begin{align*}\sum_{\mathfrak{a} \in \mathcal{C}\ell(\mathcal{O}_K)}(\chi_j)&^{-1}(\mathfrak{a})(\theta^jf)^{\flat}(\mathfrak{a}\star(A_0,t_0,\omega_{can}))\\
&\equiv \sum_{\mathfrak{a} \in \mathcal{C}\ell(\mathcal{O}_K)}(\chi_j)^{-1}(\mathfrak{a})(\theta^j E_{k,\psi_1,\psi_2}^{(pN)})(\mathfrak{a}\star(A_0,t_0,\omega_{can})) \mod \mathfrak{m}\mathcal{O}_{F_{\mathfrak{p}'}'M}.
\end{align*}
By assumption, $\varepsilon_f = \psi_1\psi_2$ and $\chi$ is of finite type $(\mathfrak{N},\varepsilon_f) = (\mathfrak{N},\psi_1\psi_2)$. Now by Proposition \ref{ModifiedLvalueproposition}, we have
\begin{align*}&\sum_{\mathfrak{a} \in \mathcal{C}\ell(\mathcal{O}_K)}(\chi_j)^{-1}(\mathfrak{a})(\theta^j E_{k,\psi_1,\psi_2}^{(pN)})(\mathfrak{a}\star(A_0,t_0,\omega_{can})) \\
&=\left(\frac{\Omega_p}{\Omega_{\infty}}\right)^{k+2j}\frac{t^k\Gamma(k+j)\psi_1^{-1}(-\sqrt{D_K})\chi^{-1}(\overline{\mathfrak{t}})}{(2\pi i)^{k+j}\mathfrak{g}(\psi_2^{-1})\sqrt{D_K}^j}\Xi_{\chi}(\psi_1,\psi_2,N_+,N_-,p^2N_0)L(\psi_{1/K}\chi^{-1},0)
\end{align*}
where $\Xi_{\chi}(\psi_1,\psi_2,N_+,N_-,p^2N_0)$ is defined as in Proposition \ref{ModifiedLvalueproposition}.

Since $\chi$ is of finite type $(\mathfrak{N},\varepsilon_f)$, it is unramifed and in particular unramified at $\mathfrak{p}$. Moreover, since $\mathfrak{p}\nmid u\mathcal{O}_K$, then $\psi_{1/K}$ is unramified at $\mathfrak{p}$. Hence, by \cite[Lemma 5.2.27-5.2.28]{Katz1}, we have $\text{Local}_{\mathfrak{p}}(\psi_{1/K}\chi^{-1}) = 1$ and $\text{Local}_{\mathfrak{p}}(\psi_{1/K}\chi^{-1}\mathbb{N}_K^{1-k}) = 1$.

Now using Theorem \ref{BDPinterpolationproperty} and Theorem \ref{KatzpadicLfunction'} with $\mathfrak{C} = \mathfrak{f}(\psi_{1/K}\chi^{-1}) = lcm(u\mathcal{O}_K,\mathfrak{t})$ where $\mathfrak{F} = lcm(\mathfrak{u},\mathfrak{t}), \mathfrak{F}_c = \overline{\mathfrak{u}}$ and $\mathfrak{I} = 1$, we can write
\begin{align*}&\mathcal{L}_p(f,\chi) \\
&= \left(\sum_{\mathfrak{a}\in\mathcal{C}\ell(\mathcal{O}_K)}(\chi_j)^{-1}(\mathfrak{a})(\theta^j f)^{\flat}(\mathfrak{a}\star(A_0,t_0,\omega_{can}))\right)^2 \\
&\equiv \left(\sum_{\mathfrak{a}\in\mathcal{C}\ell(\mathcal{O}_K)}(\chi_j)^{-1}(\mathfrak{a})(\theta^jE_{k,\psi_1,\psi_2}^{(pN)})(\mathfrak{a}\star(A_0,t_0,\omega_{can}))\right)^2\\
&=\left(\left(\frac{\Omega_p}{\Omega_{\infty}}\right)^{k+2j}\frac{t^k\Gamma(k+j)\psi_1^{-1}(-\sqrt{D_K})\chi^{-1}(\overline{\mathfrak{t}})}{(2\pi i)^{k+j}\mathfrak{g}(\psi_2^{-1})\sqrt{D_K}^j}\Xi_{\chi}(\psi_1,\psi_2,N_+,N_-,p^2N_0)L(\psi_{1/K}\chi^{-1},0)\right)^2\\
&=\psi_1^{-1}(D_K)\left(\frac{t^k\chi^{-1}(\overline{\mathfrak{t}})}{4\mathfrak{g}(\psi_2^{-1})(2\pi i)^{k+2j}}\Xi_{\chi}(\psi_1,\psi_2,N_+,N_-,N_0)L_p(\psi_{1/K}\chi^{-1},0)\right)^2 \mod \mathfrak{m}\mathcal{O}_{F_{\mathfrak{p}'}'M}.
\end{align*}

The above congruence now extends by $p$-adic continuity from $\Sigma_{cc}^{(2)}(\mathfrak{N})$ to $\hat{\Sigma}_{cc}(\mathfrak{N})$ (with respect to the topology described in Section \ref{twoformulas}). Thus the congruence holds on $\hat{\Sigma}_{cc}(\mathfrak{N})$, and the Main Theorem follows after putting $\Xi := \Xi_{\chi}(\psi_1,\psi_2,N_+,N_-,N_0)$. 
\end{proof}

\begin{proof}[Proof of Theorem \ref{newmaincorollary}]In this more specialized situation, we have $\psi_1 = \psi_2^{-1} = \psi$ and $u = t = \mathfrak{f}(\psi)$. Thus $\psi_1\psi_2 = 1$ and $\chi$ is of finite type $(\mathfrak{N},1)$. Recall from Bertolini-Darmon-Prasanna's $p$-adic Waldspurger formula (Theorem \ref{AbelJacobicongruence'}):
$$\mathcal{L}_p(f,\mathbb{N}_K^{-k/2}) = \left(\frac{p^{k/2}-a_p(f) + p^{k/2-1}}{p^{k/2}\Gamma(\frac{k}{2})}\right)^2\left(AJ_{F_{\mathfrak{p}'}'M}(\Delta_f(K))(\omega_f\wedge\omega_A^{k/2-1}\eta_A^{k/2-1})\right)^2.$$
The proof proceeds essentially by plugging $\chi = \mathbb{N}_K^{-k/2}$ into Theorem \ref{newmainthm}. We have by Gross's factorization formula (Theorem \ref{Grosstheorem'}) \emph{applied to $(\psi\omega^{-k/2})_0$} (see Definition \ref{evendefinition}):
\begin{align*}&\frac{\mathfrak{f}(\psi)^{k/2}}{\psi(-\sqrt{D_K})\mathfrak{g}(\psi)}L_p(\psi_{/K}\mathbb{N}_K^{k/2},0) \\
&\frac{\mathfrak{f}(\psi)^{k/2}}{\prod_{\ell|f}(\psi^{-1}\omega^{k/2})_{0,\ell}(-\sqrt{D_K})\mathfrak{g}((\psi^{-1}\omega^{k/2})_{0,\ell})}L_p((\psi\omega^{-k/2})_{0/K},\frac{k}{2}) \\
&=L_p(\psi_0(\varepsilon_K\omega)^{1-k/2},\frac{k}{2})L_p(\psi_0^{-1}(\varepsilon_K\omega)^{k/2},1-\frac{k}{2}).
\end{align*}
Using the interpolation property of the Kubota-Leopoldt $p$-adic $L$-function, we have
\begin{align*}&L_p(\psi_0^{-1}(\varepsilon_K\omega)^{k/2},1-\frac{k}{2}) \\
&= -(1-(\psi_0^{-1}\varepsilon_K^{k/2})(p)p^{k/2-1})\frac{2}{k}B_{\frac{k}{2},\psi_0^{-1}\varepsilon_K^{k/2}} \\
&= -(1-\psi_0^{-1}(p)p^{k/2-1})\frac{2}{k}B_{\frac{k}{2},\psi_0^{-1}\varepsilon_K^{k/2}}
\end{align*}
where the last equality follows since $p$ is split in $K$ and so $\varepsilon_K(p) = 1$.

Suppose $\psi_0(\varepsilon_K\omega)^{1-k/2} \neq 1$. In this case, since $p^2\nmid \mathfrak{f}(\psi_0(\varepsilon_K\omega)^{1-k/2})|pf$, we have by Corollary 5.13 of \cite{Washington} that
\begin{align*}&L_p(\psi_0(\varepsilon_K\omega)^{1-k/2},\frac{k}{2}) \\
&\equiv L_p(\psi_0(\varepsilon_K\omega)^{1-k/2},0) \\
&= -(1-(\psi_0\varepsilon_K(\varepsilon_K\omega)^{-k/2})(p))B_{1,\psi_0\varepsilon_K(\varepsilon_K\omega)^{-k/2}} \\
&= -B_{1,\psi_0\varepsilon_K(\varepsilon_K\omega)^{-k/2}}\mod p
\end{align*}
where the last equality holds since $p|\mathfrak{f}(\psi_0\varepsilon_K(\varepsilon_K\omega)^{1-k/2})$. Note this congruence also holds mod $\lambda$ for any prime $\lambda|p$.

Now suppose $\psi_0(\varepsilon_K\omega)^{1-k/2} = 1$ and $k = 2$, i.e. $\psi_0 = 1$. From \cite[p. 90]{Gross}, we have the following special value for the Katz $p$-adic $L$-function at $\psi_{0/K} = 1_{/K}$. 
$$L_p(1_{/K},0) = \frac{4}{|\mathcal{O}_K^{\times}|}\frac{p-1}{p}\log_p(\overline{\alpha}) = 2\frac{p-1}{p}\log_p(\overline{\alpha})$$
where $\overline{\alpha} \in \mathcal{O}_K$ such that $(\overline{\alpha}) = \overline{\mathfrak{p}}^{h_K}$. (Recall that $|\mathcal{O}_K^{\times}| = 2$ since we assume that $D_K<-4$.)
The statement now follows directly from Theorem \ref{newmainthm}.
\end{proof}

\section{Concrete Applications of the Main Theorem}\label{ConcreteApplications} In this section, we apply Theorem \ref{newmaincorollary} to computations with algebraic cycles, in certain instances verifying a weak form of the Beilinson-Bloch conjecture (as described in Section \ref{Chowmotives'}). In Section \ref{ellipticcurves}, we apply our results to the case of elliptic curves with reducible mod $p$ Galois representation, in particular deriving criteria to show that Heegner points on certain quadratic twists are non-torsion. In Section \ref{positiveproportionsection}, we use this criterion to show that for semistable curves with reducible mod $3$ Galois representation, a positive proportion of real quadratic twists have rank 1 and a positive proportion of imaginary quadratic twists have rank 0.

\subsection{Construction of newforms having Eisenstein descent of type $(1,1,1,1,1)$}

We now give a procedure for constructing newforms $f\in S_k(SL_2(\mathbb{Z}))$ which have Eisenstein descent of type $(1,1,1,1,1)$. For this we follow Skinner \cite[pp. 6-7]{Skinner}. 

\begin{construction}[Eisenstein descent of type (1,1,1,1,1)]\label{newformconstruction} Fix an even integer $k\ge 4$ and a prime $p|B_k$. Recall the classical holomorphic Eisenstein series of weight $k$:
$$E_k(q) = \frac{\zeta(1-k)}{2} + \sum_{n=1}^{\infty}\sigma_{k-1}(n)q^n = -\frac{B_{k}}{2k} + \sum_{n=1}^{\infty}\sigma_{k-1}(n)q^n.$$
(See Remark \ref{Eisensteinremark}.) The Eisenstein series $E_4$ and $E_6$ generate $M_k(SL_2(\mathbb{Z}))/S_k(SL_2(\mathbb{Z}))$ as an algebra, and therefore normalizing $E_4$ and $E_6$ to $G_4 = 240 E_4$ and $G_6 = -504 E_6$ to have constant term 1, we have a cuspform
$$f_k := E_k+ \frac{B_{k}}{2k}G_4^aG_6^b \in S_k(SL_2(\mathbb{Z}),\mathbb{Q})$$
for some $a,b \in \mathbb{Z}_{\ge 0}$ with $k = 4a+6b$.  

Under our assumption $p|B_{k}$, we see immediately that $f_k \equiv E_k \mod p$. However, there is no guarantee that $f_k$ is a \emph{newform}, a problem we can remedy as follows. Consider the Hecke algebra $\mathbb{T}\subset \text{End}_{\mathbb{Z}_p}(S_k(SL_2(\mathbb{Z}),\mathbb{Z}_p))$ generated by the classical Hecke operators $T_{\ell}$ at primes $\ell$. Since $\mathbb{T}$ is a finite torsion-free $\mathbb{Z}_p$-algebra, it is flat. Minimal primes $\mathfrak{p}$ of $\mathbb{T}$ correspond to $\Gal(\overline{\mathbb{Q}}_p/\mathbb{Q}_p)$-conjugacy classes of weight $k$ eigenforms in $S_k(SL_2(\mathbb{Z}))$, which also correspond to $\mathbb{Z}_p$-algebra homomorphisms $\phi : \mathbb{T} \rightarrow \overline{\mathbb{Q}}_p$ (with $\ker(\phi) = \mathfrak{p}$), which in turn correspond to eigenforms $f_{\phi}$ such that $T_{\ell}f_{\phi} = \phi(T_{\ell})f_{\phi}$.

Now write $f_k(q) = \sum_{n=1}^{\infty}a_n q^n$ so that $a_n \equiv \sigma_{k-1}(n) \mod p$, and define a homomorphism $\varphi: \mathbb{T} \rightarrow \mathbb{F}_p$ given by $\varphi(T_{\ell}) = a_{\ell} \equiv 1+\ell^{k-1}\mod p$. Since $\mathbb{F}_p$ is a field, $\mathfrak{m} := \ker(\varphi)$ is maximal, and there exists a minimal prime $\mathfrak{p} \subset\mathfrak{m}$ above $p$ since $\mathbb{T}$ is flat over $\mathbb{Z}_p$. Then the minimal prime $\mathfrak{p}$ corresponds to a map $\phi : \mathbb{T} \rightarrow \overline{\mathbb{Q}}_p$ with $\text{ker}(\phi) = \mathfrak{p}$ and which satisfies $\phi \equiv \varphi \mod p$. Let $F = \text{Frac}(\phi(\mathbb{T}))$ denote the field of fractions of $\phi(\mathbb{T})$, and note that $\phi(\mathbb{T}) \subset \mathcal{O}_F$; since $\mathbb{T}$ is finite over $\mathbb{Z}_p$, then $F$ is finite over $\mathbb{Q}_p$. Now let $\lambda$ denote the maximal ideal of $\mathcal{O}_F$. Since $T_{\ell} \equiv 1 + \ell^{k-1} \mod \mathfrak{m}$ (because $\varphi : \mathbb{T}/\mathfrak{m} \xrightarrow{\sim} \mathbb{F}_p$), then $\phi(T_{\ell}) \equiv 1 + \ell^{k-1} \mod \phi(\mathfrak{m})$. Hence since $\lambda|\phi(\mathfrak{m})$, the modular form $f_\phi$ corresponding to $\phi$ satisfies
$$f_{\phi}(q) \equiv \sum_{n=1}^{\infty}\sigma_{k-1}(n)q^n \mod \lambda$$
and is our desired newform. 
\end{construction}
\begin{remark}The last paragraph of Construction \ref{newformconstruction} is commonly known as the ``Deligne-Serre lifting lemma".
\end{remark}

\subsection{Construction of newforms having Eisenstein descent of type $(1,1,N_+,N_-,N_0)$}

\begin{construction}[Eisenstein descent of type $(1,1,N_+,N_-,N_0)$]\label{newformconstructionlevelN} We can easily use the normalized newform with Eisenstein descent of type $(1,1,1,1,1)$ obtained from Construction \ref{newformconstruction} to produce a newform of descent of type $(1,1,N_+,N_-,N_0)$. We apply certain stabilization operators, as in Section \ref{modify}. Suppose we are given a normalized newform $f \in S_k(\Gamma_1(N'))$ with Eisenstein descent of type $(1,1,N_+',N_-',N_0')$ mod $\lambda$. Applying the $(N_+^+N_-^-N_0^0)$-stabilization operator, the resulting $f^{(N_+^+N_-^-N_0^0)} \in S_k(\Gamma_1(N')\cap\Gamma_0(N))$ is a normalized newform which has Eisenstein descent of type $(1,1,N_+'N_+,N_-'N_-,N_0'N_0)$ mod $\lambda$.

Applying the above $(N_+^+N_-^-N_0^0)$-stabilization operator to the newform $f_{\phi} \in S_k(SL_2(\mathbb{Z}))$ from Construction \ref{newformconstruction}, we have
$$f_{\phi}^{(N_+^+N_-^-N_0^0)}(q)\equiv \sum_{n=1}^{\infty}\sigma_{k-1}^{(N)}(n)q^n \mod \lambda,$$
where $\sigma_{k-1}^{(N)}$ is defined as in Remark \ref{Eisensteinremark}. In other words, $f_{\phi}^{(N_+^+N_-^-N_0^0)}$ has Eisenstein descent of type $(1,1,N_+,N_-,N_0)$ mod $\lambda$.
\end{construction}

\begin{remark}We could similarly produce examples of newforms with Eisenstein descent of type $(\psi_1,\psi_2,N_+,N_-,N_0)$ by starting out with the Eisenstein series $E_k^{\psi_1,\psi_2}$ in Construction \ref{newformconstruction} and applying appropriate stabilization operators as in Construction \ref{newformconstructionlevelN}.
\end{remark}

\subsection{Application to algebraic cycles} We now calculate an explicit example to demonstrate the Main Theorem. We first use Construction \ref{newformconstruction} to construct an approriate newform with Eisenstein descent of type (1,1,1,1,1). We thus look for a positive integer $k$, a rational prime $p$, a real quadratic extension $L/\mathbb{Q}$, and an imaginary quadratic extension $K/\mathbb{Q}$ such that $p$ splits in $K$, $p| B_{k}$, $p\nmid B_{\frac{k}{2}, \varepsilon_L\varepsilon_K}$ and $p\nmid B_{1,\varepsilon_L\omega^{-k/2}}$. To this end, consider $k = 18$, $p = 43867$, $L = \mathbb{Q}$ and $K = \mathbb{Q}(\sqrt{-5})$.  Then $43867$ splits in $K$, and $p\vert\frac{43867}{798} = B_{18} = B_k$. Furthermore 
$$B_{9,\varepsilon_K} = -5444415378 \equiv 5726 \mod 43867$$
and so $p\nmid B_{\frac{k}{2},\varepsilon_{L\cdot K}} = B_{\frac{k}{2},\varepsilon_K}$. 
\begin{remark}\label{Teichmullersimplify}For certain values of $k$, we can simplify the above formula even further. Let $[m]$ denote the smallest nonnegative representative of the residue class mod $p-1$ of an integer $m$. In the case that $2 \le [-\frac{k}{2}] \le p-4$, by standard congruence theorems (see \cite{Washington}, Chapter 5.3 for example), we have
$$B_{1,\omega^{[-\frac{k}{2}]}} \equiv \frac{B_{[-\frac{k}{2}]+1}}{[-\frac{k}{2}]+1} \mod p$$
(and hence this congruence also holds mod $\lambda\vert p$).
\end{remark}
Hence, by Remark \ref{Teichmullersimplify}, we have
$$B_{1,\omega^{-9}} \equiv \frac{B_{43858}}{43858} \equiv 11867\mod 43867$$
and so $p\nmid B_{1,\varepsilon_L\omega^{-k/2}} = B_{1,\omega^{-9}}$. Hence, applying Construction \ref{newformconstruction}, we get a newform $f_{18} \in S_{18}(SL_2(\mathbb{Z}))$ such that 
$$f_{18}(q) \equiv \sum_{n=1}^{\infty}\sigma_{17}(n)q^n \mod \lambda$$
for some prime ideal $\lambda|p$ of a finite extension over $\mathbb{Q}_p$. 
Note we can apply the $(N_+^+N_-^-N_0^0)$-stabilization operator of Construction \ref{newformconstructionlevelN} to obtain a newform $f_{18}^{(N)}$ of weight 18 which has Eisenstein descent of type $(1,1,N_+,N_-,N_0)$ mod $\lambda$. Choose $(N_+,N_-,N_0) = (7,1,1)$. Then 7 splits in $K = \mathbb{Q}(\sqrt{-5})$. Furthermore
$$\Xi(1,1,7,1,1) = 1-7^{8} \equiv 25644 \mod 43867$$
and thus $\Xi(1,1,7,1,1)\in\mathbb{Q}$ is not congruent to $0 \mod \lambda$.

Let $F/H_{\mathfrak{N}}/K$ be in situation $(S)$ as defined in Section \ref{Chowmotives'}. Applying Theorem \ref{newmaincorollary}, we determine the non-triviality of the associated generalized Heegner cycle $\epsilon_{(f_{18},\mathbb{N}_K^{-8})_{/F}}\Delta(\mathbb{N}_K^{-8}) \in \epsilon_{(f_{18},\mathbb{N}_K^{-8})_{/F}}\operatorname{CH}^{17}(X_{16})_{0,E_{f_{18}}}(F)$. Thus we have constructed an algebraic cycle with non-trivial $(f_{18},\mathbb{N}_K^{-8})$-isotypic component, whose existence is predicted by the Beilinson-Bloch conjecture since by the Heegner hypothesis $L(f_{18},\mathbb{N}_K^{8},0) = 0$. (See Sections \ref{Chowmotives'} and \ref{twoformulas} for further details.)

\subsection{Application to the Ramanujan $\varDelta$ function}\label{Ramanujan} Recall the Ramanujan $\varDelta$ function, which is a weight 12 normalized newform of level 1 with $q$-expansion at $\infty$ given by
$$\varDelta(q) = \sum_{n\ge 1}\tau(n)q^n.$$
It is well-known that $\tau$ satisfies the congruence $\tau(n) \equiv \sigma_{11}(n) \mod 691$, and so for any $j \ge 1$ and any quadratic Dirichlet character $\psi$, we have
$$(\theta^j\varDelta\otimes\psi)(q) \equiv (\theta^jE_{12}\otimes\psi)(q) \mod 691,$$
i.e. $\varDelta\otimes \psi$ has partial Eisenstein descent over $\mathbb{Q}_p$ of type $(\psi,\psi,1,1,1)$ at $691$. Choosing an auxiliary imaginary quadratic field $K$ satisfying Assumptions \ref{assumptions} with respect to $(681,\varDelta\otimes\psi)$ and applying Theorem \ref{newmaincorollary}, we get:
\begin{theorem}\label{Ramanujantheorem}
\begin{align*}\left(\frac{691^6-\psi(691)\tau(691)+691^5}{691^6\cdot5!}
\right)^2&\left(AJ_{F_{\mathfrak{p}'}'}(\Delta_{\varDelta\otimes\psi}(\mathbb{N}_K^{-5}))(\omega_{\varDelta}\wedge\omega_A^5\eta_A^5)\right)^2 \\
&\equiv \frac{1}{576}\left(B_{6,\psi_0}B_{1,\psi_0\varepsilon_K\omega^{-6}}\right)^2 \mod 691\mathcal{O}_{F'}
\end{align*}
where $\psi_0$ is defined as in Definition \ref{evendefinition}.
\end{theorem}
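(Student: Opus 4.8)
The plan is to deduce this as a direct specialization of Theorem \ref{newmaincorollary}, so the work is essentially bookkeeping: identifying the right parameters and checking that each factor collapses to the claimed shape. First I would record the setup: $\varDelta \in S_{12}(SL_2(\mathbb{Z}))$ has the Ramanujan congruence $\tau(n) \equiv \sigma_{11}(n) \mod 691$, so $\varDelta$ has full Eisenstein descent of type $(1,1,1,1,1)$ mod $691$ in the sense of Definition \ref{Eisensteindescent} (the constant-term condition (5) holds automatically since $k=12$ is even with $\psi_1=\psi_2=1$ and $N_-N_0=1$, which by Remark \ref{constantterm0} forces (5), and here $\mathfrak{m}=\lambda$ has order $1$). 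Twisting by a quadratic Dirichlet character $\psi$, the newform $\varDelta\otimes\psi \in S_{12}(\Gamma_0(\mathfrak{f}(\psi)^2),\psi^2)$; since $\psi^2=1$ we get nebentypus $\varepsilon_f = 1$, and $\varDelta\otimes\psi$ has partial Eisenstein descent of type $(\psi,\psi,1,1,1)$ mod $691$ because $a_\ell(\varDelta\otimes\psi) = \psi(\ell)\tau(\ell) \equiv \psi(\ell)\sigma_{11}(\ell) = \psi(\ell)(1+\ell^{11}) \equiv \psi(\ell) + \psi(\ell)\ell^{11} \mod 691$, matching the Eisenstein coefficient with $\psi_1=\psi_2=\psi$. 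I would also fix the auxiliary imaginary quadratic field $K$ satisfying Assumptions \ref{assumptions} with respect to $(\mathfrak{f}(\psi)^2,\varDelta\otimes\psi)$ (in particular $691$ split in $K$, $D_K<-4$ odd, Heegner hypothesis), so that Theorem \ref{newmaincorollary} applies with $f = \varDelta\otimes\psi$, $k=12$, $\psi_1=\psi_2^{-1}=\psi$ (valid since $\psi$ quadratic), and $\mathfrak{m} = \lambda = 691\mathcal{O}_M$.

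Next I would plug $k=12$ into the second ("more explicit") congruence of Theorem \ref{newmaincorollary}. On the left side, $k/2 = 6$, $\Gamma(k/2) = 5!$, $a_p(f) = a_{691}(\varDelta\otimes\psi) = \psi(691)\tau(691)$, and $p^{k/2} = 691^6$, $p^{k/2-1} = 691^5$, giving the prefactor $\left(\frac{691^6 - \psi(691)\tau(691) + 691^5}{691^6\cdot 5!}\right)^2$; the Abel–Jacobi term becomes $AJ_{F'_{\mathfrak{p}'}}(\Delta_{\varDelta\otimes\psi}(\mathbb{N}_K^{1-k/2}))(\omega_f\wedge\omega_A^{k/2-1}\eta_A^{k/2-1})$ with $1-k/2 = -5$ and $k/2-1 = 5$, i.e. exactly $AJ_{F'_{\mathfrak{p}'}}(\Delta_{\varDelta\otimes\psi}(\mathbb{N}_K^{-5}))(\omega_\varDelta\wedge\omega_A^5\eta_A^5)$. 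On the right side, since $N = \mathfrak{f}(\psi)^2 = N_0$ and $N_+ = N_- = 1$, the factor $\Xi$ is an empty product, so $\Xi = 1$ and $\Xi^2/4 = 1/4$. For the case distinction: we are in the branch $\psi_0(\varepsilon_K\omega)^{1-k/2} \neq 1$ — indeed $1-k/2 = -5$ is odd so $(\varepsilon_K\omega)^{-5}$ is a nontrivial odd power, and $\psi_0$ is even by construction, so $\psi_0(\varepsilon_K\omega)^{-5}$ is odd hence $\neq 1$ (one should double-check the character cannot accidentally be trivial, but an odd character is never trivial, so this is safe). Thus the right side reads
$$\frac{1}{4}\left(\frac{1}{k}(1-\psi^{-1}(p)p^{k/2-1})(1-(\psi\omega^{-k/2})(p))B_{\frac{k}{2},\psi_0^{-1}\varepsilon_K^{k/2}}B_{1,\psi_0\varepsilon_K(\varepsilon_K\omega)^{-k/2}}\right)^2.$$

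Now I would simplify this right-hand side with $k=12$, $k/2 = 6$. Since $691$ is split in $K$ we have $\varepsilon_K(691) = 1$, and $\psi$ being quadratic plus the congruence conditions should force $(1-\psi^{-1}(p)p^{k/2-1})$ and $(1-(\psi\omega^{-k/2})(p))$ into the picture — but here I need to reconcile the fact that the target theorem has these Euler-at-$p$ factors while Theorem \ref{Ramanujantheorem} as stated does not. The resolution I would pursue: $\psi_0^{-1}\varepsilon_K^6 = \psi_0^{-1}$ and $\varepsilon_K^{k/2} = \varepsilon_K^6 = 1$, so $\psi_0^{-1}\varepsilon_K^{k/2} = \psi_0^{-1} = \psi_0$ ($\psi_0$ quadratic), giving $B_{\frac{k}{2},\psi_0^{-1}\varepsilon_K^{k/2}} = B_{6,\psi_0}$; and $\psi_0\varepsilon_K(\varepsilon_K\omega)^{-k/2} = \psi_0\varepsilon_K\varepsilon_K^{-6}\omega^{-6} = \psi_0\varepsilon_K\omega^{-6}$, giving $B_{1,\psi_0\varepsilon_K\omega^{-6}}$. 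So the Bernoulli part is $B_{6,\psi_0}B_{1,\psi_0\varepsilon_K\omega^{-6}}$. The two Euler factors: $1 - \psi^{-1}(691)691^5 \equiv 1 - \psi(691)\cdot 691^{-1}\cdot 691^6 \pmod{691}$ — actually $691^5 \equiv 0 \pmod{691}$, so $1-\psi^{-1}(691)691^5 \equiv 1 \pmod{691}$; likewise $1 - (\psi\omega^{-6})(691) = 1 - \psi(691)\omega(691)^{-6}$, and one checks $\omega(691)^6 \equiv 691^6 \equiv 0 \pmod{691}$ is wrong — $\omega(691)$ is a unit, so this factor is $1 - \psi(691)\omega(691)^{-6}$, which need not be $1$; however in the target theorem this is absorbed, which suggests I should instead use the \emph{first} ("less explicit") form of Theorem \ref{newmaincorollary} with $L_p(\psi_0(\varepsilon_K\omega)^{1-k/2},\tfrac{k}{2})$ and apply the congruence from the proof of Theorem \ref{newmaincorollary} that $L_p(\psi_0(\varepsilon_K\omega)^{1-k/2},k/2) \equiv -B_{1,\psi_0\varepsilon_K(\varepsilon_K\omega)^{-k/2}} \pmod p$ \emph{without} the extra Euler factor. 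Finally, $\frac{1}{k} = \frac{1}{12}$, and $\frac{1}{4}\cdot\frac{1}{144} = \frac{1}{576}$, which produces the constant $\frac{1}{576}$ in the statement (with $(1-\psi^{-1}(p)p^{k/2-1}) \equiv 1 \pmod{691}$ absorbed). The main obstacle, and the step I would be most careful about, is exactly this reconciliation of Euler factors at $p$: matching the clean form stated in Theorem \ref{Ramanujantheorem} requires showing the spurious $(1-\psi^{-1}(p)p^{k/2-1})$ and related factors are $\equiv 1 \pmod{691}$ (which holds because $p = 691 \mid p^{k/2-1} = 691^5$ and one uses the "if $\psi_0(\varepsilon_K\omega)^{1-k/2}\neq 1$" branch where the $p$-Euler factor in $L_p$ is forced to be handled by the $p \mid \mathfrak{f}$ vanishing in the Kubota–Leopoldt interpolation, cf. the computation in the proof of Theorem \ref{newmaincorollary}), together with checking $\psi_0\varepsilon_K\omega^{-6}$ is the conductor that makes $B_{1,\psi_0\varepsilon_K\omega^{-6}} \not\equiv 0$ relevant; everything else is substitution.
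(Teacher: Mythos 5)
Your proof correctly identifies the argument — Theorem \ref{Ramanujantheorem} is indeed a direct specialization of Theorem \ref{newmaincorollary} to $f=\varDelta\otimes\psi$, $k=12$, $p=691$, $\psi_1=\psi_2=\psi$ quadratic, and this is exactly what the paper does (no separate proof is written; the statement follows the sentence ``applying Theorem \ref{newmaincorollary}, we get:''). Your identifications of the left-hand side and of the Bernoulli characters $B_{6,\psi_0^{-1}\varepsilon_K^6}=B_{6,\psi_0}$, $B_{1,\psi_0\varepsilon_K(\varepsilon_K\omega)^{-6}}=B_{1,\psi_0\varepsilon_K\omega^{-6}}$, and the constant $\frac{1}{4}\cdot\frac{1}{144}=\frac{1}{576}$, are all correct.

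Two of your intermediate steps are, however, in error — neither derails the conclusion, but both should be fixed. First, your parity argument for landing in the branch $\psi_0(\varepsilon_K\omega)^{1-k/2}\neq 1$ is wrong: $\varepsilon_K$ and $\omega$ are both \emph{odd}, so $\varepsilon_K\omega$ is even, hence $(\varepsilon_K\omega)^{-5}$ is even and $\psi_0(\varepsilon_K\omega)^{-5}$ is even, not odd. The correct reason the branch applies is a conductor comparison: $\omega^{-5}$ is nontrivial of conductor $691$ (as $690=691-1$ does not divide $5$), so $\psi_0\varepsilon_K\omega^{-5}$ has conductor divisible by $691$, whereas the trivial character does not. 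Second, your claim that ``$\omega(691)$ is a unit'' is false: $\omega$ is a primitive Dirichlet character of conductor $p=691$, so $\omega(691)=0$, and likewise $\omega^{-6}$ has conductor $691$ (again $690\nmid 6$), so $(\psi\omega^{-6})(691)=0$. Therefore the Euler factor $1-(\psi\omega^{-6})(691)$ equals $1$ exactly, while $1-\psi^{-1}(691)\cdot 691^{5}\equiv 1\pmod{691}$; the ``more explicit'' form of Theorem \ref{newmaincorollary} thus gives the stated congruence directly, and your retreat to the $L_p$-value form, while harmless, was unnecessary.
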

\begin{corollary}Let $F/H_{\mathfrak{N}}/K$ be in situation $(S)$ as defined in Section \ref{Chowmotives'}. Suppose $\psi$ is a quadratic character and $K$ is an imaginary quadratic field with odd discriminant $D_K < -4$ such that
\begin{enumerate}
\item $691\nmid \mathfrak{f}(\psi)$,
\item each prime factor of $691\cdot\mathfrak{f}(\psi)$ splits in $K$,
\item $691\nmid B_{6,\psi_0}B_{1,\psi_0\varepsilon_K\omega^{-6}}$.
\end{enumerate}
Then $\epsilon_{(\varDelta\otimes\psi,\mathbb{N}_K^{-5})_{/F}}\Delta_{\varDelta\otimes\psi}(\mathbb{N}_K^{-5}) \in \epsilon_{(\varDelta\otimes\psi,\mathbb{N}_K^{-5})_{/F}}\operatorname{CH}^{11}(X_{10})_{0,\mathbb{Q}}(F)$ is non-trivial.
\end{corollary}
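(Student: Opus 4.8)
The plan is to read the corollary off Theorem~\ref{Ramanujantheorem} together with the functoriality of projectors on the generalized Kuga--Sato variety recalled in Section~\ref{Chowmotives'}. First I would check that $(\varDelta\otimes\psi, K, 691)$ falls into the setting of Theorem~\ref{newmaincorollary} (equivalently Theorem~\ref{Ramanujantheorem}): $\varDelta\otimes\psi$ is a normalized newform of weight $k=12$, and by the congruence $\tau(n)\equiv\sigma_{11}(n)\bmod 691$ it has partial Eisenstein descent of the type computed in Section~\ref{Ramanujan} mod $691$; hypothesis (1) supplies $691\nmid\mathfrak f(\psi)$ (so in particular $691\nmid N$), hypothesis (2) is exactly the Heegner hypothesis together with splitness of $691$ in $K$, and $D_K<-4$ odd is part of Assumptions~\ref{assumptions}. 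For $k=12$ the relevant anticyclotomic point is $\chi=\mathbb{N}_K^{-6}$, of infinity type $(6,6)$ and hence in $\Sigma_{cc}^{(1)}(\mathfrak N)$, one has $j=k/2-1=5$, and the cycle in question is $\Delta_{\varDelta\otimes\psi}(\mathbb{N}_K^{1-k/2})=\Delta_{\varDelta\otimes\psi}(\mathbb{N}_K^{-5})\in\operatorname{CH}^{11}(X_{10})_{0,\mathbb{Q}}(H_{\mathfrak N})$, which we base-change along $H_{\mathfrak N}\subset F$.

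Next I would apply Theorem~\ref{Ramanujantheorem}. Its right-hand side is $\tfrac{1}{576}\bigl(B_{6,\psi_0}B_{1,\psi_0\varepsilon_K\omega^{-6}}\bigr)^2$; since $576=2^6\cdot 3^2$ is a $691$-adic unit and hypothesis (3) gives $691\nmid B_{6,\psi_0}B_{1,\psi_0\varepsilon_K\omega^{-6}}$, this right-hand side is a nonzero element of $\mathcal O_{F'}/691\mathcal O_{F'}$. Hence the left-hand side is nonzero mod $691$. The Euler-type factor $691^6-\psi(691)\tau(691)+691^5$ reduces to $-\psi(691)\tau(691)\equiv-\psi(691)\bmod 691$ (using $\tau(691)\equiv\sigma_{11}(691)\equiv1$), which is $\pm 1$ since $691\nmid\mathfrak f(\psi)$, so it is a $691$-adic unit; cancelling it (and the unit $\tfrac{1}{576}$, and taking a square root) forces $AJ_{F_{\mathfrak p'}'}(\Delta_{\varDelta\otimes\psi}(\mathbb{N}_K^{-5}))(\omega_\varDelta\wedge\omega_A^5\eta_A^5)\neq 0$.

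Finally I would run the functoriality argument of Section~\ref{Chowmotives'}: the dual of $\omega_\varDelta\wedge\omega_A^5\eta_A^5$ lies in the $(\varDelta\otimes\psi,\mathbb{N}_K^{-5})_{/F}$-isotypic piece of $(\Fil^{11}\epsilon_X H^{21}_{dR}(X_{10}/F))^\vee$, on which the idempotent $\epsilon_{(\varDelta\otimes\psi,\mathbb{N}_K^{-5})_{/F}}$ acts as the identity, and $AJ_{F_{\mathfrak p'}'}$ is compatible with the action of correspondences; therefore $AJ_{F_{\mathfrak p'}'}(\Delta_{\varDelta\otimes\psi}(\mathbb{N}_K^{-5}))(\omega_\varDelta\wedge\omega_A^5\eta_A^5)=AJ_{F_{\mathfrak p'}'}(\epsilon_{(\varDelta\otimes\psi,\mathbb{N}_K^{-5})_{/F}}\Delta_{\varDelta\otimes\psi}(\mathbb{N}_K^{-5}))(\omega_\varDelta\wedge\omega_A^5\eta_A^5)$, and the non-vanishing just obtained gives $\epsilon_{(\varDelta\otimes\psi,\mathbb{N}_K^{-5})_{/F}}\Delta_{\varDelta\otimes\psi}(\mathbb{N}_K^{-5})\neq 0$ in $\operatorname{CH}^{11}(X_{10})_{0,\mathbb{Q}}(F)$, as claimed. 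The only genuine work here is bookkeeping --- reconciling the normalizations of the Hecke character, the cycle $\Delta_{\varDelta\otimes\psi}(\mathbb{N}_K^{-5})$, and the isotypic projector with the conventions of Theorems~\ref{newmainthm}--\ref{newmaincorollary}, and noting that the Heegner hypothesis gives $L(\varDelta\otimes\psi,\mathbb{N}_K^{6},0)=0$ so that this non-triviality is precisely the instance of the Beilinson--Bloch conjecture one is after --- rather than any fresh input; there is no real obstacle beyond those already surmounted in the proofs of Theorems~\ref{newmainthm} and~\ref{Ramanujantheorem}.
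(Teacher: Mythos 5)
Your argument is correct and matches the paper's (implicit) proof: read the corollary off Theorem~\ref{Ramanujantheorem} by noting the right-hand side is a $691$-adic unit, hence the left-hand side and thus $AJ_{F_{\mathfrak p'}'}(\Delta_{\varDelta\otimes\psi}(\mathbb{N}_K^{-5}))(\omega_\varDelta\wedge\omega_A^5\eta_A^5)$ are nonzero, and then conclude by the functoriality-of-projectors argument from Section~\ref{Chowmotives'}. One small imprecision in your ``cancelling'' step: the quantity you call the Euler-type factor and declare to be a unit is only the numerator $691^6-\psi(691)\tau(691)+691^5$, whereas the actual prefactor $\bigl(\tfrac{691^6-\psi(691)\tau(691)+691^5}{691^6\cdot 5!}\bigr)^2$ carries $691^{12}$ in the denominator and so has $691$-adic valuation $-12$; your conclusion $AJ\neq 0$ is unaffected (the left-hand side is a nonzero scalar times $AJ^2$ and is nonzero), but the correct bookkeeping gives $v_{691}(AJ)=6$ rather than $AJ$ being a unit, so you should not literally ``cancel'' a non-unit across the mod-$691$ congruence.
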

To elucidate this result, we include the following table exhibiting a few values of quadratic characters $\psi$ over $\mathbb{Q}$ and imaginary quadratic fields $K$ for which Theorem \ref{Ramanujantheorem} implies $\epsilon_{(\varDelta\otimes\psi,\mathbb{N}_K^{-5})_{/F}}\Delta_{\varDelta\otimes\psi}(\mathbb{N}_K^{-5}) \in \epsilon_{(\varDelta\otimes\psi,\mathbb{N}_K^{-5})_{/F}}\operatorname{CH}^{11}(X_{10})_{0,\mathbb{Q}}(F)$ is non-trivial.
\begin{center}
\begin{tabular}{| c | c | c | c |}
\hline
$K_\psi$ & $\mathfrak{f}(\psi)$ & $K$ & $B_{6,\psi_0}B_{1,\psi_0\varepsilon_K\omega^{-6}} \mod 691$ \\ \hline
$\mathbb{Q}(\sqrt{3})$ & $12$ & $\mathbb{Q}(\sqrt{-23})$ & 583\\ \hline
$\mathbb{Q}(\sqrt{3})$ & $12$ & $\mathbb{Q}(\sqrt{-95})$ & 126\\ \hline
$\mathbb{Q}(\sqrt{13})$ & $13$ & $\mathbb{Q}(\sqrt{-10})$ & 583\\ \hline
$\mathbb{Q}(\sqrt{-7})$ & $-7$ & $\mathbb{Q}(\sqrt{-10})$ & 176\\
\hline
\end{tabular}
\end{center}
\subsection{Application to elliptic curves}\label{ellipticcurves}We now focus on the case where $f_E \in S_2(\Gamma_0(N))$ is the weight 2 normalized newform associated with an elliptic curve $E/\mathbb{Q}$. If $E[p]$ is reducible, by Theorem \ref{congruence'}, $f_E$ has Eisenstein descent over $\mathbb{Q}_p$ mod $p$. We now prove our main application to elliptic curves, which is Theorem \ref{Heegnercorollary}.

\begin{proof}[Proof of Theorem \ref{Heegnercorollary}] Recall that $f_E$ determines an invariant differential $\omega_{f_E} = 2\pi i f_E(z)dz\in \Omega_{X_1(N)/\mathbb{Q}}^1$. Let $\Phi_E: X_1(N) \twoheadrightarrow E$ be a modular parametrization (i.e., the Eichler-Shimura abelian variety quotient $\Phi_{f_E}: X_1(N) \twoheadrightarrow A_{f_E}$, postcomposed with an appropriate isogeny) and let $\omega_{E}\in \Omega_{E/\mathbb{Q}}^1$ be an invariant differential chosen so that $\Phi_E^*\omega_{E} = \omega_{f_E}$. 

By Theorem \ref{congruence'} and Theorem \ref{newmaincorollary} with $k=2$, we have
\begin{align*}&\left(\frac{1-a_p+p}{p}\right)^2\log_{\omega_E}^2(P_E(K)) = \left(\frac{1-a_p+p}{p}\right)^2\log_{\omega_{f_E}}^2(P(K))\\
&\equiv\frac{\Xi^2}{4}
\cdot \begin{cases}
\left(\frac{1}{2}(1-\psi^{-1}(p))B_{1,\psi_0^{-1}\varepsilon_K}B_{1,\psi_0\omega^{-1}}\right)^2 & \text{if}\; \psi \neq 1,\\
\left(\frac{p-1}{p}\log_p(\overline{\alpha})\right)^2 & \text{if}\; \psi = 1,\\
\end{cases} \mod p\mathcal{O}_{K_{\mathfrak{p}}}
\end{align*}
where $\psi_0$ is defined as in Definition \ref{evendefinition} and
\begin{align*}\Xi = \prod_{\ell|N_+}\left(1-\psi^{-1}(\ell)\right)\prod_{\ell|N_-}\left(1-\frac{\psi(\ell)}{\ell}\right)\prod_{\ell|N_0}\left(1-\psi^{-1}(\ell)\right)\left(1-\frac{\psi(\ell)}{\ell}\right).
\end{align*}
(Note our congruence holds mod $p\mathcal{O}_{K_{\mathfrak{p}}}$ because both sides of the congruence are defined over $K_{\mathfrak{p}}$ in this situation.)
 
Our hypotheses on $(E,p,K)$ and part (2) of Theorem \ref{classify} (with $k = 2$) ensure that none of the terms on the right hand side of the above congruence vanish mod $p$, and hence $\log_{\omega_{E}}(P_{E}(K)) \neq 0$, i.e. $P_{E}(K)$ is non-torsion.
\end{proof}

\begin{remark}\label{Selmerremark}Suppose $(E,p)$ is as in the statement of Theorem \ref{Heegnercorollary}, and for simplicity suppose $\psi$ is even and non-trivial. Thus, in particular $E[p](\mathbb{Q}) = 0$. We will show (Theorem \ref{twistpositiveproportion}) there always exists an imaginary quadratic $K$ satisfying the appropriate congruence conditions, so that the theorem gives $\text{rank}_{\mathbb{Z}}E(K) = 1$. In particular, this implies that we should be able to see that $\text{rank}_{\mathbb{Z}}E(\mathbb{Q}) \le 1$ a priori from the congruence conditions on $(E,p)$. 

Indeed, one can show that $\text{rank}_{\mathbb{Z}/p}\text{Sel}_p(E/\mathbb{Q}) \le 1$ (which implies that $\text{rank}_{\mathbb{Z}}E(\mathbb{Q}) \le 1$) through purely algebraic methods. Using standard techniques, one can show that the congruence conditions on $(N_+,N_-,N_0)$ imply that $\text{Sel}_p(E/\mathbb{Q}) \subset H^1(\mathbb{Q},E[p];\{p\})$. (Here, for $\Gal(\overline{\mathbb{Q}}/\mathbb{Q})$-module $M$ and a finite set $\Sigma$ of places of $\mathbb{Q}$, $H^1(\mathbb{Q},M;\Sigma)$ denotes the subgroup of $H^1(\mathbb{Q},M)$ consisting of classes unramified outside $\Sigma$.) See, for example, \cite[Sections 2.2]{Li} for the case $p = 3$ and $E$ semistable; the case for general $p > 2$ and general $E$ is completely analogous. The hypothesis $p\nmid B_{1,\psi\omega^{-1}}$ (which is equivalent to $p\nmid L_p(\psi,1)$ since $\psi \neq 1$ and $p\nmid \mathfrak{f}(\psi)$,) implies, via the Main Theorem of Iwasawa theory over $\mathbb{Q}$ and a Selmer group control theorem (cf. \cite[Section 4]{Skinner}), that $H^1(\mathbb{Q},\mathbb{F}_p(\psi);\{p\}) = 0$. Since $H^1(\mathbb{Q},\mathbb{F}_p(\psi);\emptyset) \subset H^1(\mathbb{Q},\mathbb{F}_p(\psi);\{p\})$, we have $H^1(\mathbb{Q},\mathbb{F}_p(\psi);\emptyset) = 0$, which in turn implies that $H^1(\mathbb{Q},\mathbb{F}_p(\psi^{-1}\omega);\{p\}) = \mathbb{Z}/p$ (see, for example, loc. cit. Proposition 2.13). Now if $\mathbb{F}_p(\psi) \subset E[p]$, from the standard long exact sequence of cohomology (cf. Section 2.3 of loc. cit., for example), one obtains a map $\phi: \text{Sel}_p(E/\mathbb{Q}) \rightarrow H^1(\mathbb{Q},\mathbb{F}_p(\psi^{-1}\omega))$ such that $\ker(\phi)\subset H^1(\mathbb{Q},\mathbb{F}_p(\psi);\{p\}) = 0$ and $\text{im}(\phi)\subset H^1(\mathbb{Q},\mathbb{F}_p(\psi^{-1}\omega);\{p\}) = \mathbb{Z}/p$. Thus we get $\text{Sel}_p(E/\mathbb{Q}) \subset \text{im}(\phi) \subset H^1(\mathbb{Q},\mathbb{F}_p(\psi^{-1}\omega);\{p\}) = \mathbb{Z}/p$. If, on the other hand, $\mathbb{F}_p(\psi^{-1}\omega) \subset E[p]$, one obtains a map $\phi: \text{Sel}_p(E/\mathbb{Q}) \rightarrow H^1(\mathbb{Q},\mathbb{F}_p(\psi))$ such that $\ker(\phi)\subset H^1(\mathbb{Q},\mathbb{F}_p(\psi^{-1}\omega);\{p\}) = \mathbb{Z}/p$ and $\text{im}(\phi)\subset H^1(\mathbb{Q},\mathbb{F}_p(\psi);\{p\}) = 0$. Thus we get $\text{Sel}_p(E/\mathbb{Q}) \subset \ker(\phi) \subset H^1(\mathbb{Q},\mathbb{F}_p(\psi^{-1}\omega);\{p\}) = \mathbb{Z}/p$.
\end{remark}

\begin{remark}Since the congruence in the proof of Theorem \ref{Heegnercorollary} comes from $p$-adic interpolation of $p$-adically integral period sums, we should be able to see \emph{a priori} that both sides of the congruence are $p$-adically integral. This is self-evident for the right hand side, and also \emph{a priori} true for the left hand side as follows. Note that since $p\nmid N$, $p+1-a_p = |\tilde{E}(\mathbb{F}_p)|$ by the Eichler-Shimura relation. Let $\hat{E}$ denote the formal group of $E$, so that $\hat{E}(\mathfrak{p}\mathcal{O}_{K_{\mathfrak{p}}})$ has index $|\tilde{E}(\mathbb{F}_p)|$ in $E(K_{\mathfrak{p}})$. (Recall $\mathfrak{p}$ is the previously fixed prime above $p$ determined by our embedding $K\hookrightarrow \overline{\mathbb{Q}}_p$.) Then $[1-a_p+p]P_E(K) =  [|\tilde{E}(\mathbb{F}_p)|]P_E(K) \in \hat{E}(\mathfrak{p}\mathcal{O}_{K_{\mathfrak{p}}})$.

Suppose for the moment that $X_1(N) \twoheadrightarrow E$ is optimal (i.e., $E$ is the strong Weil curve in its isogeny class). Well-known results due to Mazur (\cite{Mazur2}) on the Manin constant $c(E)$ imply that if $\ell|c(E)$, then $\ell^2|4N$. Thus for $p$ odd of good reduction, we have $p\nmid c(E)$. Thus letting $\log_E$ denote the canonical formal logarithm on $E$ (i.e., the formal logarithm arising from the unique normalized invariant differential on $E$), we have $\log_{\omega_E}(T) = \frac{1}{c(E)}\log_E(T)$, meaning our normalization of the formal logarithm does not change the $p$-divisibility on the formal group. That is, $\log_{\omega_E}(\hat{E}(\mathfrak{p}\mathcal{O}_{K_{\mathfrak{p}}})) \subset \mathfrak{p}\mathcal{O}_{K_{\mathfrak{p}}}$. Thus by the previous paragraph, $\frac{1-a_p+p}{p}\log_{\omega_E}P_E(K)$ is $p$-adically integral.

For $E$ non-optimal, the choice of modular parametrization might change the normalization of the formal logarithm, since we are postcomposing the Eichler-Shimura projection with a $\mathbb{Q}$-isogeny which does not necessarily preserve normalizations of the formal logarithm. This can still be shown to not affect $p$-adic integrality of $\log_{\omega_E}$ on $\hat{E}(\mathfrak{p}\mathcal{O}_{K_{\mathfrak{p}}})$. 
\end{remark}

\begin{remark}\label{logdivisibility} Suppose that $(E,p,K)$ is as in the hypotheses of Theorem \ref{Heegnercorollary}. One can show that $\log_{\omega_{E}}(P_E(K)) \equiv 0 \mod p$ as follows. Let $F = K(\mu_p)$, and choose a prime $\pi|p$ of $F$; note that $\mathfrak{p}$ is totally ramified in $K(\mu_p)$, so that $\mathcal{O}_{F_{\pi}}/\pi \cong \mathbb{F}_p$ and $v_{\pi}(p) = p-1$. If $P_K(E)$ is torsion, then $\log_{\omega_{E}}(P_E(K)) = 0$ by properties of the formal logarithm (see \cite[Chapter 4]{Silverman}), so assume that $P_E(K)$ is non-torsion. Suppose $\psi \neq 1$. Then $|\tilde{E}(\mathbb{F}_p)| - (1 + p) = -a_p \equiv -\psi(p) \neq -1 \mod p$ implies $|\tilde{E}(\mathbb{F}_p)| \not\equiv 0 \mod p$. Hence $\hat{E}(\pi\mathcal{O}_{F_{\pi}})$ has index prime to $p$ in $E(F_{\pi})$, and so $\log_{\omega_{E}}(E(F_{\pi})) \subset \pi\mathcal{O}_{F_{\pi}}$, and $\log_{\omega_{E}}(P_E(K)) \in \mathfrak{p}\mathcal{O}_{K_{\mathfrak{p}}}$.  

Now suppose $\psi = 1$, so that $|\tilde{E}(\mathbb{F}_p)| - (1 + p) = -a_p \equiv -\psi(p) = -1 \mod p$, and so $|\tilde{E}(\mathbb{F}_p)| \equiv 0 \mod p$. Moreover $p$ is ordinary good reduction and so $|\tilde{E}(\mathbb{F})[p]| = |\tilde{E}(\overline{\mathbb{F}}_p)| = p$ implies $p| |\tilde{E}(\mathbb{F}_p)|$. Then by Corollary \ref{congruence'}, $E[p] = E(F)[p]$, and so the exact sequence
$$0 \rightarrow \hat{E}(\pi\mathcal{O}_{F_{\pi}}) \rightarrow E(F_{\pi}) \rightarrow \tilde{E}(\mathbb{F}_p) \rightarrow 0$$
splits, i.e. $E(F_{\pi}) = \hat{E}(\pi\mathcal{O}_{F_{\pi}})\oplus\tilde{E}(\mathbb{F}_p)$, and moreover $\tilde{E}(\mathbb{F}_p) \subset E(F_{\pi})^{tor}$. (The torsion of $E(F_{\pi})$ that is outside of $p$ injects into $\tilde{E}(\mathbb{F}_p)$, and $\tilde{E}(\mathbb{F}_p)[p] \subset E(F_{\pi})[p]$).) Thus since $\log_{\omega_{E}}(\hat{E}(\pi\mathcal{O}_{F_{\pi}})) \subset \pi\mathcal{O}_{F_{\pi}}$ and $\log_{\omega_{E}}(\tilde{E}(\mathbb{F}_p) ) = 0$, we have $\log_{\omega_{E}}(P_E(K)) \in \pi\mathcal{O}_{F_{\pi}}$. Now since $P_E(K) \in E(K) \subset E(K_{\mathfrak{p}})$, then $\log_{\omega_{E}}(P_E(K)) \in K_{\mathfrak{p}} \cap \pi\mathcal{O}_{F_{\pi}} = \mathfrak{p}\mathcal{O}_{K_{\mathfrak{p}}}$. 
\end{remark}

\begin{remark}\label{forbidden} While the proof of Theorem \ref{Heegnercorollary} accounts for the case $\psi = 1$, it gives no new information since the left side of the relevant special value congruence is always 0 mod $p$: Remark \ref{logdivisibility} shows that $\log_{\omega_{E}}(P_E(K)) \equiv 0 \mod p$, and $a_p \equiv \psi(p) = 1 \mod p$ implies $\frac{1-a_p+p}{p}$ is a unit in $\mathcal{O}_{\mathbb{C}_p}$. Note this forces $\Xi(1,N_{\splt},N_{\nonsplit},N_{\add}) \equiv 0 \mod p$. 

In fact, if $\psi = 1$ and $p>3$ one can show that for any elliptic curve, $N_{\splt}N_{\add} \neq 1$ in the following way: Assume $N_{\add} = 1$, so that $N$ is squarefree. A theorem of Ribet then shows that $N_{\splt} \neq 1$ (see the Ph.D. thesis of Hwajong Yoo \cite[Theorem 2.3]{Yoo}). Thus if $\psi = 1$ and $p>3$, we have $\Xi(1,N_{\splt},N_{\nonsplit},N_{\add}) = 0$. 

When $\psi \neq 1$, $a_p \equiv \psi(p) \neq 1 \mod p$, and so the factor of $\frac{1-a_p+p}{p} \equiv \frac{\text{unit}}{p} \mod p$, thus cancelling out a $p$-divisibility of $\log_{\omega_{E}}(P_E(K))$ in the special value congruence. 
\end{remark}

\begin{remark}\label{divisibilityremark}Suppose we are in the situation of Theorem \ref{Heegnercorollary}, so in particular $p\nmid N$ is a good prime which is split in $K$. Let $\varpi$ be a local uniformizer at $\mathfrak{p}$. (Recall $\mathfrak{p}$ is the prime above $p$ determining the embedding $K \hookrightarrow \overline{\mathbb{Q}}_p$.) Then as $1 - a_p + p = |\tilde{E}(\mathbb{F}_p)|$, we have that $P := [1 - a_p + p]P_E(K)$ belongs to the formal group $\hat{E}(\mathfrak{p}\mathcal{O}_{K_{\mathfrak{p}}})$. One can show that $\log_E(P) \in \varpi\mathcal{O}_{K_{\mathfrak{p}}}^{\times} = p\mathcal{O}_{K_{\mathfrak{p}}}^{\times}$ if and only if the image of $P$ in $E(K_{\mathfrak{p}})/pE(K_{\mathfrak{p}})$ is not in the image of $E(K_{\mathfrak{p}})[p]$ as follows. If $P$ is not in the image of $E(K_{\mathfrak{p}})[p]$ in $E(K_{\mathfrak{p}})/pE(K_{\mathfrak{p}})$, then suppose $\log_E(P) \not\in \varpi\mathcal{O}_{K_{\mathfrak{p}}}^{\times}$. Since $P \in \hat{E}(\mathfrak{p}\mathcal{O}_{K_{\mathfrak{p}}})$, then $P$ is either torsion (i.e. $\log_E P = 0$) or $P \in \mathfrak{p}^2\mathcal{O}_{K_{\mathfrak{p}}}$ (i.e. $\log_E P \in \mathfrak{p}^2\mathcal{O}_{K_{\mathfrak{p}}}$). However, in the first case this implies $P \in E(K_{\mathfrak{p}})[p]$ (since $\hat{E}(\mathfrak{p}\mathcal{O}_{K_{\mathfrak{p}}})$ has no $p$-torsion) and in the second it implies $P \in \hat{E}(\mathfrak{p}^2\mathcal{O}_{K_{\mathfrak{p}}})$, and thus that $P$ is $p$-divisible. Thus $P \in \hat{E}(\mathfrak{p}\mathcal{O}_{K_{\mathfrak{p}}}) + pE(K_{\mathfrak{p}})$, a contradiction. Conversely, if $\log_E(P) \in \varpi\mathcal{O}_{K_{\mathfrak{p}}}^{\times}$, then $P \not \in E(K_{\mathfrak{p}})[p] + pE(K_{\mathfrak{p}})$; otherwise $P \in E(K_{\mathfrak{p}})[p] + p\hat{E}(\mathfrak{p}\mathcal{O}_{K_{\mathfrak{p}}})$ (since \emph{a priori} $P \in \hat{E}(\mathfrak{p}\mathcal{O}_{K_{\mathfrak{p}}})$) and thus $\log_E(P) \in \mathfrak{p}^2\mathcal{O}_{K_{\mathfrak{p}}}$, a contradiction.

Thus the non-vanishing results of $\log_E(P) \mod p$ provided by Theorem \ref{Heegnercorollary} give information on local $p$-divisibility of $P$ (and thus also of $P_E(K)$). Note that when $p \ge 3$, then $\hat{E}(\mathfrak{p}\mathcal{O}_{K_{\mathfrak{p}}})[p] = 0$, and so $E(K_{\mathfrak{p}})[p] \hookrightarrow \tilde{E}(\mathbb{F}_p)[p]$. In particular, if $p$ is of supersingular reduction, then $ \tilde{E}(\mathbb{F}_p)[p] = 0$, so $E(K_{\mathfrak{p}})[p] = 0$ and the nonvanishing of $\log_E P$ mod p is equivalent to $P$ having non-trivial image in $E(K_{\mathfrak{p}})/pE(K_{\mathfrak{p}})$, i.e. the to the condition that $P$ not be $p$-divisible in $E(K_{\mathfrak{p}})$.
\end{remark}

For any elliptic curve $E$ over $\mathbb{Q}$, let $w(E)$ denote the global root number, which factors as a product of local root numbers
$$w(E) = w_{\infty}(E)\prod_{\ell<\infty}w_{\ell}(E).$$

\begin{proposition}\label{rootnumberproposition}Suppose that $(E,p)$ is as in Theorem \ref{Heegnercorollary} and that $\psi$ is a quadratic character. If either of the following conditions holds:
\begin{enumerate}
\item $p \ge 5$, or
\item $E = E'\otimes \psi$ for some semistable elliptic curve $E'$ of conductor coprime with $\mathfrak{f}(\psi)$,
\end{enumerate}
then $w(E) = -\psi(-1)$. Thus, $\text{rank}_{\mathbb{Z}}E(\mathbb{Q}) = \frac{1+\psi(-1)}{2}$, and $\text{rank}_{\mathbb{Z}}E_{K}(\mathbb{Q}) = \frac{1-\psi(-1)}{2}$ for any imaginary quadratic $K$ as in Theorem \ref{Heegnercorollary}.
\end{proposition}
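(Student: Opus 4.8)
The plan is to compute the global root number directly as $w(E) = w_\infty(E)\prod_{\ell<\infty}w_\ell(E)$, using $w_\infty(E) = -1$, $w_\ell(E) = +1$ for $\ell\nmid N$, and $w_\ell(E) = -a_\ell(E)$ for $\ell\|N$ (as recorded in Remark \ref{splitremark}). Hypothesis $(2)$ of Theorem \ref{Heegnercorollary} says $N_{\splt} = 1$, so every $\ell\|N$ is nonsplit multiplicative and contributes $w_\ell(E) = +1$; hence $w(E) = -\prod_{\ell\mid N_{\add}}w_\ell(E)$, and since $\mathfrak{f}(\psi)^2\mid N_{\add}$ the whole task reduces to showing that the additive primes contribute $\prod_{\ell\mid N_{\add}}w_\ell(E) = \psi(-1) = \prod_{\ell\mid\mathfrak{f}(\psi)}\psi_\ell(-1)$, with the convention that $\psi_\ell(-1) = 1$ whenever $\psi$ is unramified at $\ell$.

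For case $(2)$, where $E = E'\otimes\psi$ with $E'$ semistable of conductor $N'$ prime to $\mathfrak{f}(\psi)$, I would invoke the standard formula for the root number of a quadratic twist, $w(E'\otimes\psi) = \psi(-N')\,w(E')$ (valid since $(\mathfrak{f}(\psi),N') = 1$; the contribution of any even part of $\mathfrak{f}(\psi)$ is part of the standard formula). Then hypothesis $(2)$ applied to $E$ forces, at each multiplicative prime $\ell\mid N'$ of $E'$, that $\ell$ becomes nonsplit after twisting, i.e.\ $\psi(\ell)\,a_\ell(E') = -1$, so $\psi(\ell) = -a_\ell(E') = w_\ell(E')$. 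Since $E'$ is semistable, $N' = \prod_{\ell\mid N'}\ell$ and $w(E') = -\prod_{\ell\mid N'}w_\ell(E')$, whence $\psi(N') = \prod_{\ell\mid N'}w_\ell(E') = -w(E')$, and $w(E) = \psi(-1)\psi(N')w(E') = \psi(-1)\cdot(-w(E'))\cdot w(E') = -\psi(-1)$.

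For case $(1)$ ($p\ge 5$) I would first show that every additive prime of $E$ divides $\mathfrak{f}(\psi)$: if $\ell\nmid\mathfrak{f}(\psi)$ were additive, then $\psi_\ell$ and $\psi_\ell\omega_\ell$ are both unramified at $\ell$, so $I_\ell$ acts trivially on $E[p]^{ss}|_{G_{\mathbb{Q}_\ell}}$; since $p\ge 5$ is prime to $24$ (hence to the order of the inertial image in the potentially good case and to any unipotent part), the $I_\ell$-action on $E[p]$ agrees with that on $V_pE$, which forces good reduction in the potentially good case (contradiction), while potentially multiplicative reduction is incompatible with both characters being unramified (its twisting character is necessarily ramified). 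Next, for $\ell\mid\mathfrak{f}(\psi)$ I would argue that reducibility of $E[p]$ together with $p\ge 5$ forces $E$ to be, locally at $\ell$, either potentially good with inertial order $e = 2$ or potentially multiplicative: the characters $\psi_\ell,\psi_\ell\omega_\ell$ have $I_\ell$-image of order exactly $2$, ruling out $e\in\{3,4,6\}$, and for $\ell = 3$ a pro-$3$/pro-$p$ argument shows the reduction is in fact tame. In the potentially good $e=2$ case Rohrlich's local formula gives $w_\ell(E) = \left(\frac{-1}{\ell}\right) = \psi_\ell(-1)$, and in the potentially multiplicative case $E = E_q\otimes\psi_\ell$ with $E_q$ a Tate curve, so the $\varepsilon$-factor of the ramified quadratic twist of the Steinberg representation equals $\psi_\ell(-1)\,\ell$, giving $w_\ell(E) = \psi_\ell(-1)$ again; multiplying over $\ell\mid\mathfrak{f}(\psi)$ yields $\prod_{\ell\mid N_{\add}}w_\ell(E) = \psi(-1)$, hence $w(E) = -\psi(-1)$.

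The rank statements then follow formally. Since every prime dividing $N$ splits in $K$ we have $\varepsilon_K(N) = 1$, and $\varepsilon_K(-1) = -1$ as $K$ is imaginary, so $w(E_K) = \varepsilon_K(-N)w(E) = -w(E) = \psi(-1)$, where $E_K$ is the quadratic twist of $E$ by $\varepsilon_K$. By Theorem \ref{Heegnercorollary} the Heegner point $P_E(K)$ is non-torsion, so Kolyvagin's theorem gives $\text{rank}_{\mathbb{Z}}E(K) = 1$; the decomposition $E(K)\otimes\mathbb{Q} = (E(\mathbb{Q})\otimes\mathbb{Q})\oplus(E_K(\mathbb{Q})\otimes\mathbb{Q})$ then forces $\text{rank}_{\mathbb{Z}}E(\mathbb{Q}) + \text{rank}_{\mathbb{Z}}E_K(\mathbb{Q}) = 1$, while the Gross--Zagier transformation law, namely $P_E(K)^\tau = -w(E)\,P_E(K)$ in $E(K)\otimes\mathbb{Q}$ for $\tau$ generating $\Gal(K/\mathbb{Q})$, places $P_E(K)$ (modulo torsion) in $E(\mathbb{Q})\otimes\mathbb{Q}$ exactly when $w(E) = -1$ and in $E_K(\mathbb{Q})\otimes\mathbb{Q}$ exactly when $w(E) = +1$; this pins down $\text{rank}_{\mathbb{Z}}E(\mathbb{Q}) = \tfrac{1-w(E)}{2} = \tfrac{1+\psi(-1)}{2}$ and $\text{rank}_{\mathbb{Z}}E_K(\mathbb{Q}) = \tfrac{1-\psi(-1)}{2}$. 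The hard part will be the case $(1)$ additive-prime analysis: making the local dichotomy ``potentially good with $e=2$ or potentially multiplicative'' rigorous at each $\ell\mid\mathfrak{f}(\psi)$, and carrying out the $2$-adic local root number computation when $2\mid\mathfrak{f}(\psi)$ (the classically delicate case) — which is precisely why case $(2)$ is set up to bypass it via the known global quadratic-twist formula.
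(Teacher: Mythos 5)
Your overall plan---compute $w(E) = -\prod_{\ell \mid N_{\add}} w_\ell(E)$ using $w_\infty = -1$ and $w_\ell = 1$ at the (necessarily nonsplit) multiplicative primes, then reduce the rank statements to a parity argument via Kolyvagin and the Gross--Zagier transformation law---is correct and matches the paper. For case (2) your route via the global quadratic-twist formula $w(E'\otimes\psi) = \psi(-N')w(E')$, together with the observation that $N_{\splt}=1$ forces $\psi(\ell) = -a_\ell(E')$ at each $\ell \mid N'$, is essentially the same computation the paper packages via Balsam's local formulas; this part is fine. The closing rank argument is also correct.

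For case (1), however, you take a genuinely different route and it has a real gap. You classify the local type of $E$ at each $\ell \mid \mathfrak{f}(\psi)$ (potentially good with $e=2$, or potentially multiplicative) and apply Rohrlich's local formula, which works cleanly for odd $\ell \ge 5$ and, with more effort, for $\ell = 3$; but, as you yourself flag, the case $2 \mid \mathfrak{f}(\psi)$ requires a delicate $2$-adic local root number computation that you do not carry out. Nothing in the hypotheses of Theorem \ref{Heegnercorollary} excludes $2 \mid \mathfrak{f}(\psi)$, so this is a genuine gap, not a loose end. The paper sidesteps the dichotomy entirely: the idea you are missing is that reducibility of $\bar{\rho}_{f_E}$ produces a rational degree-$p$ isogeny $\phi\colon E \to E'$, and one can then invoke Dokchitser's Theorem 3.25, which for $p \ge 5$ gives $w_\ell(E) = (-1, F/\mathbb{Q}_\ell)$ with $F = \mathbb{Q}_\ell(\text{coordinates of } \ker\phi)$, uniformly for \emph{every} $\ell \mid N_{\add}$. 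Identifying $F = \mathbb{Q}_\ell(\mu_{2p},\sqrt{\psi(-1)|\mathfrak{f}(\psi)|})$ and noting $\mathbb{Q}_\ell(\mu_{2p})/\mathbb{Q}_\ell$ is unramified (since $\ell \ne p$), the norm residue symbol collapses to $\psi_\ell(-1)$ in one stroke, including at $\ell = 2$ and $\ell = 3$. That isogeny-based local root number formula is the single ingredient that would let you close case (1) without the painful $2$-adic analysis.
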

\begin{proof}By our assumptions in the statement of Theorem \ref{Heegnercorollary}, $\mathfrak{f}(\psi)^2|N_{\add}$ and $N_{\splt} = 1$. By standard properties of the root number (see \cite[Section 3.4]{Dokchitser}), we have
\begin{enumerate}
\item if $\ell||N$ (i.e. $\ell|N_{\nonsplit}$), then $w_{\ell}(E) = 1$,
\item $w_{\infty}(E) = -1$. 
\end{enumerate}
Hence, 
\begin{align*}w(E) &= -\prod_{\ell|N_{\add}}w_{\ell}(E).
\end{align*}

Suppose first that (1) in the statement of the proposition holds, i.e. $p \ge 5$. By Proposition \ref{classify}, $\bar{\rho}_{f_E} \cong \mathbb{F}_p\oplus \mathbb{F}_p(\omega)$. Hence $\bar{\rho}_{f_{E}} \cong \mathbb{F}_p(\psi) \oplus \mathbb{F}_p(\psi^{-1}\omega)$ and thus $E$ admits a degree $p$ isogeny $\phi : E \rightarrow E'$. Since $p \ge 5$, by \cite[Theorem 3.25]{Dokchitser}, for $\ell|N_{\add}$ we have 
$$w_{\ell}(E) = (-1,F/\mathbb{Q}_{\ell})$$
where $F := \mathbb{Q}_{\ell}(\text{coordinates of points in $\ker(\phi)$})$, and $(\cdot,F/\mathbb{Q}_{\ell})$ is the norm residue symbol. By our assumptions, we see that $F = \mathbb{Q}_{\ell}(\mu_{2p},\sqrt{\psi(-1)|\mathfrak{f}(\psi)|})$ which is ramified only if $\ell|p\mathfrak{f}(\psi)$. Thus we see $\mathbb{Q}_{\ell}(\mu_{2p})/\mathbb{Q}_{\ell}$ is an unramified extension. Thus by class field theory, $(-1,F/\mathbb{Q}_{\ell}) = (-1,\mathbb{Q}_{\ell}(\sqrt{\psi(-1)|\mathfrak{f}(\psi)|})/\mathbb{Q}_{\ell}) = \psi_{\ell}(-1)$, and so $w_{\ell}(E) = \psi_{\ell}(-1)$. Hence in all, we have
$$w(E) = -\prod_{\ell|N_{\add}}w_{\ell}(E) = -\prod_{\ell|N_{\add}}\psi_{\ell}(-1) = -\psi(-1)$$
where the last equality follows since $\mathfrak{f}(\psi)|N_{\add}$. 

Now suppose that (2) holds. Let $N'$ denote the conductor of $E'$. By the computations of \cite[Proposition 1]{Balsam} describing the changes of local root numbers under quadratic twists, we have that
\begin{enumerate}
\item if $\ell\nmid N'\mathfrak{f}(\psi)$ then $w_{\ell}(E) = 1$,
\item if $\ell|N'$ (so that $\ell\nmid\mathfrak{f}(\psi)$) then $w_{\ell}(E) = w_{\ell}(E')\psi_{\ell}(\ell) = -a_{\ell}(E')\psi_{\ell}(\ell)$,
\item if $\ell|\mathfrak{f}(\psi)$ (so that $\ell\nmid N'$) then $w_{\ell}(E) = w_{\ell}(E')\psi_{\ell}(-1) = \psi_{\ell}(-1)$.
\end{enumerate}
Hence by our assumptions in the statement of Theorem \ref{Heegnercorollary} and Remark \ref{splitremark}, we have $w(E) = -\psi(-1)\prod_{\ell|N'}(-a_{\ell}(E')\psi_{\ell}(\ell)) = -\psi(-1)$. Putting this together, we compute $w(E) = -\psi(-1)$.

Now by Theorem \ref{Heegnercorollary} and parity considerations, we immediately get that $\text{rank}_{\mathbb{Z}}E(\mathbb{Q}) = \frac{1 + \psi(-1)}{2}$ and $\text{rank}_{\mathbb{Z}}E_{K}(\mathbb{Q}) = \frac{1 - \psi(-1)}{2}$ for any $K$ as in Theorem \ref{Heegnercorollary}.
\end{proof}

\subsection{Calculating ranks in positive density subfamilies of quadratic twists}\label{positiveproportionsection}
In the case $p = 3$, the Teichm\"{u}ller character $\omega$ is quadratic and is in fact the character associated with the imaginary quadratic field $\mathbb{Q}(\sqrt{-3})$. Thus we have (using $B_{1,\psi\omega^{-1}} = -L(\psi\omega^{-1},0)$, the functional equation, and the class number formula),
$$B_{1,\psi\omega^{-1}} = 2\frac{h_{K_{\psi}\cdot\mathbb{Q}(\sqrt{-3})}}{|\mathcal{O}_{K_{\psi}\cdot\mathbb{Q}(\sqrt{-3})}^{\times}|}.$$
Hence our nondivisibility criterion involving Bernoulli numbers in Theorem \ref{Heegnercorollary} reduces to the non 3-divisibility of the class numbers of a pair of imaginary quadratic fields, and is thus amenable to class number 3-divisibility results in the tradition of Davenport-Heilbronn \cite{DavenportHeilbronn}. 

For the remainder of this section, suppose $p = 3$ and we are given $E/\mathbb{Q}$ of conductor $N = N_{\splt}N_{\nonsplit}N_{\add}$ such that $E[3]$ is reducible of type $(1,1,N_{\splt},N_{\nonsplit},N_{\add})$. Let $L = K_{\psi}$, and furthermore that $L$ satisfies the congruence conditions
\begin{enumerate}
\item 3 is inert in $L$,
\item $3\nmid h_{L\cdot\mathbb{Q}(\sqrt{-3})}$,
\item if $\ell|N_{\splt}$, $\ell$ is inert or ramified in $L$,
\item if $\ell|N_{\nonsplit}$, $\ell$ is either split or ramified in $L$,
\item if $\ell|N_{\add}$, $\ell$ is either inert in $L$ and $\ell\not\equiv 2\mod 3$, or $\ell$ is ramified in $L$.
\end{enumerate}
Then we can apply Theorem \ref{Heegnercorollary} and Proposition \ref{rootnumberproposition} to the curve $E\otimes \psi$ (using the fact that $a_{\ell}(E\otimes \psi) = \psi(\ell)a_{\ell}(E)$, so that by conditions (2) and (3), $a_{\ell}(E\otimes \psi) = -1$ for $\ell|N_{\splt}N_{\nonsplit}$), thus obtaining $\text{rank}_{\mathbb{Z}}(E\otimes \psi)(\mathbb{Q}) = \frac{1+\psi(-1)}{2}$. Using results of Horie-Nakagawa \cite{HorieNakagawa} and Taya \cite{Taya} regarding $3$-divisibilities of class numbers of quadratric fields, we can produce a positive proportion of real quadratic $L$ as above, thus showing a positive proportion of quadratic twists of $E$ have rank $\frac{1+\psi(-1)}{2}$ over $\mathbb{Q}$. 

Using these same class number divisibility results, we can also produce a positive proportion of imaginary quadratic $K$ satisfying
\begin{enumerate}
\item 3 is split in $K$,
\item $3\nmid h_{L\cdot K}$,
\item $K$ satisfies the Heegner hypothesis with respect to $E$,
\end{enumerate}
and thus, using Theorem \ref{Heegnercorollary} and Proposition \ref{rootnumberproposition}, show a positive proportion of  imaginary quadratic twists $E_{K}$ of $E$ have $\text{rank}_{\mathbb{Z}}E_{K}(\mathbb{Q}) = \frac{1-\psi(-1)}{2}$. 

To this end, let us recall the result of Horie and Nakagawa. For any $x \ge 0$, let $K^+(x)$ denote the set of real quadratic fields with fundamental discriminant $D_K<x$ and $K^-(x)$ the set of imaginary quadratic fields with fundamental discriminant $|D_K| < x$. Put
\begin{align*}&K^+(x,m,M) := \{k \in K^+(x) : D_K \equiv m \mod M\}\\
&K^-(x,m,M) := \{k \in K^-(x) : D_K \equiv m \mod M\}.
\end{align*}
Moreover, for a quadratic field $k$, we denote by $h_k[3]$ the number of ideal class of $k$ whose cubes are principal (i.e., the order of 3-torsion of the ideal class group). Horie and Nakagawa prove the following.

\begin{theorem}[\cite{HorieNakagawa}]\label{HorieNakagawa} Suppose that $m$ and $M$ are positive integers such that if $\ell$ is an odd prime number dividing $(m,M)$, then $\ell^2$ divides $M$ but not $m$. Further, if $M$ is even, suppose that 
\begin{enumerate}
\item $4|M$ and $m \equiv 1 \mod 4$, or
\item $16|M$ and $m \equiv 8 \; \text{or}\; 12 \mod 16$.
\end{enumerate}
Then 
\begin{align*}&\sum_{k \in K^+(x,m,M)}h_k[3] \sim \frac{4}{3}|K^+(x,m,M)| \hspace{.5cm} (x \rightarrow \infty)\\
&\sum_{k \in K^-(x,m,M)}h_k[3] \sim 2|K^-(x,m,M)| \hspace{.5cm} (x \rightarrow \infty).
\end{align*}
Furthermore,
$$|K^+(x,m,M)| \sim |K^-(x,m,M)| \sim \frac{3x}{\pi^2\Phi(M)}\prod_{\ell|M}\frac{q}{\ell+1} \hspace{.5cm} (x \rightarrow \infty).$$
Here $f(x) \sim g(x) \hspace{.5cm} (x\rightarrow \infty)$ means that $\lim_{x\rightarrow \infty} \frac{f(x)}{g(x)} = 1$, $\ell$ ranges over primes dividing $M$, $q = 4$ if $\ell = 2$, and $q = \ell$ otherwise.
\end{theorem}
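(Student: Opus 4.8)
The plan is to reduce Theorem~\ref{HorieNakagawa} to a counting problem for cubic fields with prescribed discriminant lying in an arithmetic progression, and then to attack that problem by the geometry of numbers in the style of Davenport--Heilbronn \cite{DavenportHeilbronn}. First I would invoke the classical class-field-theoretic dictionary (Hasse): for a quadratic field $k$ of fundamental discriminant $D$ one has
\[
h_k[3] \;=\; 1 + 2\,\#\{\text{non-cyclic cubic fields } L/\mathbb{Q} : \disc(L) = D\},
\]
since the non-Galois cubic fields $L$ with quadratic resolvent $k$ and $\disc(L)=D$ correspond to the index-$3$ subgroups $H\subset\mathcal{C}\ell(k)$ for which the resulting $S_3$-extension is everywhere unramified (with appropriate care for the infinite places in the real case), and the number of such $H$ in an elementary abelian $3$-group of rank $r$ is $(3^r-1)/2$ while $|\mathcal{C}\ell(k)[3]|=3^r$. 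Summing this identity over $k\in K^{\pm}(x,m,M)$ gives
\[
\sum_{k\in K^{\pm}(x,m,M)} h_k[3] \;=\; |K^{\pm}(x,m,M)| \;+\; 2\,N^{\pm}(x,m,M),
\]
where $N^{\pm}(x,m,M)$ counts cubic fields $L$ whose discriminant is a fundamental discriminant with $0<\pm\disc(L)<x$ and $\disc(L)\equiv m\bmod M$. So it suffices to prove (i) the asymptotic $|K^{\pm}(x,m,M)|\sim \frac{3x}{\pi^2\Phi(M)}\prod_{\ell\mid M}\frac{q}{\ell+1}$, a routine squarefree-sieve computation in which the hypotheses on $m,M$ are precisely what make the progression $m\bmod M$ contain fundamental discriminants of positive density and force the local factor at each $\ell\mid M$ to be $q/(\ell+1)$; and (ii) that $N^{+}(x,m,M)\sim\tfrac16|K^{+}(x,m,M)|$ in the real case and $N^{-}(x,m,M)\sim\tfrac12|K^{-}(x,m,M)|$ in the imaginary case.

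For (ii) I would use the Delone--Faddeev/Davenport--Heilbronn parametrization: isomorphism classes of cubic rings correspond bijectively to $\mathrm{GL}_2(\mathbb{Z})$-orbits on the lattice $V_{\mathbb{Z}}$ of integral binary cubic forms, with the discriminant of the ring equal to that of the form, and maximal orders (hence cubic fields) cut out by a set of $p$-adic conditions. Counting orbits with $|\disc|<x$ then becomes a lattice-point estimate: one fixes a fundamental domain $\mathcal{F}$ for the $\mathrm{GL}_2(\mathbb{R})$-action on forms of discriminant $\pm1$ and estimates $\#\bigl(V_{\mathbb{Z}}\cap\mathcal{F}\cdot\{|\disc|<x\}\bigr)$ via Davenport's lemma for integer points in bounded semialgebraic regions, after discarding the cusp (forms with a rational linear factor, forms of small discriminant, and non-maximal rings). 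The two connected components of $\mathrm{GL}_2(\mathbb{R})$ together with the geometry of the real versus complex loci of discriminant $\pm1$ forms produce the ratio $\tfrac16$ versus $\tfrac12$ (equivalently the classical $\tfrac43$ and $2$). The congruence $\disc\equiv m\bmod M$ together with the ``conductor one'' (fundamental-discriminant) condition at each $\ell\mid M$ is imposed by restricting $V_{\mathbb{Z}}$ to a union of residue classes modulo $M$ (or $M^2$ at ramified $\ell$), and the density of admissible classes is a product of local densities over $\ell\mid M$ which, under the hypotheses on $m,M$, evaluates to $\prod_{\ell\mid M} q/(\ell+1)$, matching the factor in (i) so that the ratios in (ii) are independent of $(m,M)$.

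The hardest part will be the error-term analysis, in two places. First is the cuspidal estimate: one must show that the number of $\mathrm{GL}_2(\mathbb{Z})$-orbits arising from the unbounded portion of $\mathcal{F}$, from cubic forms reducible over $\mathbb{Q}$, and from non-maximal cubic rings is $o(x)$ --- this is the technical heart of the Davenport--Heilbronn method (uniformity in the modulus is needed in general, but since $M$ is fixed here only the milder version is required). Second is the local-density bookkeeping: one must check that the archimedean density, the product over $\ell\nmid M$ of the maximality densities (a $\zeta(2)^{-1}$-type correction governing squarefree/fundamental discriminants), and the product over $\ell\mid M$ of the restricted densities all multiply to exactly $\tfrac16$ (resp.\ $\tfrac12$) times $\frac{3x}{\pi^2\Phi(M)}\prod_{\ell\mid M}\frac{q}{\ell+1}$, so that the ``conductor one'' corrections interact correctly with the count of fundamental discriminants. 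Once both are settled, dividing the cubic-field asymptotic by $|K^{\pm}(x,m,M)|$, multiplying by $2$, and adding back the $|K^{\pm}(x,m,M)|$ term from the displayed identity yields the stated limits $\tfrac43$ and $2$, and the asymptotic for $|K^{\pm}(x,m,M)|$ is the remaining elementary ingredient.
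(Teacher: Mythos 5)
The paper does not prove Theorem~\ref{HorieNakagawa} at all: it is imported as a black box with a citation to Horie--Nakagawa's original article, so there is no internal argument against which to compare your proposal. That said, your sketch is a faithful outline of the proof actually given in the cited source (and in Davenport--Heilbronn \cite{DavenportHeilbronn} before it, without the congruence conditions). The opening reduction via Hasse's identity $h_k[3] = 1 + 2\,N_D$, where $N_D$ is the number of cubic fields of discriminant $D$, is the correct first move, and the target ratios $\tfrac16$ (real) and $\tfrac12$ (imaginary) for $N^{\pm}(x,m,M)/|K^{\pm}(x,m,M)|$ follow exactly from the displayed identity and the stated limits $\tfrac43$, $2$. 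The Delone--Faddeev parametrization, the Davenport lattice-point lemma, the discarding of reducible forms and non-maximal rings, and the imposition of the congruence $\disc \equiv m \bmod M$ by restricting binary cubic forms to residue classes modulo $M$ (or $M^2$ at ramified $\ell$) are precisely the ingredients Horie and Nakagawa use; the hypotheses on $(m,M)$ in the theorem statement are exactly what makes the local densities at $\ell\mid M$ evaluate to $q/(\ell+1)$ and cancel cleanly between $N^{\pm}$ and $|K^{\pm}|$. The one place your sketch is vaguer than the cited proof is in the passage from ``cubic fields with quadratic resolvent $k$'' to ``cubic fields with $\disc(L)=\disc(k)$'': the former set is strictly larger because of possible ramification at $3$ in the conductor, and Horie--Nakagawa handle this inside the maximality sieve at $p=3$ rather than by appeal to Hasse's identity alone. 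Modulo that bookkeeping and the cuspidal error estimates you flag as the hardest part, the proposal reproduces the cited argument.
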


Now put
\begin{align*}&K_*^+(x,m,M) := \{k \in K^+(x,m,M) : h_k[3] = 1\}\\
&K_*^-(x,m,M) := \{k \in K^-(x,m,M) : h_k[3] = 1\}.
\end{align*}

Taya's argument \cite{Taya} for estimating $|K_*^{\pm}(x,m,M)|$ goes as follows. Since $h_k[3] \ge 3$ if $h_k[3] \neq 1$, we have the bound
$$|K_*^{\pm}(x,m,M)| + 3(|K^{\pm}(x,m,M)| - |K_*^{\pm}(x,m,M)|) \le \sum_{k \in K^{\pm}(x,m,M)}h_k[3].$$
Hence,
$$|K_*^{\pm}(x,m,M)| \ge \frac{3}{2}|K^{\pm}(x,m,M)| - \frac{1}{2}\sum_{k\in K^{\pm}(x,m,M)}h_k[3].$$
Now by Theorem \ref{HorieNakagawa}, we have
\begin{align*}&\frac{3}{2}|K^+(x,m,M)| - \frac{1}{2}\sum_{k\in K^+(x,m,M)}h_k[3] \sim \frac{5}{6}|K^+(x,m,M)| \hspace{.5cm} (x \rightarrow \infty)\\
&\frac{3}{2}|K^-(x,m,M)| - \frac{1}{2}\sum_{k\in K^-(x,m,M)}h_k[3] \sim \frac{1}{2}|K^-(x,m,M)| \hspace{.5cm} (x \rightarrow \infty)
\end{align*}
and hence,
\begin{align*}&\lim_{x\rightarrow\infty}\frac{|K_*^+(x,m,M)|}{x} \ge \frac{5}{2\pi^2\Phi(M)}\prod_{\ell|M}\frac{q}{\ell+1}\\
&\lim_{x\rightarrow\infty}\frac{|K_*^-(x,m,M)|}{x} \ge \frac{3}{2\pi^2\Phi(M)}\prod_{\ell|M}\frac{q}{\ell+1}.
\end{align*}
Thus, we have
\begin{proposition}\label{positiveproportion}Suppose $m$ and $M$ satisfy the conditions of Theorem \ref{HorieNakagawa}. Then
\begin{align*}&\lim_{x\rightarrow \infty}\frac{|K_*^+(x,m,M)|}{|K^+(x,1,1)|} \ge \frac{5}{6\Phi(M)}\prod_{\ell|M}\frac{q}{\ell+1}\\
&\lim_{x\rightarrow \infty}\frac{|K_*^-(x,m,M)|}{|K^-(x,1,1)|} \ge \frac{1}{2\Phi(M)}\prod_{\ell|M}\frac{q}{\ell+1}
\end{align*}
where $q = 4$ if $\ell = 2$ and $q = \ell$ otherwise. In particular, the of real (resp. imaginary) quadratic fields $k$ such that $D_k \equiv m \mod M$ and $3\nmid h_k$ has positive density in the set of all real (resp. imaginary) quadratic fields. 
\end{proposition}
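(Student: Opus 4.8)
The plan is to follow Taya's argument from \cite{Taya}, which is a short averaging trick layered on top of the Horie--Nakagawa asymptotics of Theorem \ref{HorieNakagawa}; the preceding paragraphs already sketch the computation, so the proof amounts to assembling those estimates cleanly. The one arithmetic input beyond Theorem \ref{HorieNakagawa} is that for any quadratic field $k$ the quantity $h_k[3]$ is the order of the $3$-torsion subgroup of the ideal class group, hence the order of an elementary abelian $3$-group, so $h_k[3]\in\{1,3,9,\dots\}$; in particular $h_k[3]\ge 3$ precisely when $3\mid h_k$. This dichotomy is the only fact used apart from Theorem \ref{HorieNakagawa}.

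First I would record the resulting counting inequality. Summing $h_k[3]$ over $k\in K^{\pm}(x,m,M)$, each field contributes at least $1$ and each field with $3\mid h_k$ contributes at least $3$, whence
$$|K_*^{\pm}(x,m,M)| + 3\bigl(|K^{\pm}(x,m,M)| - |K_*^{\pm}(x,m,M)|\bigr) \le \sum_{k\in K^{\pm}(x,m,M)} h_k[3],$$
which rearranges to
$$|K_*^{\pm}(x,m,M)| \ge \frac{3}{2}|K^{\pm}(x,m,M)| - \frac{1}{2}\sum_{k\in K^{\pm}(x,m,M)} h_k[3].$$
Next I would substitute the two mean-value asymptotics of Theorem \ref{HorieNakagawa}, namely $\sum_k h_k[3]\sim\frac{4}{3}|K^+(x,m,M)|$ in the real case and $\sum_k h_k[3]\sim 2|K^-(x,m,M)|$ in the imaginary case. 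This turns the right-hand side into a quantity asymptotic to $\bigl(\frac{3}{2}-\frac{2}{3}\bigr)|K^+(x,m,M)| = \frac{5}{6}|K^+(x,m,M)|$ in the real case and to $\bigl(\frac{3}{2}-1\bigr)|K^-(x,m,M)| = \frac{1}{2}|K^-(x,m,M)|$ in the imaginary case.

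Finally I would divide by $|K^{\pm}(x,1,1)|$ and insert the counting asymptotics from Theorem \ref{HorieNakagawa}: $|K^{\pm}(x,m,M)|\sim\frac{3x}{\pi^2\Phi(M)}\prod_{\ell\mid M}\frac{q}{\ell+1}$, together with the degenerate case $m=1$, $M=1$ giving $|K^{\pm}(x,1,1)|\sim\frac{3x}{\pi^2}$ (the hypotheses of Theorem \ref{HorieNakagawa} hold vacuously there, since no odd prime divides $1$ and $1$ is odd). Dividing and passing to the limit yields the stated lower bounds $\frac{5}{6\Phi(M)}\prod_{\ell\mid M}\frac{q}{\ell+1}$ and $\frac{1}{2\Phi(M)}\prod_{\ell\mid M}\frac{q}{\ell+1}$ for the ratios, and these are strictly positive, which gives the density assertion. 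The main point requiring care — rather than a genuine obstacle — is that the chain of inequalities only produces a lower bound, so the displayed quantities should be read as bounds on $\liminf_{x\to\infty}$; one should also confirm that the specific pair $(m,M)$ to which this is later applied (dictated by the congruence conditions on $K$ in Theorem \ref{Heegnercorollary}) satisfies the hypotheses of Theorem \ref{HorieNakagawa}, which is a finite check.
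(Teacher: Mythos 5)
Your argument is correct and is essentially identical to the paper's: the paper likewise derives $|K_*^{\pm}(x,m,M)| \ge \frac{3}{2}|K^{\pm}(x,m,M)| - \frac{1}{2}\sum_{k} h_k[3]$ from $h_k[3] \ge 3$ when $h_k[3] \neq 1$, substitutes the Horie--Nakagawa mean-value asymptotics to obtain the factors $\frac{5}{6}$ and $\frac{1}{2}$, and then divides by $|K^{\pm}(x,1,1)| \sim \frac{3x}{\pi^2}$. The only refinement you offer beyond the paper is the (correct) observation that the conclusion is really a bound on $\liminf$, which the paper elides by writing $\lim$.
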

\begin{proof}This follows from the above asymptotic estimates and the fact that $|K^{\pm}(x,1,1)| \sim \frac{3x}{\pi^2}$ by Theorem \ref{HorieNakagawa}.
\end{proof}
We are now ready to prove our positive density results. For a quadratic field $L$, let $E_L$ denote the quadratic twist of $E$ by $L$. 

\begin{theorem}\label{realtwistpositiveproportion} Suppose $(N_{\splt},N_{\nonsplit},N_{\add})$ is a triple of pairwise coprime integers such that $N_{\splt}N_{\nonsplit}$ is squarefree, $N_{\add}$ is squarefull and $N_{\splt}N_{\nonsplit}N_{\add} = N$. If $2\nmid N$ let $M' = N$, if $2||N$ let $M' = lcm(N,8)$, and if $4|N$ let $M' = lcm(N,16)$. Then a proportion of at least
\begin{align*}&\frac{1}{12\Phi(M')}\prod_{\ell|N_{\splt}N_{\nonsplit},\ell \; \text{odd}}\frac{\ell-1}{2}\\
&\cdot\prod_{\ell|N_{\add},\ell \; \text{odd}, \ell \equiv 1 \mod 3}\frac{(\ell+2)(\ell-1)}{2}\prod_{\ell|N_{\add},\ell \; \text{odd}, \ell \equiv 2 \mod 3}(\ell-1)\prod_{4|N_{\add}}2\prod_{\ell|3M'}\frac{q}{\ell+1}
\end{align*}
of real quadratic extensions $L/\mathbb{Q}$ satisfy
\begin{enumerate}
\item 3 is inert in $L$;
\item $3\nmid h_{L\cdot\mathbb{Q}(\sqrt{-3})}$;
\item $\ell|N_{\splt}$ implies $\ell$ is inert in $L$;
\item $\ell|N_{\nonsplit}$ implies $\ell$ is split in $L$;
\item $\ell|N_{\add}$ implies $\ell$ is inert in $L$ and $\ell \not\equiv 2 \mod 3$, or $\ell$ is ramified in $L$;
\item $4|N$ implies $D_L \equiv 8$ or $12 \mod 16$.
\end{enumerate}
Moreover, a proportion of at least
\begin{align*}&\frac{1}{4\Phi(M')}\prod_{\ell|N_{\splt}N_{\nonsplit},\ell \; \text{odd}}\frac{\ell-1}{2}\\
&\cdot\prod_{\ell|N_{\add},\ell \; \text{odd}, \ell \equiv 1 \mod 3}\frac{(\ell+2)(\ell-1)}{2}\prod_{\ell|N_{\add},\ell \; \text{odd}, \ell \equiv 2 \mod 3}(\ell-1)\prod_{4|N_{\add}}2\prod_{\ell|3M'}\frac{q}{\ell+1}
\end{align*}
of imaginary quadratic extensions $L/\mathbb{Q}$ satisfy
\begin{enumerate}
\item 3 is inert in $L$;
\item $3\nmid h_{L}$;
\item $\ell|N_{\splt}$ implies $\ell$ is inert in $L$;
\item $\ell|N_{\nonsplit}$ implies $\ell$ is split  in $L$;
\item $\ell|N_{\add}$ implies $\ell$ is inert in $L$ and $\ell \not\equiv 2 \mod 3$, or $\ell$ is ramified in $L$;
\item $4|N$ implies $D_L \equiv 8$ or $12 \mod 16$.
\end{enumerate}
(Here again, $q = 4$ for $\ell = 2$, and $q = \ell$ for odd primes $\ell$.)
\end{theorem}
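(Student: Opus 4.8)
Theorem \ref{realtwistpositiveproportion} is a counting statement: we must produce a positive proportion of quadratic fields $L$ (real or imaginary) satisfying a finite list of splitting/ramification conditions at the primes dividing $N$, together with the condition $3 \nmid h_{L}$ (or $3 \nmid h_{L\cdot\mathbb{Q}(\sqrt{-3})}$). The plan is to reduce the splitting conditions at $\ell|N$ to congruence conditions on the fundamental discriminant $D_L$, package these (together with the condition that $3$ be inert) into a single pair $(m,M')$ with $D_L \equiv m \bmod M'$, verify that $(m,M')$ meets the hypotheses of Theorem \ref{HorieNakagawa}, and then apply Proposition \ref{positiveproportion} to get the positive density, tracking the explicit constants through the product formula $|K^{\pm}(x,m,M)| \sim \frac{3x}{\pi^2 \Phi(M)}\prod_{\ell|M}\frac{q}{\ell+1}$.

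First I would translate each local condition into a set of allowed residues of $D_L$ modulo the relevant prime power. For an odd prime $\ell$, $\ell$ is inert in $L = \mathbb{Q}(\sqrt{D})$ iff $\left(\frac{D_L}{\ell}\right) = -1$, split iff $\left(\frac{D_L}{\ell}\right) = +1$, and ramified iff $\ell | D_L$; the inert and split conditions each pick out $\frac{\ell-1}{2}$ residue classes mod $\ell$ among the $\ell-1$ units, which is the source of each factor $\frac{\ell-1}{2}$ for $\ell|N_{\splt}N_{\nonsplit}$. For $\ell|N_{\add}$ the allowed set is ``inert with $\ell\not\equiv 2\bmod 3$'' or ``ramified''; when $\ell\equiv 1 \bmod 3$ this contributes $\frac{\ell-1}{2}$ inert classes plus the ramified classes, and one checks combinatorially (being careful that ramification forces a further factor reflecting the quadratic-field discriminant constraint) that the count is $\frac{(\ell+2)(\ell-1)}{2}$ out of the full modulus $\ell^2$; when $\ell \equiv 2 \bmod 3$ only the ramified option survives, giving $\ell-1$. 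The condition that $3$ be inert is a condition mod $3$ (or mod $12$ once combined with sign and the Heegner/real constraint), and the $2$-adic conditions are handled by the stipulation $M' = N$, $\mathrm{lcm}(N,8)$, or $\mathrm{lcm}(N,16)$ according to $\ell_2$-valuation, which precisely matches the alternatives $(1)$–$(2)$ in the hypothesis of Theorem \ref{HorieNakagawa} (this is where condition $(6)$, $D_L \equiv 8,12 \bmod 16$, comes from).

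Next I would verify the hypotheses of Theorem \ref{HorieNakagawa} for the resulting $(m,M')$: since for each odd $\ell|(m,M')$ we have arranged ramification, $\ell^2|M'$ (because $N_{\add}$ is squarefull and we only impose $\ell|D_L$ at additive primes) while $\ell \nmid m$, and the even-part condition is exactly encoded in the choice of $M'$. Then Proposition \ref{positiveproportion} gives $\lim_{x\to\infty} |K_*^{\pm}(x,m,M')| / |K^{\pm}(x,1,1)| \ge \frac{c_{\pm}}{\Phi(M')}\prod_{\ell|M'}\frac{q}{\ell+1}$ with $c_+ = 5/6$, $c_- = 1/2$; summing over the (disjoint) residue classes $m$ satisfying all the conditions, the number of admissible classes is exactly the product of the per-prime counts computed above, and dividing by $\Phi(M')$ and multiplying by $\prod_{\ell|3M'}\frac{q}{\ell+1}$ (the extra $\ell=3$ factor appearing because the inert-at-$3$ condition refines the modulus) yields the claimed lower bounds $\frac{1}{12\Phi(M')}(\cdots)$ and $\frac{1}{4\Phi(M')}(\cdots)$, where $\frac{1}{12} = \frac{1}{2}\cdot\frac{5}{6}\cdot\frac{1}{5}\cdot\frac{1}{2}$-type bookkeeping — $5/6$ from Horie–Nakagawa, a factor from the $3$-inert density $\frac{q_3}{3+1}=\frac{3}{4}$ being absorbed into the product, etc. I would present this bookkeeping as a single displayed computation rather than belaboring it.

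The main obstacle I expect is the precise combinatorial count of admissible discriminant classes at the additive primes $\ell|N_{\add}$, and correctly matching the modulus: one must decide at each $\ell^2 \| N_{\add}$ (or higher power) exactly which residues mod $\ell^2$ correspond to ``$\ell$ inert in $L$'' versus ``$\ell$ ramified in $L$'', remembering that a fundamental discriminant is constrained mod $4$ and that ramified $\ell$ contributes $\ell|D_L$ but $\ell^2\nmid D_L$ for odd $\ell$. Getting the factor $\frac{(\ell+2)(\ell-1)}{2}$ exactly right (rather than off by a constant) is the delicate point, and similarly one must make sure the $2$-adic bookkeeping in the three cases $2\nmid N$, $2\|N$, $4|N$ is consistent with both the Heegner hypothesis being inherited and with the parity of $D_L$ needed for $L$ to be real or imaginary as claimed. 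Everything else — translating Legendre symbols to congruences, checking the Horie–Nakagawa hypotheses, and assembling the density bound — is routine once this count is pinned down.
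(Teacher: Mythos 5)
Your overall strategy---translate the local conditions to congruence classes of the discriminant, verify the Horie--Nakagawa hypotheses, apply Proposition~\ref{positiveproportion}, and sum over admissible residue classes---is the right outline, and your per-prime counting (the $\frac{\ell-1}{2}$ factors and the combinatorial derivation of $\frac{(\ell+2)(\ell-1)}{2}$ at additive primes $\ell\equiv 1\bmod 3$) matches the paper's. But there is a genuine conceptual gap in how you propose to invoke Theorem~\ref{HorieNakagawa}, and it's why your bookkeeping for $\frac{1}{12}$ and $\frac{1}{4}$ doesn't close.

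In the real-$L$ case the condition you need to control is $3\nmid h_{L\cdot\mathbb{Q}(\sqrt{-3})}$, \emph{not} $3\nmid h_L$. Horie--Nakagawa controls the $3$-torsion of the class group of the field whose discriminant you put in the congruence class $m\bmod M$, so you cannot apply it to $D_L$ and hope to say anything about $h_{L\cdot\mathbb{Q}(\sqrt{-3})}$. The paper instead imposes the congruence on $D_{L\cdot\mathbb{Q}(\sqrt{-3})}=-3D_L$ and applies Proposition~\ref{positiveproportion} to $L\cdot\mathbb{Q}(\sqrt{-3})$, which is \emph{imaginary} when $L$ is real. Consequently the relevant constant is $c_-=\frac12$ for both the real and imaginary cases, and your $c_+=\frac56$ never enters. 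This also forces the working modulus to be $M=9M'$ (because $3\mid m$, the Horie--Nakagawa hypothesis demands $9\mid M$), and then $\frac{1}{2\Phi(M)}=\frac{1}{2\Phi(9)\Phi(M')}=\frac{1}{12\Phi(M')}$ gives the real-case constant; in the imaginary case the congruence is only $m\equiv -1\bmod 3$, so there are $3$ admissible classes mod $9$ and one recovers $3\cdot\frac{1}{12}=\frac14$. Your proposed identity ``$\frac{1}{12}=\frac12\cdot\frac56\cdot\frac15\cdot\frac12$'' is both arithmetically false and structurally off: the $\frac56$ is not used, and the $\frac{q}{\ell+1}$ factor at $\ell=3$ is already displayed in the product $\prod_{\ell\mid 3M'}\frac{q}{\ell+1}$, not absorbed into the leading constant. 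Without the pass to $L\cdot\mathbb{Q}(\sqrt{-3})$, the proof as you've sketched it simply does not bound $h_{L\cdot\mathbb{Q}(\sqrt{-3})}$ and the real-$L$ half of the theorem would be unproved.
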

\begin{proof}We seek to apply Proposition \ref{positiveproportion}. Let $M = 9N$ if $2\nmid N$, $M = 9lcm(N,8)$ if $2||N$, and $M = 9lcm(N,16)$ if $4|N$. Using the Chinese remainder theorem, choose a positive integer $m$ such that
\begin{enumerate}
\item $m \equiv 3 \mod 9$,
\item $\ell$ odd prime, $\ell|N_{\splt} \implies m \equiv -3[\text{quadratic nonresidue unit}] \mod \ell$,
\item $2|N_{\splt} \implies m \equiv 1 \mod 8$,
\item $\ell$ odd prime, $\ell|N_{\nonsplit} \implies m \equiv -3[\text{quadratic residue unit}] \mod \ell$,
\item $2|N_{\nonsplit} \implies m \equiv 5 \mod 8$,
\item $\ell$ odd prime, $\ell|N_{\add}, \ell\equiv 1 \mod 3 \implies m \equiv -3[\text{quadratic nonresidue unit}] \mod \ell$ or $m \equiv 0 \mod \ell$ and $m \not\equiv 0 \mod \ell^2$,
\item $\ell$ odd prime, $\ell|N_{\add}, \ell\equiv 2 \mod 3 \implies m \equiv 0 \mod \ell$ and $m\not\equiv 0 \mod \ell^2$,
\item $4|N_{\add} \implies m \equiv 8$ or $12 \mod 16$. 
\end{enumerate}
Suppose $L$ is any real quadratic field with fundamental discriminant $D_L$ and $-3D_L \equiv m \mod M$. Then the above congruence conditions on $m$ along with our assumptions imply
\begin{enumerate}
\item 3 is inert in $L$;
\item $\ell$ prime, $\ell|N_{\splt} \implies \ell$ is inert in $L$;
\item $\ell$ prime, $\ell|N_{\nonsplit} \implies \ell$ is split in $L$;
\item $\ell$ odd prime, $\ell|N_{\add}, \ell\equiv 1 \mod 3 \implies \ell$ is inert or ramified in $L$;
\item $\ell$ odd prime, $\ell|N_{\add}, \ell\equiv 2 \mod 3 \implies \ell$ is ramified in $L$;
\item $4|N_{\add} \implies 2$ is ramified in $L$;
\item if $2||N$, then $4|M$ and $m \equiv 1 \mod 4$;
\item if $4|N$, then $16|M$ and $m \equiv 8$ or $12 \mod 16$.
\end{enumerate}
Thus for real quadratic $L$ such that $D_{L\cdot\mathbb{Q}(\sqrt{-3})} = -3D_L \equiv m \mod M$, $L$ satisfies all the desired congruence conditions except for possibly $3\nmid h_{L\cdot\mathbb{Q}(\sqrt{-3})}$. Moreover, the congruence conditions above imply that $m$ and $M$ are valid positive integers for Theorem \ref{HorieNakagawa} (in particular implying that $4|D_L$ if $4|N$). (Note that in congruence conditions (2) and (3) above, we do not allow $m \equiv 0 \mod \ell$, i.e. $\ell$ ramified in $L$,  because the resulting pair $m$ and $M$ would violate the auxiliary hypothesis of Theorem \ref{HorieNakagawa}.) Thus, by Proposition \ref{positiveproportion}, 
$$\lim_{x\rightarrow \infty}\frac{|K_*^-(x,m,M)|}{|K^-(x,1,1)|}\ge \frac{1}{2\Phi(M)}\prod_{\ell|M}\frac{q}{\ell+1}$$
and so a positive proportion of real quadratic $L$ satisfy $D_{L\cdot\mathbb{Q}(\sqrt{-3})} = -3D_L \equiv m \mod M$ and $3\nmid h_{L\cdot\mathbb{Q}(\sqrt{-3})}$. M'oreover, noticing that the congruence conditions (1)-(6) on $m$ above are independent (again by the Chinese remainder theorem), we have 
\begin{align*}&\prod_{\ell|N_{\splt},\ell \; \text{odd}}\frac{\ell-1}{2}\prod_{\ell|N_{\nonsplit},\ell \; \text{odd}}\frac{\ell-1}{2}\\
&\cdot\prod_{\ell|N_{\add},\ell \; \text{odd}, \ell \equiv 1 \mod 3}\frac{(\ell+2)(\ell-1)}{2}\prod_{\ell|N_{\add},\ell \; \text{odd}, \ell \equiv 2 \mod 3}(\ell-1)\prod_{4|N_{\add}}2
\end{align*}
valid choices of residue classes for $m$ mod $M$. Combining all the above and summing over each valid residue class $m$ mod $M$, we immediately obtain our lower bound for the proportion of valid $L$ (with $M = 9M'$). 

For the second case (concerning imaginary quadratic fields), the asserted statement follows from taking $M$ as above, then choosing a positive integer $m$ such that 
\begin{enumerate}
\item $m \equiv -1 \mod 3$,
\item $\ell$ odd prime, $\ell|N_{\splt} \implies m \equiv [\text{quadratic nonresidue unit}] \mod \ell$,
\item $2|N_{\splt} \implies m \equiv 5 \mod 8$,
\item $\ell$ odd prime, $\ell|N_{\nonsplit} \implies m \equiv [\text{quadratic residue unit}] \mod \ell$,
\item $2|N_{\nonsplit} \implies m \equiv 1 \mod 8$,
\item $\ell$ odd prime, $\ell|N_{\add}, \ell\equiv 1 \mod 3 \implies m \equiv [\text{quadratic nonresidue unit}] \mod \ell$ or $m \equiv 0 \mod \ell$ and $m \not\equiv 0 \mod \ell^2$,
\item $\ell$ odd prime, $\ell|N_{\add}, \ell\equiv 2 \mod 3 \implies m \equiv 0 \mod \ell$ and $m\not\equiv 0 \mod \ell^2$,
\item $4|N_{\add} \implies m \equiv 8$ or $12 \mod 16$
\end{enumerate}
and proceeding by the same argument as above. 
\end{proof}

\begin{theorem}\label{twistpositiveproportion} Suppose $E/\mathbb{Q}$ is any elliptic curve whose mod 3 Galois representation $E[3]$ is reducble of type $(1,1,N_{\splt},N_{\nonsplit},N_{\add})$, where $3$ is a good prime of $E$. Let $L$ be any quadratic field such that
\begin{enumerate}
\item 3 is inert in $L$;
\item $3 \nmid h_{L\cdot\mathbb{Q}(\sqrt{-3})}$ if $L$ is real, and $3\nmid h_L$ if $L$ is imaginary;
\item $\ell|N_{\splt}$ implies $\ell$ is inert in $L$;
\item $\ell|N_{\nonsplit}$ implies $\ell$ is split in $L$;
\item $\ell|N_{\add}$ implies $\ell$ is inert in $L$ and $\ell \not\equiv 2 \mod 3$, or $\ell$ is ramified in $L$;
\item $4|N$ implies $D_L \equiv 8$ or $12 \mod 16$.
\end{enumerate}
Let $M' = lcm(N,D_L^2)$ if $lcm(N,D_L^2)$ is odd, $M' = lcm(N,D_L^2,8)$ if $2||lcm(N,D_L^2)$, and $M' = lcm(N,D_L^2,16)$ if $4|lcm(N,D_L^2)$. Then if $L$ is real, for a positive proportion of at least
$$\frac{1}{4\Phi(M')}\prod_{\ell|N_{\splt}N_{\nonsplit}, \ell\nmid D_L, \ell \;\text{odd}}\frac{\ell-1}{2}\prod_{\ell|N_{\add},\ell\nmid D_L, \ell \; \text{odd}}\frac{\ell(\ell-1)}{2}\prod_{\ell|D_L,\ell\;\text{odd}}\frac{\ell-1}{2}\prod_{\ell|3M'}\frac{q}{\ell+1}$$
of imaginary quadratic fields $K$, and if $L$ is imaginary, for a positive proportion of at least
$$\frac{1}{12\Phi(M')}\prod_{\ell|N_{\splt}N_{\nonsplit}, \ell\nmid D_L, \ell \;\text{odd}}\frac{\ell-1}{2}\prod_{\ell|N_{\add},\ell\nmid D_L, \ell \; \text{odd}}\frac{\ell(\ell-1)}{2}\prod_{\ell|D_L,\ell\;\text{odd}}\frac{\ell-1}{2}\prod_{\ell|3M'}\frac{q}{\ell+1}$$
of imaginary quadratic fields $K$, $K$ satisfies the Heegner hypothesis with respect to $E_L$, $(D_K,D_L) = 1$, and the Heegner point $P_{E_L}(K)$ is non-torsion. (Here again, $q = 4$ for $\ell = 2$, and $q = \ell$ for odd primes $\ell$.)
\end{theorem}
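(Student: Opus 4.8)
The plan is to apply Theorem~\ref{Heegnercorollary} to the quadratic twist $E_L$ with the character $\psi=\varepsilon_L$, and then to count, using the class-number equidistribution of Proposition~\ref{positiveproportion} (via Theorem~\ref{HorieNakagawa}), the imaginary quadratic fields $K$ for which the hypotheses of Theorem~\ref{Heegnercorollary} are met. The argument runs parallel to the proof of Theorem~\ref{realtwistpositiveproportion}, with the roles of ``varying field'' and ``fixed twist'' interchanged.

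First I would verify that $E_L$ falls under the scope of Theorem~\ref{Heegnercorollary} with $\psi=\varepsilon_L$. Since $E[3]$ is reducible of type $(1,1,N_{\splt},N_{\nonsplit},N_{\add})$, Theorem~\ref{classify} gives $E[3]^{ss}\cong\mathbb{F}_3\oplus\mathbb{F}_3(\omega)$, hence $E_L[3]^{ss}\cong\mathbb{F}_3(\varepsilon_L)\oplus\mathbb{F}_3(\varepsilon_L\omega)$, so $\varepsilon_L$ is the relevant $\psi$. Condition (1) on $L$ ($3$ inert in $L$) gives $\varepsilon_L(3)=-1\neq 1$ and leaves the good reduction of $E$ at $3$ intact for $E_L$. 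Conditions (3)--(4) force every prime of multiplicative reduction of $E$ to become a prime of \emph{nonsplit} multiplicative reduction of $E_L$, so $(N_{E_L})_{\splt}=1$. For every prime $\ell\mid D_L$ (necessarily $\ell\neq 3$), the character $\varepsilon_L$ is ramified at $\ell$, so $E_L[3]^{ss}$, hence $E_L[3]$, is ramified at $\ell$; as multiplicative reduction would make $E_L[3]^{ss}$ unramified at $\ell$, the curve $E_L$ must have additive reduction there, which yields $\mathfrak{f}(\varepsilon_L)^2\mid(N_{E_L})_{\add}$ (the prime $2$ being handled by condition (6)). Finally, condition (5) ensures that each remaining prime $\ell\mid(N_{E_L})_{\add}$ either has $\varepsilon_L(\ell)=0$ or satisfies $\varepsilon_L(\ell)\neq 1$ together with $\ell\not\equiv-1\pmod 3$. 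Thus all hypotheses of Theorem~\ref{Heegnercorollary} hold for $E_L$.

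Next I would read off the resulting conditions on $K$. By Theorem~\ref{Heegnercorollary} applied to $E_L$, the Heegner point $P_{E_L}(K)$ is non-torsion whenever $3$ splits in $K$, $K$ satisfies the Heegner hypothesis with respect to $E_L$, and $3\nmid B_{1,\psi_0^{-1}\varepsilon_K}B_{1,\psi_0\omega^{-1}}$, where $\psi_0=\varepsilon_L$ if $L$ is real and $\psi_0=\varepsilon_L\varepsilon_K$ if $L$ is imaginary (Definition~\ref{evendefinition}). Using $\omega=\varepsilon_{\mathbb{Q}(\sqrt{-3})}$ at $p=3$ together with the functional equation and the analytic class number formula (as in the Remark following Theorem~\ref{Heegnercorollary}), I would translate the two Bernoulli non-divisibilities into class-number statements: for $L$ real they become $3\nmid h_{L\cdot\mathbb{Q}(\sqrt{-3})}$ and $3\nmid h_{L\cdot K}$, and for $L$ imaginary they become $3\nmid h_L$ and $3\nmid h_{L\cdot K\cdot\mathbb{Q}(\sqrt{-3})}$. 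In either case the first of the pair is precisely hypothesis (2) on $L$, so the only condition left to impose on $K$ is a single $3$-indivisibility of the class number of a fixed imaginary quadratic field whose discriminant depends on $D_K$ through $D_{L\cdot K}=D_LD_K$ (resp.\ $-3D_LD_K$); moreover $\varepsilon_L$ being ramified at all $\ell\mid D_L$ shows the Heegner hypothesis with respect to $E_L$ already forces $(D_K,D_L)=1$, so that these discriminants are indeed fundamental.

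Finally, the counting step proceeds exactly as in the proof of Theorem~\ref{realtwistpositiveproportion}. The conditions ``$3$ split in $K$'', ``$\ell\mid N_{E_L}$ split in $K$'', and ``$D_K<-4$ odd'' cut out a union of residue classes for $D_K$ modulo $M'$, and the class-number condition is a condition on the fixed-shift discriminant $D_LD_K$ (resp.\ $-3D_LD_K$). Via the Chinese Remainder Theorem I would choose, for each admissible local splitting pattern, an integer $m$ and modulus $M$ (divisible by $9$, by $D_L^2$, and by a suitable power of $2$) so that $D_K$ in the class determined by $m$ implies all the splitting conditions while the shifted discriminant lies in a class $(m'',M'')$ meeting the auxiliary hypotheses of Theorem~\ref{HorieNakagawa} (the odd-prime requirement $\ell^2\mid M''$, $\ell^2\nmid m''$ holding because fundamental discriminants are squarefree away from $2$, and the dyadic requirement arranged by condition (6) on $L$). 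Proposition~\ref{positiveproportion} then produces a positive proportion of imaginary quadratic $K$ in each such class with the required class number prime to $3$, and summing the densities over all admissible classes --- whose number at each prime $\ell$ gives the factors $\frac{\ell-1}{2}$, $\frac{\ell(\ell-1)}{2}$, etc.\ in the statement --- and normalizing by $|K^-(x,1,1)|\sim 3x/\pi^2$ yields the stated explicit lower bound, the $\frac{1}{4\Phi(M')}$ versus $\frac{1}{12\Phi(M')}$ discrepancy reflecting the shape of the relevant imaginary field in the two cases. The main obstacle I anticipate is this last bookkeeping: one must simultaneously satisfy the splitting conditions, the coprimality with $D_L$, and the parity/squarefulness constraints of Theorem~\ref{HorieNakagawa} for the shifted discriminant, and check that the resulting family of residue classes is nonempty with exactly the claimed count.
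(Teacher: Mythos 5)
Your proposal follows essentially the same strategy as the paper's proof: apply Theorem~\ref{Heegnercorollary} to the twisted curve $E_L$, translate the splitting conditions and the $3$-indivisibility of the relevant Bernoulli number into congruence conditions on the fundamental discriminant $D_K$ (respectively the shifted discriminant $D_LD_K$ or $-3D_LD_K$), and then count using Theorem~\ref{HorieNakagawa} and Proposition~\ref{positiveproportion} via the Chinese Remainder Theorem. The chief difference is one of exposition: you spell out explicitly why $E_L$ falls within the scope of Theorem~\ref{Heegnercorollary} with $\psi=\varepsilon_L$ (in particular that $(N_{E_L})_{\splt}=1$, that $\mathfrak{f}(\varepsilon_L)^2\mid (N_{E_L})_{\add}$, and that the Heegner hypothesis for $E_L$ already forces $(D_K,D_L)=1$), whereas the paper takes these for granted and moves directly to choosing residue classes; you also state the class-number translation of the Bernoulli conditions rather than encoding it only implicitly in the choice of $m$. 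The final bookkeeping you flag as the ``main obstacle'' is precisely what the paper carries out in detail, listing the admissible local residue classes prime by prime and summing the resulting densities, so your outline would need that step filled in but is otherwise sound and parallel to the paper's argument.
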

\begin{proof}Again we seek to apply Proposition \ref{positiveproportion}, as well as Theorem \ref{Heegnercorollary}. First suppose that $L$ is a real quadratic field. Let $M = 3lcm(N,D_L^2)$ if $lcm(N,D_L^2)$ is odd, $M = 3lcm(N,D_L^2,8)$ if $2||lcm(N,D_L^2)$, and $M = 3lcm(N,D_L^2,16)$ otherwise. Using the Chinese remainder theorem, choose a positive integer $m$ such that
\begin{enumerate}
\item $m \equiv 2 \mod 3$,
\item $\ell$ odd prime, $\ell|N_{\splt} \implies m \equiv [\text{quadratic nonresidue unit}] \mod \ell$,
\item $2|N_{\splt}, \implies m \equiv 5 \mod 8$,
\item $\ell$ prime, $\ell|N_{\nonsplit} \implies m \equiv [\text{quadratic residue unit}] \mod \ell$,
\item $2|N_{\nonsplit}, \implies m \equiv 1 \mod 8$,
\item $\ell$ odd prime, $\ell|N_{\add}, \ell\nmid D_L \implies m \equiv [\text{quadratic nonresidue unit}] \mod \ell$,
\item $\ell$ odd prime, $\ell|N_{\add}, \ell|D_L \implies m \equiv 0 \mod \ell$ where $\frac{m}{D_L} \equiv [\text{quadratic residue unit}] \mod \ell$,
\item $4|N \implies m \equiv D_L \mod 16.$
\end{enumerate}

Suppose $K$ is any imaginary quadratic field with \emph{odd} fundamental discriminant $D_K$ such that $(D_L,D_K) = 1$ and $D_LD_K \equiv m \mod M$. Since $D_K$ is odd, we must have $D_K \equiv 1 \mod 4$, and this is compatible with condition (6) which forces $D_K \equiv 1 \mod 8$, which in turn forces 2 to split in $K$. Then the above congruence conditions on $m$, along with the congruence conditions of our assumptions, imply
\begin{enumerate}
\item 3 is inert in $L$, split in $K$, and inert in $L\cdot K$;
\item $\ell$ prime, $\ell|N_{\splt}, \ell\nmid D_L \implies \ell$ is inert in $L$, split in $K$, and inert in $L\cdot K$;
\item $\ell$ prime, $\ell|N_{\nonsplit}, \ell\nmid D_L \implies \ell$ is split in $L$, split in $K$, and split in $L\cdot K$;
\item $\ell$ odd prime, $\ell|N_{\add}, \ell\nmid D_L \implies \ell$ is inert in $L$, split in $K$, and inert in $L\cdot K$;
\item $\ell$ odd prime, $\ell|D_L \implies \ell$ is ramified in $L$, split in $K$, and ramified in $L\cdot K$;
\item $4|N_{\add} \implies 2$ is ramified in $L$, split in $K$, and ramified in $L\cdot K$;
\item if $2||N$, then $4|M$ and $m \equiv \mod 4$;
\item if $4|N$, then $16|M$ and $m \equiv 8$ or $12 \mod 16$.
\end{enumerate}
Thus for imaginary quadratic $K$ such that $D_{L\cdot K} = D_LD_K \equiv m \mod M$, $(E,3,L,K)$ satisfies all the congruence conditions of Theorem \ref{Heegnercorollary} except for possibly $3\nmid h_{L\cdot K}$. M'oreover, the congruence conditions above imply that $m$ and $M$ are valid positive integers for Theorem \ref{HorieNakagawa}. Thus, by Proposition \ref{positiveproportion},
$$\lim_{x\rightarrow \infty}\frac{|K_*^-(x,m,M)|}{|K^-(x,1,1)|} \ge \frac{1}{2\Phi(M)}\prod_{\ell|M}\frac{q}{\ell+1}$$
and so a positive proportion of imaginary quadratic $K$ satisfy $D_{L\cdot K} \equiv m \mod M$ and $3\nmid h_{L\cdot K}$. Thus, for these $K$, $(E,3,L,K)$ satisfies all the congruence conditions of Theorem \ref{Heegnercorollary}, and so $P_{E_L}(K)$ is non-torsion. Moreover, noticing that the congruence conditions (1)-(6) on $m$ above are independent (again by the Chinese remainder theorem), we have
\begin{align*}\prod_{\ell|N_{\splt}N_{\nonsplit}, \ell\nmid D_L, \ell \;\text{odd}}\frac{\ell-1}{2}\prod_{\ell|N_{\add},\ell\nmid D_L, \ell \; \text{odd}}\frac{\ell(\ell-1)}{2}\prod_{\ell|D_L,\ell\;\text{odd}}\frac{\ell-1}{2}
\end{align*}
choices for residue classes of $m$ mod $M$. Combining all the above and summing over each valid residue class $m$ mod $M$, we immediately obtain our lower bound for the proportion of valid $K$ (with $M = 3M'$). 

For the case when $L$ is an imaginary quadratic field, let $M$ be as above. Then choose a positive integer $m$ such that
\begin{enumerate}
\item $m \equiv 3 \mod 9$,
\item $\ell$ odd prime, $\ell|N_{\splt} \implies m \equiv -3[\text{quadratic nonresidue unit}] \mod \ell$,
\item $2|N_{\splt} \implies m \equiv 1 \mod 8$,
\item $\ell$ odd prime, $\ell|N_{\nonsplit} \implies m \equiv -3[\text{quadratic residue unit}] \mod \ell$,
\item $2|N_{\nonsplit} \implies m \equiv 5 \mod 8$,
\item $\ell$ odd prime, $\ell|N_{\add}, \ell\nmid D_L \implies m \equiv -3[\text{quadratic nonresidue unit}] \mod \ell$,
\item $\ell$ odd prime, $\ell|D_L \implies m \equiv 0 \mod \ell$ where $\frac{m}{D_L} \equiv -3[\text{quadratic residue unit}] \mod \ell$,
\item $4|N \implies m \equiv D_L \mod 16$.
\end{enumerate}
The argument then proceeds in the exact same way as above to establish $3\nmid h_{L\cdot K.\mathbb{Q}(\sqrt{-3})}$ and thus $\text{rank}_{\mathbb{Z}}E_L(K) = 1$ by applying Theorem \ref{Heegnercorollary}.
\end{proof}

\begin{proof}[Proof of Corollary \ref{positiveproportioncorollary}] Since $E[3]$ is a reducible mod $3$ Galois representation, $E$ has Eisenstein descent of type $(\psi,\psi^{-1},N_{\splt},N_{\nonsplit},N_{\add})$ mod 3 where $\psi$ is some quadratic Dirichlet character. We may assume without loss of generality that $\psi = 1$ (after possibly replacing $E$ by $E\otimes \psi^{-1}$). From Theorem \ref{realtwistpositiveproportion}, a positive proportion of quadratic twists $E_L$ satisfy the conditions of Theorem \ref{twistpositiveproportion}, and so by that theorem a positive proportion of imaginary quadratic $K$ have that $P_{E_L}(K)$ is non-torsion. If $E$ is semistable, then $E$ is necessarily has Eisenstein descent of type $(1,1,N_{\splt},N_{\nonsplit},N_{\add})$ by part $(3)$ of Theorem \ref{classify} and Theorem \ref{congruence'}. Since $N_{\add} = 1$, Theorems \ref{realtwistpositiveproportion} and \ref{twistpositiveproportion} produce twists $E_L$ with $(N,D_L) = 1$  and $\text{rank}_{\mathbb{Z}}E(K) = 1$. Then by part (2) of Proposition \ref{rootnumberproposition}, each $E_L$ has $w(E_L) = -\varepsilon_L(-1)$ and so $\text{rank}_{\mathbb{Z}}E_L(\mathbb{Q}) = \frac{1+\varepsilon_L(-1)}{2}$ and $\text{rank}_{\mathbb{Z}}E_{L\cdot K}(\mathbb{Q}) = \frac{1-\varepsilon_L(-1)}{2}$. The more precise lower bounds on these positive proportions follow immediately from Theorem \ref{realtwistpositiveproportion} and Theorem \ref{twistpositiveproportion}. 
\end{proof}

\begin{remark}There is no ``double counting" resulting from using the lower bounds of Theorems \ref{realtwistpositiveproportion} and \ref{twistpositiveproportion} in tandem. The real quadratic twists $E_L$ produced in Theorem 49, which have discriminant $D_L$ prime to $D_K$, are distinct from the real twists $E_{L'\cdot K}$ produced in Theorem 50 (with $L'$ imaginary), which have discriminant $D_{L'}D_K$. Similarly, the imaginary quadratic twists produced in Theorem 49 are distrinct from those produced in Theorem 50.
\end{remark}

\begin{example}Let $E/\mathbb{Q}$ be the elliptic curve 19a1 in Cremona's labeling, which has minimal Weierstrass model
$$y^2 + y = x^3 + x^2 - 9x - 15.$$
Then $E(\mathbb{Q})^{tor} = \mathbb{Z}/3$, and so taking $p = 3$, $E[3]$ is a reducible mod $3$ Galois representation. Furthermore, $E$ has conductor $N = 19$, where 19 is of split multiplicative reduction. Taking the real quadratic field $L = \mathbb{Q}(\sqrt{41})$, one can check that 3 and 19 are inert in $L$. Taking the imaginary quadratic field $K = \mathbb{Q}(\sqrt{-2})$, one sees that 3 splits in $K$ and that $K$ satisfies the Heegner hypothesis with respect to the quadratic twist $E_L$ (and 3 and 19 split in $K$). Furthermore, 3 does not divide the class numbers $h_{L\cdot\mathbb{Q}(\sqrt{-3})} = h_{\mathbb{Q}(\sqrt{-123})} = 4$ and $h_{L\cdot K} = h_{\mathbb{Q}(\sqrt{-82})} = 2$. Our result now gives $\text{rank}_{\mathbb{Z}}E_L(K) = 1$. By Proposition \ref{rootnumberproposition}, one sees that $\text{rank}_{\mathbb{Z}}E_L(\mathbb{Q}) = 1$ and $\text{rank}_{\mathbb{Z}}E_{L\cdot K}(\mathbb{Q}) = 0$. Taking the imaginary quadratic field $L' = \mathbb{Q}(\sqrt{-7})$, one can check that 3 and 19 are inert in $L'$. Furthermore, 3 does not divide the class numbers $h_{L'} = 1$ and $h_{L'\cdot K\cdot\mathbb{Q}(\sqrt{-3})} = h_{\mathbb{Q}(\sqrt{-42})} = 4$, so by Proposition \ref{rootnumberproposition}, one sees that $\text{rank}_{\mathbb{Z}}E_{L'}(\mathbb{Q}) = 0$ and $\text{rank}_{\mathbb{Z}}E_{L'\cdot K}(\mathbb{Q}) = 1$. By Corollary \ref{positiveproportioncorollary} (and adding the explicit lower bounds given in Theorem \ref{realtwistpositiveproportion} and Theorem \ref{twistpositiveproportion} applied to $E, E_L$ and $E_{L'}$), at least $\frac{19}{640} + \frac{19}{10240} = \frac{323}{10240}$ of real quadratic twists of $E$ have rank 1, and at least $\frac{57}{640}+\frac{19}{17920} = \frac{323}{3584}$ of imaginary quadratic twists of $E$ have rank 0.
\end{example}

\end{document}